\theoremstyle{plain}
\newenvironment{theorem}[2][Theorem]{\begin{trivlist}
\item[\hskip \labelsep {\bfseries #1}\hskip \labelsep {\bfseries #2}]}{\end{trivlist}}
\newtheorem{thm}{Theorem}[section]
\newtheorem{lem}[thm]{Lemma}
\newtheorem{prop}[thm]{Proposition}
\newtheorem{cor}[thm]{Corollary}
\newtheorem{defn}[thm]{Definition}
\newtheorem{rmk}[thm]{Remark}
\newtheorem{ques}[thm]{Question} 
\theoremstyle{definition}
\newtheorem{thmx}{Theorem}
\newcommand{\fakeenv}{} 
\newenvironment{restate}[2]  
{ 
 \renewcommand{\fakeenv}{#2} 
 \theoremstyle{plain} 
 \newtheorem*{\fakeenv}{#1~\ref{#2}} 
 \begin{\fakeenv}
}
{
 \end{\fakeenv}
}
\DeclareMathOperator{\id}{id}
\DeclareMathOperator{\PR}{PSL_2(\RR)}
\DeclareMathOperator{\PC}{PSL_2(\CC)}
\DeclareMathOperator{\LL}{\mathcal{L}}
\DeclareMathOperator{\GG}{\mathscr{G}}
\DeclareMathOperator{\Aut}{Aut}
\DeclareMathOperator{\Homeop}{Homeo^+}
\DeclareMathOperator{\COL}{COL}
\DeclareMathOperator{\Stab}{Stab}
\DeclareMathOperator{\Fix}{Fix}
\newcommand{\NN}{\mathbb{N}}      %
\newcommand{\ZZ}{\mathbb{Z}}      
\newcommand{\QQ}{\mathbb{Q}}  
\newcommand{\RR}{\mathbb{R}}      
\newcommand{\CC}{\mathbb{C}}      
\newcommand{\DD}{\mathbb{D}}      
\newcommand{\HH}{\mathbb{H}}      
\newcommand{\hol}{\operatorname{hol}}     
\newcommand{\intr}{\operatorname{int}} 
\newcommand{\extr}{\operatorname{ext}} 
\newcommand{\stab}{\operatorname{Stab}}
\newcommand{\x}{\overline{x}}
\begin{document}

\title[Laminar groups]{
Characterization of Fuchsian groups as laminar groups $\&$ the structure theorem of hyperbolic $2$-orbifolds} 

\author{Hyungryul Baik \& KyeongRo Kim}
\address{Department of Mathematical Sciences, KAIST,  
291 Daehak-ro, Yuseong-gu, Daejeon 34141, South Korea }
\email{hrbaik@kaist.ac.kr \& cantor14@kaist.ac.kr}

\begin{abstract}
Thurston and Calegari-Dunfield showed that the fundamental group of some tautly foliated hyperbolic $3$-manifold acts on the circle in a distinctive way that the action preserves some structure of $S^1$, so-called a circle lamination. Indeed, a large class of Kleinian groups acts on the circle preserving a circle lamination. In this paper, we are concerned with the converse problem that a group acting  on the circle with at least two invariant circle laminations is Kleinian. We prove that a subgroup of the orientation preserving circle homeomorphism group is a Fuchsian group whose quotient orbifold is not a geometric pair of pants (or turnover) if and only if it preserves three circle laminations with a certain transversality. This is the complete generalization of the previous result of the first author which is proven under the assumption that the subgroup is discrete and torsion-free. On the way to our main result, we also show a structure theorem (generalized pants decomposition) for complete 2-dimensional hyperbolic orbifolds, including the case of infinite type, which is of independent interest. 
\end{abstract}
\maketitle


\section{Introduction}
\subsection{Background and Motivation}

The classification of a group action on a manifold is a fundamental question in mathematics. Along this line, one of the outstanding studies is the Zimmer's program. One part of Zimmer's program is to understand the actions of a lattice $G$ of a semi-simple Lie group $\Gamma$ on a compact manifold $M$.
See \cite{Fisher11} for the survey of Zimmer's program.

The most accessible case is that $M$ is dimension one, namely, $M=S^1$.  When $\Gamma$ is of real rank at least two, Ghys~\cite{Ghys99} gave a complete characterization for homomorphisms from $G$ to the orientation preserving circle homeomorphism group $\Homeop(S^1)$. This result says that the circle action of a higher rank lattice is rigid. 

On the contrary, in the rank one case, the circle actions of $G$ are more flexible. For instance, when  $\Gamma=\PR$ and $G$ is a closed hyperbolic surface group, the space of $G$-actions is studied by Mann~\cite{Mann15} and Mann-Wolff~\cite{MannWolff19} in terms of the Euler number. They showed the equivalency of the rigidity and the geometricity of the $G$-actions. This implies that non-geometric actions have no rigidity in some reasonable sense. Hence, in general, given any subgroup $H$ of $\Homeop(S^1),$ it is difficult to determine whether $H$ comes from a lattice of a semi-simple Lie group of real rank one.

Nonetheless, in the case of $\PC$, we can observe that a large class of lattices act on the circle in a distinctive way that the actions preserve some structure of $S^1$, so-called a \emph{circle lamination}. This feature is first observed by Thurston along the proof of the hyperbolization theorem for surface bundles over $S^1$. More generally, in \cite{Thurston97}, Thurston showed that the fundamental group of a tautly foliated hyperbolic $3$-manifold faithfully acts on $S^1$, preserving a pair of circle laminations. In various circumstance,  Thurston's theorem has been generalized by many authors, e.g. \cite{Calegari00}, \cite{CalDun03}, \cite{Calegari06}, \cite{fenley2012ideal},  \cite{FrankelSchleimerSegerman19} and so on.  Also,  we can see that almost all torsion-free lattices in $\PR$ which are the fundamental groups of hyperbolic surfaces of finite type act on $S^1$ preserving infinitely many distinct circle laminations.

Hence, it is natural to ask whether a subgroup of $\Homeop(S^1)$ preserving several circle laminations, so-called a \emph{laminar group}, is a discrete subgroup of $\PC$, so-called a \emph{Kleinian group}. Also, we may ask that a laminar group with infinite many invariant circle laminations is a \emph{Fuchsian group} which is a discrete subgroup of $\PR$. 
In this paper, we are mainly concerned with the second question. We will show that three is the enough number of invariant circle laminations to characterize Fuchsian groups.

\subsection{Main results}
To present the precise statements for the questions and our main theorems, we roughly introduce basic notions about laminations.
Let $\HH^2$ be the hyperbolic plane. For now, we use the Poincar\'{e} disk for $\HH^2$ which is the open unit disk in the complex plane $\CC$. The Gromov boundary of $\HH^2$ is the unit circle $S^1$ in $\CC$. Hence, we can think of the orientation preserving isometry group $\PR$ for $\HH^2$ as a subgroup of $\Homeop(S^1)$ since each element of $\PR$ is continuously and uniquely extended to $S^1$.  

A given geodesic in $\HH^2$ joins two points in $S^1$. Conversely, given two distinct points in $S^1$, there is a unique geodesic  in $\HH^2$ connecting them. Hence, the set $g(\HH^2)$ of geodesics in $\HH^2$ is parametrized by the set $\mathcal{M}$ of subsets of $S^1$ with caldinality of $2$. We denote the parametrization from $g(\HH^2)$ to $\mathcal{M}$ by $\epsilon$. 
 
A \emph{geodesic lamination} $\Lambda$ in $\HH^2$ is a non-empty subcollection of $g(\HH^2)$ such that $\bigcup \Lambda$ is a closed subset in $\HH^2$ and each pair of elements of $\Lambda$ are disjoint. We say that for each component of $\HH^2 \setminus \Lambda$, the closure of the component in $\overline{\HH^2}$  is a \emph{non-leaf gap} of $\Lambda$. A geodesic lamination is \emph{quite full} if every non-leaf gap is either an ideal polygon or a crown. Here, a \emph{crown} $\GG$ is a non-leaf gap having a sequence $\{t_i\}_{i\in \NN}$ in $S^1$ such that $\epsilon^{-1}(\{t_i,t_{i+1}\})$ is a boundary geodesic of $\GG$ and $\{t_i\}_{i\in \NN}$ accumulates to a unique point $p$. The point $p$ is called the \emph{pivot} of the crown. In particular, a quite full geodesic lamination without crowns is said to be \emph{very full}.  

A \emph{circle lamination} $\LL$ is a subcollection of $\mathcal{M}$ if $\LL=\epsilon(\Lambda)$ for some geodesic lamination $\Lambda$ in $\HH^2$. For convenience, we think of  circle laminations as geodesic laminations in $\HH^2$. Given a collection $\mathcal{C}=\{\LL_\alpha\}_{\alpha\in \Gamma}$ of circle laminations, we denote by $\Aut(\mathcal{C})$ the set of all elements in $\Homeop(S^1)$ preserving $\mathcal{C}$, namely, $\Aut(\mathcal{C}):=\{g\in \Homeop(S^1): g(\LL_\alpha)=\LL_\alpha \text{for all $\alpha\in \Gamma$} \}$. We also call $\Aut(\mathcal{C})$ the \emph{laminar automorphism group} of $\mathcal{C}$. A subgroup of $\Homeop(S^1)$ is \emph{laminar} if it is a subgroup of the laminar automorphism group of some circle lamination. The  laminar groups were first introduced in  \cite{calegari2001foliations}.

Now, we are ready to give the precise statements for the questions explained in the previous section.

\begin{ques}\label{firstQ}
Let $\mathcal{C}=\{\LL_\alpha \}_{\alpha \in \Gamma}$ be a collection of quite full circle laminations whose endpoint sets are pairwise disjoints, namely, $(\bigcup \LL_\alpha) \cap (\bigcup \LL_\beta)=\emptyset $ for any $\alpha\neq \beta$. Is every subgroup of $\Aut(\mathcal{C})$  isomorphic to a discrete subgroup of $\PC$ ? 
\end{ques}

\begin{ques}\label{secondQ}
Are the Fuchsian groups the only case to act faithfully on $S^1$ preserving an infinite collection of quite full circle laminations whose endpoint sets are pairwise disjoint? In other words, if  $\mathcal{C}$ is an infinite collection of quite full circle laminations whose endpoint sets are pairwise disjoint, is every subgroup of $\Aut(\mathcal{C})$ conjugated  to a Fuchsian group as subgroups in $\Homeop(S^1)$?
\end{ques}

Indeed, Question~\ref{firstQ} was proposed in \cite{BaikFuchsian} and \cite{alonso2019laminar} in terms of pants-like $\COL_n$ groups. See Section~\ref{sec:laminar groups with two lami} for the definition of a pants-like $\COL_n$ group. According to \cite{BaikJungKim22}, given a finite collection $\mathcal{C}=\{\LL_\alpha \}_{\alpha \in \Gamma}$ of quite full circle laminations whose endpoint sets are pairwise disjoint, for any subgroup $G$ of $\Aut(\mathcal{C})$, we can construct a pants-like collection $\mathcal{C}'=\{\LL_\alpha' \}_{\alpha \in \Gamma}$ of circle laminations preserved by $G$. Hence, even thought the definition of a pants-like $\COL_n$ group looks artificial, this is a weaker condition than preserving $n$ quite full circle laminations with the disjoint endpoint set condition. Therefore, Question~\ref{firstQ} assumes conditions which are stronger than in \cite{BaikFuchsian} and weaker than in \cite{alonso2019laminar}.

In this paper,  we show the following corollary which is the affirmative answer for Question~\ref{secondQ}. Therefore, three is the enough number of invariant laminations to characterize the circle actions of Fuchsian groups.
\begin{cor}\label{quiteFullVer}
If  $\mathcal{C}=\{\LL_1,\LL_2,\LL_3\}$ is a collection of quite full circle laminations whose endpoint sets are pairwise disjoint, every subgroup of $\Aut(\mathcal{C})$ is conjugated  to a Fuchsian group as subgroups in $\Homeop(S^1)$. Conversely, given a Fuchsian group $G$, we can construct three quite full circle laminations, with the disjoint endpoint condition, preserved by $G$ if $\HH^2/G$ is not a geometric pair of pants.
\end{cor}

In fact, we show a stronger statement than the first statement of Corollary~\ref{quiteFullVer} as follows.
\begin{thmx}\label{A}
  Every pants-like $\COL_3$ group is a convergence group. 
  \end{thmx}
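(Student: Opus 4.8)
The plan is to verify the convergence property in the dynamical form that is equivalent to the usual definition: $G$ is a convergence group if and only if it acts properly discontinuously on the space $\Theta$ of ordered triples of distinct points of $S^1$, that is, if and only if $\{g \in G : gK \cap K \neq \emptyset\}$ is finite for every compact $K \subset \Theta$. The whole argument consists of extracting this properness from the three invariant laminations supplied by the pants-like $\COL_3$ hypothesis.

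To each lamination $\Lambda_i$ ($i=1,2,3$) I attach its dual tree $T_i$ (collapse each complementary gap, an ideal polygon since the lamination is full, to a vertex, with the leaves encoding adjacencies); then $G$ acts on $T_i$ by automorphisms and $S^1$ is identified with the space of ends of $T_i$. A triple $T=\{x,y,z\}\in\Theta$ has a well-defined median $c_i(T)\in T_i$, equivalently the gap or leaf of $\Lambda_i$ separating the three points from one another, and the assignment $c_i$ is $G$-equivariant. A finiteness lemma---provable from the disjointness of distinct gaps together with the fact that the three points of a triple in a compact $K$ stay uniformly separated, so that only sufficiently large gaps can be medians---shows that the set $\mathcal{M}_i=\{c_i(T):T\in K\}$ is finite.

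Now suppose toward a contradiction that infinitely many distinct elements $g_n$ satisfy $g_nK\cap K\neq\emptyset$, and pick $T_n\in K$ with $g_nT_n\in K$. For each $i$, both $c_i(T_n)$ and $g_nc_i(T_n)=c_i(g_nT_n)$ lie in the finite set $\mathcal{M}_i$, so after passing to a subsequence we may assume $c_i(T_n)=v_i$ and $g_nv_i=w_i$ are independent of $n$ for all three $i$. Then $g_1^{-1}g_n$ fixes $v_1,v_2,v_3$ simultaneously, hence lies in the joint stabilizer $\Stab_G(v_1)\cap\Stab_G(v_2)\cap\Stab_G(v_3)$. The decisive input, which I would extract from the transversality built into the pants-like condition, is that this joint stabilizer is finite for any triple of medians of a common element of $\Theta$. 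Granting it, the elements $g_1^{-1}g_n$ all lie in one finite group, so only finitely many $g_n$ are distinct, the desired contradiction; hence $G$ acts properly discontinuously on $\Theta$ and is a convergence group.

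The finiteness of the joint stabilizer is the crux, and the precise point at which the argument must go beyond the torsion-free setting of Theorem \ref{thm:baikmainthm}. A single lamination is never enough: a leaf that is the axis of an infinite-order element has infinite stabilizer, and even a gap may a priori be fixed by an infinite group before convergence is known, since an element can fix every vertex of an ideal polygon while acting nontrivially on the complementary arcs. The transversality hypothesis is designed exactly to defeat this: an element fixing $v_1,v_2,v_3$ must, after passing to a bounded power, fix boundary leaves of all three gaps, and since leaves of different laminations cross transversally this pins down a configuration of fixed leaves from three laminations in general position. I expect the main work to lie in organizing this into a clean case analysis, the two genuinely new phenomena being (i) torsion, where an ideal-polygon gap carries a nontrivial finite cyclic or dihedral symmetry that one must show assembles into a finite joint stabilizer rather than being excluded outright as in the torsion-free case, and (ii) cusps, where leaves of two laminations share an endpoint and a parabolic could stabilize two of the three medians, so that only the third lamination---transverse at the cusp by the pants-like condition---prevents an infinite parabolic joint stabilizer. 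Confirming that the pants-like transversality is exactly strong enough to collapse every such joint stabilizer to a finite group is where the real difficulty resides.
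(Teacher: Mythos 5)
Your reduction to proper discontinuity on triples is the same starting point as the paper (via Bowditch's criterion, Theorem 4.2 there), but the route you take from there breaks at the finiteness lemma, which is false as stated. It is not true that for a compact $K\subset\Theta(S^1)$ the set of medians $\mathcal{M}_i=\{c_i(T):T\in K\}$ is finite: disjointness of gaps plus uniform separation of the three points does not prevent infinitely many distinct median gaps, because complementary regions of a very full lamination can accumulate onto a leaf. Concretely, take a leaf $\ell$ with endpoints $a,b$ and a nested sequence of leaves $\ell_n\to\ell$ whose endpoints $a_n\to a$, $b_n\to b$ approach from one side, with quadrilateral gaps $\GG_n$ sandwiched between $\ell_n$ and $\ell_{n+1}$; each $\GG_n$ has two ``long'' complementary arcs (one containing a fixed point $x$ far from $a,b$, one containing a fixed point $y$ strictly between $a$ and $b$) and two tiny arcs near $a$ and $b$. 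Choosing $z_n$ in the tiny arc of $\GG_n$ near $a$ gives triples $T_n=\{x,y,z_n\}$ that are uniformly separated and converge in $\Theta(S^1)$, yet $c_i(T_n)=\GG_n$ are pairwise distinct. So you cannot pass to a subsequence on which the medians are constant, and the pigeonhole step that produces $g_1^{-1}g_n$ fixing three gaps never gets off the ground. This accumulation phenomenon is exactly what the paper's machinery of quasi-rainbows and pre-approximation sequences is built to control: instead of forcing a constant median, the paper tracks nested intervals $I_n$ shrinking to a point with $g_n(x_n)\in I_n$, shows the compositions $g_m\circ g_n^{-1}$ are hyperbolic with fixed points trapped in $\overline{I_n}$, and derives a contradiction from an element acquiring three fixed points (or verifies the convergence property directly via Lemma 7.3).

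On the part you flag as the ``real difficulty,'' you are actually over-worried: a median of a triple in a very full lamination system is necessarily a non-leaf gap (a leaf has only two complementary arcs), hence an ideal polygon with $n\geq 3$ vertices, and the setwise stabilizer of such a gap maps into the cyclic group of rotations of its cyclically ordered vertex set with trivial kernel, since by Theorem 3.6 a pants-like $\COL_3$ group is M\"obius-like and a nontrivial M\"obius-like element cannot fix three points. So already a single stabilizer $\Stab_G(v_1)$ is finite, and no transversality analysis is needed there. The irreparable step is the one you treated as routine.
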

Thanks to the following famous theorem,  Theorem~\ref{A} is equivalent to Corollary~\ref{charFuchsian} which we desired. 
\begin{thm}[ Convergence group theorem (\cite{Tukia88},\cite{Gabai91},\cite{Casson1994}, cf. \cite{Hinkkanen1990} for the indiscret case)] \label{thm:convergenceGroupThm}~
A subgroup of $\Homeop(S^1)$ is a convergence group if and only if it is a Fuchsian group. 

\end{thm}

\begin{cor}\label{charFuchsian}
   Every pants-like $\COL_3$ group $G$ is a discrete M\"obius group, namely,  there is an element $\phi$ in $\Homeop(S^1)$ such that $\phi G \phi^{-1}$ is a Fuchsian group. 
\end{cor}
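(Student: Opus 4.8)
The plan is to prove this corollary not by a fresh construction but by chaining Theorem \ref{A} with the convergence group theorem, so the argument is a short sequence of implications. First I would invoke Theorem \ref{A} verbatim: by hypothesis $G$ is a pants-like $\COL_3$ group, and Theorem \ref{A} asserts precisely that such a group is a convergence group acting on $S^1$. This is the one substantive input; all of the combinatorial and dynamical work needed to verify the convergence property has already been discharged in the proof of Theorem \ref{A}, so nothing further is required at this stage.

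Next I would apply the forward ("only if") direction of the convergence group theorem of Tukia, Hinkkanen, Gabai, and Casson--Jungreis. Since $G < \Homeop(S^1)$ is a convergence group, that theorem furnishes an element $\phi \in \Homeop(S^1)$ such that the conjugate $\phi G \phi^{-1}$ coincides with the restriction to $S^1 \cong \partial_\infty \HH^2$ of a Fuchsian group $\Gamma < \PR$. Because a Fuchsian group is by definition a discrete subgroup of $\PR$, and elements of $\PR$ are Möbius transformations, the group $\phi G \phi^{-1}$ is exactly the boundary restriction of a discrete group of Möbius transformations. This is the assertion that $G$ is conjugate, through the circle homeomorphism $\phi$, to a discrete Möbius group, which is the content of the corollary. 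I would emphasize that no separate discreteness argument is needed: discreteness is already built into the notion of a Fuchsian group supplied by the convergence group theorem.

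There is essentially no obstacle internal to this corollary; the entire difficulty lives upstream in establishing the convergence property of Theorem \ref{A}. The only points that deserve an explicit word are bookkeeping ones. One should note that the conjugating homeomorphism $\phi$ may be taken orientation-preserving, which is automatic since both Theorem \ref{A} and the convergence group theorem are stated within $\Homeop(S^1)$. One should also be explicit that the phrase "discrete Möbius group" is simply terminology for a conjugate of a Fuchsian group, so that the statement claims nothing beyond what the convergence group theorem directly provides once Theorem \ref{A} has been invoked.
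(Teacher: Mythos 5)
Your proposal is correct and is exactly the argument the paper intends: the corollary is stated immediately after Theorem \ref{A} with the single remark that it follows ``by the convergence group theorem,'' i.e.\ Theorem \ref{A} gives the convergence property and the Tukia--Hinkkanen--Gabai--Casson--Jungreis theorem then supplies the conjugating homeomorphism $\phi$ with $\phi G \phi^{-1}$ Fuchsian (hence discrete). Nothing further is needed.
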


Conversely, we also show the following theorem which is weaker than the second statement of Corollary~\ref{quiteFullVer}.

\begin{thmx}\label{B}
  Let $G$ be a Fuchsian group. Suppose that $\HH/G$ is not a geometric pair of pants. Then $G$ is a pants-like $\COL_3$ group.  
\end{thmx}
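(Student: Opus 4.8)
The plan is to build, directly in the hyperbolic plane, three $G$-invariant geodesic laminations whose boundary data furnish the three very full laminations of $S^1$ required by the definition of a pants-like $\COL_3$ group. Since $G$ is already Fuchsian, its action on $S^1 \cong \partial \HH$ is the standard boundary action, so leaves may be taken to be honest hyperbolic geodesics with ideal endpoints on $S^1$; a $G$-invariant geodesic lamination of $\HH$ then descends to a geodesic lamination of the orbifold $O = \HH/G$, and conversely a geodesic lamination of $O$ lifts to a $G$-invariant one. Thus the entire problem is transported to $O$, where it becomes the geometric task of producing three laminations of $O$ that are very full and pairwise transverse and that share boundary leaves only along cusps.

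First I would invoke the structure theorem for complete hyperbolic $2$-orbifolds to cut $O$ along a locally finite family of disjoint simple closed geodesics (together with finitely many geodesic arcs when $O$ has funnels or convex-core boundary) into elementary pieces: orbifold pairs of pants, each with three distinguished ends that are cusps, cone points, funnels, or boundary geodesics. The hypothesis that $O$ is \emph{not} a geometric pair of pants is exactly what ensures there is enough complexity — either several pieces, or a single piece with a nontrivial self-gluing — to support three genuinely distinct laminations, mirroring the role of the thrice-punctured sphere exclusion in Theorem \ref{thm:baikmainthm}. On each elementary piece I would install a standard local template: a finite family of geodesics cutting the piece into finitely many ideal polygons, where leaves run into the cusps and may spiral onto the decomposing closed geodesics. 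Running three pairwise-transverse such choices, compatibly across the cutting curves, yields $\Lambda_1, \Lambda_2, \Lambda_3$; transversality in the interiors forces leaves of distinct $\Lambda_i$ to cross rather than to share an endpoint there, while the leaves running into a given cusp produce exactly the shared cusp endpoints demanded by the condition.

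The main obstacle is the presence of torsion, i.e. cone points, which is the essential new phenomenon beyond the torsion-free setting of \cite{BaikFuchsian}. A cone point of order $n$ is the image of a point $c \in \HH$ whose $G$-stabilizer is generated by the rotation $\rho$ of angle $2\pi/n$ about $c$; since the lifted lamination must be $\langle \rho \rangle$-invariant, the gap (or leaf, in the case $n=2$) containing $c$ is $\rho$-invariant, so for $n \geq 3$ it is an ideal $m$-gon with $n \mid m$, as $\rho$ acts freely on $S^1$ and hence on the vertex set. This forces the local template near a cone point to be rotationally symmetric of order $n$. The delicate points are to check that such symmetric templates can still be chosen (i) so that every complementary region of the resulting lamination is a finite-sided ideal polygon, making each $\Lambda_i$ genuinely very full, and (ii) so that the lifts of a cone point are never common endpoints of leaves of two different $\Lambda_i$ — that is, that symmetrising the templates manufactures no spurious shared endpoints at non-cusp points. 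A secondary technical difficulty, addressed by the infinite-type version of the structure theorem, is that when $O$ has infinite type the decomposition and templates involve infinitely many geodesics, so one must verify that the resulting leaf set is closed and locally finite, hence an honest lamination, and that passage to the closure introduces no unintended shared endpoints.

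With the three laminations in hand, I would finish by verifying the two implications of the transversality condition. For a parabolic fixed point $p$ (a cusp of $G$), each template was arranged to have a leaf running into the cusp, so each $\Lambda_i$ has a leaf with endpoint $p$, giving the ``if'' direction. For the ``only if'' direction, any $p \in S^1$ that is a common endpoint of leaves of $\Lambda_i$ and $\Lambda_j$ with $i \neq j$ must, by interior transversality together with the cone-point analysis above, come from the cusp behaviour of the templates and hence be parabolic. Combined with the very fullness established piece by piece, this exhibits $\Lambda_1, \Lambda_2, \Lambda_3$ as the required data, showing that $G$ is a pants-like $\COL_3$ group and completing the proof.
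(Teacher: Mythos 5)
Your overall strategy for the first-kind case --- cut $O$ into pants via the structure theorem and install transverse very full templates on the pieces --- is the same as the paper's, but as written the proposal has two genuine gaps. First, you appear to run all three laminations over a \emph{single} system of cutting curves (``three pairwise-transverse such choices, compatibly across the cutting curves''). This cannot work: in a geometric pants decomposition the decomposing closed geodesics are themselves leaves (and must be, since the added leaves spiral onto them and a lamination is closed), so if the three laminations share the cutting system they share leaves, violating transversality, and the endpoints of those shared leaves are non-cusp points of $E(\LL_i)\cap E(\LL_j)$. The paper instead produces three pants decompositions $\mathcal{C}_1,\mathcal{C}_2,\mathcal{C}_3$ that are pairwise \emph{distinct} in the sense that no curve of one is freely homotopic to a curve of another (Lemma \ref{lem : three geometric pants decompositions}, quoting Lemma 4.2 and Proposition 4.3 of \cite{BaikFuchsian}); this is where the hypothesis that $\HH/G$ is not a pair of pants is actually consumed in the first-kind case, and it is a step your proposal does not supply.

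Second, large classes of Fuchsian groups admit no pants decomposition at all, and your heuristic that ``not a geometric pair of pants'' guarantees ``enough complexity \dots\ to support three genuinely distinct laminations'' fails for them. The elementary groups (trivial, finite cyclic generated by an elliptic, infinite cyclic generated by a parabolic or a hyperbolic, infinite dihedral) have quotients with no essential simple closed curves whatsoever, and the paper must treat them by bare-hands constructions (Lemma \ref{thm:elementary}), filling half-planes with ``half Farey triangulations'' whose endpoint sets are three prescribed pairwise disjoint countable dense subsets of the ideal boundary. More generally, for non-elementary groups of the second kind the quotient contains half-planes and funnels; the paper handles these by doubling $\HH/G$ across its ideal boundary to reduce to the first kind, carrying out a lengthy case analysis of how the ideal boundary meets the pants pieces, and again filling the resulting half-planes and funnels with Farey-type laminations having prescribed disjoint dense endpoint sets. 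Your parenthetical ``(together with finitely many geodesic arcs when $O$ has funnels or convex-core boundary)'' substantially underestimates this: filling a single half-plane so that every complementary region is an ideal triangle already requires infinitely many leaves, and arranging that the three fillings meet only at cusp points is the content of a separate construction. Your cone-point analysis (rotationally symmetric templates, ideal $n$-gon gaps around lifts of order-$n$ cone points, a single leaf for order two) does match the paper's treatment via ideal monogons and is fine as far as it goes.
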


In the proof of Theorem~\ref{B}, by removing the gray geodesics in the pieces of  Figure~\ref{fig:monogonhole} type, we can modify Lemma~\ref{lem : a lamination in a geometric pair of pants} for quite full laminations. Eventually, we can strengthen Theorem~\ref{B} in order to get the second statement of Corollary~\ref{quiteFullVer}.

The combination of Theorem~\ref{A} and Theorem~\ref{B} is the complete generalization of the following theorem shown in \cite{BaikFuchsian}.
\begin{thm}\label{BaikThm}
Let $G$ be a torsion-free discrete subgroup of $\Homeop(S^1)$. Then $G$ is a pants-like $\COL_3$ group if and only if G is a Fuchsian group whose quotient is not the thrice-punctured sphere.
\end{thm}

Note that Theorem~\ref{A} and Theorem~\ref{B} are not immediately implied by Theorem~\ref{BaikThm}. First, in \cite{BaikFuchsian}, the proof of the only-if-direction depends on the fact that every non-trivial element has a fixed point which is implied by the torsion-free condition. Also, along the proof, discreteness was used in a lemma which is corresponding to Lemma~\ref{lem: having the convergence property} in this paper. Hence, we have to overcome these difficulties to prove Theorem~\ref{A}. For the if-direction, the construction of three different geodesic laminations contained some technical issues. Also, the Selberg's Lemma does not make it easy to construct the geodesic laminations on hyperbolic $2$-orbifolds since we also consider the infinite type orbifolds. Instead, we have to use the structure theorem (generalized pants decomposition) for hyperbolic $2$-orbifolds including the case of infinite type as follows.  

\begin{restate}{Theorem}{thm : straightening a pants decomposition}[The structure theorem for complete hyperbolic 2-dimensional orbifolds]
  Let $G$ be a non-elementary Fuchsian group and $S$ be the complete $2$-dimensional hyperbolic orbifold $\HH/G.$ Suppose that there is a pants decomposition $\mathcal{C}$ of $S.$ Let $\Lambda$ be the union of all geodesic realizations in $S$ of elements of $\mathcal{C}.$ Then $\Lambda \cup (\partial CH(L(G))/G)$ is a geometric pants decomposition of $CC(S).$
\end{restate}

This theorem is analogous of the structure theorem for complete hyperbolic surfaces of \'{A}lvarez and Rodr\'{\i}guez \cite{structuretheorem} and Basmajian and Šarić~\cite{BASMAJIAN_2017}. The pants decomposition theorem for hyperbolic orbifolds of finite type is well known. For instance, see Thurston~\cite[Chapter~13, ~Orbifolds]{Thurston80}. However, we could not find the structure theorem of hyperbolic $2$-orbifold including the case of infinite type in the literature. Hence, we collect and reorganize basic facts about the hyperbolic geometry of orbifolds, and prove the structure theorem for hyperbolic orbifolds for the sake of completeness. This is of the independent interest and could serve as a useful reference to the community (for instance, see \cite{biringer2021chabauty}).

Now, by Corollary~\ref{quiteFullVer}, the remaining case of Question~\ref{firstQ} is that $\mathcal{C}$ consists of at most two circle laminations. The case where $\mathcal{C}$ consists of two circle laminations is studied in \cite{BaikJungKim22}. In \cite{BaikJungKim22}, we  and Jung show that if $\mathcal{C}=\{\LL_1,\LL_2\}$ is a \emph{veering pair} which satisfies the disjoint endpoints condition and some extra conditions, then every subgroup in $\Aut(\mathcal{C})$ is a fundamental group of an irreducible $3$-orbifold. Moreover, if the $3$-orbifold is finite in some sense, then the orbifold admits a complete hyperbolic metric and so the fundamental group is Kleinian. Along the proof of this theorem, we use the following lemma to prove the classification theorem of the gap stabilizers. It is also the key lemma to prove Theorem~\ref{A}.

\begin{restate}{Lemma}{lem: taking a pre-approximation sequence}
  Let $G$ be a pants-like $\COL_2$ group and $\mathcal{C}=\{\LL_1, \LL_2\}$ be a pants-like collection for $G$. Suppose that we have  a sequence $\{x_n\}_{n=1}^{\infty}$ of elements of $S^1$ converging to $x\in S^1$ and a sequence $\{g_n\}_{n=1}^{\infty}$ of distinct elements of $G$ such that $\{g_n(x_n)\}_{n=1}^\infty$ converges to $x'\in S^1$. Then we can have one of the following cases:
\begin{enumerate}
\item there is a subsequence $\{g_{n_k}\}_{k=1}^{\infty}$ such that $g_{n_k}(x)=x'$ for all $k\in\NN$; 
\item there is a subsequence $\{g_{n_k}\}_{k=1}^{\infty}$ and a quasi-rainbow $\{I_k\}_{k=1}^\infty$ at $x'$ in $\LL_i$ for some $i\in \ZZ_2$ such that the sequence $\{(g_{n_k},I_k)\}_{k=1}^{\infty}$ is a pre-approximation sequence at $x'$;
\item  there is a subsequence $\{g_{n_k}\}_{k=1}^{\infty}$ and  a quasi-rainbow $\{I_k\}_{k=1}^\infty$ at $x$ in $\LL_i$ for some $i\in \ZZ_2$  such that the sequence $\{(g_{n_k}^{-1},I_k)\}_{k=1}^{\infty}$ is a pre-approximation  sequence at $x$. 
\end{enumerate} 
\end{restate}

Throughout the paper, we use basic notations and statements of \cite{Baik2019LaminarGA} and \cite{BaikFuchsian}. Especially, instead of the circle laminations, we use the notion of laminations systems  which are more proper to keep track of the dynamic of the circle actions. 

\subsection{Organization}

We give a brief review of necessary preliminaries in Section \ref{sec:prelim}. In Section \ref{sec:laminar groups with two lami}, we show that Möbius-like laminar groups with two invariant laminations satisfying some special conditions are discrete in $\Homeop(S^1).$ This implies that every pants-like $\COL_3$ group is discrete. In Section \ref{sec:main1}, we prove Theorem \ref{A}.  In Section \ref{sec:elementary facts of orbifolds}, we give a summary of elementary facts about the geometry and topology of $2$-dimensional hyperbolic orbifolds and for completeness, we prove some basic facts which are well-known among experts. In Section \ref{subsec:the structure theorem}, we show a structure theorem of complete $2$-dimensional hyperbolic orbifolds, Theorem \ref{thm : straightening a pants decomposition}, following the proof for hyperbolic surfaces in \cite{BASMAJIAN_2017}. Note that the structure theorem for complete hyperbolic surfaces was firstly proven by \'{A}lvarez and Rodr\'{\i}guez \cite{structuretheorem}, and after that, a geometric proof was suggested by Basmajian and Šarić\cite{BASMAJIAN_2017}. Using Theorem~\ref{thm : straightening a pants decomposition}, in Section~\ref{subsec:main2b} and Section~\ref{subsec:main2c}, we construct three different laminations in $S^1$ and prove Theorem~\ref{B}.

\vspace{.5cm} 
\noindent \textbf{Acknowledgement:} A large part of this paper is a part of the Ph.D. thesis of the second author. We thank Inhyeok Choi, Suhyoung Choi, Sang-hyun Kim, Seonhee Lim, Junghwan Park, Philippe Tranchida  for helpful discussions. 
Both authors were partially supported by Samsung Science \& Technology Foundation grant No. SSTF-BA1702-01.

\section{Basic notions and notations to study laminar groups}\label{sec:prelim} 
In first three sections, we review basic notions about Riemann surfaces and the hyperbolic geometry on orbifolds. In this part, to prove Theorem~\ref{thm : straightening a pants decomposition}, we adopt a proper collection of definitions. This is based on  \cite{ratcliffe2006foundations}, \cite{beardon1983geometry}, and \cite{kra1972automorphic}. Then, we recall circle laminations and lamination systems. Also, we summerize related notions and technical lemmas. This part is a summary of \cite{Baik2019LaminarGA}.

\subsection{The Riemann sphere and the hyperbolic plane}
Let $\hat{\CC}=\CC \cup \{ \infty \}$ be the Riemann sphere and $\hat{\RR}= \RR \cup \{\infty\}$ be the extended real line. We denote the upper half plane by  $\HH$ and the lower half plane by $\HH^*.$ The upper half plane $\HH$ has  a standard hyperbolic metric $\rho$, so-called the Poincaré metric. With this metric, $\hat{\RR}$ is the Gromov boundary $\partial \HH.$ On the other hand, a Cayley transformation $p$ expressed as
$$p(z)=i\frac{1+z}{1-z}$$ gives an identification between the Poincaré disk  $\DD$ and the upper half plan $\HH$ 
where $\DD$ is the unit open disk in $\CC$ with the standard hyperbolic metric.  

Recall that the automorphism groups of $\hat{\CC}$ and $\HH$ are $\PC$ and $\PR,$ respectively. The group $\PC$ acts on $\hat{\CC}$ by linear fractional transformations and especially the subgroup $\PR$ acts on $\hat{\CC}$ preserving $\hat{\RR}.$ Recall that the orientation preserving isometry group of $\HH$ is exactly $\PR.$ Note that the automorphism group of $\DD$ can be identified  with $\PR$, via $p,$ and so we think of the automorphism group of $\DD$ as $\PR.$

A discrete subgroup of $\PC$ is a \emph{Kleinian group}. Especially,  a Kleinian group in $\PR$ is a \emph{Fuchsian group}.  Let $G$ be a Fuchsian group. We denote the domain of discontinuity of $G$ on $\hat{\CC}$ by $\Omega(G)$ and the limit set of $G$ by $L(G). $ Note that $L(G)$ is a closed subset of $\hat{\RR}.$ Also, $\HH\subset \Omega(G).$ The Fuchsian group $G$ is \emph{of the first kind} if $L(G)=\hat{\RR},$ and \emph{of the second kind}, otherwise. In the first kind case,  $\Omega(G)$ has exactly two connected components $\HH$ and $\HH^*.$ If $L(G)$ is not $\hat{\RR}$ and is finite, then the cardinality of $L(G)$ is at most two and in this case we say that $G$ is  \emph{elementary}. Recall that every elementary Fuchsian group is virtually abelian and the converse is also true.  When $L(G)$ is not $\hat{\RR}$ and is infinite, the limit set $L(G)$ is a closed, perfect, nowhere dense subset of $\hat{\RR}.$ Hence, in the second kind case, $\Omega(G)$ has the only one connected component which is conformally equivalent to the Riemann sphere without a cantor set.
 
 \subsection{Riemann surfaces with branched coverings}
Let $G$ be a subgroup of $\PC$ and $\Omega$ be a non-empty $G$-invariant open set in $\hat{\CC}.$ Suppose that $G$ acts properly discontinuously on $\Omega.$ If $\Omega$ has more than two boundary points and $z_0$ is a point in $\Omega,$ then the \emph{stabilizer} $G_{z_0}$ defined by $$G_{z_0}=\{g\in G: g(z_0)=z_0\}$$ is a finite cyclic group which is generated by an elliptic element fixing $z_0$.  Moreover, we can take  a complex structure on the orbit space $\Omega/G$ so that the quotient map $q$ from $\Omega$ to $\Omega/G$ is holomorphic and a branched covering. In other words, for each $z_0\in \Omega$, there are charts $\varphi_1:U_1\to V_1$ on $\Omega$  and $\varphi_2: U_2 \to V_2$ on $\Omega/G$ with $q(U_1)\subset U_2$ such that $\varphi_1^{-1}(0)=z_0$ and $\varphi_2 \circ q \circ \varphi_1^{-1}(z)=z^m$ where $m$ is the cardinality of the stabilizer $G_{z_0}.$ The cardinality of the stabilizer $G_{z_0}$ is called the \emph{ramification index} of $q$ at the point $z_0,$ and  the point $z_0$ is a \emph{ramification point} if the ramification index is greater than $1.$ See \cite{kra1972automorphic} for more detaied discussion.

Let  $G$ be a non-elementary Fuchsian group. We write $\pi_G^d$ for the quotient map from $\Omega(G)$ to $\Omega(G)/G,$ and $\pi_G$ for the quotient map from $\HH$ to $\HH/G.$ The quotient spaces $\Omega(G)/G$ and $\HH/G$ with conformal structures defined as above are Riemann surfaces. Note that if $G$ is of the first kind, $\Omega(G)/G$ has exactly two connected components, and is connected, otherwise. If $G$ is of the second kind, there is a natural anti-conformal involution $J$ on $\Omega(G)/G$ induced by the complex conjugation on $\hat{\CC}.$ Then the map $J$ switches between $\HH/G$ and  $\HH^*/G,$ and fixes every point of $(\hat{\RR}-L(G))/G.$

\subsection{$2$-dimensional hyperbolic orbifolds.}
Let $S$ be a connected metric space. The space $S$ is called a \emph{$2$-dimensional hyperbolic orbifold} if there is a  subgroup $G(S)$ of $\PR$ and a non-empty $G(S)$-invariant simply connected open set $C(S)$ of $\HH$ such that $G(S)$ acts properly discontinuously on $C(S)$ and $C(S)/G(S)$ with the metric induced by $\rho$ is isometric to $S.$ We say that $S$ is \emph{complete} if $C(S)=\HH.$ 

Similarly, the space $S$ is  a \emph{$2$-dimensional hyperbolic orbifold with geodesic boundary} if there is a subgroup $G(S)$ of $\PR$ and a $G(S)$-invariant closed convex subset $C(S)$ of $\HH$ with non-empty interior whose boundary is a disjoint union of  bi-infinite geodesics such that $G(S)$ acts properly discontinuously on $C(S)$ and $C(S)/G(S)$ with the metric induced by $\rho$ is isometric to $S.$ We allow $C(S)$ to be $\HH.$ By definition, every $2$-dimensional hyperbolic orbifold with geodesic boundary is complete as a metric space. Note that $G(S)$  is uniquely determined up to conjugation by $\PR.$

Let $\pi_S$ be the quotient map from $C(S)$ to $C(S)/G(S).$ As discussed previously, when $S$ is a $2$-dimensional hyperbolic orbifold,  $\pi_S$ is a branched covering. Even though $S$ is a $2$-dimensional hyperbolic orbifold with geodesic boundary, the restriction $\pi_S|int(C(S))$ to the interior $int(C(S))$ of $C(S)$ is a branched covering onto $int(C(S))/G(S)$ of $S$ and this map can be also extended continuously onto the geodesic boundary $\partial C(S).$ Therefore, $\pi_S$ is a branched covering.    

Let $S$ be a $2$-dimensional hyperbolic orbifold with or without geodesic boundary. The set $C(S)$ is a universal orbifold covering space and $\pi_S$ is the orbifold covering map. We say that a point $x\in S$ is a \emph{cone point of order $n$} if $x$ is an image of a ramification point with ramification index $n$ and we denote the set of all cone points in $S$ by $\Sigma(S).$ Note that by definition, a ramification point is not in $\partial C(S)$ when $S$ is a $2$-dimensional hyperbolic orbifold with geodesic boundary.  

We define the \emph{geometric interior} $S^o$of $S$ to be the set $int(C(S))/G(S)-\Sigma(S).$ We call an element of the geometric interior a \emph{regular point.}
A point  in $S-int(C(S))/G(S)$ is called a \emph{boundary point} and we denote the set of  boundary points of $S$ by $\partial S.$ Note that when $S$ is a $2$-dimensional hyperbolic orbifold with geodesic boundary, $\partial S$ is a disjoint union of geodesics. 

Recall that if a bordered Riemann surface $X$ has an open subset which is conformally equivalent to a unit disk without center, then we say that $X$ has a \emph{puncture}. Moreover, when $X$ has a puncture, we can uniquely extend $S$ to a bigger Riemann surface $\overline{X}$ by adding the center. In this sense, we think of a puncture as the added center of the disk. If  a point $x$ is in $\partial S \cup \Sigma(S)$ or a puncture of $S$, $x$ is called a \emph{geometric boundary point} and $\Delta S$ denotes the set of all geometric boundary points of $S.$

Let $S$ be a hyperbolic surface with geodesic boundary. In our definition, $S$ is a $2$-dimensional hyperbolic orbifold with geodesic boundary and without cone points. As usual, a \emph{geodesic lamination} on $S$ is a nonempty closed subset of $S$ which is a disjoint union of  simple geodesics, and the geodesics composing the lamination are called \emph{leaves}. We can also define geodesic laminations on $2$-dimensional hyperbolic orbifolds with geodesic boundary as follows. 

\begin{defn}
Let $S$ be a $2$-dimensional hyperbolic orbifold with geodesic boundary. Fix the corresponding group $G(S).$  Then a closed subset  $\Lambda$ of $S$ is called a \emph{geodesic lamination} on $S$ if $\pi_{S}^{-1}(\Lambda)$ is a geodesic lamination of $C(S),$ namely $\pi_S^{-1}(\Lambda)$  is a closed subset of $C(S)$ which consists of pairwise disjoint bi-infinite geodesics. In particular, if $\pi_{S}^{-1}(\Lambda)=G(S)\cdot \ell$ for some bi-infinite geodesic $\ell$ in $\HH,$ then we call $\Lambda$ a \emph{simple geodesic} in $S$.
\end{defn}

When $S$ is a complete $2$-dimensional hyperbolic orbifold and $\Lambda$ is a geodesic lamination of $S,$ we think of $\Lambda$ as a closed subset of $S$ which is a disjoint union of simple geodesics. Hence, the geodesics composing the lamination are called \emph{leaves}. As $\pi_{S}^{-1}(\Lambda)$ is a geodesic lamination of $C(S)$, $\pi_{S}^{-1}(\Lambda)$ is a geodesic lamination on $\HH.$ Since given two points in $\partial \HH,$ there is a unique bi-infinite geodesic which has the points as end points, and vice versa, we may think of $\pi_{S}^{-1}(\Lambda)$ as a collection of two-point sets of $S^1$ corresponding to leaves of $\pi_{S}^{-1}(\Lambda).$ Moreover, since $G(S)$ acts on $\hat{\RR}$ and $G(S)$ permutes leaves of $\pi_{S}^{-1}(\Lambda),$ $G(S)$ acts on the collection of two-point sets of $S^1$ and permutes two-point sets. In the following sections, we will formulate precisely the collection of two-point sets corresponding to $\pi_{S}^{-1}(\Lambda)$ in terms of the topology of $S^1$ and group actions on such collections.

\subsection{The topology of $S^1$}
Let $S^1$ be the unit circle on $\CC$ which is the boundary of the Poincare disk $\DD.$  Now we introduce a circular order to $S^1$ which is helpful to discuss the topology of $S^1.$ 

We start with a set $X.$ For each $n\in \NN$ with $n\geq2$, we write $\Delta_n(X)$ for the set $$\{ (x_1, \cdots, x_n)\in X^n : x_i=x_j \ \text{for some} \ i \neq j \}.$$ A \emph{ circular order} of $X$  means a map $\varphi$ from $X^3$ to $\{-1,0,1\}$ satisfying following properties :
 \begin{enumerate}
\item[(DV)] $\varphi^{-1}(0)=\Delta_3(X)$, and 
\item[(C)] $\varphi(x_1, x_2, x_3)-\varphi (x_0,x_2, x_3)+\varphi(x_0,x_1,x_3)-\varphi(x_0, x_1, x_2)=0$ for all $x_0,x_1,x_2,x_3 \in X.$  
\end{enumerate}

Let $n\in \NN$ with $n\geq 3$. A $n$-tuple $(x_1, \cdots, x_n)$ in $(S^1)^n-\Delta_n(S^1)$ is called \emph{positively oriented} in $S^1$ if for each $i \in \{2, \cdots, n-1\}$, $p(x_1^{-1}x_i)<p(x_1^{-1}x_{i+1})$ where $p$ is the Cayley transformation. Similarly, a $n$-tuple $(x_1, \cdots, x_n)$ in $(S^1)^n-\Delta_n(S^1)$ is called \emph{negatively oriented} in $S^1$ if for each $i \in \{2, \cdots, n-1\}$, $p(x_1^{-1}x_{i+1})<p(x_1^{-1}x_{i}).$ 
Let $\varphi$ be a map from $(S^1)^3$ to $\{-1,0,1\}$ defined by
\begin{equation*} \varphi(x)=
\begin{cases}
-1 & \text{if } \  x \ \text{ is negatively oriented,}\\
 0 & \text{if } \  x \in\Delta_3(S^1) ,\\ 
 1 & \text{if } \ x \ \text{is positively oriented.}
\end{cases} \end{equation*}
We can see that  $\varphi$ is a circular order of $S^1.$ From now on, the circular order of $S^1$ means the map $\varphi.$

We call a nonempty proper connected open subset of $S^1$ an
\emph{open interval} in $S^1$.  Let $u, v$ be elements of $S^1$. Then, we define the subset $(u,v)_{S^1}$ of $S^1$ as follows:
\begin{enumerate}
\item If $u\neq v$, $(u,v)_{S^1}$ is the set $(u,v)_{S^1}=\{ p\in S^1 : \varphi(u,p,v)=1 \}$ and is called a \emph{nondegenerate open interval} in $S^1$.
\item If $u= v$,  $(u,v)_{S^1}$ is the set $S^1-\{ u \}$ and is called a \emph{degenerate open interval} in $S^1$.
\end{enumerate} 
In particular, if $(u,v)_{S^1}$ is a nondegenerate open interval, then we denote $(v,u)_{S^1}$ by $(u,v)_{S^1}^*$, and call it the \emph{dual interval} of $(u,v)_{S^1}$. We can see that the set of all nondegenerate open intervals of $S^1$ is a basis for the topology of $S^1$ which is induced from the standard topology of $\CC$.  
For convenience,  when $(u,v)_{S^1}$ is a nondegenerate open interval, we use the following notations: $[ u, v )_{S_1}=\{u\} \cup (u,v)_{S^1}$; $(u,v]_{S^1}= (u,v)_{S^1} \cup \{ v \}$; and
 $[u,v]_{S^1}= (u,v)_{S^1} \cup \{u, v \}$.
 
 \begin{rmk}
 Indeed, in \cite{Baik2019LaminarGA}, $p$ was the stereographic projection 
 $$p(z)=\frac{\operatorname{Im}(z)}{\operatorname{Re} (z)-1}.$$ Even though we use the Cayley transformation to define a circular order $\varphi$, the induced circular orders are same and it does not affect the results of \cite{Baik2019LaminarGA}.
 \end{rmk}

\subsection{Laminations of $S^1$}
Let $\mathcal{M}$ be the set of all two-point subsets of $S^1$. More precisely, 
 $$\mathcal{M}=((S^1)^2-\Delta_2(S^1))/(x,y)\sim (y,x).$$ 
 For each $\{a,b\}$ and $\{c,d\}$ of $\mathcal{M}$, they are said to be \emph{linked} if each connected component of $S^1-\{a,b\}$  contains precisely one of $c$ or $d.$ They are called \emph{unlinked} if they are not linked. 
 We can see easily that $\mathcal{M}$ has a hausdorff distance induced from the Euclidean metric of $\CC$. 
 
 \begin{defn}
 A \emph{lamination} of $S^1$ is a nonempty closed subset $\Lambda$ of $\mathcal{M}$ whose elements are pairwise unlinked. We call each element $\{a,b\}$ of $\Lambda$ a \emph{leaf} of $\Lambda$ and the points $a$ and $b$ are called the \emph{end points} of the leaf. 
 \end{defn}
 
\begin{rmk}
Let $A$ be a geodesic laminations of $\HH.$ For each leaf in $\HH,$ the end point set of the leaf is a two elements subset of $S^1.$ In this sense, there is a subset of $\mathcal{M}$ corresponding to $A$. Then, this subset is obviously a lamination of $S^1.$ For more detailed discussion, consult \cite[Chapter~$2$]{Calebook}.
\end{rmk}


\subsection{Lamination systems on $S^1$}

From now on, we recall the notion of lamination systems on $S^1$ which is first introduced in \cite{Baik2019LaminarGA}.  
This makes it easier to do general topological arguments for laminations of $S^1$. It turns out that a lamination system on $S^1$ is  a subset of the topology of $S^1$.

The idea of making a lamination system on $S^1$ from a lamination of $S^1$ is to take all connected components of complements of each leaf of the lamination of $S^1$. So, we will define lamination systems as a collection of nondegenerate open intervals in $S^1$ with some conditions corresponding to unlinkedness and closedness. 

Let $I$ and $J$ be two nondegenerate open intervals. If  $I\subseteq J$ or $I^* \subseteq J$, then we say that the pair of points $\{ I, I^*\}$ \emph{lies on} $J$ (See Figure \ref{fig:lie}). If  $\bar{I}\subseteq J$ or $\overline{I^*} \subseteq J$, then 
 we say that the pair of points $\{ I, I^*\}$ \emph{properly lies on} $J$. Now, we can define lamination systems as follows. 
 
 \begin{figure}
  \includegraphics[width=10cm]{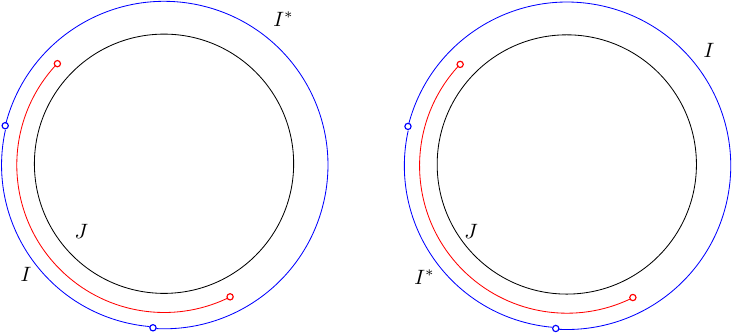}
  \caption{The red segment represents the nondegenerate open interval $J$ and the blue parts represent $I$ and $I^*$. Two figures show  all possible cases where $\{I,I^*\}$ lies on $J$.}
  \label{fig:lie}
\end{figure}
 
\begin{defn}[\cite{Baik2019LaminarGA}]
Let $\mathcal{L}$ be a nonempty family of nondegenerate open intervals of $S^1$. $\mathcal{L}$ is called 
a \emph{lamination system} \index{lamination system} on $S^1$ if it satisfies the following properties :
\begin{enumerate}
\item If $I\in \mathcal{L}$, then $I^* \in \mathcal{L}$. 
\item For any $I, J\in \mathcal{L}$, $\{I,I^*\}$ lies on $J$ or $J^*$.
\item If there is a sequence $\{I_n\}_{n=1}^{\infty}$ on $\mathcal{L}$ such that for any $n\in \NN$, $I_n\subseteq I_{n+1}$, and $\displaystyle \bigcup_{n=1}^{\infty} I_n$ is a nondegenerate open interval in $S^1$, then $\displaystyle \bigcup_{n=1}^{\infty} I_n\in \mathcal{L}$.
\end{enumerate}  
\end{defn}
The third condition will imply the closedness of lamination systems on $S^1.$ 

We define leaves and gaps on a lamination system $\LL$ as follows. A subset $\GG$ of $\LL$ is  a \emph{leaf} of $\LL$ if  $\GG=\{ I,I^*\}$ for some $I\in \LL$. We denote such  a leaf  $\GG$ by $\ell(I)$. Likewise, a subset $\GG$ of $\LL$ is a \emph{gap} of $\LL$ if $\GG$ satisfies  following conditions: 
\begin{enumerate}
\item Elements of $\GG$ are pairwise disjoint , and
\item for each $I\in\LL$, there is a $J$ in $\GG$ on which $\ell(I)$ lies. 
\end{enumerate}
By the second condition on gaps, every gap is nonempty.   Obviously,  a leaf is also a gap with two elements. Hence, we say that a gap is a \emph{non-leaf gap} if it is not a leaf. We denote $S^1- \displaystyle \bigcup_{I \in \GG} I$ by $v(\GG)$ and call it a \emph{vertex set} of $\GG$ or an  \emph{end points set} of $\GG$. Each element of a vertex set is called a \emph{vertex} or an \emph{end point}. Note that in general, a vertex set  need not be  a discrete subset of $S^1$.

Let us talk about the closedness of lamination systems. First, we must have the concept of  convergence in lamination systems. The third condition of lamination systems allows us define the notion of the limit of a sequence of leaves.

\begin{defn}[\cite{Baik2019LaminarGA}]
Let $\LL$ be a lamination system, and $\{\ell_n\}_{n=1}^{\infty}$ be a sequence of leaves in 
$\LL$. Let $J$ be a nondegenerate open interval. We say that $\{\ell_n\}_{n=1}^{\infty}$ \emph{converges to} $J$ if there is a sequence $\{I_n\}_{n=1}^{\infty}$ in $\LL$ such that 
for each $n\in \NN$, $\ell_n=\ell(I_n)$, and 
$$J\subseteq \liminf I_n \subseteq \limsup I_n \subseteq \overline{J}.$$ We denote this by 
$\ell_n\rightarrow J$.
\end{defn}

This definition is symmetric in the following sense. 

\begin{prop}[\cite{Baik2019LaminarGA}]\label{prop:symmetry of sequences}
Let $\LL$ be a lamination system and $\{\ell_n\}_{n=1}$ be a sequence of leaves in $\LL$. Let $J$ be a nondegenerate open interval. Suppose that  there is a sequence $\{I_n\}_{n=1}^{\infty}$ in $\LL$ such that for each $n\in \NN$, $\ell_n=\ell(I_n)$ and $$J\subseteq \liminf I_n \subseteq \limsup I_n \subseteq \overline{J}.$$ Then $$J^*\subseteq \liminf I_n^* \subseteq \limsup I_n^* \subseteq \overline{J^*}.$$
\end{prop}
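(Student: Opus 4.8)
The plan is to reduce the statement to three set inclusions and to observe that everything follows from two complementation identities together with a single geometric fact about the endpoints of the intervals $I_n$. Throughout I work with the set-theoretic $\liminf$ and $\limsup$ of the sequences of arcs, and I use the basic identity $I_n^* = S^1 - \overline{I_n}$ (the dual arc is the complement of the closed arc $\overline{I_n}$) together with the standard complementation rules $\liminf(S^1 - A_n) = S^1 - \limsup A_n$ and $\limsup(S^1 - A_n) = S^1 - \liminf A_n$. Writing $J = (a,b)_{S^1}$, I also record that $J^* = S^1 - \overline{J}$ and $J = S^1 - \overline{J^*}$.

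Note that the middle inclusion $\liminf I_n^* \subseteq \limsup I_n^*$ is automatic. For the right-hand inclusion, the complementation identities give $\limsup I_n^* = S^1 - \liminf \overline{I_n}$, so $\limsup I_n^* \subseteq \overline{J^*}$ is equivalent to $S^1 - \overline{J^*} \subseteq \liminf \overline{I_n}$, that is, to $J \subseteq \liminf \overline{I_n}$; since $I_n \subseteq \overline{I_n}$, monotonicity of $\liminf$ and the hypothesis $J \subseteq \liminf I_n$ make this immediate. The left-hand inclusion $J^* \subseteq \liminf I_n^*$ is the substantive one: again by complementation it is equivalent to $\limsup \overline{I_n} \subseteq \overline{J}$. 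Since $\overline{I_n} = I_n \cup \partial I_n$, with $\partial I_n$ the two-point endpoint set of $I_n$, and since $\limsup$ distributes over finite unions, this splits as $\limsup \overline{I_n} = \limsup I_n \cup \limsup \partial I_n$. The first piece lies in $\overline{J}$ by hypothesis, so everything reduces to the claim that the accumulation set of the endpoints satisfies $\limsup \partial I_n \subseteq \overline{J}$.

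I expect this endpoint containment to be the main obstacle, and I would prove it by contradiction, crucially exploiting that $J$ is nondegenerate (hence nonempty) and that $J \subseteq \liminf I_n$. Fix a point $y_0 \in J$; then $y_0 \in I_n$ for all large $n$. If some $x \in \limsup \partial I_n$ were to lie in $J^*$ rather than in $\overline{J}$, I would pass to a subsequence along which $x$ is always the same endpoint of $I_{n_k}$ and each $I_{n_k}$ contains $y_0$. Because $I_{n_k}$ is an arc having $x$ as an endpoint and containing the point $y_0 \in J$ while $x \in J^*$, a short circular-order computation shows that $I_{n_k}$ must contain one fixed nonempty sub-arc $K$ of $J^*$ abutting $x$ (namely $(x,a)_{S^1}$ or $(b,x)_{S^1}$, according to which endpoint $x$ is), and this sub-arc does not depend on $k$ because $x$, $a$, $b$ are fixed. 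Consequently every point of $K$ lies in infinitely many $I_n$, so $K \subseteq \limsup I_n \subseteq \overline{J}$; but $K$ is a nonempty subset of $J^*$, which is disjoint from $\overline{J}$, a contradiction. This establishes $\limsup \partial I_n \subseteq \overline{J}$, hence the first inclusion, and completes the proof.

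Finally, I would remark that the argument never invokes the lamination-system axioms or the fact that the $\ell_n$ are leaves: it is a general statement about sequences of nondegenerate open intervals of $S^1$ satisfying the hypothesised inclusions, with the interval/dual symmetry $I_n^* = S^1 - \overline{I_n}$ doing all of the work.
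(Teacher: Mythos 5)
Your argument is correct. Note that the paper itself states this proposition as imported from \cite{Baik2019LaminarGA} and gives no proof of it, so there is nothing to compare against line by line; judged on its own, your reduction is sound. The two complementation identities and the identity $I^*=S^1-\overline{I}$ do reduce everything to the single containment $\limsup \partial I_n\subseteq \overline{J}$, and you correctly identify this as the only substantive step. Your handling of it is also right: if $x\in\limsup\partial I_n$ lay in $J^*=(b,a)_{S^1}$, then after passing to a subsequence on which $x$ is consistently the initial (resp.\ terminal) endpoint of $I_{n_k}$, the fact that each $I_{n_k}$ contains a fixed $y_0\in J$ forces $(x,a)_{S^1}\subseteq I_{n_k}$ (resp.\ $(b,x)_{S^1}\subseteq I_{n_k}$), and this fixed nonempty subarc of $J^*$ would then sit inside $\limsup I_n\subseteq\overline{J}$, which is disjoint from $J^*$ --- a genuine contradiction. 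Your closing observation is also accurate and worth keeping: the statement is purely about sequences of nondegenerate open arcs satisfying the hypothesised inclusions, and none of the lamination-system axioms are needed.
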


Since the third condition on lamination systems guarantees  that the limit of an ascending sequence on a lamination system is in the lamination system,  we need to consider descending sequences to say about  limits of arbitrary sequences on lamination systems. 
The following lemma implies the  closedness of  descending sequences in a lamination system $\LL$. 
\begin{lem}[\cite{Baik2019LaminarGA}]\label{lods}
Let $\{ I_n\}_{n=1}^{\infty}$ be a sequence on a lamination system $\LL$ such that $I_{n+1}\subseteq I_n$ for all $n\in \NN$, and $\displaystyle \bigcup_{n=1}^{\infty} I_n^* =J \in \LL$. Then $\operatorname{Int}\Big( \displaystyle\bigcap_{n=1}^{\infty} I_n \Big)=J^*\in \LL$. 
\end{lem}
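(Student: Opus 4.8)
The plan is to reduce everything to elementary manipulations with complements, closures, and interiors on $S^1$, using the single observation that for any nondegenerate open interval $I=(u,v)_{S^1}$ one has $I^* = S^1 \setminus \overline{I}$ (the dual arc is precisely the complement of the closed arc $\overline{I}=[u,v]_{S^1}$). This converts the duality operation into ``complement of closure,'' after which all the set operations appearing in the statement commute in the directions we need.

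First I would record that the dual sequence $\{I_n^*\}$ is ascending. Indeed, from $I_{n+1}\subseteq I_n$ the monotonicity of closure gives $\overline{I_{n+1}}\subseteq \overline{I_n}$, and taking complements reverses the inclusion, so $I_n^*\subseteq I_{n+1}^*$. Thus $\{I_n^*\}$ is an ascending sequence whose union is the nondegenerate open interval $J$; writing $J=(a,b)_{S^1}$ we have $J^*=(b,a)_{S^1}$ and $\overline{J^*}=S^1\setminus J$.

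Next comes the topological heart. Taking complements in $J=\bigcup_n I_n^*$ gives
\[
 S^1\setminus J = \bigcap_n \big(S^1\setminus I_n^*\big)=\bigcap_n \overline{I_n},
\]
since $S^1\setminus I_n^* = \overline{I_n}$. Hence $\bigcap_n \overline{I_n}=S^1\setminus J=\overline{J^*}$, and taking interiors yields $\operatorname{Int}\big(\bigcap_n \overline{I_n}\big)=\operatorname{Int}(\overline{J^*})=J^*$, the last equality because the interior of the closed arc $\overline{J^*}=[b,a]_{S^1}$ is the open arc $J^*$. Now I would sandwich $\bigcap_n I_n$. On one side, $\bigcap_n I_n\subseteq \bigcap_n \overline{I_n}=\overline{J^*}$, so by monotonicity of the interior operator $\operatorname{Int}\big(\bigcap_n I_n\big)\subseteq J^*$. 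On the other side, for each $n$ we have $I_n^*\subseteq J$, hence $\overline{I_n^*}\subseteq \overline{J}$ and therefore $J^*=S^1\setminus\overline{J}\subseteq S^1\setminus \overline{I_n^*}=I_n$; thus $J^*\subseteq \bigcap_n I_n$, and since $J^*$ is open this gives $J^*\subseteq \operatorname{Int}\big(\bigcap_n I_n\big)$. Combining the two inclusions yields $\operatorname{Int}\big(\bigcap_n I_n\big)=J^*$. Finally, $J\in\LL$ together with the first condition in the definition of a lamination system gives $J^*\in\LL$, completing the argument.

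I expect the only genuine obstacle to be the bookkeeping of the circular topology, in particular verifying that the ascending union of the \emph{proper} arcs $I_n^*$ is again a proper arc that grows monotonically from both endpoints, so that $J$ has well-defined endpoints $a,b$ with no ``wrap-around.'' This is exactly where the hypothesis $\bigcup_n I_n^* = J$ being a nondegenerate (hence proper) open interval is used; once that is secured, the complement-of-closure identity makes the remainder purely formal, and the lamination-system structure enters only through the closure-under-dual condition (1) to place $J^*$ back in $\LL$.
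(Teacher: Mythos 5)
Your argument is correct. Note that the paper itself gives no proof of this lemma (it is imported verbatim from the cited reference \cite{Baik2019LaminarGA}), so there is nothing internal to compare against; but your proof is a valid, self-contained derivation. The one identity everything hinges on, $I^* = S^1\setminus\overline{I}$ for a nondegenerate open interval $I=(u,v)_{S^1}$, does hold, since $S^1$ is the disjoint union of $(u,v)_{S^1}$, $\{u,v\}$, and $(v,u)_{S^1}$ and the closure of the open arc is the corresponding closed arc; once that is in place your complement/closure/interior manipulations, the sandwich $J^*\subseteq\bigcap_n I_n\subseteq\overline{J^*}$, and the appeal to closure of $\LL$ under duals are all sound.
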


With this lemma, the following proposition shows the closedness of  lamination systems.

\begin{prop}[\cite{Baik2019LaminarGA}]\label{prop:closedness of sequence of leaves}
If a sequence $\{\ell_n\}_{n=1}^{\infty}$ of leaves of  a lamination system $\LL$ converges to a nondegenerate open interval $J$, then $J\in \LL$.
\end{prop}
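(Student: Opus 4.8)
The plan is to reduce the statement to the two monotone situations already covered by the tools just established: an ascending chain of intervals is absorbed directly by condition (3) of a lamination system, while a descending chain is handled by Lemma \ref{lods}. So the whole game is to extract, from the converging sequence of leaves, a monotone subsequence of intervals whose limit is exactly $J$. Write $\ell_n=\ell(I_n)$ for the sequence realizing the convergence, so that $J\subseteq \liminf I_n\subseteq \limsup I_n\subseteq \overline{J}$, and record $J=(a,b)_{S^1}$ together with $I_n=(a_n,b_n)_{S^1}$. The first thing I would prove is that the endpoints converge, $\{a_n,b_n\}\to\{a,b\}$. If some endpoint accumulated at an interior point $c\in J$, then $c\in\liminf I_n$ would place $c$ in all but finitely many $I_n$, which is incompatible with $c$ being a limit of boundary points of the $I_n$; and if some endpoint accumulated at a point $c\in J^*$, then the $I_n$ would contain points near $c$ infinitely often, contradicting $\limsup I_n\subseteq\overline{J}$. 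Hence every accumulation point of the endpoints lies in $\partial J=\{a,b\}$. In particular each interior point of $J$ eventually lies in $I_n$, so $I_n$ is the arc spanned by $\{a_n,b_n\}$ lying on the $J$-side, and thus $I_n\to J$ and $I_n^*\to J^*$ as sets.

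Next I would produce the chain. For $m,n$ large the endpoints $a_m,a_n$ lie in a small neighborhood $U_a$ of $a$ and $b_m,b_n$ in a small neighborhood $U_b$ of $b$, with $U_a,U_b$ disjoint. Condition (2) applied to $I_m,I_n\in\LL$ forces one of $I_m\subseteq I_n$, $I_m^*\subseteq I_n$, $I_m\subseteq I_n^*$, $I_m^*\subseteq I_n^*$. Since $J^*$ is nondegenerate, $I_m^*$ contains a point bounded away from $\{a,b\}$ inside $J^*$, which cannot lie in $I_n$ (eventually contained in a neighborhood of $\overline{J}$); this rules out $I_m^*\subseteq I_n$, and symmetrically $I_m\subseteq I_n^*$. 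Using $I^{**}=I$, the two surviving options $I_m\subseteq I_n$ and $I_m^*\subseteq I_n^*$ both say that $I_m$ and $I_n$ are nested. Thus the tail $\{I_n\}_{n\ge N}$ is linearly ordered by inclusion, and since every sequence in a linear order contains a monotone subsequence, I extract such an $\{I_{n_k}\}$. It still satisfies $J\subseteq\liminf I_{n_k}\subseteq\limsup I_{n_k}\subseteq\overline{J}$, because passing to a subsequence only enlarges the $\liminf$ and shrinks the $\limsup$.

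Finally I would split into the two monotone cases. If $\{I_{n_k}\}$ is ascending, then $\liminf I_{n_k}=\limsup I_{n_k}=\bigcup_k I_{n_k}=:U$, so $J\subseteq U\subseteq\overline{J}$ with $U$ a nondegenerate open interval; as the only open interval between $(a,b)_{S^1}$ and $[a,b]_{S^1}$ is $(a,b)_{S^1}$ itself, we get $U=J$, and condition (3) gives $J\in\LL$. If $\{I_{n_k}\}$ is descending, then $\{I_{n_k}^*\}$ is ascending; by Proposition \ref{prop:symmetry of sequences} and the same squeeze its union equals $J^*$, a nondegenerate open interval, so condition (3) yields $\bigcup_k I_{n_k}^*=J^*\in\LL$. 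Then Lemma \ref{lods} applies to the descending sequence $\{I_{n_k}\}$ and gives $\operatorname{Int}\big(\bigcap_k I_{n_k}\big)=(J^*)^*=J\in\LL$. (A constant subsequence forces $I_{n_k}=J$ directly.)

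I expect the main obstacle to be the chain step together with the endpoint convergence it rests on: converting the abstract convergence hypothesis and the unlinkedness condition (2) into genuine nesting of the intervals, and in particular excluding the crossed containments $I_m^*\subseteq I_n$ and $I_m\subseteq I_n^*$. Once nesting is secured the proof is a clean dichotomy, with condition (3) and Lemma \ref{lods} doing the remaining work.
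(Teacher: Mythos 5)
Your proof is correct, and it follows the route the paper clearly intends: the paper itself states this proposition without proof (citing the reference on lamination systems), but the preceding discussion — condition (3) for ascending chains, Lemma \ref{lods} for descending ones, Proposition \ref{prop:symmetry of sequences} to pass between $I_n$ and $I_n^*$ — is exactly the ascending/descending dichotomy you implement, and your use of the unlinkedness condition to show that a tail of $\{I_n\}$ is nested is the right way to reduce to that dichotomy. The only step I would tighten is the exclusion of endpoints accumulating at a point $c\in J^*$: saying that the $I_n$ ``contain points near $c$ infinitely often'' does not by itself contradict $\limsup I_n\subseteq\overline{J}$, since $\limsup$ concerns a single point lying in infinitely many $I_n$ and your nearby points may vary with $n$; one should use connectedness of the arcs $I_n$ (each contains a fixed interior point of $J$, hence an entire sub-arc of $J^*$ reaching from near $c$ to near $\partial J$) to produce one fixed point of $J^*$ lying in infinitely many $I_n$.
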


Moreover, by Proposition \ref{prop:symmetry of sequences} we can prove that if $\ell_n \rightarrow J$, then $\ell_n \rightarrow J^*$, and so $J^*\in \LL$. 
Due to that, we can make a concept of the limit of a sequence of leaves.  

\begin{defn}[\cite{Baik2019LaminarGA}]
Let $\LL$ be a lamination system, and $\{\ell_n\}_{n=1}^{\infty}$ be a sequence of leaves on $\LL$. Let $\ell$ be a leaf of $\LL$. Then, we say that $\{\ell_n\}_{n=1}^{\infty}$ \emph{converges to} $\ell$ if $\ell_n \rightarrow I$ for some $I\in \ell$ and we denote it by 
$\ell_n \rightarrow \ell$. 
\end{defn}

\begin{rmk}
Let $\Lambda$ is a lamination of $S^1.$ Then the set $$\LL= \{(a,b)_{S^1}: \{ a,b\} \in \Lambda\}$$
is a lamination system. Conversely, for a given lamination systme $\LL$, the set 
$$\Lambda=\{v(\ell(I)): I \in \LL\}.$$ Note that if a sequence $\{\ell_n\}_{n=1}^\infty$ of leaves of $\LL$ converges to $\ell,$ then $\{v(\ell_n)\}_{n=1}^\infty$ is also a sequence of leaves of $\Lambda$ and converges to $v(\ell).$
\end{rmk}

\subsection{Lamination systems and laminar groups}\label{Lamination systems and laminar groups}

A homeomorphism $f$ on $S^1$ is \emph{orientation preserving} if for each positively oriented triple $(x,y,z)\in (S^1)^3-\Delta_3(S^1)$, $(f(x),f(y),f(z))$ is positively oriented. $\Homeop(S^1)$ is the set of orientation preserving homeomorphisms on $S^1$. For each $f \in \Homeop(S^1)$, we denote the set of fixed points of $f$ by $\Fix_f.$

Let $G$ be  a subgroup of $\Homeop(S^1)$. The group $G$ is called a \emph{laminar group} if there is a lamination system $\LL$  such that  for any $g\in G$ and $I\in \LL$, $g(I)\in \LL.$ We say a lamination system $\LL$ to be \emph{$G$-invariant} if for each $g\in G$ and $I\in \LL$, $g(I)\in \LL.$ Note that on a $G$-invariant lamination system, every gap is mapped to a gap and the converging property is preserved under the given $G$-action. 

Let $\LL$ be a lamination system on $S^1.$ On a lamination system $\LL$ on $S^1$,   a gap $\GG$ with $|v(\GG)| < \infty$ is called 
 an \emph{ideal polygon}. In particular, an ideal polygon is called a \emph{non-leaf ideal polygon} if it is not a non-leaf gap.  We say that a lamination system  $\LL$ is \emph{very full} if every gap of $\LL$ is an ideal polygon.  Note that for an ideal polygon $\GG$, since $v(\GG)$ is a finite set, we can write $v(\GG)=\{x_1, x_2, \cdots ,x_n\}$ where $|v(\GG)|=n$, and $(x_1, x_2, \cdots, x_n)$ is a positively oriented $n$-tuple. Moreover, we can represent $\GG=\{ (x_1,x_2)_{S^1}, (x_2, x_3)_{S^1}, \cdots, (x_{n-1},x_n)_{S^1}, (x_n,x_1)_{S^1} \}$.
 
 Let $\{I, J\}$ be a subset of $\LL$. Then, $\{I, J\}$ is called a \emph{distinct pair} if $I\cap J =\emptyset$, and $\{I,J\}$ is not a leaf. 
A distinct pair $\{I,J\}$ is \emph{separated} if there is a non-leaf gap $\GG$ such that $I\subseteq K$ and $J\subseteq L$ for some $K, L\in \GG$, not necessarily $K\neq L.$ The lamination system  $\LL$ is \emph{totally disconnected} if every distinct pair is separated.

Let $I\in \LL$ and  $\{ \ell_n\}_{n=1}^{\infty}$ be a sequence of leaves of $\LL$. Then we call $\{\ell_n \}_{n=1}^{\infty}$ an \emph{$I$-side sequence} if for all $n\in \NN$, $I\notin \ell_n$, and $\ell_n$ lies on $I$, and $\ell_n\rightarrow I$. And we say that $I$ is \emph{isolated} if there is no $I$-side sequence on $\LL$. Moreover, a leaf $\ell$ is \emph{isolated} if each element of $\ell$ is isolated.

For a lamination system $\LL,$ let $E(\LL)=\displaystyle \bigcup_{I\in \LL}v(\ell(I))$ and we call it the \emph{end points set} of $\LL$. When we consider several lamination systems at the same time, considering transversality is useful.  

\begin{defn}
Two lamination systems  $\LL_1$ and $\LL_2$ on $S^1$ are \emph{transverse} if $\LL_1 \cap \LL_2=\emptyset.$ They are said to be \emph{strongly transverse} if $E(\LL_1)\cap E(\LL_2)=\emptyset.$
\end{defn}

\subsection{Some technical lemmas}
The following structure is useful to deal with the  configuration of leaves and to argue the existence of a gap. 
\begin{defn}[\cite{Baik2019LaminarGA}]
Let $\LL$ be a lamination system on $S^1$ and $I$ be a nondegenerate open interval. Then, for $p\in I$, we define $C_p^I$ as the set 
$C_p^I=\{ J\in \LL : p\in J \subseteq I \}.$
\end{defn}

We can see that $C_p^{I}$ is totally ordered by the set inclusion \cite{Baik2019LaminarGA}. 
The following lemma gives good descending or ascending sequences of $C_p^I$ to make sure the existence of the maximal or minimal interval in $C_p^{I}$. 
\begin{lem}[\cite{Baik2019LaminarGA}]\label{lem: existence of the extremal leaf}
Let $\LL$ be a lamination system on $S^1$, and $I$ be a nondegenerate open interval. Let $x$ be an element of $I$. Assume that $C_x^{I}$ is nonempty. Then there is a sequence $\{J_n\}_{n=1}^{\infty}$ on $C_x^{I}$ such that for all $n\in \NN$, $J_n\subseteq J_{n+1}$, and $\displaystyle \bigcup_{n=1}^{\infty} J_n= \bigcup_{K\in C_x^{I}}K $. Also, there is a sequence 
$\{K_n\}_{n=1}^{\infty}$  on $C_x^{I}$  such that for all $n\in \NN$,  $K_{n+1}\subseteq K_n$, and $\displaystyle\bigcap_{n=1}^{\infty} K_n= \bigcap_{K\in C_x^{I}}K $.
\end{lem}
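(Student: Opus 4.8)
The plan is to reduce the statement to a purely order-theoretic fact about the chain $(C_x^{I}, \subseteq)$, which we already know to be totally ordered. First I would fix an orientation on the arc $I$ and let $\prec$ be the resulting linear order on the closed arc $\overline{I}$ induced by the circular order $\varphi$; this identifies $\overline{I}$ order-isomorphically with a compact interval of $\RR$. Every $J \in C_x^{I}$ then has the form $J = (a_J, b_J)_{S^1}$ with $a_J \prec x \prec b_J$ and $a_J, b_J \in \overline{I}$, and for $J, J' \in C_x^{I}$ we have $J \subseteq J'$ iff $a_{J'} \preceq a_J$ and $b_J \preceq b_{J'}$. The whole argument rests on two observations: first, that a finite subset of the chain $C_x^{I}$ has a smallest and a largest element (its intersection, respectively union, is again one of its members), so running minima and maxima stay inside $C_x^{I}$; second, that $\overline{I}$ is compact, so the relevant suprema and infima of endpoints exist in $\overline{I}$.

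For the ascending sequence I would set $\alpha = \inf_{J} a_J$ and $\beta = \sup_{J} b_J$ (the outer endpoints, taken in $\overline{I}$), and first check that $\bigcup_{K \in C_x^{I}} K = (\alpha, \beta)_{S^1}$: the inclusion $\subseteq$ is immediate, and for $\supseteq$ one uses that any $p$ strictly between $\alpha$ and $\beta$ satisfies $a_{J_1} \prec p$ for some $J_1$ and $p \prec b_{J_2}$ for some $J_2$, so $p$ lies in the larger of $J_1, J_2$. Next I choose $J_m' \in C_x^{I}$ with $a_{J_m'} \to \alpha$ and $b_{J_m'} \to \beta$ and put $J_n = \max\{J_1', \dots, J_n'\}$; this sequence is increasing, stays in $C_x^{I}$, and by the same pointwise argument its union is again $(\alpha, \beta)_{S^1}$, hence equals $\bigcup_{K} K$.

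For the descending sequence I would instead set $\alpha = \sup_{J} a_J$ and $\beta = \inf_{J} b_J$ (the inner endpoints) and split into two cases. If $C_x^{I}$ has a minimum element $K_{\min}$, then the constant sequence $K_n = K_{\min}$ works, since $\bigcap_{K} K = K_{\min}$. Otherwise I pick $K_m \in C_x^{I}$ with $a_{K_m} \to \alpha$ and $b_{K_m} \to \beta$ and claim that $\{K_m\}$ is \emph{coinitial}, i.e.\ every $K \in C_x^{I}$ contains some $K_m$: if not, then by total order $K \subsetneq K_m$ for all $m$, which forces $\alpha \preceq a_K$ and $b_K \preceq \beta$, hence $K = (\alpha,\beta)_{S^1}$; but $(\alpha,\beta)_{S^1}$ is then contained in every member of $C_x^{I}$, so it would be a minimum, contradicting the case assumption. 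Coinitiality yields $\bigcap_m K_m = \bigcap_{K} K$, and taking running minima $K_n = \min\{K_1,\dots,K_n\}$ produces the required decreasing sequence.

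The main obstacle is the descending case, and specifically the behaviour at the two endpoints $\alpha$ and $\beta$. For the union these endpoints are harmlessly excluded on both sides because all intervals are open, so a naive endpoint-approximating sequence automatically has the correct union; but whether $\alpha$ (or $\beta$) belongs to $\bigcap_{K} K$ depends on whether it is actually \emph{attained} as an endpoint $a_J$ of some leaf-interval, and a carelessly chosen shrinking sequence can over- or under-shoot the true intersection at precisely those two points. The coinitiality argument — equivalently the minimum-element dichotomy — is exactly what pins down this endpoint behaviour, since it guarantees that the chosen countable subfamily approximates \emph{every} member of $C_x^{I}$ from inside, so its intersection cannot be strictly larger than $\bigcap_{K} K$. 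Everything else is a routine verification using the linear order on $I$ and the total ordering of $C_x^{I}$.
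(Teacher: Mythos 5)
Your argument is correct. Note that the paper itself only quotes this lemma from \cite{Baik2019LaminarGA} and gives no proof, so there is nothing to compare against line by line; but your reduction to the linear order on $\overline{I}$ is the natural route, and the two points that actually need care are both handled: the simultaneous approximation of the two extremal endpoints by a \emph{single} monotone sequence (legitimate because, as you observe, finite subsets of the chain $C_x^{I}$ have a maximum and a minimum, so one may take the max/min of two separate approximating sequences), and the endpoint behaviour of the intersection, which your minimum-element dichotomy and coinitiality argument settle correctly. The only cosmetic gloss is that "pick $K_m$ with $a_{K_m}\to\alpha$ and $b_{K_m}\to\beta$" silently invokes that max/min trick, but since you stated the needed fact up front this is not a gap.
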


The following lemma shows that the one-sided isolation condition is a sufficient condition for a gap to exist. 

 \begin{lem}[\cite{Baik2019LaminarGA}]\label{lem: existence of a real gap}
Let $\LL$ be a lamination system on $S^1$ and $I\in \LL$. Suppose that $I^*$ is isolated. Then there is a non-leaf gap $\GG$ such that $I \in \GG$.
\end{lem}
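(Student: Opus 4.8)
The plan is to build the desired gap $\GG$ explicitly as $I$ together with the maximal \emph{proper} subintervals of $I^*$ that belong to $\LL$, and to use the isolation hypothesis precisely to guarantee that these maximal intervals are genuinely proper, so that $\GG$ does not collapse to the leaf $\ell(I)$.

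First I would fix, for each $x\in I^*$, the totally ordered family $\mathcal{A}_x=\{J\in\LL: x\in J\subsetneq I^*\}$ of proper subintervals of $I^*$ through $x$. When $\mathcal{A}_x$ is nonempty, the argument of Lemma \ref{lem: existence of the extremal leaf}, applied to this totally ordered subfamily of $C_x^{I^*}$, produces an ascending sequence $J_1\subseteq J_2\subseteq\cdots$ in $\mathcal{A}_x$ whose union $U_x$ equals $\bigcup_{J\in\mathcal{A}_x}J$. The crucial dichotomy is whether $U_x=I^*$ or $U_x\subsetneq I^*$.

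The key step, and the one I expect to be the main obstacle, is ruling out $U_x=I^*$ using that $I^*$ is isolated. If $U_x=I^*$, then $J_n\nearrow I^*$ with every $J_n\subsetneq I^*$; since $J_n\subseteq I^*\subseteq\overline{I^*}$ and $\liminf J_n=\bigcup J_n=I^*$ for an ascending sequence, the leaves $\ell(J_n)$ lie on $I^*$ and satisfy $\ell(J_n)\to I^*$, while $I^*\notin\ell(J_n)$ (indeed $J_n^*=I^*$ would force $J_n=I$, impossible as $J_n\subseteq I^*$ and $I\cap I^*=\emptyset$). Thus $\{\ell(J_n)\}$ would be an $I^*$-side sequence, contradicting the hypothesis that $I^*$ is isolated. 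Hence $U_x\subsetneq I^*$, and by the third lamination-system axiom $U_x\in\LL$; it is then the maximal proper subinterval of $I^*$ containing $x$.

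Having established this, I would set $\GG_0=\{U_x: x\in I^*,\ \mathcal{A}_x\neq\emptyset\}$ and $\GG=\{I\}\cup\GG_0$, and verify the two gap axioms. Pairwise disjointness follows from the unlinkedness axiom (2): each $U_x\subseteq I^*$ is disjoint from $I$, and for distinct $U_1,U_2\in\GG_0$ the statement ``$\{U_1,U_1^*\}$ lies on $U_2$ or $U_2^*$'' leaves only the case $U_1\subseteq U_2^*$ (forcing $U_1\cap U_2=\emptyset$) available, since a containment $U_1\subseteq U_2$ would contradict maximality and $U_1^*\subseteq U_2$ would force both $U_1,U_1^*\subseteq I^*$, which is impossible. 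For the covering axiom, given any $K\in\LL$, axiom (2) puts $\ell(K)$ on $I$ or on $I^*$; in the first case $\ell(K)$ lies on $I\in\GG$, and in the second the interval of $\ell(K)$ contained in $I^*$ is either all of $I^*$, so that $\ell(K)=\ell(I)$ (again handled by $I$), or a proper subinterval sitting inside the corresponding $U_x\in\GG_0$. Finally, $\GG$ is not a leaf: every element of $\GG_0$ is strictly smaller than $I^*$, so $I^*\notin\GG$ and $\GG\neq\{I,I^*\}$ (the degenerate possibility $\GG_0=\emptyset$, giving $\GG=\{I\}$, is also not a leaf). This exhibits a non-leaf gap containing $I$, as required.
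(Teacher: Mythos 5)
Your argument is correct: the isolation of $I^*$ is used exactly where it must be, namely to rule out an ascending chain of proper subintervals of $I^*$ exhausting $I^*$ (which would yield an $I^*$-side sequence), and the resulting family of maximal proper members of $\LL$ contained in $I^*$, together with $I$ itself, satisfies both gap axioms and cannot be a leaf since none of its elements equals $I^*$. The paper states Lemma \ref{lem: existence of a real gap} without reproducing a proof, citing \cite{Baik2019LaminarGA}, and your construction is the standard one; the only cosmetic point is that in the disjointness check the fourth alternative $U_1^*\subseteq U_2^*$, i.e.\ $U_2\subseteq U_1$, should be named explicitly, though it is dispatched by the same maximality argument you give for $U_1\subseteq U_2$.
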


Like leaves, there is a kind of unlinkedness for gaps. 
\begin{lem}[\cite{Baik2019LaminarGA}]\label{lem: the configuration of two gaps}
Let $\LL$ be a lamination system on $S^1$, and $\GG$, $\GG'$ be two gaps with $|\GG|,|\GG'|\ge2$. Then, 
$\GG=\GG'$ or there are $I$ in $\GG$, and $I'$ in $\GG'$ such that $I^*\subseteq I'$, and for all $J\in \GG$, $\ell(J)$ lies on $I'$, and for all $J'\in \GG'$, $\ell(J')$ lies on $I$.   
\end{lem}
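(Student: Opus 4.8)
The plan is to reduce the entire statement to the single assertion that, whenever $\GG \neq \GG'$, one can produce $I \in \GG$ and $I' \in \GG'$ with $I^* \subseteq I'$. Two elementary observations drive this reduction. First, \emph{within a gap, distinct members are complementary-nested}: if $I, J \in \GG$ are distinct they are disjoint by definition, and a nondegenerate open interval $J$ disjoint from $I$ cannot contain an endpoint of $I$ (otherwise it would meet $I$), so $J \subseteq S^1 \setminus \overline{I} = I^*$. Second, a \emph{duality fact}: if $A^* \subseteq B$ for nondegenerate open intervals, then $B^* \subseteq A$; this follows by taking complements, $\overline{B^*} = S^1 \setminus B \subseteq S^1 \setminus A^* = \overline{A}$, and then discarding the endpoints of $A$ from $B^*$ by the same openness argument. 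Granting the nesting $I^* \subseteq I'$, the first observation gives $J \subseteq I^* \subseteq I'$ for every other $J \in \GG$, while $I^* \subseteq I'$ itself handles $J = I$, so every $\ell(J)$ lies on $I'$; dually, the duality fact gives $(I')^* \subseteq I$, whence the first observation applied to $\GG'$ gives $J' \subseteq (I')^* \subseteq I$ for $J' \neq I'$, with $(I')^* \subseteq I$ handling $J' = I'$, so every $\ell(J')$ lies on $I$. Thus both ``lies on'' clauses are automatic once the nesting is produced.

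To produce this nesting (or else force $\GG = \GG'$), I would exploit that a gap controls \emph{every} leaf of $\LL$. Applying the defining property of the gap $\GG$ to each member $I'$ of $\GG'$ (which lies in $\LL$), there is $J \in \GG$ with $I' \subseteq J$ or $(I')^* \subseteq J$. If the second alternative ever occurs, the duality fact yields $J^* \subseteq I'$, and taking $I := J$ finishes the proof. Running the symmetric argument with $\GG'$ against the members of $\GG$: if some $I \in \GG$ satisfies $I^* \subseteq J'$ for a $J' \in \GG'$, then $I' := J'$ delivers the desired nesting. Hence we may assume that neither alternative produces a nesting, which means precisely that every member of $\GG'$ is contained in some member of $\GG$ and every member of $\GG$ is contained in some member of $\GG'$.

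It remains to see that this last situation forces $\GG = \GG'$. Take $I \in \GG$; choose $I' \in \GG'$ with $I \subseteq I'$ and then $I'' \in \GG$ with $I' \subseteq I''$, so $I \subseteq I' \subseteq I''$ with $I, I'' \in \GG$. Since distinct members of a gap are disjoint, the containment $I \subseteq I''$ forces $I = I''$, hence $I = I' \in \GG'$; thus $\GG \subseteq \GG'$, and symmetrically $\GG' \subseteq \GG$, giving $\GG = \GG'$. The hypothesis $|\GG|, |\GG'| \ge 2$ is what ensures these are genuine gaps to which the disjointness and covering properties apply. The routine part is the interval bookkeeping; the step I expect to require the most care is the duality bookkeeping together with the openness/endpoint arguments that upgrade containments of closures to containments of open intervals, and the verification that the case analysis of the second paragraph is genuinely exhaustive, i.e.\ that failing to find a nesting really does force the two-sided containment of members used in the final step.
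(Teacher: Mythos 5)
Your argument is correct. Note that this paper only quotes the lemma from \cite{Baik2019LaminarGA} and does not reproduce a proof, so there is no in-paper argument to compare against; judged on its own, your proof is complete relative to the definitions given here. The reduction to producing a single nesting $I^*\subseteq I'$ is sound: your first observation (distinct members of a gap, being disjoint open intervals, satisfy $J\subseteq I^*$) and your duality fact together do make both ``lies on'' clauses automatic, and the three-way case analysis is genuinely exhaustive, since the gap condition guarantees for each $I'\in\GG'$ \emph{some} $J\in\GG$ with $I'\subseteq J$ or $(I')^*\subseteq J$, so if the second alternative never occurs for any pair then the first must, and symmetrically. The final step ($I\subseteq I'\subseteq I''$ with $I,I''\in\GG$ forces $I=I''=I'$ by pairwise disjointness, whence $\GG=\GG'$) is also correct. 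The only point worth spelling out slightly more is the duality fact: from $A^*\subseteq B$ you get $\overline{B^*}\subseteq\overline{A}$, and to discard the endpoints of $A$ you should note explicitly that $B^*\cap A^*=\emptyset$ (because $A^*\subseteq B$ and $B\cap B^*=\emptyset$), so the open set $B^*$ cannot contain a boundary point of $A^*$, which is the same two-point set as $\partial A$. With that line added the proof is airtight.
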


Let $\LL$ be a lamination system on $S^1.$  $\LL$ is called \emph{dense} if $E(\LL)$ is a dense subset of $S^1$. Let $p\in S^1$ and $\LL$ be a dense lamination system. Suppose that there is a sequence $\{I_n\}_{n=1}^{\infty}$ on $\LL$ such that for
all $n\in \NN$, $I_{n+1}\subseteq I_n$, and $\displaystyle
\bigcap_{n\in \NN} I_n=\{p\}$. We call such a sequence a
\emph{rainbow} at $p$. In \cite{BaikFuchsian}, it is observed that
very full laminations have abundant rainbows. 

\begin{thm}[\cite{BaikFuchsian},\cite{Baik2019LaminarGA}]\label{er}
Let $\LL$ be a very full lamination system. For $p\in S^1$, either $p$ is in $E(\LL)$ or $p$ has a rainbow. These two possibilities are mutually exclusive. 
\end{thm}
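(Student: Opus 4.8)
The plan is to prove the dichotomy by first disposing of the (easy) mutual exclusivity and then isolating the single structural fact that does the real work: for a very full $\LL$, no point outside $E(\LL)$ can have a \emph{smallest} interval of $\LL$ around it, from which a rainbow drops out of the extremal–leaf machinery.

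\emph{Mutual exclusivity.} Suppose $p$ both lies in $E(\LL)$ and admits a rainbow $\{I_n\}$. Membership in $E(\LL)$ means $p$ is an endpoint of some leaf $\ell(J)$, so $p\in\partial J=\partial J^{*}$ and $p\notin J\cup J^{*}$; being a rainbow forces $p\in I_n$ for all $n$ with $\mathrm{diam}(I_n)\to 0$. For large $n$, $I_n$ is a tiny interval around $p$, so the fixed nondegenerate intervals $J$ and $J^{*}$ (each having $p$ as an endpoint, hence with points near $p$ on the ``inside'' and points bounded away from $p$) satisfy neither $J,J^{*}\subseteq I_n$ nor $J,J^{*}\subseteq I_n^{*}$. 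This contradicts property (2) of a lamination system, which demands that $\{J,J^{*}\}$ lie on $I_n$ or on $I_n^{*}$.

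\emph{The main lemma.} I claim that if $\LL$ is very full and $p\notin E(\LL)$, then no $L\in\LL$ is minimal among intervals of $\LL$ containing $p$ (i.e.\ there is no $L$ with no $J\in\LL$ satisfying $p\in J\subsetneq L$). Suppose such an $L$ existed; since $p\notin E(\LL)$, $p$ is not an endpoint of $\ell(L)$, so $p$ is interior to $L$. I would split on whether $L$ is isolated. If $L$ is isolated, applying Lemma~\ref{lem: existence of a real gap} to $L^{*}$ (whose dual $L$ is isolated) produces a non-leaf gap $\GG$ with $L^{*}\in\GG$; very fullness makes $\GG$ a finite ideal polygon, all of whose sides other than $L^{*}$ lie inside $L$, and since $p$ is interior to $L$ and not a vertex it lies in one such side $J\subsetneq L$, contradicting minimality. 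If $L$ is not isolated, there is an $L$-side sequence $\ell_n=\ell(I_n)\to L$ with $\ell_n\neq\ell(L)$; the convergence $L\subseteq\liminf I_n\subseteq\limsup I_n\subseteq\overline{L}$ forces $p\in I_n$ for large $n$, while the upper bound $\limsup I_n\subseteq\overline{L}$ rules out the alternative $I_n^{*}\subseteq L$, so each such $I_n$ is a proper subinterval of $L$ containing $p$, again contradicting minimality.

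\emph{Density and the rainbow.} The main lemma immediately gives that $E(\LL)$ is dense: an interval $U$ disjoint from $E(\LL)$ would contain a point $q\notin E(\LL)$ for which, since no leaf has an endpoint in $U$, every interval of $\LL$ through $q$ contains $U$, producing via Lemma~\ref{lem: existence of the extremal leaf} and Lemma~\ref{lods} a minimal interval around $q$, which is impossible. For existence, take $p\notin E(\LL)$ and any $I_0\in\LL$ with $p\in I_0$ (one of the two intervals of any leaf contains $p$). By Lemma~\ref{lem: existence of the extremal leaf} there is a descending sequence $\{K_n\}$ in $C_p^{I_0}$ with $\bigcap_n K_n=\bigcap_{K\in C_p^{I_0}}K=:M$. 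If $M$ were nondegenerate, Lemma~\ref{lods} would place $\mathrm{int}(M)$ in $\LL$ as a minimal interval containing $p$, contradicting the main lemma; hence $M=\{p\}$ and $\{K_n\}$ is the desired rainbow.

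I expect the main obstacle to be the non-isolated branch of the main lemma: one must exclude the a priori scenario in which leaves merely accumulate onto $p$ from one side (tiny leaves shrinking to $p$) without ever yielding an interval of $\LL$ that actually \emph{contains} $p$. The resolution is exactly the containment chain $L\subseteq\liminf I_n\subseteq\limsup I_n\subseteq\overline{L}$ built into the definition of an $L$-side sequence, which simultaneously swallows the interior point $p$ and forbids the ``large'' alternative $I_n^{*}\subseteq L$; pinning down these circular-order containments, together with tracking which complementary arc of each near-$p$ leaf contains $p$, is where the genuine care lies. Very fullness enters only through Lemma~\ref{lem: existence of a real gap}, guaranteeing that the produced gaps are finite polygons, so that $p$, not being a vertex, is forced into a genuine side.
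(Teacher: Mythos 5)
The paper states Theorem \ref{er} only as a citation from \cite{BaikFuchsian} and \cite{Baik2019LaminarGA} and supplies no proof of its own, so there is no internal argument to compare against; your proof is correct and follows what is essentially the route of the cited sources: mutual exclusivity from the unlinkedness axiom, then the key lemma that a non-endpoint admits no minimal interval of $\LL$ around it (via Lemma \ref{lem: existence of a real gap} and very-fullness in the isolated case, and via an $L$-side sequence otherwise), then Lemma \ref{lem: existence of the extremal leaf} and Lemma \ref{lods} to collapse $\bigcap_{K\in C_p^{I_0}}K$ to $\{p\}$. One step you should make explicit in the final paragraph: when $M=\bigcap_{K\in C_p^{I_0}}K$ is not a single point, Lemma \ref{lods} does place $\operatorname{Int}(M)$ in $\LL$, but a priori $p$ could lie on the boundary of $\operatorname{Int}(M)$ rather than inside it (e.g.\ if the left endpoints of the $K_n$ increase strictly to $p$); this possibility is excluded precisely because $p\notin E(\LL)$ would otherwise fail, and only after that observation is $\operatorname{Int}(M)$ a minimal interval of $\LL$ containing $p$, so that your contradiction with the main lemma applies.
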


\begin{cor}[\cite{BaikFuchsian},\cite{Baik2019LaminarGA}]\label{ed}
Let $\LL$ be a very full lamination system of $S^1$. Then $E(\LL)$ is dense in $S^1$.
\end{cor}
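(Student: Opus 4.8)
The plan is to derive the corollary as an immediate consequence of Theorem~\ref{er}. That theorem furnishes, for every point $p\in S^1$, a clean dichotomy: either $p\in E(\LL)$ or $p$ carries a rainbow. I would show that in either case $p$ lies in the closure of $E(\LL)$, so that $\overline{E(\LL)}=S^1$.

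First I would fix an arbitrary $p\in S^1$ and apply Theorem~\ref{er}. If $p\in E(\LL)$, then trivially $p\in\overline{E(\LL)}$ and there is nothing further to check. The content is in the second alternative, where $p$ admits a rainbow: a sequence $\{I_n\}_{n=1}^\infty$ in $\LL$ with $I_{n+1}\subseteq I_n$ for all $n$ and $\bigcap_{n}I_n=\{p\}$.

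Writing $I_n=(a_n,b_n)_{S^1}$, the endpoints $a_n$ and $b_n$ belong to $v(\ell(I_n))\subseteq E(\LL)$. Since $p\in\bigcap_n I_n$, the point $p$ is interior to every $I_n$, and the nesting together with $\bigcap_n I_n=\{p\}$ forces the intervals to collapse onto $p$ from both sides; concretely, once the $I_n$ are contained in a small arc around $p$ one may lift to $\RR$ and observe that $a_n$ increases to $p$ while $b_n$ decreases to $p$. Hence $a_n\to p$ and $b_n\to p$ in $d_{\CC}$, exhibiting $p$ as a limit of points of $E(\LL)$, so $p\in\overline{E(\LL)}$.

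As $p$ was arbitrary, this gives $\overline{E(\LL)}=S^1$, that is, $E(\LL)$ is dense. The only delicate point is the claim that a rainbow genuinely pinches its endpoints down to $p$: one must exclude degenerate configurations such as all $I_n$ sharing a common endpoint distinct from $p$, which would make $\bigcap_n I_n$ a nondegenerate arc rather than a single point. The equality $\bigcap_n I_n=\{p\}$, as opposed to mere nesting, is exactly what rules this out, and verifying it is a short computation with the circular order — the main, though modest, obstacle in the argument.
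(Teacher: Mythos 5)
Your proposal is correct and is precisely the intended derivation: the paper presents this as a corollary of Theorem \ref{er}, and the argument is exactly the dichotomy you use — points of $E(\LL)$ are trivially in the closure, and a point with a rainbow $\{I_n\}$ is the limit of the endpoints $\partial I_n = v(\ell(I_n)) \subseteq E(\LL)$, since nestedness plus $\bigcap_n I_n = \{p\}$ forces both endpoint sequences to converge to $p$. Your attention to the pinching of the endpoints (ruling out a shared endpoint away from $p$, which would make the intersection a nondegenerate arc) is the right thing to check and is handled correctly.
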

Hence, very full lamination systems are dense. 

Totally disconnectedness is an important condition since it guarantees the existence of a gap. The transversality condition makes lamination systems be totally disconnected. 

\begin{lem}[\cite{alonso2019laminar},\cite{Baik2019LaminarGA}]\label{tot dis}
If two dense lamination systems are strongly transverse , then each of the lamination systems is totally disconnected. 
\end{lem}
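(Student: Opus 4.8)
The plan is to prove that $\LL_1$ is totally disconnected; the argument for $\LL_2$ is identical by the symmetry of the hypotheses. Fix a distinct pair $\{I,J\}$ in $\LL_1$. Since $I\cap J=\emptyset$ while $\{I,J\}$ is not a leaf, after relabelling $I\mapsto I^*$ or $J\mapsto J^*$ if necessary I may assume $I=(a,b)_{S^1}$ and $J=(c,d)_{S^1}$ with $(a,b,c,d)$ positively oriented, so that the two complementary arcs $(b,c)_{S^1}$ and $(d,a)_{S^1}$ — the ``channels'' between the two leaves — satisfy $I^*\cap J^*=(b,c)_{S^1}\cup(d,a)_{S^1}$, and at least one channel is nondegenerate; say $(b,c)_{S^1}\neq\emptyset$. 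I first record the reduction: if $\GG$ is a non-leaf gap all of whose vertices lie in the closed channel $[b,c]_{S^1}$, then one element $K\in\GG$ is the complementary arc running the long way through $d,a$, so $\overline{I}\cup\overline{J}\subseteq K$; taking $K=L$ in the definition of separation shows $\GG$ separates $\{I,J\}$. Hence it suffices to produce a non-leaf gap of $\LL_1$ whose vertex set lies in a single channel.

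The key use of the hypotheses enters here. Since $\LL_2$ is dense, choose $x_0\in E(\LL_2)\cap(b,c)_{S^1}$; strong transversality gives $x_0\notin E(\LL_1)$. I claim some leaf of $\LL_1$ has both endpoints in $(b,c)_{S^1}$. If not, pick $u_n\in E(\LL_1)$ with $u_n\to x_0$ from inside the channel; the opposite endpoint $w_n$ of the leaf through $u_n$ then lies outside $(b,c)_{S^1}$, hence stays a definite distance from $x_0$, so a subsequence of the leaves $\{u_n,w_n\}$ converges in the Hausdorff metric to a nondegenerate leaf with endpoint $x_0$, and by Proposition \ref{prop:closedness of sequence of leaves} this forces $x_0\in E(\LL_1)$, contradicting strong transversality. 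So there is a leaf $\ell(M_0)$ with $M_0=(m_1,m_2)_{S^1}\subseteq(b,c)_{S^1}$.

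Now I locate the gap. Using density of $\LL_2$ again, choose $x\in E(\LL_2)\cap M_0$, so $x\notin E(\LL_1)$ by transversality and $M_0\in C_x^{(b,c)_{S^1}}$; in particular $C_x^{(b,c)_{S^1}}\neq\emptyset$. By Lemma \ref{lem: existence of the extremal leaf} together with the closedness Lemma \ref{lods}, the interval $K_{\min}:=\operatorname{Int}\big(\bigcap_{K\in C_x^{(b,c)_{S^1}}}K\big)$ belongs to $\LL_1$, and since $x\notin E(\LL_1)$ we have $x\in K_{\min}\subseteq(b,c)_{S^1}$; thus $K_{\min}$ is the smallest leaf-interval of $\LL_1$ containing $x$ and lying in the channel. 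I claim $K_{\min}$ is isolated: a $K_{\min}$-side sequence would consist of leaves $L_m\subsetneq K_{\min}$ with $L_m\to K_{\min}$, so eventually $x\in L_m$, placing $L_m$ in $C_x^{(b,c)_{S^1}}$ strictly below $K_{\min}$ and contradicting minimality. As $K_{\min}=(K_{\min}^*)^*$ is isolated, Lemma \ref{lem: existence of a real gap} applied to $K_{\min}^*$ produces a non-leaf gap $\GG$ with $K_{\min}^*\in\GG$, and every vertex of $\GG$ lies in $\overline{K_{\min}}\subseteq[b,c]_{S^1}$. By the reduction, $\GG$ separates $\{I,J\}$.

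The normalization, the degenerate case where the two leaves share an endpoint (handled by working in whichever channel is nondegenerate), and the verification that the limiting leaf in the second paragraph is nondegenerate are routine. The substantive step is the third paragraph, and I expect the main obstacle to be exactly the isolation of a channel leaf: upgrading ``there is a leaf inside the channel'' to ``there is a non-leaf complementary polygon inside the channel.'' This is the point where both hypotheses are indispensable — density of $\LL_2$ supplies an auxiliary point $x$ interior to a channel leaf, and strong transversality guarantees $x\notin E(\LL_1)$, which is precisely what prevents $\LL_1$-leaves from accumulating onto $\ell(K_{\min})$ from inside and so forces a genuine non-leaf gap.
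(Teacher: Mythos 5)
Your proof is correct. Note first that the paper does not actually prove Lemma \ref{tot dis} itself --- it is imported from \cite{alonso2019laminar} and \cite{Baik2019LaminarGA} --- so the natural in-paper comparison is the proof of Proposition \ref{prop: COL_2 is totally disconnected}, which is the same conclusion under the pants-like hypothesis. That proof and yours share a skeleton (locate a suitable family $C_p^{\bullet}$, extract an extremal element, prove one-sided isolation, invoke Lemma \ref{lem: existence of a real gap}), but they diverge at two substantive points. First, to find the auxiliary point the paper combines density of $E(\LL_2)$ with Theorem \ref{er} (a rainbow at $p$), which requires the lamination systems to be \emph{very full}, plus a separate cusp-point branch when $p\in E(\LL_1)\cap E(\LL_2)$; you instead manufacture a leaf $M_0$ lying entirely inside one channel using only density of both systems, strong transversality, and closedness of $\LL_1$ under Hausdorff limits. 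This is the right move here, since Lemma \ref{tot dis} assumes only density, and strong transversality kills the shared-endpoint case outright. Second, the paper takes the \emph{maximal} element $K$ of $C_p^{I^*}\cap C_p^{J^*}$, shows $K^*$ is isolated, obtains a gap containing $K$, and then must argue separately that the gap has sides $U_I\supseteq I$ and $U_J\supseteq J$; you take the \emph{minimal} element $K_{\min}$ of $C_x^{(b,c)_{S^1}}$, show $K_{\min}$ itself is isolated, and obtain a gap containing $K_{\min}^*\supseteq\overline{I}\cup\overline{J}$, so separation is witnessed by a single gap element with no further case analysis. Two cosmetic points: the relabelling $I\mapsto I^*$ in your first paragraph is never needed (disjointness of $I$ and $J$ already forces the stated cyclic order of the four endpoints), and taken literally it would change which pair is being separated, so it should be omitted; and the appeal to Proposition \ref{prop:closedness of sequence of leaves} in your second paragraph is really an appeal to the translation between Hausdorff convergence in $\mathcal{M}$ and interval convergence carried out in the paper's equivalence section, which is worth citing explicitly.
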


\section{Laminar groups with two lamination systems and fixed point sets}\label{sec:laminar groups with two lami}

Analyzing fixed point sets of elements of laminar groups is useful to study laminar groups.  Let $g$ be an element of $\Homeop(S^1).$ If $g$ is not trivial, the fixed point set $\Fix_g$ is a proper closed subset of $S^1.$ Hence, the complement of the fixed set $\Fix_g$ is an at most countable disjoint union of open intervals. Therefore, we can see that the action of $g$ is just translations on the intervals of the complement of $\Fix_g.$ Fortunately, unlike laminar groups with one invariant lamination system, in the case where laminar groups with more than two lamination systems, the fixed point set of each element is a finite set \cite{Baik2019LaminarGA}.  
\begin{defn}[\cite{BaikFuchsian}]
A nontrivial element $g$ in $\Homeop(S^1)$ is said to be :
\begin{enumerate}
\item \emph{elliptic} if $|\Fix_g|=0$,
\item \emph{parabolic} if $|\Fix_g|=1$, and 
\item \emph{hyperbolic} if $|\Fix_g|=2$
\end{enumerate}
\end{defn}

A point $p$ of $S^1$ is called a \emph{cusp point} of $G$ if there is a parabolic element $g$ in $G$ fixing $p.$
\begin{rmk}[\cite{BaikFuchsian}]
Let $G$ be a laminar group and $\LL$ a $G$-invariant lamination system. Suppose that $\LL$ is very full. Given a cusp point, there are infinitely many leaves of $\LL$ having the cusp point as an end point.
\end{rmk}

Now we introduce some classes of laminar groups to study laminar groups having more than two lamination systems. 

\begin{defn}[\cite{BaikFuchsian}]\label{Defn:COL}
Let $G$ be a subgroup of $\Homeop(S^1)$ and let $\mathcal{C}=\{ \LL_\alpha \}_{\alpha \in J}$ be a collection of $G$-invariant  very full lamination systems, where $J$ is an index set. Then $\mathcal{C}$ is called \emph{pants-like} if the lamination systems in $\mathcal{C}$ are pairwise transverse and for any $\alpha \neq \beta \in J$, every element of $E(\LL_\alpha)\cap E(\LL_\beta)$ is a cusp point of $G$. A subgroup $G$ of $\Homeop(S^1)$ is said to be \emph{pants-like $\COL_n$} for some $n\in \NN$ if it admits a pants-like collection with $n$ elements. 
\end{defn}

 A subgroup $G$ of $\Homeop(S^1)$ is said to be \emph{Möbius-like} if for each element $g \in G$ , there is an element $h$ in $\Homeop(S^1)$ such that $hgh^{-1}\in \PR$. The group $G$ is said to be \emph{Möbius} if there is an element $h$ in $\Homeop(S^1)$ such that $\{hgh^{-1}: g\in G\}$ is a subgroup of $\PR$.
We recall the following theorem.
\begin{thm}[\cite{BaikFuchsian}]\label{thm: Mobius-like}
Let $G$ be a pants-like $\COL_3$ group. Then for any nontrivial $g\in G$, there is an element $h$ in $\Homeop(S^1)$ such that $hgh^{-1}\in \PR.$ Moreover, 
\begin{enumerate}
\item $g$ is elliptic if and only if $hgh^{-1}$ is an elliptic element of finite order in $\PR$,
\item $g$ is parabolic if and only if $hgh^{-1}$ is parabolic in $\PR$, and 
\item $g$ is hyperbolic if and only if $hgh^{-1}$ is hyperbolic in $\PR$.
\end{enumerate}
\end{thm}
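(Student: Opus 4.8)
The plan is to work one element at a time: fix a nontrivial $g\in G$ and use the three pairwise transverse very full pants-like lamination systems $\LL_1,\LL_2,\LL_3$ preserved by $G$ to pin down the topological dynamics of $g$ on $S^1$, and then invoke the classical rigidity of orientation preserving circle homeomorphisms within each model realized in $\PR$. Concretely, once I know that $g$ is either (a) a fixed-point-free map of \emph{finite order}, (b) a homeomorphism with a single fixed point acting as a translation on the complementary arc, or (c) a north--south map with exactly two fixed points, I can conjugate it by a homeomorphism of $S^1$ onto, respectively, a rigid rational rotation (finite order elliptic), a parabolic, or a hyperbolic element of $\PR$: any finite order orientation preserving homeomorphism of $S^1$ is conjugate to a rigid rotation, and the one- and two-fixed-point models each form a single topological conjugacy class. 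Thus the whole statement reduces to reading off the dynamical type of $g$ from $\Fix_g$ and verifying these rigidity inputs; the three displayed equivalences then follow because $|\Fix_g|$ is a conjugacy invariant, and the elements of $\PR$ with $0$, $1$, $2$ boundary fixed points are exactly the elliptic, parabolic, and hyperbolic ones, with the elliptic class refined to finite order by the argument below.

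The first step is to control $\Fix_g$. Finiteness is already available for groups carrying more than two invariant lamination systems, so it remains to show $|\Fix_g|\le 2$. Here I would use very fullness through the rainbow dichotomy of Theorem \ref{er}: a fixed point $p$ is either an endpoint of $\LL_1$, so that $g$ permutes the leaves of $\LL_1$ ending at $p$, or $p$ carries a rainbow $I_1\supseteq I_2\supseteq\cdots$ shrinking to $p$. In either description $g$, being orientation preserving, must preserve the local fan of leaves or the rainbow at $p$, and the configuration lemma for gaps (Lemma \ref{lem: the configuration of two gaps}) forces the arcs cut off around distinct fixed points to nest or be disjoint in a way that an orientation preserving homeomorphism can sustain at only two points; three fixed points would produce three $g$-invariant complementary arcs on each of which $g$ acts freely, and tracking the images of a leaf spanning one such arc contradicts unlinkedness together with the closedness of limits of leaves (Proposition \ref{prop:closedness of sequence of leaves}).

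With $|\Fix_g|\le 2$ I dispatch the two easy cases. If $|\Fix_g|=2$, say $\Fix_g=\{p,q\}$, then $g$ acts freely on each complementary arc, so I must only exclude the \emph{doubly semistable} configuration (both fixed points attracting on one side and repelling on the other). If that held, then iterating a leaf $\ell$ with both endpoints in the arc on which both endpoints flow toward $q$ would force $g^n\ell$ to degenerate to the point $q$, contradicting Proposition \ref{prop:closedness of sequence of leaves}; hence one fixed point is attracting and the other repelling, $g$ is north--south, and $g$ is conjugate into $\PR$ as a hyperbolic element. If $|\Fix_g|=1$, then $g$ acts freely on the single complementary open arc $S^1\setminus\Fix_g\cong\RR$, i.e.\ as a translation, which is precisely the parabolic model, so $g$ is conjugate into $\PR$ as a parabolic element. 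In both cases the label and the $\PR$-type match by the count $|\Fix_g|$.

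The crux is the elliptic case $\Fix_g=\emptyset$, where I must show $g$ has \emph{finite} order rather than merely lying in one conjugacy class. Let $\rho(g)\in\RR/\ZZ$ be the rotation number. If $\rho(g)$ is irrational then $\rho(g^k)=k\rho(g)\neq 0$ for all $k\neq 0$, so no power of $g$ has a fixed point and no gap of $\LL_1$ is periodic; an ideal polygon gap $\GG$ would then have an infinite orbit $\{g^n\GG\}$ of pairwise distinct, pairwise unlinked polygons with a fixed number of vertices (Lemma \ref{lem: the configuration of two gaps}), which I would contradict using the density of endpoints (Corollary \ref{ed}) together with the semiconjugacy of $g$ to an irrational rotation, exactly as one rules out invariant laminations under irrational rotations. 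If $\rho(g)=p/q$ in lowest terms with $g^q\neq\id$, then every periodic orbit of $g$ has size $q$ and $\Fix_{g^q}$ is a nonempty union of such orbits, so $q\le |\Fix_{g^q}|\le 2$ because $g^q$ is parabolic or hyperbolic; since $q=1$ would give a fixed point of $g$, this leaves only the \emph{glide} possibility $q=2$, in which $g^2$ is hyperbolic and $g$ swaps its two fixed points. This residual case must be excluded using the lamination data: $g$ is an orientation preserving involution of the pair of north--south fixed points of $g^2$, and analyzing the $g$-invariant leaves and rainbows at these two points, together with the transversality and pants-like hypotheses on $\LL_1,\LL_2,\LL_3$, yields a contradiction; thus $g^q=\id$ and $g$ is conjugate to a finite order elliptic element of $\PR$. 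I expect this finite-order argument --- in particular the clean exclusion of the glide case --- to be the main obstacle, precisely because it is where having three transverse pants-like laminations, rather than a single invariant lamination, is indispensable.
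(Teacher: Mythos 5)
Note first that the paper does not prove this statement at all: Theorem \ref{thm: Mobius-like} is recalled verbatim from \cite{BaikFuchsian}, so there is no in-paper proof to compare against, and your proposal has to stand on its own. Your overall architecture (bound $|\Fix_g|$, identify the topological model in each case, invoke the standard conjugacy rigidity of the zero-, one- and two-fixed-point models, and treat finiteness of order via the rotation number) is the natural one and broadly matches how such results are proved. But two of your steps rest on a mechanism that does not work, and a third is explicitly left unproved. The recurring faulty mechanism is the claim that a sequence of leaves $g^n\ell$ ``degenerating to the point $q$'' contradicts Proposition \ref{prop:closedness of sequence of leaves}. It does not: that proposition only constrains sequences of leaves converging to a \emph{nondegenerate} interval, and sequences of leaves shrinking to a point are ubiquitous in very full laminations (every rainbow is one). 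You use this both to bound $|\Fix_g|$ by $2$ and to exclude the semi-stable two-fixed-point configuration, so both of those steps are currently unsupported. The correct lever is the opposite phenomenon: in the semi-stable configuration (each fixed point attracting from one side and repelling from the other), very fullness together with the rainbow dichotomy (Theorem \ref{er}) produces a leaf with one endpoint in each complementary arc of $\Fix_g=\{p,q\}$, and iterating \emph{that} leaf converges to the nondegenerate leaf $\{p,q\}$, which by closedness must then lie in every $g$-invariant very full lamination system --- contradicting the transversality $\LL_i\cap\LL_j=\emptyset$. This is where the pants-like hypothesis actually enters, and it is absent from your write-up.

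The third gap is the glide case $q=2$ in the elliptic analysis, which you correctly isolate as the crux but then defer with ``analyzing the $g$-invariant leaves and rainbows \dots yields a contradiction.'' That is the statement of a problem, not a proof. For the record, it closes as follows: if $g^2$ is north--south, then since $g$ commutes with $g^2$, conjugation by $g$ must preserve the attracting fixed point of $g^2$, so $g$ fixes it, contradicting $\Fix_g=\emptyset$; if $g^2$ is semi-stable, the transversality argument of the previous paragraph applied to $g^2$ forces the leaf joining its two fixed points into all three (indeed, already into two) of the $\LL_i$, again a contradiction. Finally, your irrational rotation number step needs more care than ``exactly as one rules out invariant laminations under irrational rotations'': Denjoy-type examples with irrational rotation number \emph{do} preserve laminations (the orbit of the endpoint pair of a wandering gap), so you must use very fullness explicitly --- e.g.\ the rainbow dichotomy guarantees a leaf whose endpoints are separated by the semiconjugacy onto the rigid rotation, whose orbit then self-links. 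As written, the proposal has the right skeleton but the decisive lamination-theoretic arguments are either wrong or missing.
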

Therefore, every pants-like $\COL_3$ group is Möbius-like. So one of our goals is to show that every pants-like $\COL_3$ group is Möbius.
Indeed every pants-like $\COL_3$ group is discrete in $\Homeop(S^1)$ with the compact open topology.

As we can see in Lemma \ref{tot dis}, the transversality condition implies totally disconnectedness. Likewise, we can prove also a similar result. Then this implies the discreteness of Möbius-like laminar groups. 
 
\begin{prop}\label{prop: COL_2 is totally disconnected}
Let $G$ be a pants-like $\COL_2$ group and $\mathcal{C}=\{\LL_1, \LL_2\}$ be a pants-like collection for $G$. Then both $\LL_1$ and $\LL_2$ are totally disconnected.
\end{prop}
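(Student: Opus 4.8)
The plan is to mimic the proof of Lemma \ref{tot dis}, which settles the \emph{strongly} transverse case, and to pinpoint exactly where the weaker hypothesis at hand forces extra work. Since $\LL_1$ and $\LL_2$ are very full, Corollary \ref{ed} shows that both are dense, so the only hypothesis of Lemma \ref{tot dis} that we lack is strong transversality: we know $\LL_1\cap\LL_2=\emptyset$ but not $E(\LL_1)\cap E(\LL_2)=\emptyset$. By the pants-like condition, however, every point of $E(\LL_1)\cap E(\LL_2)$ is a cusp point of $G$, and this is precisely the set on which care is required. Because the roles of $\LL_1$ and $\LL_2$ in the pants-like collection are symmetric, it suffices to show that $\LL_1$ is totally disconnected, i.e. that every distinct pair $\{I,J\}$ of $\LL_1$ is separated by some non-leaf gap.

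Fix a distinct pair $\{I,J\}$ and write $I=(a,b)_{S^1}$, $J=(c,d)_{S^1}$ with $a,b,c,d$ in positive cyclic order, so that $A_1=(b,c)_{S^1}$ and $A_2=(d,a)_{S^1}$ are the two arcs into which a separating object must reach. I would argue by contradiction: assume $\{I,J\}$ is \emph{not} separated, so there is no non-leaf gap of $\LL_1$ having $I$ and $J$ in its side-arcs. The idea is to manufacture a contradiction from a transverse leaf of $\LL_2$. By density of $E(\LL_2)$ (Corollary \ref{ed}) together with very-fullness, one produces an $\LL_2$-leaf $m=\{p,q\}$ with $p\in A_1$ and $q\in A_2$; such a leaf separates $I$ from $J$. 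If the separating leaves of $\LL_1$ accumulate onto $m$ from both sides, then $m$ is realized as a two-sided limit of $\LL_1$-leaves, and Proposition \ref{prop:closedness of sequence of leaves} forces $m\in\LL_1$, contradicting $\LL_1\cap\LL_2=\emptyset$. Thus some side of $m$ must be free of accumulation, which is exactly an isolation statement; invoking Lemma \ref{lem: existence of a real gap} then upgrades the relevant isolated separating $\LL_1$-leaf to a non-leaf gap with $I$ and $J$ on side-arcs, giving the desired separation after all.

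The main obstacle is controlling the endpoints $p,q$ of the crossing leaf in the presence of common endpoints, i.e. cusp points, inside $A_1$ and $A_2$. In the strongly transverse setting one simply chooses $p,q$ in the interiors of the appropriate $\LL_1$-side-arcs and is guaranteed $p,q\notin E(\LL_1)$, so that $p$ and $q$ are genuine points of two-sided accumulation of $\LL_1$-leaves; here a careless choice could place $p$ or $q$ at a cusp point shared with $\LL_1$, where the ``two-sided squeezing'' of $m$ degenerates. I would handle this with the rainbow dichotomy of Theorem \ref{er}: a point $p\in A_1$ either lies in $E(\LL_1)$ (hence, if it is also in $E(\LL_2)$, is a cusp point) or carries an $\LL_1$-rainbow, and these are mutually exclusive. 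Choosing the crossing leaf $m$ so that $p\in A_1$ and $q\in A_2$ lie in $E(\LL_2)\setminus E(\LL_1)$—possible since $E(\LL_2)$ is dense in each arc while the common endpoints do not exhaust it—guarantees that $p$ and $q$ admit $\LL_1$-rainbows, so that $\LL_1$-endpoints accumulate to $p$ and $q$ from both sides and the contradiction above runs unobstructed. In the genuinely degenerate configurations, where a cusp point cannot be avoided, I would instead exploit the remark that infinitely many leaves of $\LL_1$ emanate from each cusp point, extracting an extremal separating $\LL_1$-leaf via Lemma \ref{lem: existence of the extremal leaf} and again applying Lemma \ref{lem: existence of a real gap}.

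Finally, the identical argument with $\LL_1$ and $\LL_2$ interchanged shows that $\LL_2$ is totally disconnected as well. I expect the hard part to be the obstacle-handling step of the third paragraph: turning the heuristic ``no separating gap implies two-sided accumulation onto a crossing $\LL_2$-leaf'' into a rigorous statement, which is exactly the point where the precise interplay between transversality, density (Corollary \ref{ed}), the rainbow dichotomy (Theorem \ref{er}), and the cusp condition must all be used together, so that the cusp points can never conspire to block every separating configuration.
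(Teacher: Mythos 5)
Your proposal correctly locates the difficulty (the possible common endpoints, which are cusp points) and names the right tools, but the argument as written has genuine gaps, and its central mechanism is not one that can be made to work. First, the crossing $\LL_2$-leaf $m=\{p,q\}$ with $p\in A_1$ and $q\in A_2$ need not exist: density of $E(\LL_2)$ and very-fullness give points of $E(\LL_2)$ in each arc, but say nothing about where the \emph{other} endpoint of a leaf through such a point lies (for the lamination system of the Farey tessellation and two sufficiently short disjoint arcs, no leaf joins one arc to the other). Second, and more seriously, the contradiction ``two-sided accumulation onto $m$ forces $m\in\LL_1$'' is never actually produced: an $\LL_1$-rainbow at $p$ yields leaves $\ell(I_n)$ with \emph{both} endpoints converging to $p$, i.e.\ leaves shrinking to the point $p$, not leaves converging to the chord $\{p,q\}$; having points of $E(\LL_1)$ accumulate to $p$ and to $q$ separately does not pair them into leaves converging to $m$, so Proposition \ref{prop:closedness of sequence of leaves} never applies and transversality is never contradicted. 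Relatedly, your dichotomy presupposes that separating $\LL_1$-leaves exist at all --- which is precisely the nontrivial content of the proposition and is nowhere established --- and ``one side of $m$ free of accumulation'' is strictly weaker than the hypothesis of Lemma \ref{lem: existence of a real gap}, which requires isolation against \emph{all} leaves of $\LL_1$, not only the separating ones.

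For comparison, the paper's proof is direct rather than by contradiction. Writing $I^*\cap J^*=A_1\cup A_2$, it first proves a self-contained claim: if some $p\in I^*\cap J^*$ has $C_p^{I^*}\cap C_p^{J^*}\neq\emptyset$, i.e.\ there is an $\LL_1$-interval containing $p$ and contained in $I^*\cap J^*$, then $\{I,J\}$ is separated. This is done by taking the \emph{maximal} such interval $K$ (via Lemma \ref{lem: existence of the extremal leaf} and closedness), showing that any $K^*$-side sequence would produce a strictly larger element of $C_p^{I^*}\cap C_p^{J^*}$ --- so $K^*$ is isolated --- and then invoking Lemma \ref{lem: existence of a real gap}. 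The hypothesis of the claim is then verified by the rainbow dichotomy used differently from how you use it: pick $p\in I^*\cap J^*\cap E(\LL_2)$; if $p\notin E(\LL_1)$, the $\LL_1$-rainbow at $p$ eventually enters the open set $I^*\cap J^*$ and its terms are the required separating intervals; if $p\in E(\LL_1)\cap E(\LL_2)$, then $p$ is a cusp point and one iterates a parabolic $g$ fixing $p$ to push a leaf of $\LL_1$ with endpoint $p$ entirely into a small neighbourhood of $p$ inside $I^*\cap J^*$. Your substitute for this last step (``infinitely many leaves emanate from a cusp point'') is insufficient on its own: those leaves all share the endpoint $p$, but nothing forces their other endpoints into $A_1$ without the parabolic dynamics. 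I would suggest restructuring your argument around producing one separating $\LL_1$-interval via this dichotomy and then running the maximality/isolation step.
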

\begin{proof}
First we show that $\LL_1$ is totally disconnected. Assume that there is a  $\{I,J\}$ be a distinct pair of $\LL_1.$ We claim that 
if there is a point $p$ in $I^* \cap J^*$ such that $C_p^{I^*} \cap C_p^{J^*}$ is nonempty in $\LL_1$, then $\{I,J\}$ is separated.  Let $K$ be the union of elements of $C_p^{I^*} \cap C_p^{J^*}.$ Then $K$ is a nondegenerate open interval such that $C_p^K=C_p^{I^*} \cap C_p^{J^*}.$ Moreover, by Lemma \ref{lem: existence of the extremal leaf} and the closedness of $\LL_1$, we have that $K\in \LL_1.$ Now we show that $K^*$ is isolated. Assume that there is a $K^*$-side sequence $\{\ell_n\}_{n=1}^\infty$ of $\LL_1.$ Then we can take a sequence $\{I_n\}_{n=1}^\infty$ in $\LL_1$ so that $\ell_n=\ell(I_n)$ for all $n\in \NN$  and 
$$K^* \subseteq \liminf I_n \subseteq \limsup I_n \subseteq \overline{K^*}.$$
Choose a point $p_I$ in $I$ and a point  $p_J$ in $J$. Since $\{p_I, p_J\} \subseteq K^*$ and $K^* \subseteq \liminf I_n$, there is a number $N$ in $\NN$ such that $\{p_I,p_J\} \subseteq \displaystyle \bigcap_{k=N}^\infty I_k$, that is for all $n$ with $N\leq n$, $\{p_I,p_J\} \subseteq I_n.$ Similarly choose a point $p_K$ in $K$. Since $\limsup I_n \subseteq \overline{K^*}$ and $p_K \notin K^c= \overline{K^*},$ there is a number $M$ in $\NN$ such that $\displaystyle p_K\notin \bigcup_{k=M}^\infty I_k$, that is for all $n\in \NN$ with $M\leq n$, $p_K\notin I_n.$ Fix $n_0$ with $\max\{N,M\} < n_0.$ Now we consider the unlinkedenss of $I$ and $I_{n_0}.$ Since $p_I\in I\cap I_{n_0}$ and $I\cap I_{n_0}$ is nonempty, there are three possible cases: $I\subseteq I_{n_0}$; $I_{n_0} \subseteq I$; $I_{n_0}^* \subseteq I$. If $I_{n_0} \subseteq I$, then $p_J\notin I_{n_0}$ as $p_J\notin I.$ This is a contradiction. If $I_{n_0}^* \subseteq I$, then $p_K\in \overline{I_{n_0}^*} \subseteq \overline{I}$ but $p_K\in K\subseteq (I^* \cap J^*) \subseteq I^*=(\overline{I})^c.$ This is also a contradiction. Hence $I\subseteq I_{n_0}$ is the only possible case. Likewise $J\subseteq I_{n_0}.$ Moreover since $\ell(I_{n_0})$ lies in $K^*$, $I_{n_0}\subseteq K^*$ or $I_{n_0}^* \subseteq K^*.$ However, if $I_{n_0}^* \subseteq K^*$, then $p_K\in K\subseteq I_{n_0}$ so this is in contradiction to that $p_K\notin I_{n_0}.$ So $I_{n_0}\subseteq K^*$ is the only possible case and then $p\in I_{n_0}^*.$ Therefore $I_{n_0}^*\subseteq C_p^{I^*} \cap C_p^{ J^*}$ and $K\subseteq I_{n_0}^*$. However since $\{\ell_n\}_{n=1}^\infty$ is a $K^*$-side sequence, $\ell(I_{n_0})\neq \ell(K)$ and so $K\subsetneq I_{n_0}^*.$ This contradicts the maximality of $K$ in  $C_p^K.$ Therefore $K^*$ is isolated. By Lemma \ref{lem: existence of a real gap}, there is a non-leaf gap $\GG$ of $\LL_1$ such that $K\in \GG.$ Then since $\GG$ is a gap, there are two elements $U_I$ and $U_J$ in $\GG$ such that $\ell(I)$ lies on $U_I$ and $\ell(J)$ lies on $U_J.$ If $I^*\subseteq U_I$, then $K\subseteq I^*\subseteq U_I$ so $K=I^*=U_I.$ However $p_J\in I^*$ and $p_J\notin K$. Hence this is a contradiction. Therefore $I\subseteq U_I.$ Similarly we can get that $J\subseteq U_J.$ Thus the distinct pair $\{I,J\}$ is separated so the claim is proved. 

 Now observe that the intersection $I^*\cap J^*$ is a nonempty open set as $\{I, J\}$ is not a leaf of $\LL_1.$ Then since $E(\LL_2)$ is dense by Lemma \ref{ed}, there is a point $p$ in $I^*\cap J^*\cap E(\LL_2).$ If $p\notin E(\LL_1)$, then by Lemma \ref{er}, there is a rainbow $\{J_n\}_{n=1}^\infty$ at $p$ in $\LL_1$. Therefore we can show that $C_p^{I^*}\cap C_p^{ J^*}$ is nonempty in this case, proving that $J_n\in C_p^{I^*}\cap C_p^{J^*}$ for some $n\in\NN$. Hence by the claim, $\{I,J\}$ is separated. If not, $p$ is in $E(\LL_1)\cap E(\LL_2)$ and is a cusp point of $G$ since $\{\LL_1, \LL_2\}$ is a pants-like collection. Then there is a parabolic element $g$ of $G$ fixing $p$ and there is a leaf $\ell$ of $\LL_1$ having $p$ as an end point. Now we take a nondegenerate open interval $U$ which is a neighborhood of $p$ so that $U\cap (I\cup J)=\emptyset.$ Since $g$ is parabolic, $g^L(\ell)$ lies on $U$ for some $L\in \NN.$ Hence there is an element $V$ in $g^L(\ell)$ such that $V\subseteq U$ and we pick a point $q$ in $V.$ Then $C_q^{I^*}\cap C_q^{J^*}$ is nonempty as $V\in C_q^{I^*}\cap C_q^{J^*}.$ By the claim, $\{I,J\}$ is separated. Thus $\LL_1$ is totally disconnected. Likewise $\LL_2$ is also totally disconnected. 
\end{proof} 

\begin{cor}\label{cor: finding a real gap}
Let $G$ be a pants-like $\COL_2$ group and $\mathcal{C}=\{\LL_1, \LL_2\}$ be a pants-like collection for $G$. Then each of  $\LL_1$ and $\LL_2$ has a non-leaf ideal polygon.
\end{cor}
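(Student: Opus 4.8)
The plan is to read off the corollary from the total disconnectedness just proved in Proposition \ref{prop: COL_2 is totally disconnected}, using crucially that the lamination systems in a pants-like collection are \emph{very full}. Recall that a non-leaf ideal polygon is an ideal polygon which is not a leaf, equivalently a non-leaf gap with finite vertex set. So it suffices to exhibit a single non-leaf gap in each $\LL_i$: very-fullness then forces that gap to be an ideal polygon, hence a non-leaf ideal polygon. I will do this for $\LL_1$, the argument for $\LL_2$ being verbatim the same.

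First I would produce a distinct pair in $\LL_1$. Since $\LL_1$ is very full, Corollary \ref{ed} gives that $E(\LL_1)$ is dense in $S^1$; in particular $E(\LL_1)$ is infinite, so $\LL_1$ contains at least two distinct leaves $\ell(I)$ and $\ell(J)$. Because these leaves are distinct, none of the four pairs $\{I,J\}$, $\{I,J^*\}$, $\{I^*,J\}$, $\{I^*,J^*\}$ is a leaf (that would force $\ell(J)=\ell(I)$). On the other hand, condition (2) in the definition of a lamination system says $\{I,I^*\}$ lies on $J$ or on $J^*$; whichever holds, one of $I,I^*$ is contained in one of $J,J^*$, and the corresponding ``opposite'' interval is then disjoint from it. This yields two disjoint intervals forming a non-leaf pair, i.e.\ a distinct pair of $\LL_1$.

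Next I invoke Proposition \ref{prop: COL_2 is totally disconnected}: $\LL_1$ is totally disconnected, so every distinct pair is separated. Applying this to the distinct pair constructed above produces, directly from the definition of separation, a non-leaf gap $\GG$ of $\LL_1$ (the two intervals of the pair being contained in members of $\GG$). Since $\LL_1$ is very full, every gap of $\LL_1$ is an ideal polygon, so $v(\GG)$ is finite; as $\GG$ is a non-leaf gap, $\GG$ is a non-leaf ideal polygon. Repeating the three steps for $\LL_2$ finishes the proof. (One could equally well observe that the proof of Proposition \ref{prop: COL_2 is totally disconnected} itself already constructs such a gap $\GG$ once a distinct pair is in hand.)

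There is no genuine obstacle here; the corollary is essentially a repackaging of the proposition with very-fullness. The only step needing a little care is the very first one, namely guaranteeing that a true distinct pair exists rather than a single leaf: this is exactly what the density statement of Corollary \ref{ed} supplies, since it rules out the degenerate one-leaf configurations that would possess no non-leaf gap.
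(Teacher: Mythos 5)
Your proof is correct and follows essentially the same route as the paper: produce a distinct pair in each $\LL_i$, invoke Proposition \ref{prop: COL_2 is totally disconnected} to separate it by a non-leaf gap, and conclude from very-fullness that the gap is a non-leaf ideal polygon. The only (immaterial) difference is how the distinct pair is found — the paper uses density of $E(\LL_i)$ to locate a leaf inside a given $I\in\LL_i$ and pairs it with $I^*$, whereas you take two distinct leaves and extract a disjoint non-leaf pair via the unlinkedness axiom.
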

\begin{proof}
Let $\LL$ be an element of $\mathcal{C}.$ Since $\LL$ is nonempty, there is an element $I$ of $\LL$ and because $E(\LL)$ is dense in $S^1$, there is a point $p$ in $I\cap E(\LL).$ Hence there is a leaf $\ell$ having $p$ as an end point and then since $\ell$ lies on $I$, there is an element $J$ of $\ell$ such that $J\subseteq I.$ Note that $I\notin \ell$ since $p\notin v(\ell(I))$ Therefore $\{J, I^*\}$ is a distinct pair and so by Proposition \ref{prop: COL_2 is totally disconnected}, there is a non-leaf gap $\GG$ of $\LL$ which makes $\{J,I^*\}$ be separated. Thus $\GG$ is a non-leaf ideal polygon since $\LL$ is very full. 
\end{proof}

\begin{cor}\label{cor: discreteness of laminar groups}
Let $G$ be a pants-like $\COL_2$ group and $\mathcal{C}=\{\LL_1, \LL_2\}$ be a pants-like collection for $G$. If $G$ is Möbius-like, then $G$ is a discrete subgroup of $\Homeop(S^1)$ with the compact open topology. 
\end{cor}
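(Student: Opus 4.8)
The plan is to show that if $G$ were non-discrete, we could produce a sequence of nontrivial elements $g_n \in G$ converging to the identity in the compact-open topology, and then derive a contradiction with the structural rigidity provided by the two invariant lamination systems. Since $G$ is M\"obius-like, each $g_n$ is conjugate into $\PR$, hence is elliptic, parabolic, or hyperbolic in the sense of the fixed-point classification; in particular each nontrivial $g_n$ has at most two fixed points. The first step is to use Corollary \ref{cor: finding a real gap}: each of $\LL_1$ and $\LL_2$ contains a non-leaf ideal polygon $\GG_i$, a gap with finitely many vertices. These finite vertex sets give us rigid finite configurations in $S^1$ that must be permuted by the $G$-action.

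The key idea is that a sequence converging to the identity must eventually fix, or nearly fix, any prescribed finite configuration. First I would fix a non-leaf ideal polygon $\GG$ of $\LL_1$ with vertex set $v(\GG) = \{x_1,\dots,x_k\}$, $k \geq 3$. If $g_n \to \id$ uniformly, then for large $n$ the images $g_n(x_j)$ are close to $x_j$; since $\LL_1$ is $G$-invariant, $g_n(\GG)$ is again a gap of $\LL_1$, and by the discreteness of ideal polygon configurations (two gaps are either equal or unlinked in the sense of Lemma \ref{lem: the configuration of two gaps}), for large $n$ we must have $g_n(\GG) = \GG$. Thus $g_n$ permutes the finite set $v(\GG)$, and as $g_n \to \id$ this permutation is eventually trivial, so $g_n$ fixes every vertex $x_1,\dots,x_k$ pointwise. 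But $k \geq 3 > 2$, contradicting that the nontrivial M\"obius-like element $g_n$ has at most two fixed points (Theorem \ref{thm: Mobius-like}). Hence $g_n$ must be trivial for large $n$, contradicting non-discreteness.

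The main obstacle I anticipate is justifying the step ``$g_n(\GG) = \GG$ for large $n$'' rigorously, since convergence in the compact-open topology controls the homeomorphisms uniformly but one must translate this into control on the combinatorial gap structure of the lamination system. The careful argument is that the vertices of $\GG$ are separated by a definite amount in $S^1$, so for $n$ large the triple $(g_n(x_1), g_n(x_2), g_n(x_3))$ remains positively oriented and close to $(x_1,x_2,x_3)$; combined with the fact that distinct gaps have vertex sets that are unlinked and hence cannot be made arbitrarily close without coinciding, this forces $g_n(\GG)$ to share its vertices with $\GG$. I would make this precise using the Hausdorff-metric description of leaves on $\mathcal{M}$ and the convergence of leaves (Proposition \ref{prop:closedness of sequence of leaves}): the leaves of $g_n(\GG)$ converge to the leaves of $\GG$, and since both lie in the closed lamination system $\LL_1$ and share limiting endpoints, they eventually agree. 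Once the combinatorial stabilization is established, the fixed-point count gives the contradiction immediately, completing the proof.
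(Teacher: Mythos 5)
Your proposal is correct and follows essentially the same route as the paper: extract a non-leaf ideal polygon from Corollary \ref{cor: finding a real gap}, use the gap-configuration dichotomy of Lemma \ref{lem: the configuration of two gaps} together with a uniform separation $\epsilon$ between the vertices to force $g_n(\GG)=\GG$ for large $n$, and then derive a contradiction from the fact that a nontrivial M\"obius-like element cannot fix three or more points. The detail you flag as the main obstacle is handled in the paper exactly as you suggest, by ruling out the alternative that the whole vertex set lands in a single side $[x_i,x_{i+1}]_{S^1}$.
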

\begin{proof}
Let $\LL$ be an element of $\mathcal{C}.$ Then by Corollary \ref{cor: finding a real gap}, there is a non-leaf ideal polygon $\GG$ of  $\LL.$ Then we write $\GG=\{(x_1,x_2)_{S^1}, (x_2, x_3)_{S^1}, \cdots, (x_m, x_1)_{S^1}\}$ for some $m\in \NN$ with $2<m.$ We choose a number $\epsilon$ in $\RR$ so that $0<\epsilon<d_{\CC}(x_r, x_s)/3$ for all $r,s\in \ZZ_m.$ 

Now we assume that $G$ is not discrete, that is there is a sequence $\{g_n\}_{n=1}^\infty$ of nontrivial distinct elements of $G$ converging to the identity map $\id_{S^1}$ under the compact open topology. Then for each $r\in \ZZ_m$, let $U_r$ be the $\epsilon$-neighborhood of $x_r$ in $S^1$, i.e. $U_r=\{z\in S^1 : d_\CC(x_r, z)<\epsilon \}.$ Fix $r\in \ZZ_m.$ Since $\{x_r\}$ is compact and $\{g_n\}_{n=1}^\infty$ converges to $\id_{S^1},$ there is a number $N_r$ in $\NN$ such that $g_n(x_r)\subseteq U_r$ whenever $N_r<n.$ 

Then we choose a number $N$ in $\NN$ which is greater than $\max \{N_1, N_2, \cdots, N_m\}.$ Then for each $r\in \ZZ_m$, $g_N(x_r)\in U_r.$ On the other hand, by Lemma \ref{lem: the configuration of two gaps}, $g_N(\GG)=\GG$ or  $g_N(\{x_1,x_2, \cdots, x_m\})\subseteq [x_i, x_{i+1}]_{S^1}$ for some $i\in \ZZ_m.$ If $g_N(\{x_1,x_2, \cdots, x_m\})\subseteq [x_i, x_{i+1}]_{S^1}$ for some $i\in \ZZ_m$, then $\epsilon < d_{\CC}(x_{i+2}, g_N(x_{i+2}))$  so this is in contradiction to that $g_N(x_{i+2})\in U_{i+2}.$ If  $g_N(\GG)=\GG$, then by the choice of $N$, $g_N$ must fix the end points of $\GG$ and so $g_N$ has at least three fixed points since $\GG$ is not a leaf. However because $\GG$ is Möbius-like, $g_N$ must be the identity map but this is a contradiction since $g_N$ was taken to be nontrivial. Thus $G$ is a discrete subgroup of $\Homeop(S^1).$    
\end{proof}

\begin{cor}
 Every pants-like $\COL_3$ group is a discrete subgroup of $\Homeop(S^1)$ with the compact open topology.
\end{cor}
\begin{proof}
It follows from Theorem \ref{thm: Mobius-like} and Corollary \ref{cor: discreteness of laminar groups}.
\end{proof}
Hence we cloud expect the following lemma. 
\begin{lem}\label{lem: disjoint fixed point sets}
Let $G$ be a pants-like $\COL_2$ group and $\mathcal{C}=\{\LL_1, \LL_2\}$ be a pants-like collection for $G$. If $G$ is Möbius-like, then hyperbolic elements can not share fixed points with parabolic elements, that is for any hyperbolic element $g$ and parabolic element $h$ of $G$, $\Fix_g\cap \Fix_h=\emptyset.$ 
\end{lem}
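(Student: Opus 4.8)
The plan is to suppose, for contradiction, that some hyperbolic element $g$ and some parabolic element $h$ of $G$ share a fixed point $p \in \Fix_g \cap \Fix_h$. Since $G$ is M\"obius-like by hypothesis and is also a pants-like $\COL_2$ group, I would first want to exploit the local dynamics near the common fixed point $p$. The key observation is that a hyperbolic element fixes exactly two points $p, q$ with north-south dynamics (an attracting and a repelling fixed point), while a parabolic element fixes exactly the single point $p$ with ``one-sided'' attracting/repelling behavior on both complementary arcs. The two dynamics are incompatible in a subtle way: under repeated application of $g^{\pm 1}$ and $h^{\pm 1}$ one can produce sequences of group elements whose fixed-point data violates the finiteness and separation structure forced by the lamination systems.

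First I would set up the standard ping-pong-style argument on the arcs determined by the fixed points. Take the two complementary arcs $A_1, A_2$ of $S^1 \setminus \{p, q\}$ where $q$ is the other fixed point of $g$. The parabolic $h$ moves points on both sides toward (or away from) $p$, so some iterate $h^k$ will push $q$ across into the interior of one of the arcs, and then the conjugates $g^n h^k g^{-n}$ (or commutators $[g^n, h]$) form a sequence of nontrivial elements. I would then examine the fixed-point sets of these produced elements. Because the group is $\COL_2$ with more than two lamination systems available, every nontrivial element is elliptic, parabolic, or hyperbolic, hence has at most two fixed points. The contradiction should emerge by showing that the sequence of commutators either converges to the identity (contradicting discreteness, which we have from Corollary \ref{cor: discreteness of laminar groups}), or produces an element with too many fixed points or with fixed-point behavior incompatible with the elliptic/parabolic/hyperbolic trichotomy.

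The cleaner route, which I would prefer, is to transport the problem into $\PR$ using the classification in Theorem \ref{thm: Mobius-like}. Each nontrivial $g$ is conjugate \emph{individually} into $\PR$, with hyperbolic/parabolic type preserved. The difficulty is that the conjugating homeomorphism $h_0$ depends on the element, so I cannot immediately say $g$ and $h$ simultaneously lie in a single copy of $\PR$. But I can use the type-preservation together with the fact that in $\PR$ a parabolic and a hyperbolic element that share a fixed point generate a nondiscrete group (the subgroup they generate has elements accumulating at the identity near the shared boundary fixed point). Concretely, I would look at the subgroup $\langle g, h\rangle$, use the individual conjugacies and the shared fixed point $p$ to analyze the translation lengths/displacement of the elements $g^{-n} h g^n$, and argue that these limit to the identity in the compact-open topology. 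That would contradict the discreteness of $G$ established above, completing the proof.

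The main obstacle will be the third step: rigorously deducing the nondiscreteness contradiction without a single global conjugacy. Since the M\"obius-like property only gives element-by-element conjugation into $\PR$, I cannot directly compute in $\PR$ with both $g$ and $h$ at once. The delicate part is to show, purely from the one-dimensional dynamics on $S^1$ (north-south dynamics for $g$, parabolic convergence for $h$, and the shared fixed point $p$), that the iterated conjugates $g^{-n} h g^n$ have displacement shrinking to zero uniformly on compact subsets of $S^1 \setminus \{p\}$, so that they converge to $\id_{S^1}$. This requires controlling how $g$ contracts toward its attracting fixed point while $h$ fixes $p$, and I expect this is exactly where the quantitative estimate on the arc lengths (using that $p$ is a fixed point of both) must be made precise.
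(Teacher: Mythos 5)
Your proposed route has a genuine gap at precisely the step you flag as ``the delicate part,'' and the gap is not merely technical: the estimate you need is false at the level of generality you are working at. You want to show that the conjugates $g^{-n}hg^{n}$ (or $g^{n}hg^{-n}$) converge to $\id_{S^1}$ using only (a) north--south dynamics of $g$, (b) the parabolic behavior of $h$, (c) the shared fixed point $p$, and (d) element-wise conjugacy into $\PR$. This fails. Normalize so that $g(z)=\lambda z$ on $\hat{\RR}$ with $\lambda>1$, $p=\infty$, $q=0$, and let $h$ be the homeomorphism of $\hat{\RR}$ with $h(y)=2y$ for $y\geq 1$, $h(y)=y+1$ for $y\leq 0$, interpolated monotonically in between; then $h(y)>y$ for all $y\in\RR$, so $h$ is parabolic fixing only $\infty$ and is individually conjugate into $\PR$. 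But for $x>0$ and $n$ large, $g^{-n}hg^{n}(x)=\lambda^{-n}h(\lambda^{n}x)=2x$, and in the other direction $g^{n}hg^{-n}(x)=\lambda^{n}h(\lambda^{-n}x)\to\infty$. Neither sequence approaches the identity, so no nondiscreteness contradiction is available from these hypotheses alone. In $\PR$ the computation works because a M\"obius parabolic fixing $\infty$ is exactly a translation, but M\"obius-likeness is only an element-by-element statement and gives you no control over how $h$ behaves in the coordinates adapted to $g$. Your first (ping-pong/commutator) sketch suffers from the same missing uniformity. Any correct proof must use the invariant laminations in an essential way, not just the classification of individual elements.

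For comparison, the paper's proof does exactly that, and is quite short. Since $h$ is parabolic fixing $p$, the point $p$ is a cusp point, so (the lamination systems being very full) each $\LL_i$ contains a leaf $\ell^i$ with $p$ as an endpoint. If the other endpoint of $\ell^i$ is not $q$, one pushes $\ell^i$ by the powers $g^{\pm n}$: the images are a nested sequence of intervals in $\LL_i$, and by the closedness of lamination systems (Lemma \ref{lods}) the limit is again a leaf of $\LL_i$, whose vertex set is forced by the north--south dynamics to be exactly $\Fix_g=\{p,q\}$. Carrying this out for both $i=1,2$ puts the same leaf $\{p,q\}$ in $\LL_1$ and $\LL_2$, contradicting the transversality $\LL_1\cap\LL_2=\emptyset$ required of a pants-like collection. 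Note that this argument never invokes discreteness; the transversality of the two laminations is the real engine, and it is exactly the ingredient your proposal leaves unused.
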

\begin{proof}
Let $\mathcal{C}=\{\LL_1,\LL_2\}$ be a pants-like collection for $G.$
Assume that there is a hyperbolic element $g$ and a parabolic element $h$ of $G$ such that $\Fix_{g}\cap \Fix_h =\{p\}.$ We denote the other fixed point of $g$ by $q$.  For each $i\in \ZZ_2$, there is a leaf $\ell^i$ of $\LL_i$ since $p$ is a cusp point.

 Fix $i\in \ZZ_2$. We claim that there is a leaf $\ell_\infty^i$ of $\LL_i$ such that $v(\ell_\infty^i)=\Fix_g.$ If $q\in v(\ell^i)$, then 
 $\ell_\infty^i=\ell^i.$ If not, there is an element $I^i$ in $\ell^i$ containing the point $q$. Consider the sequences $\{g^n(I^i)\}_{n=1}^\infty$ and $\{g^{-n}(I^i)\}_{n=1}^\infty.$ When $g(I^i) \subsetneq I^i$, $\displaystyle I_\infty^i=\operatorname{Int}\big( \bigcap_{n=1}^\infty g^n(I^i) \big)$ is in $\LL_i$ by Lemma \ref{lods} and $v(\ell(I_\infty^i))=\{p, q\}.$ Hence $\ell_\infty^i=\ell(I_\infty^i).$ Similarly if $I^i\subsetneq g(I^i),$ then $\displaystyle I_\infty^i=\operatorname{Int}\big( \bigcup_{n=1}^\infty g^{-n}(I^i) \big)$ is in $\LL_i$ and $v(\ell(I_\infty^i))=\{p, q\}.$ Therefore $\ell_\infty^i=\ell(I_\infty^i).$  However this implies that $\LL_1\cap \LL_2 \neq \emptyset$ and so it is a contradiction since $\LL_1$ and  $\LL_2$ are transverse. Thus we are done.
\end{proof}

\section{Pants-like $\COL_3$ groups are convergence groups}\label{sec:main1}




In this section, we show that every pants-like $\COL_3$ group is a convergence group. Let $G$ be a subgroup of $\Homeop(S^1).$
We say a sequence of distinct elements of  $G$ to have the \emph{convergence property} if there exist two points $a$ and $b$ in $S^1$ and a subsequence $\{g_n\}_{n=1}^\infty$ such that the sequence $\{g_n |_{S^1-\{b\}}\}_{n=1}^\infty$ of maps restricted to $S^1-\{b\}$ converges uniformly to the constant map from $S^1-\{b\}$ to $\{a\}$ on every compact subset of $S^1-\{b\}$.
The group $G$ is called a \emph{convergence group} if every sequence of distinct elements of $G$ has the convergence property.

\begin{rmk}\label{rmk: the convergence property for inverse}
Let $\{g_n\}_{n=1}^\infty$ be a sequence of distinct elements of a subgroup $G< \Homeop(S^1).$ Then 
$\{g_n\}_{n=1}^\infty$  has the convergence property if and only if $\{g_n^{-1}\}_{n=1}^\infty$ has the convergence property.
\end{rmk} 

In the proofs of several theorems related to the convergence property, we frequently take subsequences. Hence for clarity we sometimes think of a sequence $\{s_n\}_{n=1}^\infty$ in a set $S$ as a function $s$ from $\NN$ to $S$ so that $s(n)=s_n$ for all $n\in \NN$. Also, in this sense, a subsequence $\{s_{n_k}\}_{k=1}^\infty$ of the sequence has a function $\alpha$ on $\NN$ increasing strictly so that $\alpha(k)=n_k$ for all $k\in \NN$. Let $\alpha$ be a strictly increasing function on $\NN$. A strictly increasing function $\gamma$ on $\NN$ is called a \emph{subindex} of $\alpha$ if there is a strictly increasing function $\beta$ on $\NN$ such that $\gamma =\alpha \circ \beta$. 

 In the pants-like $\COL_3$ case,  it is not so easy to directly show that a given sequence of distinct elements has the convergence property. Theorem \ref{Bowditch} gives another equivalent condition for convergence groups which makes the situation be slightly easier. Let $\Theta(S^1)$ be the set of all subsets of $S^1$ of cardinality $3$ or equivalently $\Theta(S^1)$ is the quotient space of $(S^1)^3-\Delta_3(S^1)$ by the symmetric group $S_3$ permuting coordinates (following the notation in \cite{Bowditch1999ConvergenceGA}). We say that a sequence $\{h_n\}_{n=1}^\infty$ of distinct elements of $G$ is \emph{properly discontinuous on triples} if for any two compact subsets $K$ and $L$ of $\Theta(S^1)$, the set $\{m \in \NN : h_m(K)\cap L \neq \emptyset \}$ is finite.

\begin{thm}[\cite{Bowditch1999ConvergenceGA}]\label{Bowditch}
Let $G$ be a subgroup of $\Homeop(S^1)$ and $\{g_n\}_{n=1}^\infty$ be a sequence of distinct elements of $G$. If $\{g_n\}_{n=1}^\infty$ is properly discontinuous on triples, then every subsequence of $\{g_n\}_{n=1}^\infty$ has the convergence property. The converse also holds.
\end{thm}



The strategy for proving that pants-like $\COL_3$ groups are  convergence groups is first to assume that there is a sequence $\{g_n\}_{n=1}^\infty$ not having the convergence property. By Theorem \ref{Bowditch}, the sequence is not properly discontinuous on triples. Then there are two compact subsets $K$ and $L$ of $\Theta(S^1)$ such that the set $\{m\in \NN : g_m(K)\cap L\neq \emptyset \}$ is infinite. Namely, there is a strictly increasing function $\alpha$ on $\NN$ such that for all $n\in \NN$, $g_{\alpha(n)}(K)\cap L \neq \emptyset$.  
\begin{lem}\label{not properly discontinuous}
Let $X$ be a topological space and $\{g_n\}_{n=1}^\infty$ a sequence of distinct homeomorphisms on $X$. Suppose that there are two sequentially compact subsets $K$ and $L$ of $X$ such that $g_n(K)\cap L \neq \emptyset$ for infinitely many $n\in \NN$.  Then there is a sequence $\{x_k\}_{k=1}^\infty$ in $K$ and a subsequence $\{g_{n_k}\}_{k=1}^\infty$ such that $\{x_k\}_{k=1}^\infty$ converges to a point $x$ in $K$ and $\{g_{n_k}(x_k)\}_{k=1}^\infty$ converges to a point $x'$ in $L.$
\end{lem}

Now by Lemma \ref{not properly discontinuous}, there is a sequence $\{ x_n\}_{n=1}^\infty$ in $K$ and a subindex $\beta$ of  $\alpha$ such that the sequence $\{x_n\}_{n=1}^\infty$ converges to a point $x_\infty$ in $K$ and the sequence $\{g_{\beta(n)}(x_n)\}_{n=1}^\infty$ converges to a point $y_\infty$ in $L.$ Set $x_n=(x_n^1, x_n^2, x_n^3)$, $x_\infty=(x_\infty^1,x_\infty^2,x_\infty^3)$ and $y_\infty=(y_\infty^1,y_\infty^2,y_\infty^3).$ In other words we have that for each $i\in \ZZ_3$, the sequence $\{x_n^i\}_{n=1}^\infty$ converges to $x_\infty^i$ and the sequence $\{g_{\beta(n)}(x_n^i)\}_{n=1}^\infty$ converges to $y_\infty^i.$ Then, using lamination systems, we show that the subsequence $\{g_{ \beta(n)}\}_{n=1}^\infty$ has the convergence property. This is in contradiction to the assumption.

Therefore we will focus on analyzing the sequences $\{x_n^i\}_{n=1}^\infty$ and $\{g_{\beta(n)}\}_{n=1}^\infty.$
To control the behavior of sequences of homeomorphisms, we need to consider a slightly generalized concept of rainbows.



\begin{defn}
Let $p$ be a point in $S^1$ and $\LL$ a lamination system. Then a sequence $\{I_n\}_{n=1}^{\infty}$ in $\LL$ is called a \emph{quasi-rainbow} at the point $p$ if for  each $n\in \NN$, $I_{n+1}\subseteq I_n$ and $\displaystyle \bigcap_{n=1}^{\infty}\overline{I_n}=\{p\}.$
\end{defn}
Every rainbow at $p$ is also a quasi-rainbow at $p$. Let $\LL$ be a lamination system and $\{I_n\}_{n=1}^{\infty}$ a rainbow at $p\in S^1$. Then  $\displaystyle \{p\}=\bigcap_{n=1}^{\infty}I_n \subseteq \bigcap_{n=1}^{\infty}\overline{I_n}.$ Assume that  there is a element $q$ such that $\displaystyle q\in \bigcap_{n=1}^{\infty}\overline{I_n}-\{p\}$. Then $q\in \overline{I_n}$ for all $n\in \NN$. On the other hand, since  $\displaystyle \{p\}=\bigcap_{n=1}^{\infty}I_n$ and $I_{n+1}\subseteq I_n$ for all $n\in \NN$, there is $N$ such that $q\notin I_n$ for all $n>N.$ Therefore, for each $n>N$, $q\in \partial I_n$ so one of $(p,q)_{S^1}$ or $(q,p)_{S^1}$ is contained in $I_n$. If there is $M$ and $M'$ such that $M,M'>N$ , $(p,q)_{S^1} \subseteq I_M$, and $(q,p)_{S^1} \subseteq I_{M'}$, then one of $I_M$ and $I_{M'}$ must be $(q,q)_{S^1}$ since it is either $I_M\subseteq I_{M'}$ or $I_{M'}\subseteq I_M$. It is a contradiction since $(q,q)_{S^1} \notin \LL$. Therefore,  if $(p,q)_{S^1} \subseteq I_{N'}$ for some $N'>N$, then $(p,q)_{S^1}\subseteq I_n$ for all $n>N'$ but $\displaystyle \{p\}\subsetneq [p,q)_{S^1}\subseteq \bigcap_{n=1}^{\infty}I_n.$ It is a condtradiction. Similarly, if $(q,p)_{S^1} \subseteq I_{N''}$ for some $N''>N$, then $(q,p)_{S^1}\subseteq I_n$ for all $n>N''$ but $\displaystyle \{p\}\subsetneq (q,p]_{S^1}\subseteq \bigcap_{n=1}^{\infty}I_n.$ It is also a contradiction. Thus the rainbow $\{I_n\}_{n=1}^\infty$ is a quasi-rainbow at $p$. 

\begin{defn}
Let $p$ be a point in $S^1$ and $\{p_n\}_{n=1}^{\infty}$ a sequence in $S^1$ converging to the point $p$. If $\varphi(p, p_1, p_2)\neq 0$ and $\varphi(p,p_1,p_2)=\varphi(p,p_n, p_{n+1})$ for all $n\in \NN$, then we say that the sequence $\{p_n\}_{n=1}^{\infty}$ \emph{converges monotonically} to the point $p$. 
\end{defn}
Let $\{p_n\}_{n=1}^\infty$ be a sequence in $S^1$ converging monotonically to a point $p$ in $S^1$. Observe that for each $n\in \NN$, $\varphi(p, p_n, p_m)=\varphi(p, p_1, p_2)$ for all $m\in \NN$ with $n<m.$ Hence every subsequence also converges monotonically. And if $\varphi(p, p_1, p_2)=-1$ and $\varphi(p, p_2, p_1)=1$, then $p_n\in (p,p_1)_{S^1}$ for all $n\in \NN$ with $1<n$ as $\varphi(p, p_n, p_1)=1$ for all $n\in \NN$ with $1<n.$ Moreover, for each $n\in \NN$, $p_m\in (p,p_n)_{S^1}$ for all $m\in \NN$ with $n<m.$ Then for any $n$ and $m$ in $\NN$ with $n<m$, $(p,p_m)_{S^1}\subsetneq (p,p_n)_{S^1}$ and since $\{p_n\}_{n=1}^\infty$ converges to $p$,$\displaystyle \bigcap_{n=1}^\infty (p, p_n)_{S^1}=\emptyset$ and  $\displaystyle \bigcap_{n=1}^\infty [p, p_n]_{S^1}=\{p\}.$ This looks like a quasi-rainbow at $p$. Likewise, if $\varphi(p, p_1, p_2)=1$, then we can derive similar results.

\begin{lem}\label{lem: making a quasi-rainbow}
Let $G$ be a pants-like $\COL_2$ group and $\mathcal{C}=\{\LL_1, \LL_2\}$ be a pants-like collection for $G$. Let $p$ be a point in $S^1$ and $\{p_n\}_{n=1}^\infty$ be a sequence in $S^1$ converging monotonically to $p$. Then there exist a subsequence $\{p_{n_k}\}_{k=1}^\infty$ and a quasi-rainbow $\{I_k\}_{k=1}^\infty$ in $\LL_i$ for some $i\in \ZZ_2$ such that $p_{n_k}\in I_k$ for all $k\in \NN.$
\end{lem}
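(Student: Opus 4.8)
The plan is to distinguish cases according to whether $p$ lies in the endpoint sets $E(\LL_1)$ and $E(\LL_2)$. Since $\LL_1$ and $\LL_2$ are very full, they are dense by Corollary \ref{ed}, so Theorem \ref{er} applies to each: for $i\in\ZZ_2$, either $p\in E(\LL_i)$ or $p$ admits a rainbow in $\LL_i$. If $p\notin E(\LL_i)$ for some $i$, then I am essentially done without even using monotonicity. Indeed, a rainbow $\{J_k\}_{k=1}^\infty$ at $p$ is in particular a quasi-rainbow at $p$ (as observed immediately after the definition of quasi-rainbow), and each $J_k$ is a nondegenerate open interval containing $p$, hence a neighborhood of $p$ in $S^1$. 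As $p_n\to p$, all but finitely many $p_n$ lie in any fixed $J_k$, so I can choose indices $n_1<n_2<\cdots$ with $p_{n_k}\in J_k$ and set $I_k=J_k$.

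The substantial case is $p\in E(\LL_1)\cap E(\LL_2)$. Here the pants-like hypothesis forces $p$ to be a cusp point, so I fix a parabolic element $g\in G$ with $\Fix_g=\{p\}$ and a leaf $\ell(I_0)$ of $\LL_1$ having $p$ as an endpoint (which exists since $p\in E(\LL_1)$); write $q$ for the other endpoint of $I_0$. The key input is the elementary dynamics of a parabolic: restricted to $S^1-\{p\}\cong\RR$, the map $g$ is an orientation-preserving homeomorphism without fixed points, hence $g(t)-t$ has constant sign, so $g$ is translation-like and every orbit converges to $p$ under both $g^{n}$ and $g^{-n}$. Consequently, for any open interval with endpoint $p$, exactly one of $g,g^{-1}$ maps it strictly inside itself. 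Now monotone convergence of $\{p_n\}$ means the sequence approaches $p$ from a single side, so for large $n$ it lies entirely in one of $I_0$ or $I_0^*$; relabeling $I_0$ by $I_0^*$ if necessary, I may assume $p_n\in I_0$ for large $n$, and relabeling $g$ by $g^{-1}$ if necessary, I may assume $g(I_0)\subsetneq I_0$.

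With these normalizations, $\{g^k(I_0)\}_{k=1}^\infty$ lies in $\LL_1$ by $G$-invariance and is strictly nested, since $g(I_0)\subsetneq I_0$ gives $g^{k+1}(I_0)\subsetneq g^{k}(I_0)$. Each $g^k(I_0)$ has $p$ as an endpoint and its other endpoint $g^k(q)$ tends monotonically to $p$, so $\bigcap_k\overline{g^k(I_0)}=\{p\}$; here one uses that the nested half-arcs cannot collapse onto a nondegenerate arc $(p,r)_{S^1}$, as its endpoint $r\ne p$ would be a second fixed point of $g$. Thus $\{g^k(I_0)\}$ is a quasi-rainbow at $p$ in $\LL_1$, and since $g^k(I_0)$ is a half-neighborhood of $p$ on exactly the side from which $\{p_n\}$ approaches, almost all $p_n$ lie in any fixed $g^k(I_0)$, so I extract a subsequence with $p_{n_k}\in g^k(I_0)$ as before. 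I expect the main obstacle to be the bookkeeping in this cusp case, namely correctly pairing the one-sided approach dictated by monotone convergence with the contracting orbit of a leaf (the two independent normalizations $I_0$ versus $I_0^*$ and $g$ versus $g^{-1}$) and confirming that the resulting nested half-arcs collapse exactly to $\{p\}$ rather than to a larger arc; the rainbow case and the final subsequence extraction are routine.
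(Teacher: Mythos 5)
Your proposal is correct and follows essentially the same route as the paper: the dichotomy via Theorem \ref{er} into the rainbow case and the cusp case, and in the cusp case the iteration of a parabolic fixing $p$ on a leaf-arc of $\LL_1$ ending at $p$ to manufacture the quasi-rainbow. The only difference is cosmetic bookkeeping — you normalize by swapping $I_0\leftrightarrow I_0^*$ and $g\leftrightarrow g^{-1}$, whereas the paper pins down the correct side and direction via the auxiliary nested intervals with $\partial I_n=\{p,p_n\}$ and the condition $g(q)\in J$, $g(J)\subset J$ — and both verifications go through.
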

\begin{proof}
First we consider the case where one of $\LL_1$ or $\LL_2$ has a rainbow $\{I_n\}_{n=1}^\infty$ at $p$. For each $n\in \NN$, there is a number $m(n)$ in $\NN$ such that for all $m$ with $m(n)< m$, $p_m\in I_n$  because $I_n$ is a neighborhood of $p$. Then a subindex $\alpha$ of the identity $\id_\NN$ on $\NN$ can be given by  $\alpha(n)=\displaystyle \sum_{k=1}^n m(k)$ so that $p_{\alpha(n)}\in I_n$ for all $n \in \NN.$ 

Then assume that there is no rainbow at $p$ in $\LL_1$ and in $\LL_2$. By Theorem \ref{er}, $p\in E(\LL_1) \cap E(\LL_2)$ so $p$ is a cusp point of $G$ since $\mathcal{C}$ is pants-like. As observed previously, there is a sequence $\{I_n\}_{n=1}^\infty$ of nondegenerate open intervals such that for each $n$ in $\NN$, $\partial I_n=\{p, p_n\}$ and $I_{n+1}\subseteq I_n.$ Choose a leaf $\ell$ of $\LL_1$ having the point $p$ as an end point. Let $q$ denote the other end point of $\ell$. 
Because $\displaystyle \bigcap_{n=1}^\infty \overline{I_n}=\{p\},$ there is a number $N$ such that $q\notin \overline{I_N}.$ Since $S^1-\{p,q\}=(p,q)_{S^1}\cup (q,p)_{S^1}$, $I_N\subseteq S^1-\{p,q\}$ and $I_N$ is connected, there is an element $J$ in $\ell$ such that $\overline{I_N} \cap \overline{J^*}=\{p\}.$ Therefore, $p_m \in J$ for all $m$ with $N\leq m$. Since $p$ is a cusp point of $G$, we can take a parabolic element $g$ in $G$ fixing $p$ so that $g(q)\in J$ and $g(J)\subset J.$ 

 Now we claim that  for each $n\in \NN$, there is a number $o(n)$ in $\NN$ such that $p_m\in g^{n-1}(J)$ for all $m\in \NN$ with $o(n)\leq m.$
 When $n=1$, $o(1)=N$. Fix a number $n$. Since $g^n(q)\in g^{n-1}(J)\subset S^1-\{p\}$, there is a number $M$ with $N<M$ such that $g^n(q)\notin \overline{I_M}.$ Then one of $g^n(J)$ or $g^n(J^*)$ contains $I_M$. If $I_M\subseteq g^n(J^*)$, then $I_M \subseteq g^n(J^*) \cap J$ and so $p_m\in g^n(J^*) \cap J $ for all $m$ with $M<m$. Hence  the open neighborhood $g^n(J)\cup \{p\} \cup J^*$ of $p$ contains only finitely many points of the sequence $\{p_n\}_{n=1}^\infty$ since  $g^n(J)\cup \{p\} \cup J^*$  and $g^n(J^*) \cap J $ are disjoint. It contradicts that $\{p_n\}_{n=1}^\infty$ converges to $p$. Therefore $I_M\subseteq g^n(J)$ and $\overline{I_M}\cap \overline{g^n(J^*)}=\{p\}.$ So $p_m\in g^n(J)$ for all $m$ with $M\leq m$. Thus if we set $o(n+1)=M$, then the claim is proved. 
 
 Finally, define a subindex $\beta$ of $\id_\NN$ by $\beta(n)=\displaystyle \sum_{k=1}^n o(k)$ so that $p_{\beta(n)}\in g^{n-1}(J)$ for all $n\in \NN.$ The sequence $\{g^{n-1}(J)\}_{n=1}^\infty$ is contained in $\LL_1$ and so it is a quasi-rainbow at $p$ in $\LL_1$. Thus we are done.
\end{proof} 

\begin{defn}
 Let $G$ be a laminar group and $\LL$ a $G$-invariant lamination system. Let $p\in S^1$. Assume that there is a quasi-rainbow $\{I_n\}_{n=1}^{\infty}$  at $p$.
 A sequence  $\{(g_n, I_n)\}_{n=1}^{\infty}$  of  elements of $G \times \LL$ is called a \emph{pre-approximation sequence} at $p$ if 
there is a point $x$ in $S^1$ such that $g_n(x)\in I_n$ for all $n\in \NN.$
 \end{defn}
\begin{rmk}
Any subsequence of a pre-approximation sequence at $p$ is also a pre-approximation sequence at $p$.
\end{rmk}

The following is a key lemma to prove the main result of this chapter. This lemma allows us to properly pass to subsequences. 

\begin{lem}\label{lem: taking a pre-approximation sequence}
Let $G$ be a pants-like $\COL_2$ group and $\mathcal{C}=\{\LL_1, \LL_2\}$ be a pants-like collection for $G$. Suppose that we have  a sequence $\{x_n\}_{n=1}^{\infty}$ of elements of $S^1$ converging to $x\in S^1$ and a sequence $\{g_n\}_{n=1}^{\infty}$ of distinct elements of $G$ such that $\{g_n(x_n)\}_{n=1}^\infty$ converges to $x'\in S^1$. Then we can have one of the following cases:
\begin{enumerate}
\item there is a subsequence $\{g_{n_k}\}_{k=1}^{\infty}$ such that $g_{n_k}(x)=x'$ for all $k\in\NN$; 
\item there is a subsequence $\{g_{n_k}\}_{k=1}^{\infty}$ and a quasi-rainbow $\{I_k\}_{k=1}^\infty$ at $x'$ in $\LL_i$ for some $i\in \ZZ_2$ such that the sequence $\{(g_{n_k},I_k)\}_{k=1}^{\infty}$ is a pre-approximation sequence at $x'$;
\item  there is a subsequence $\{g_{n_k}\}_{k=1}^{\infty}$ and  a quasi-rainbow $\{I_k\}_{k=1}^\infty$ at $x$ in $\LL_i$ for some $i\in \ZZ_2$  such that the sequence $\{(g_{n_k}^{-1},I_k)\}_{k=1}^{\infty}$ is a pre-approximation  sequence at $x$. 
\end{enumerate} 
\end{lem}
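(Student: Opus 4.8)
The plan is to reduce everything to Lemma \ref{lem: making a quasi-rainbow} by deciding, along a suitable subsequence, whether the images $g_n(x)$ accumulate at $x'$. The only input I will need besides that lemma is the elementary fact that a sequence in $S^1$ converging to a point $p$ with all terms different from $p$ has a subsequence converging monotonically to $p$: infinitely many terms lie on one side of $p$, and along such a one-sided subsequence one passes to a further subsequence on which the circular order $\varphi(p,\cdot,\cdot)$ of consecutive terms is constant and nonzero. Lemma \ref{lem: making a quasi-rainbow} then converts such a monotone sequence into a quasi-rainbow through the point it shrinks to, with the sequence trapped inside. Thus the whole argument is a dichotomy on the behaviour of $g_n$ near $x$.

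First I would treat the case where $x'$ is a subsequential limit of $\{g_n(x)\}_{n=1}^\infty$, and pass to a subsequence with $g_n(x)\to x'$. If $g_n(x)=x'$ for infinitely many $n$, the corresponding subsequence satisfies $g_{n_k}(x)=x'$ for all $k$, which is case (1). Otherwise, discarding finitely many terms I may assume $g_n(x)\neq x'$ for all $n$, extract a subsequence with $g_{n_k}(x)\to x'$ monotonically, and feed it to Lemma \ref{lem: making a quasi-rainbow} at the point $x'$. This yields a further subsequence and a quasi-rainbow $\{I_k\}_{k=1}^\infty$ at $x'$ in some $\LL_i$ with $g_{n_k}(x)\in I_k$ for all $k$; taking the witness point to be $x$ itself shows that $\{(g_{n_k},I_k)\}_{k=1}^\infty$ is a pre-approximation sequence at $x'$, giving case (2).

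It remains to handle the case where $x'$ is not a subsequential limit of $\{g_n(x)\}$. Then there is $\epsilon_0>0$ and, after passing to a subsequence, $d_\CC(g_n(x),x')\ge\epsilon_0$ for all $n$; moreover $x_n\neq x$ for large $n$, since otherwise $g_n(x)=g_n(x_n)\to x'$, contrary to assumption. Let $V_n'$ be the short closed arc bounded by $x_n$ and $x$, so that $V_n'$ shrinks to $\{x\}$. Because $g_n(x_n)\to x'$ while $g_n(x)$ stays at distance $\ge\epsilon_0$ from $x'$, the two endpoints of $g_n(V_n')$ are eventually separated by at least $\epsilon_0/2$, and hence $g_n(V_n')$ has arclength bounded below by a fixed $\delta_0>0$, independent of which of the two arcs it is. Now I would use a pigeonhole argument: fix a finite subset $F\subseteq S^1$ whose consecutive gaps are smaller than $\delta_0/2$; each open arc $\operatorname{int}\big(g_n(V_n')\big)$ meets $F$, so some fixed $c\in F$ lies in $\operatorname{int}\big(g_n(V_n')\big)$ for infinitely many $n$. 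For such $n$ one has $g_n^{-1}(c)\in\operatorname{int}(V_n')$, and since $V_n'$ shrinks to $\{x\}$ this forces $g_n^{-1}(c)\to x$ with $g_n^{-1}(c)\neq x$. Extracting a monotone subsequence and applying Lemma \ref{lem: making a quasi-rainbow} at the point $x$ produces a quasi-rainbow $\{I_k\}_{k=1}^\infty$ at $x$ in some $\LL_i$ with $g_{n_k}^{-1}(c)\in I_k$; with witness point $c$, the sequence $\{(g_{n_k}^{-1},I_k)\}_{k=1}^\infty$ is a pre-approximation sequence at $x$, which is case (3).

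I expect the delicate step to be this last paragraph. The key subtlety is that although $g_n(V_n')$ need not be the minor arc between its endpoints, it still has length bounded below purely because its endpoints are separated; this is what makes the pigeonhole point $c$ exist. One must also ensure $c$ lies in the \emph{interior} of $g_n(V_n')$ (so that $g_n^{-1}(c)$ lands strictly inside $V_n'$, away from $x$), which is exactly what guarantees the monotone-subsequence hypothesis of Lemma \ref{lem: making a quasi-rainbow} can be met. The first two cases are routine once the monotone-subsequence extraction is in hand.
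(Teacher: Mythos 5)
Your proposal is correct, but it follows a genuinely different route from the paper's proof. The paper first branches on whether $\{x_n\}$ and $\{g_n(x_n)\}$ admit constant or monotone subsequences, and in the hardest branch (both monotone) it builds a quasi-rainbow $\{J_n\}$ at $x'$ trapping $g_n(x_n)$, pulls it back to $\{g_n^{-1}(J_n)\}$, and runs a long combinatorial analysis inside the lamination system: the finite families $L_n$ of pulled-back intervals containing $y_{\delta(n)}$, the proof that their unions are pairwise equal or disjoint and totally ordered compatibly with the monotone convergence, and the extraction of a subsequence along which $g_n^{-1}(\overline{J_n})$ nests into shrinking neighborhoods of $x$, yielding $g_n^{-1}(x')\to x$ and hence case (3) with witness point $x'$. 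You instead branch on whether $x'$ is a subsequential limit of $\{g_n(x)\}$, dispose of that branch exactly as the paper would (cases (1) and (2) via Lemma \ref{lem: making a quasi-rainbow} with witness $x$), and in the complementary branch replace the entire $L_n$ machinery by a metric pigeonhole argument: the arcs $g_n(V_n')$ have arclength bounded below because their endpoints $g_n(x_n)$ and $g_n(x)$ stay uniformly separated, so a fixed point $c$ of a sufficiently fine finite net lies in infinitely many of them, forcing $g_n^{-1}(c)\to x$ with $g_n^{-1}(c)\neq x$, after which Lemma \ref{lem: making a quasi-rainbow} delivers case (3) with witness $c$. This is substantially shorter and uses no lamination structure until the final appeal to Lemma \ref{lem: making a quasi-rainbow}; the price is that your witness point is an auxiliary $c$ rather than $x'$ itself, but since the definition of a pre-approximation sequence (and every downstream use in Lemmas \ref{lem: the second type}, \ref{lem: the third type} and Theorem \ref{A}) only requires the existence of some witness point, your conclusion is fully adequate. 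The two delicate points you flag — that arclength is bounded below by the chord regardless of which of the two arcs $g_n(V_n')$ is, and that $c$ must lie in the open arc so that $g_n^{-1}(c)\neq x$ and a monotone subsequence can be extracted — are exactly the right ones, and both are handled correctly.
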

\begin{proof}
Observe that every convergence sequence in $S^1$ has  a constant subsequence or a subsequence converging monotonically. First we will take a subsequence of $\{x_n\}_{n=1}^\infty.$ Then we need to consider two possible cases by the observation. Assume that there is a subindex $\alpha$ of the identity $\id_\NN$ such that the subsequence $\{x_{\alpha(n)}\}_{n=1}^\infty$ is constant i.e. $x_{\alpha(n)}=x$ for all $n\in \NN.$ The sequence $\{g_{\alpha(n)}(x_{\alpha(n)})\}_{n=1}^\infty$ still converges to $x'$. We will take again a subindex of $\alpha$ for $\{g_{\alpha(n)}(x_{\alpha(n)})\}_{n=1}^\infty.$ If there is a subindex $\beta$ of $\alpha$ such that  $\{g_{\beta(n)}(x_{\beta(n)})\}_{n=1}^\infty$ is constant, then $g_{\beta(n)}(x)=g_{\beta(n)}(x_{\beta(n)})=x'$ for all $n\in \NN$ and this is the first case of the statement. If there is a subindex $\beta$ of $\alpha$ such that  the sequence $\{g_{\beta(n)}(x_{\beta(n)})\}_{n=1}^\infty$ converges monotonically to $x'$, then by Lemma \ref{lem: making a quasi-rainbow}, we can take a subindex $\gamma$ of $\beta$ and a quasi-rainbow $\{I_n\}_{n=1}^\infty$ at $x'$ in $\LL_i$ for some $i\in \ZZ_2$ so that $g_{\gamma(n)}(x_{\gamma(n)})\in I_n$ for all $n\in \NN.$ Since  the sequence $\{x_{\gamma(n)}\}_{n=1}^\infty$ is constant, the sequence $\{(g_{\gamma(n)}, I_n)\}_{n=1}^\infty$ is a pre-approximation sequence at $x'$ and this is the second case of the statement. 

Next we consider the case where there is a subindex $\alpha$ of the identity $\id_\NN$ such that the subsequence $\{x_{\alpha(n)}\}_{n=1}^\infty$ converges monotonically to $x'$. Similarly we will take a subsequence of the sequence $\{g_{\alpha(n)}(x_{\alpha(n)})\}_{n=1}^\infty$.
If there is a subindex $\beta$ of $\alpha$ such that the sequence  $\{g_{\beta(n)}(x_{\beta(n)})\}_{n=1}^\infty$  is constant, then $x_{\beta(n)}=g_{\beta(n)}^{-1}(x')$ for all $n\in \NN$. Because the sequence $\{x_{\beta(n)}\}_{n=1}^\infty$ still converges monotonically to $x$, Lemma \ref{lem: making a quasi-rainbow} can apply so there is a subindex $\gamma$ of $\beta$ and a quasi-rainbow $\{I_n\}_{n=1}^\infty$ at $x$ in $\LL_i$ for some $i\in \ZZ_2$ such that $g_{\gamma(n)}^{-1}(x')=x_{\gamma(n)}\in I_n$ for all $n\in \NN.$ Then the sequence $\{(g_{\gamma(n)}^{-1},I_n)\}_{n=1}^\infty$ is a pre-approximation sequence at $x$ and this is the third case. 

Finally we assume that there is a subindex $\beta$ of $\alpha$ such that the sequence  $\{g_{\beta(n)}(x_{\beta(n)})\}_{n=1}^\infty$ converges monotonically to $x'.$ By Lemma \ref{lem: making a quasi-rainbow}, there is a subindex $\gamma$ of $\beta$ and a quasi-rainbow $\{J_n\}_{n=1}^\infty$ at $x'$  in $\LL_i$ for some $i\in \ZZ_2$ such that  $g_{\gamma(n)}(x_{\gamma(n)})\in J_n$ for all $n\in \NN.$ For brevity, we write $h_n=g_{\gamma(n)}$ and $y_n=x_{\gamma(n)}.$ Let us consider the sequence $\{h_n^{-1}(J_n)\}_{n=1}^\infty.$

First, assume that  $x\in  \overline{h_n^{-1}(J_n)}$ for infinitely many $n\in \NN$. Then there is a subindex $\delta$ of the identity $\id_\NN$ such that $x\in \overline{h^{-1}_{\delta(n)}(J_{\delta(n)})}=h^{-1}_{\delta(n)}(\overline{J_{\delta(n)}})$ for all $n\in \NN.$ Then $h_{\delta(n)}(x)\in \overline{J_{\delta(n)}}$ for all $n\in \NN$ and since $\{J_{\delta(n)}\}_{n=1}^\infty$ is a quasi-rainbow at $x',$ the sequence $\{h_{\delta(n)}(x)\}_{n=1}^\infty$ converges to $x'.$ Then there are two possible cases. First, if there is a subindex $\epsilon$ of $\delta$ such that $\{h_{\epsilon(n)}(x)\}_{n=1}^\infty$ is a constant sequence, then $\{g_{\gamma\circ \epsilon(n)}\}_{n=1}^\infty$ is in the first case. Second, if we can take a subindex $\epsilon$ of $\delta$ so that the sequence $\{h_{\epsilon(n)}(x)\}_{n=1}^\infty$ converges monotonically to $x',e$ then by Lemma \ref{lem: making a quasi-rainbow}, there  are a subindex $\zeta$ of $\epsilon$ and a quasi-rainbow $\{I_n\}_{n=1}^\infty$ at $x'$ in $\LL_i$ for some $i\in \ZZ_2$ such that $h_{\zeta(n)}(z)\in I_n$ for all $n\in \NN.$ Then the sequence $\{(g_{\gamma \circ \zeta(n)},I_n)\}_{n=1}^\infty$ is a pre-approximation sequence at $x'$ and this is the second case.

Next, assume that   $x\in  \overline{h_n^{-1}(J_n)}$ for only finitely many $n\in \NN$. Then we can take a subindex $\delta$ of $\id_\NN$ such that $x\notin \overline{h_{\delta(n)}^{-1}(J_{\delta(n)})}$ for all $n\in \NN$. Now, we consider the case where there is $n_0$ in $\NN$ such that $y_{\delta(n_0)}\in h_{\delta(n)}^{-1}(J_{\delta(n)})$ for infinitely many $n$ in $\NN.$ Then we can take a subindex $\zeta$ of $\delta$ so that $\zeta(1)=\delta(n_0)$ and $y_{\zeta(1)}\in h_{\zeta(n)}^{-1}(J_{\zeta(n)})$   for all $n\in \NN.$ Observe that if $y_{\zeta(m)}\in h_{\zeta(n)}^{-1}(J_{\zeta(n)})$ for infinitely many $m\in \NN,$ then $x\in \overline{h_{\zeta(n)}^{-1}(J_{\zeta(n)})},$  so, by assumption, for each $n\in \NN$,  $y_{\zeta(m)}\in h_{\zeta(n)}^{-1}(J_{\zeta(n)})$
for only finitely many $m\in\NN.$ Then there is a number $M$ in $\NN$ such that $y_{\zeta(m)}\notin   h_{\zeta(1)}^{-1}(J_{\zeta(1)})$ for all $m \in \NN$ with $M<m.$ Choose a number $m$ with $M<m.$ Now there are four cases:
 $h_{\zeta(m)}^{-1}(J_{\zeta(m)})\subseteq h_{\zeta(1)}^{-1}(J_{\zeta(1)})$ ;
 $h_{\zeta(m)}^{-1}(J_{\zeta(m)})^*\subseteq h_{\zeta(1)}^{-1}(J_{\zeta(1)})$;
 $h_{\zeta(1)}^{-1}(J_{\zeta(1)})\subseteq h_{\zeta(m)}^{-1}(J_{\zeta(m)})^*$;
 $h_{\zeta(1)}^{-1}(J_{\zeta(1)})\subseteq h_{\zeta(m)}^{-1}(J_{\zeta(m)})$.
 Since $y_{\zeta(m)}\in h_{\zeta(m)}^{-1}(J_{\zeta(m)})-h_{\zeta(1)}^{-1}(J_{\zeta(1)})$, the first case that $h_{\zeta(m)}^{-1}(J_{\zeta(m)})\subseteq h_{\zeta(1)}^{-1}(J_{\zeta(1)})$ is not possible. Also, since $x\in h_{\zeta(m)}^{-1}(J_{\zeta(m)})^*-h_{\zeta(1)}^{-1}(J_{\zeta(1)})$, the second case is not possible. If  $h_{\zeta(1)}^{-1}(J_{\zeta(1)})\subseteq h_{\zeta(m)}^{-1}(J_{\zeta(m)})^*$, then  $h_{\zeta(1)}^{-1}(J_{\zeta(1)}) \cap h_{\zeta(m)}^{-1}(J_{\zeta(m)})=\emptyset$ but this is in contradiction with that $y_{\zeta(1)}\in  h_{\zeta(1)}^{-1}(J_{\zeta(1)})\cap h_{\zeta(m)}^{-1}(J_{\zeta(m)}).$ Therefore,  $h_{\zeta(1)}^{-1}(J_{\zeta(1)})\subseteq h_{\zeta(m)}^{-1}(J_{\zeta(m)})$, and  $y_{\zeta(1)}\in  h_{\zeta(m)}^{-1}(J_{\zeta(m)})$ for all $m\in \NN$ with $M<m.$ So we have that $h_{\zeta(m)}(y_{\zeta
 (1)})\in J_{\zeta(m)}$ for all $m\in \NN$ with $M<m$ and we define a subindex $\eta$ of $\zeta$ as $\eta(n)=\zeta(n+M)$ for all $n\in \NN.$ Then, $\{(g_{\gamma \circ \eta(n)}, J_{\eta(n)})\}_{n=1}^\infty$ is a pre-approximation sequence at $x'$ since $g_{\gamma \circ \eta(n)}(y_{\zeta(1)})=h_{\zeta(n+M)}(y_{\zeta(1)})\in  J_{\eta(n)}$  for all $n\in \NN.$ Thus, this is the second case.

Now we also assume that there is no such a $n_0.$ 
Then for each $n\in \NN$, we define $L_n=\{h_{\delta(m)}^{-1}(J_{\delta(m)}) : m\in \NN \  \text{and} \ y_{\delta(n)}\in h_{\delta(m)}^{-1}(J_{\delta(m)})\}.$ Obviously, any $L_n$ is not empty since $y_{\delta(n)}\in h_{\delta(n)}^{-1}(J_{\delta(n)})$  and $ h_{\delta(n)}^{-1}(J_{\delta(n)})\in L_n$, and is finite by the assumption. Fix a number $n\in \NN$. First we show that $L_n$ is totally ordered by the set inclusion $\subseteq.$  Let $M$ and $N$ be elements of $L_n.$ Then since each element belong to the same lamination system, $M\subseteq N$, $M^*\subseteq N$, $M \subseteq N^*$ or $M^* \subseteq N^*$ by unlinkedness. If $M^*\subseteq N$, then $x\in M^* \subseteq N$ as $x\notin \overline{M}=(M^*)^c.$ It is in contradiction to that $x\notin N$. The case where $M \subseteq N^*$ is also impossible since $y_{\delta(n)}\in M$ but $y_{\delta(n)}\notin N^*.$ Therefore $M\subseteq N$ or $M^* \subseteq N^*$, or equivalently $M\subseteq N$ or $N \subseteq M$. Thus the claim is proved. 

Then we want to show that for any $m$ and $n$ in $\NN$, $\bigcup L_m$ and $\bigcup L_n$ are same or disjoint. Note that $\bigcup L_n$ and $\bigcup L_m$ are the maximal elements of $L_m$ and $L_n$ respectively. By unlinkedness, $\bigcup L_m \subseteq \bigcup L_n$,$\big[\bigcup L_m\big]^* \subseteq \bigcup L_n$, $\bigcup L_m \subseteq \big[\bigcup L_n\big]^*$ or $\big[\bigcup L_m\big]^* \subseteq \big[ \bigcup L_n\big]^*.$  If  $\bigcup L_m \subseteq \bigcup L_n$ or $\big[\bigcup L_m\big]^* \subseteq \big[ \bigcup L_n\big]^*$, then 
$\bigcup L_m$ and $ \bigcup L_n$ are same by the maximality.  If  $\big[\bigcup L_m\big]^* \subseteq \bigcup L_n$, then $x\in \big[\bigcup L_m\big]^* \subseteq \bigcup L_n$ but $x\notin \bigcup L_n$. So this is not a possible case. If $\bigcup L_m \subseteq \big[\bigcup L_n\big]^*$, then $\bigcup L_m$ and $ \bigcup L_n$ are disjoint. Therefore the claim can be attained.  

Next we claim that for any $n$ and $m$ in $\NN$ with $n\leq m$, if $\bigcup L_n=\bigcup L_m$, then $\bigcup L_n=\bigcup L_k$ for all $k$ with $n\leq k \leq m.$ Assume that for some $n$ and $m$ with $n< m$, there is a number $k$ in $\NN$ with $n<k< m$ such that  $\bigcup L_n$ and $\bigcup L_k$ are disjoint. There is a nondegenerate open interval $I$ such that $\partial I =\{y_{\delta(n)}, y_{\delta(m)}\}$ and $x\in I^*.$ Then $[\bigcup L_n]^*\subseteq I^*$ or equivalently  $I\subseteq \bigcup L_n$ since $x\in[\bigcup L_n]^*$, $[\bigcup L_n]^* \subseteq  S^1- \{y_{\delta(n)}, y_{\delta(m)}\}$ and $[\bigcup L_n]^*$ is connected. Hence $I$ and $ \bigcup L_k$ are disjoint as $\bigcup L_n$ and $ \bigcup L_k$ are disjoint. This implies that $y_{\delta(k)}\in I^c=I^* \cup \partial I$. Because $y_{\delta(k)}\neq y_{\delta(n)}$ and $y_{\delta(k)}\neq y_{\delta(m)}$, we can get that $y_{\delta(k)}\in I^*$ and so $\varphi(y_{\delta(n)},y_{\delta(k)}, y_{\delta(m)})=\varphi(y_{\delta(n)},x, y_{\delta(m)}).$ On the other hand 
$\varphi(x, y_{\delta(1)}, y_{\delta(2)})=\varphi(x, y_{\delta(n)}, y_{\delta(m)})=\varphi(x,  y_{\delta(n)}, y_{\delta(k)})=\varphi(x,  y_{\delta(k)}, y_{\delta(m)}) $ since the sequence $\{y_{\delta(n)}\}_{n=1}^\infty$ converges monotonically to $x.$ By the cocycle condition for the $4$-tuple $(x, y_{\delta(n)}, y_{\delta(k)}, y_{\delta(m)} )$, 
\begin{align*}
0&=\varphi( y_{\delta(n)}, y_{\delta(k)}, y_{\delta(m)} )-\varphi(x, y_{\delta(k)}, y_{\delta(m)} )+\varphi(x, y_{\delta(n)},y_{\delta(m)} )-\varphi(x, y_{\delta(n)}, y_{\delta(k)} )\\
&=\varphi( y_{\delta(n)}, y_{\delta(k)}, y_{\delta(m)} )-\varphi(x, y_{\delta(1)}, y_{\delta(2)})+\varphi(x, y_{\delta(1)}, y_{\delta(2)})-\varphi(x, y_{\delta(1)}, y_{\delta(2)})\\
&=\varphi( y_{\delta(n)}, y_{\delta(k)}, y_{\delta(m)} )-\varphi(x, y_{\delta(1)}, y_{\delta(2)}).
\end{align*}
Therefore $\varphi( y_{\delta(n)}, y_{\delta(k)}, y_{\delta(m)} )=\varphi(x, y_{\delta(1)}, y_{\delta(2)})=\varphi(x, y_{\delta(n)}, y_{\delta(m)})
=-\varphi( y_{\delta(n)},x, y_{\delta(m)}).$ However this is in contradiction to that $\varphi(y_{\delta(n)},y_{\delta(k)}, y_{\delta(m)})=\varphi(y_{\delta(n)},x, y_{\delta(m)})$ since $\varphi(y_{\delta(n)},x, y_{\delta(m)})\neq 0.$
Thus we can get the result which we wanted.

Then we define $C=\{\bigcup L_n: n\in \NN \}$ and can define a partial order $\leq$ on $C$ as follows. For any pair of two elements $U$ and $V$ in $C$, $U\leq V$ if and only if there are numbers $n$ and $m$ in $\NN$ such that $n\leq m$, $y_{\delta(n)}\in U$, and $y_{\delta(m)}\in V.$ This order is well-defined by the previous claims and is a total order.  Recall that as mentioned before, for each $n\in \NN$, $y_{\delta(m)}\in h_{\delta(n)}^{-1}(J_{\delta(n)})$ for only finitely many $m\in \NN.$ Then there is the order preserving bijection $\mu$ from $(C,\leq)$ to $\NN$ with the standard total order.  Also we can define a function $\nu$ from $\{y_{\delta(n)}: n \in \NN\}$ to $C$ so that $\nu(y_{\delta(n)})=\bigcup L_n.$ Then the composition map $\mu \circ \nu \circ \xi$ is a monotonically increasing surjection on $\NN$, where the function $\xi$ from $\NN$ to $\{y_{\delta(n)}: n \in \NN\}$ is defined by $n \mapsto y_{\delta(n)}.$ Hence the map $\mu \circ \nu \circ \xi$ has a right inverse $\pi$. It follows at once that the map $\pi$ on $\NN$ is an order preserving injection, that is the map $\pi$ is a strictly increasing function on $\NN.$ Let $\epsilon$ be a subindex of $\delta$ defined by $\delta \circ \pi.$ 

Now we consider the sequence $\{h_{\epsilon(n)}^{-1}(J_{\epsilon(n)})\}_{n=1}^\infty.$ Since the sequence  $\{y_{\epsilon(n)}\}_{n=1}^\infty$ converges monotonically to $x$, as discussed previously, there is the sequence $\{K_n\}_{n=1}^\infty$ of nondegenerate open intervals such that for each $n\in \NN,$ $\partial K_n=\{x, y_{\epsilon(n)}\}$ and $K_{n+1} \subsetneq K_n.$ We claim that for each $n\in \NN$, $h_{\epsilon(m)}^{-1}(J_{\epsilon(m)})\subseteq K_n$ for all $m\in \NN$ with $n<m$. Choose a pair of two numbers $n$ and $m$ in $\NN$ with $n<m.$ Note that $ \nu \circ \xi \circ \pi$ is a bijection since  $ \nu \circ \xi \circ \pi =\mu^{-1}\circ \id_\NN.$ It follows that $U_n$ and  $U_m$ are disjoint where  we write $U_n=\nu \circ \xi \circ \pi(n)$ and $U_m=\nu \circ \xi \circ \pi(m)$. Because $U_m\subset S^1-\{x,y_{\epsilon(n)}\}$, $y_{\epsilon(m)}\in K_n$ and $U_m$ is connected, we can conclude that $U_m\subset K_n.$ Therefore $h_{\epsilon(m)}^{-1}(J_{\epsilon(m)})\subseteq U_m \subset K_n.$ Thus the claim is proved. 

Finally we consider the sequence $\{h_{\epsilon(n)}^{-1}(x')\}_{n=1}^\infty.$ Since the sequence $\{J_{\epsilon(n)}\}_{n=1}^\infty$ is a quasi-rainbow at $x'$, by the previous claim, for each $n\in \NN$, $h_{\epsilon(m)}^{-1}(x')\in h_{\epsilon(m)}^{-1}(\overline{J_{\epsilon(m)}})= \overline{h_{\epsilon(m)}^{-1}(J_{\epsilon(m)})}\subset \overline{K_n}$ for all $m\in \NN$ with $n<m.$ Hence the sequence $\{h_{\epsilon(n)}^{-1}(x')\}_{n=1}^\infty$ converges to $x$ as $\displaystyle \bigcap_{n=1}^\infty \overline{K_n}=\{x\}.$ Note that for each $n\in \NN$, $h_{\epsilon(n)}^{-1}(x')\neq x$  since $x\notin \overline{h_{\epsilon(n)}^{-1}(J_{\epsilon(n)})}.$ Then we can take a subindex $\zeta$ of $\gamma \circ \epsilon$ so that the sequence $\{g_{\zeta(n)}^{-1}(x')\}_{n=1}^\infty$ converges monotonically to $x.$ By Lemma \ref{lem: making a quasi-rainbow}, there is a subindex  $\eta$ of $\zeta$ and a quasi-rainbow $\{I_n\}_{n=1}^\infty$ at $x$ in $\LL_i$ for some $i\in \ZZ_2$ such that $g_{\eta(n)}^{-1}(x')\in I_n$ for all $n\in \NN.$ Then the sequence $\{(g_{\eta(n)}^{-1}, I_n)\}_{n=1}^\infty$ is a pre-approximation sequence at $x$. Thus this is the third case.
 \end{proof}

Now we will apply this lemma in turn to $\{x_n^1\}_{n=1}^\infty$, $\{x_n^2\}_{n=1}^\infty$ and $\{x_n^3\}_{n=1}^\infty$ with $\{g_{\beta(n)}\}_{n=1}^\infty.$ Then each subsequencing step has three possible cases as Lemma \ref{lem: taking a pre-approximation sequence}. Then it turns out that after passing to a subsequence of $\{g_{\beta(n)}\}_{n=1}^\infty$, the subsequence has the following property. 
\begin{defn}
Let $G$ be a Möbius-like subgroup of $\Homeop(S^1)$ and $\{g_n\}_{n=1}^\infty$ be a sequence of distinct elements of $G.$ Let $p$ and $q$ be two distinct points in $S^1$. Then the sequence $\{g_n\}_{n=1}^\infty$ is called an \emph{approximation sequence} to $\{p,q\}$ if for any pair of two numbers $m$ and $k$ of $\NN$ with $m<k$, $g_k\circ g_m^{-1}$ is hyperbolic and the sequence $\{\Fix_{g_{\alpha(n+1)}\circ g_{\alpha(n)}^{-1}}\}_{n=1}^\infty$ in $\mathcal{M}$ converges to $\{p,q\}$ for any subindex $\alpha$ of $\id_\NN.$
\end{defn}

\begin{rmk}
Every subsequence of an approximation sequence to $\{p,q\}$ is an approximation sequence to $\{p,q\}.$
\end{rmk}

First we consider the following lemmas. 
\begin{lem}\label{lem: the first type}
Let $G$ be a Möbius-like subgroup of $\Homeop(S^1)$ and  $\{g_n\}_{n=1}^\infty$ be a sequence of distinct elements of $G.$ Let  $x'$ and $y'$ be two distinct points in $S^1$ . Assume that there are two distinct points $x$ and $y$ in $S^1$ such that for any $n\in \NN,$ $g_n(x)=x'$ and $g_n(y)=y'.$ Then $\{g_n\}_{n=1}^\infty$ is an approximation sequence to $\{x',y'\}.$  
\end{lem}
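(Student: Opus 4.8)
The plan is to establish the single stronger fact that $\Fix_{g_k \circ g_m^{-1}} = \{x',y'\}$ \emph{exactly} for every pair $m<k$; both clauses in the definition of an approximation sequence then drop out immediately. First I would compute the fixed points directly. For $m<k$, since $g_m(x)=x'$ we have $g_m^{-1}(x')=x$, and hence $g_k\circ g_m^{-1}(x')=g_k(x)=x'$; running the identical computation with $y$ in place of $x$ gives $g_k\circ g_m^{-1}(y')=y'$. Thus $\{x',y'\}\subseteq \Fix_{g_k\circ g_m^{-1}}$, and in particular $|\Fix_{g_k\circ g_m^{-1}}|\ge 2$ because $x'\neq y'$.

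Next I would pin down the cardinality using the Möbius-like hypothesis. Since the $g_n$ are distinct, $g_k\circ g_m^{-1}$ is a nontrivial element of $G$, so there is an $h\in\Homeop(S^1)$ with $h(g_k\circ g_m^{-1})h^{-1}\in \PR$, and this conjugate is nontrivial as well. Conjugation by the homeomorphism $h$ carries $\Fix_{g_k\circ g_m^{-1}}$ bijectively onto $\Fix_{h(g_k\circ g_m^{-1})h^{-1}}$, so the two fixed-point sets have equal cardinality. A nontrivial element of $\PR$ has at most two fixed points on $S^1\cong\hat{\RR}$—only the identity of $\PR$ fixes three or more points—and therefore $|\Fix_{g_k\circ g_m^{-1}}|\le 2$. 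Together with the previous paragraph this forces $\Fix_{g_k\circ g_m^{-1}}=\{x',y'\}$; in particular $g_k\circ g_m^{-1}$ is hyperbolic in the sense of the paper (its fixed-point set has cardinality two), which verifies the first condition.

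Finally, the convergence clause is then immediate: given any subindex $\alpha$ of $\id_\NN$ we have $\alpha(n)<\alpha(n+1)$ for every $n$, so by the computation above $\Fix_{g_{\alpha(n+1)}\circ g_{\alpha(n)}^{-1}}=\{x',y'\}$ for all $n$. Hence $\{\Fix_{g_{\alpha(n+1)}\circ g_{\alpha(n)}^{-1}}\}_{n=1}^\infty$ is the constant sequence $\{x',y'\}$ in $\mathcal{M}$, which trivially converges to $\{x',y'\}$, and so $\{g_n\}_{n=1}^\infty$ is an approximation sequence to $\{x',y'\}$. The only point that genuinely requires care is the upper bound $|\Fix_{g_k\circ g_m^{-1}}|\le 2$ extracted from the Möbius-like property via conjugation-invariance of the number of fixed points; every other step is a direct unwinding of the definitions, and no properly descending argument or rainbow machinery is needed here.
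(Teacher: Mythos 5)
Your proof is correct and follows essentially the same route as the paper: compute directly that $\{x',y'\}\subseteq\Fix_{g_k\circ g_m^{-1}}$, invoke the M\"obius-like hypothesis to cap the fixed-point set at two points (hence the element is hyperbolic with $\Fix=\{x',y'\}$ exactly), and observe that the resulting constant sequence in $\mathcal{M}$ trivially converges. The only difference is that you spell out the conjugation-invariance argument for the bound $|\Fix|\le 2$, which the paper leaves implicit.
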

\begin{proof}
Choose two distinct numbers $n$ and $m$ in $\NN$ with $n<m.$ Then $g_m\circ g_n^{-1}(x')=x'$ and $g_m\circ g_n^{-1}(y')=y'$ and since $\{g_n\}_{n=1}^\infty$ is a sequence of distinct elements so $g_n\neq g_m,$ the element $g_m\circ g_n^{-1}$ is a nontrivial element having a fixed point. Hence $g_m\circ g_n^{-1}$ must be a hyperbolic element since $G$ is Möbius-like, $\{x',y'\}\subseteq \Fix_{g_m\circ g_n^{-1}}$ and $x'\neq y'.$ Moreover for any subindex $\alpha$ of $\id_\NN,$ $g_{\alpha(n+1)}\circ g_{\alpha(n)}^{-1}$ are hyperbolic and $\Fix_{g_{\alpha(n+1)}\circ g_{\alpha(n)}^{-1}}=\{x',y'\}.$ Thus $\{g_n\}_{n=1}^\infty$ is an approximation sequence to $\{x',y'\}.$
\end{proof}

\begin{lem}\label{lem: the second type}
Let $G$ be a Mobius-like laminar group and $\LL$ be a $G$-invariant lamination system. Let $\{g_n\}_{n=1}^\infty$ be a sequence of distinct elements of $G,$ and $x'$ and $y'$ be two distinct points in $S^1.$ Suppose that there is a point $x$ in $S^1$ such that $g_n(x)=x'$ for all $n\in \NN.$ Assume that we can take a quasi-rainbow $\{I_n\}_{n=1}^\infty$ at $y'$ in $\LL$  so that the sequence $\{(g_n, I_n)\}_{n=1}^\infty$ is a pre-approximation sequence at $y'.$ Further assume that $x' \notin \overline{I_1}.$ Then $\{g_n\}_{n=1}^\infty$ is an approximation sequence to $\{x',y'\}.$  
\end{lem}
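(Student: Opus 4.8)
The plan is to show directly that for every pair $m<k$ the composition $g_k \circ g_m^{-1}$ is hyperbolic, with one fixed point exactly at $x'$ and the other trapped inside $\overline{I_m}$, and then to read off the convergence of fixed point sets from the fact that the quasi-rainbow $\{I_n\}$ shrinks to $y'$.

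Fix $m<k$ and write $h=g_k\circ g_m^{-1}$. Since $g_n(x)=x'$ for every $n$, we have $g_m^{-1}(x')=x$ and hence $h(x')=g_k(x)=x'$, so $x'$ is a fixed point of $h$. Let $z$ be the point witnessing the pre-approximation sequence, so $g_n(z)\in I_n$ for all $n$. First I would record two facts about the interval $h(I_m)=g_k(g_m^{-1}(I_m))\in\LL$. Because $g_m(z)\in I_m$ we get $g_k(z)=h(g_m(z))\in h(I_m)$, while the quasi-rainbow nesting $I_k\subseteq I_m$ gives $g_k(z)\in I_k\subseteq I_m$; thus $g_k(z)\in h(I_m)\cap I_m$, so this intersection is nonempty. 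On the other hand $x'\in\overline{h(I_m)}$ would force $x=g_k^{-1}(x')\in\overline{g_m^{-1}(I_m)}$, i.e. $x'=g_m(x)\in\overline{I_m}$, contradicting $x'\notin\overline{I_1}\supseteq\overline{I_m}$; hence $x'\notin\overline{h(I_m)}$, that is $x'\in h(I_m)^*$. Note also $x'\in I_m^*$ for the same reason.

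Now I would apply unlinkedness of $I_m$ and $h(I_m)$ in $\LL$: one of $h(I_m)\subseteq I_m$, $h(I_m)^*\subseteq I_m$, $h(I_m)\subseteq I_m^*$, or $I_m\subseteq h(I_m)$ holds. The case $h(I_m)\subseteq I_m^*$ is impossible since $g_k(z)\in h(I_m)\cap I_m$, and $h(I_m)^*\subseteq I_m$ is impossible since it would put $x'\in h(I_m)^*\subseteq I_m$, against $x'\in I_m^*$. So either $h(I_m)\subseteq I_m$ or $I_m\subseteq h(I_m)$. As $h$ is a homeomorphism, taking closures yields $h(\overline{I_m})\subseteq\overline{I_m}$ or $h^{-1}(\overline{I_m})\subseteq\overline{I_m}$; in either case a continuous self-map of the closed arc $\overline{I_m}$ (namely $h$ or $h^{-1}$) has a fixed point $c\in\overline{I_m}$ by the intermediate value theorem. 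Since $x'\notin\overline{I_m}$, the point $c$ is a fixed point of $h$ distinct from $x'$. As $G$ is M\"obius-like and conjugation preserves the cardinality of the fixed point set, the nontrivial element $h$ (nontrivial because $g_m\neq g_k$) has at least two fixed points and so must be hyperbolic; thus $\Fix_h=\{x',c\}$ with $c\in\overline{I_m}$.

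Finally, for the convergence statement fix any subindex $\alpha$ of $\id_\NN$ and apply the above to $m=\alpha(n)$, $k=\alpha(n+1)$: each $g_{\alpha(n+1)}\circ g_{\alpha(n)}^{-1}$ is hyperbolic with $\Fix=\{x',c_n\}$ and $c_n\in\overline{I_{\alpha(n)}}$. Since $\{I_j\}$ is a quasi-rainbow at $y'$, the closed intervals $\overline{I_{\alpha(n)}}$ are nested decreasing with $\bigcap_j\overline{I_j}=\{y'\}$, so $c_n\to y'$; together with the constant first coordinate $x'$ this gives $\Fix_{g_{\alpha(n+1)}\circ g_{\alpha(n)}^{-1}}\to\{x',y'\}$ in $(\mathcal{M},d_H)$, which is exactly the definition of an approximation sequence to $\{x',y'\}$. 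The main obstacle is the configuration analysis in the third paragraph: one must extract precisely the two facts that $h(I_m)$ meets $I_m$ yet avoids $x'$ in its closure in order to pin down the two admissible unlinked positions, and then convert the interval containment into an honest fixed point in $\overline{I_m}$; the hypothesis $x'\notin\overline{I_1}$ is exactly what separates this second fixed point from $x'$ and thereby forces hyperbolicity.
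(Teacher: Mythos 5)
Your proposal is correct and follows essentially the same route as the paper: fix $x'$ as a common fixed point, use the pre-approximation witness to show $h(I_m)$ meets $I_m$, rule out the bad unlinked configurations (the paper does this by noting $x'\in I_m^*$ would be carried into $I_m$, you by showing $x'\notin\overline{h(I_m)}$ directly — equivalent in effect), extract a second fixed point in $\overline{I_m}$ by a fixed-point argument on the closed arc, invoke M\"obius-likeness to get hyperbolicity, and let the quasi-rainbow force the second fixed points to converge to $y'$. No gaps.
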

\begin{proof}
Since  $\{(g_n, I_n)\}_{n=1}^\infty$ is a pre-approximation sequence at $y',$ there is a point $y$ in $S^1$ such that $g_n(y)\in I_n$ for all $n\in \NN.$ Choose two distinct numbers $n$ and $m$ in $\NN$ with $n<m.$ Then $g_m\circ g_n^{-1}(x')=x'$ and $x'\in \Fix_{g_m\circ g_n^{-1}}.$ Since $\{g_n\}_{n=1}^\infty$ is a sequence of distinct elements so $g_n\neq g_m,$ the element $g_m\circ g_n^{-1}$ is a nontrivial element having a fixed point. 

Now we consider $I_n.$ The element $g_m(y)$ belongs to $(g_m\circ g_n^{-1}(I_n))\cap I_n$ as $y \in g_n^{-1}(I_n)$ and $g_m(y)\in I_m\subset I_n.$ Hence  $g_m\circ g_n^{-1}(I_n)$ and  $I_n$ are not disjoint so by the unlinkedness of them, there are three cases: 
$g_m\circ g_n^{-1}(I_n)\subset I_n$; $(g_m\circ g_n^{-1}(I_n))^* \subset I_n$; $(g_m\circ g_n^{-1}(I_n))^* \subset I_n^*.$ If $(g_m\circ g_n^{-1}(I_n))^* \subset I_n$ or equivalently $g_m\circ g_n^{-1}(I_n^*)\subset I_n$ , then $x'=g_m\circ g_n^{-1}(x')\in g_m\circ g_n^{-1}(I_n^*)\subset I_n$ as $x'\in I_n^*.$ This is a contradiction since $I$ and $I^*$ are disjoint. 

Therefore $g_m\circ g_n^{-1}(I_n)\subseteq I_n$ or $(g_m\circ g_n^{-1}(I_n))^* \subset I_n^*,$ namely $g_m\circ g_n^{-1}(\overline{I_n})\subseteq \overline{I_n}$ or $\overline{I_n} \subset g_m\circ g_n^{-1}(\overline{I_n}).$ Then the Brouwer's fixed point theorem can apply and implies that there must be a fixed point of $g_m\circ g_n^{-1}$ in $\overline{I_n}.$  Since $x' \notin \overline{I_1}$, $\overline{I_n}\subset \overline{I_1},$ and  $G$ is Möbius-like, the element $g_m\circ g_n^{-1}$ is a hyperbolic element and there is a unique fixed point in $\overline{I_n}.$ 

Finally let $\alpha$ be a subindex of $\id_\NN.$ Note that for all $n\in \NN,$ the element $g_{\alpha(n+1)}\circ g_{\alpha(n)}^{-1}$ is hyperbolic and $x'\in \Fix_{g_{\alpha(n+1)}\circ g_{\alpha(n)}^{-1}}.$ We can write $\Fix_{g_{\alpha(n+1)}\circ g_{\alpha(n)}^{-1}}=\{x',p_n\}$ for all $n\in\NN.$ As we saw, $p_n\in \overline{I_{\alpha(n)}}.$ Moreover the sequence $\{p_n\}_{n=1}^\infty$ converges to $y'$ as $\{I_{\alpha(n)}\}_{n=1}^\infty$ is a quasi-rainbow at $y'.$ Therefore the sequence  $\{x',p_n\}_{n=1}^\infty$ in $\mathcal{M}$ converges to $\{x',y'\}.$ Thus we are done.  
\end{proof}

\begin{lem}\label{lem: the third type}
Let $G$ be a Mobius-like laminar group. Let $\{g_n\}_{n=1}^\infty$ be a sequence of distinct elements of $G,$ and $x_1'$ and $x_2'$ be two distinct points in $S^1.$ Suppose that for each $i\in \ZZ_2,$ we can take a quasi-rainbow $\{I_n^i\}_{n=1}^\infty$ at $x_i'$ in some $G$-invariant lamination system so that the sequence $\{(g_n, I_n^i)\}_{n=1}^\infty$ is a pre-approximation sequence at $x_i'.$ Assume that $\overline{I_1^1}$ and $\overline{I_1^2}$ are disjoint. Then $\{g_n\}_{n=1}^\infty$ is an approximation sequence to $\{x_1',x_2'\}.$  
\end{lem}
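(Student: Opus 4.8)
The plan is to follow the architecture of the proof of Lemma \ref{lem: the second type}, but to replace the role played there by the fixed point $x'$ (which pinned one fixed point of $g_m\circ g_n^{-1}$ inside $\overline{I_n}$) by the \emph{second} quasi-rainbow. Let $y_1,y_2\in S^1$ be points witnessing the two pre-approximation sequences, so that $g_n(y_i)\in I_n^i$ for all $n\in\NN$ and each $i\in\ZZ_2$. Since each $\{I_n^i\}_{n=1}^\infty$ is a quasi-rainbow it is nested, $I_{n+1}^i\subseteq I_n^i$, and from the hypothesis that $\overline{I_1^1}$ and $\overline{I_1^2}$ are disjoint one immediately gets $\overline{I_n^1}\cap\overline{I_m^2}=\emptyset$ for all $n,m$; in particular $I_n^2\subseteq (I_n^1)^*$ and $I_n^1\subseteq (I_n^2)^*$ for every $n$. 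Fix $n<m$ and write $h=g_m\circ g_n^{-1}$, which is nontrivial because the $g_n$ are distinct.

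First I would show $h$ has a fixed point in $\overline{I_n^1}$. As in Lemma \ref{lem: the second type}, the point $g_m(y_1)$ lies in $h(I_n^1)\cap I_n^1$, since $g_m(y_1)\in I_m^1\subseteq I_n^1$ while $g_m(y_1)=h(g_n(y_1))\in h(I_n^1)$; hence these two intervals of the first $G$-invariant lamination system are not disjoint, and unlinkedness leaves the three cases $h(I_n^1)\subseteq I_n^1$, $(h(I_n^1))^*\subseteq I_n^1$, and $(h(I_n^1))^*\subseteq (I_n^1)^*$. The crux, and the step I expect to be the main obstacle, is ruling out the middle case without a fixed point to rely on: here it reads $h((I_n^1)^*)\subseteq I_n^1$, yet $g_n(y_2)\in I_n^2\subseteq (I_n^1)^*$ is carried by $h$ to $g_m(y_2)\in I_m^2\subseteq (I_n^1)^*$, contradicting $I_n^1\cap (I_n^1)^*=\emptyset$. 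This is exactly where the disjointness hypothesis $\overline{I_1^1}\cap\overline{I_1^2}=\emptyset$ is essential. In the two surviving cases $h$ or $h^{-1}$ maps $\overline{I_n^1}$ into itself, so Brouwer's fixed point theorem yields a fixed point of $h$ in $\overline{I_n^1}$. Interchanging the indices $1$ and $2$ and using $I_n^1\subseteq (I_n^2)^*$ to exclude the corresponding middle case gives, symmetrically, a fixed point of $h$ in $\overline{I_n^2}$.

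Because $\overline{I_n^1}$ and $\overline{I_n^2}$ are disjoint, these two fixed points are distinct; as $h$ is nontrivial and $G$ is M\"obius-like, $h$ is conjugate into $\PR$, and since the number of fixed points is a conjugacy invariant, $h$ must be hyperbolic with $\Fix_h=\{a,b\}$, $a\in\overline{I_n^1}$, $b\in\overline{I_n^2}$. In particular $g_k\circ g_m^{-1}$ is hyperbolic for every $m<k$. For the convergence clause, fix an arbitrary subindex $\alpha$ of $\id_\NN$ and apply the above to $h_n=g_{\alpha(n+1)}\circ g_{\alpha(n)}^{-1}$: its fixed points satisfy $a_n\in\overline{I_{\alpha(n)}^1}$ and $b_n\in\overline{I_{\alpha(n)}^2}$. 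Since each $\{I_k^i\}$ is a quasi-rainbow at $x_i'$, the nested closed sets $\overline{I_{\alpha(n)}^i}$ have diameters shrinking to $0$ with $\bigcap_k \overline{I_k^i}=\{x_i'\}$, so $a_n\to x_1'$ and $b_n\to x_2'$, whence $\Fix_{h_n}\to\{x_1',x_2'\}$ in $(\mathcal{M},d_H)$. Therefore $\{g_n\}_{n=1}^\infty$ is an approximation sequence to $\{x_1',x_2'\}$, completing the proof.
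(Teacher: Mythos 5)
Your proof is correct and follows essentially the same route as the paper's: you use the witness point of the second pre-approximation sequence together with the disjointness of $\overline{I_1^1}$ and $\overline{I_1^2}$ to exclude the case $(g_m\circ g_n^{-1}(I_n^i))^*\subseteq I_n^i$, then apply Brouwer in each closure, invoke M\"obius-likeness to conclude hyperbolicity, and let the quasi-rainbows force the fixed points to converge to $\{x_1',x_2'\}$. No substantive differences from the paper's argument.
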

\begin{proof}
Choose two distinct numbers $n$ and $m$ in $\NN$ with $n<m.$ For each $i\in \ZZ_2,$ there is a point $x_i$ in $S^1$ such that $g_n(x_i)\in I_n^i$ for all $n\in \NN$ since  $\{(g_n, I_n^i)\}_{n=1}^\infty$ is a pre-approximation sequence at $x_i'.$ Hence  for each $i\in \ZZ_2,$  $g_m(x_i)\in g_m\circ g_n^{-1}(I_n^i)$ so $g_m\circ g_n^{-1}(I_n^i)\cap I_n^i \neq \emptyset$ since $g_m(x_i)\in I_m^i\subset I_n^i.$ Fix $i\in \ZZ_2.$ By the unlinkedness of $I_n^i$ and $g_m\circ g_n^{-1}(I_n^i),$ there are three cases:  $g_m\circ g_n^{-1}(I_n^i)\subset I_n^i$;  $g_m\circ g_n^{-1}(I_n^i)^* \subset I_n^i$; $g_m\circ g_n^{-1}(I_n^i)^*\subset (I_n^i)^*.$ If $g_m\circ g_n^{-1}(I_n^i)^* \subset I_n^i,$ then $g_m\circ g_n^{-1}(I_n^{i+1}) \subset g_m\circ g_n^{-1}((I_n^i)^*) \subset I_n^i$ since $\overline{I_1^1}$ and $\overline{I_1^2}$ are disjoint and $I_n^{i+1} \subset (I_n^i)^*.$ Hence  $g_m(x_{i+1})\in g_m\circ g_n^{-1}(I_n^{i+1}) \subset I_n^i$ as $x_{i+1}\in g_n^{-1}(I_n^{i+1}),$ and so $g_m(x_{i+1})\in  (I_m^{i+1}\cap I_n^i) \subset  (I_n^{i+1}\cap I_n^i).$ However this is a contradiction since $\overline{I_1^1}$ and $\overline{I_1^2}$ are disjoint. Therefore $g_m\circ g_n^{-1}(I_n^i)\subset I_n^i$ or $g_m\circ g_n^{-1}(I_n^i)^*\subset (I_n^i)^*,$ namely $g_m\circ g_n^{-1}(\overline{I_n^i})\subset \overline{I_n^i}$ or $\overline{I_n^i} \subset g_m\circ g_n^{-1}(\overline{I_n^i}).$  Then the Brouwer's fixed point theorem can apply and implies that for each $i\in\ZZ_2,$ $I_n^i$ has a fixed point of the element $g_m\circ g_n^{-1}.$ Therefore the element $g_m\circ g_n^{-1}$ has at least two fixed points since  $\overline{I_1^1}$ and $\overline{I_1^2}$ are disjoint. Note that  $g_m\circ g_n^{-1}$ is nontrivial since $\{g_n\}_{n=1}^\infty$ is a sequence of distinct elements so $g_n\neq g_m.$   Since $G$ is Möbius-like, $g_m\circ g_n^{-1}$ must be hyperbolic and so  for each $i\in\ZZ_2,$ $I_n^i$ has exactly one fixed point of the element $g_m\circ g_n^{-1}.$

Finally let $\alpha$ be a subindex of $\id_\NN.$ Note that for each $n\in \NN,$ the element $g_{\alpha(n+1)}\circ g_{\alpha(n)}^{-1}$ is hyperbolic and for each $i\in \ZZ_i,$ $\overline{I_{\alpha(n)}^i}$ has exactly one fixed point $p_n^i$ of $g_{\alpha(n+1)}\circ g_{\alpha(n)}^{-1}$ . Hence we can write $\Fix_{g_{\alpha(n+1)}\circ g_{\alpha(n)}^{-1}}=\{p_n^1, p_n^2\}$ for all $n\in\NN.$ Therefore the sequence $\{p_n^1, p_n^2\}_{n=1}^\infty$ in $\mathcal{M}$ converges to $\{x_1',x_2'\}$ as $\{I_{\alpha(n)}^1\}_{n=1}^\infty$ and $\{I_{\alpha(n)}^2\}_{n=1}^\infty$  are quasi-rainbows at $x_1'$ and $x_2'$ respectively. Thus we are done. 
 \end{proof}
 
 \begin{theorem}{\ref{A}}
 Every pants-like $\COL_3$ group is a convergence group. 
 \end{theorem}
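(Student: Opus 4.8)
The plan is to prove Theorem~\ref{A} by contradiction, reducing the convergence property to the setup prepared by the preceding lemmas. Suppose $G$ is a pants-like $\COL_3$ group that is not a convergence group. By Theorem~\ref{Bowditch}, some sequence $\{g_n\}_{n=1}^\infty$ of distinct elements fails to be properly discontinuous on triples, so there are compact sets $K,L\subseteq \Theta(S^1)$ with $g_m(K)\cap L\neq\emptyset$ for infinitely many $m$. Applying Lemma~\ref{not properly discontinuous} (with $X=(S^1)^3-\Delta_3(S^1)$, which is sequentially compact after lifting $K,L$ appropriately), I obtain a sequence of triples $x_n=(x_n^1,x_n^2,x_n^3)$ converging to $x_\infty=(x_\infty^1,x_\infty^2,x_\infty^3)$ and a subindex $\beta$ such that $\{g_{\beta(n)}(x_n)\}$ converges to $y_\infty=(y_\infty^1,y_\infty^2,y_\infty^3)$, where both limit triples have distinct entries. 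The goal is then to show that, after passing to a further subsequence, $\{g_{\beta(n)}\}$ is an approximation sequence to some pair $\{p,q\}$, which will contradict the failure of the convergence property.

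The key idea is to apply Lemma~\ref{lem: taking a pre-approximation sequence} successively to each of the three coordinate sequences $\{x_n^1\}$, $\{x_n^2\}$, $\{x_n^3\}$ paired with $\{g_{\beta(n)}\}$ (using that $G$ is in particular a pants-like $\COL_2$ group, by forgetting one lamination). Each application yields one of three mutually exclusive outcomes: either a subsequence fixing the relevant limit point, or a pre-approximation sequence at $y_\infty^i$ in some $\LL_j$, or a pre-approximation sequence at $x_\infty^i$ for the inverses in some $\LL_j$. Passing to subsequences at each step, I arrange that the final subsequence $\{g_{\gamma(n)}\}$ simultaneously realizes one of these three types for each of the three coordinates. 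The crucial combinatorial point is that, since the three limit points $y_\infty^1,y_\infty^2,y_\infty^3$ (respectively $x_\infty^1,x_\infty^2,x_\infty^3$) are distinct, among the three coordinates I can select two indices $i\neq i'$ whose associated data produce, via Lemma~\ref{lem: the first type}, Lemma~\ref{lem: the second type}, or Lemma~\ref{lem: the third type}, an approximation sequence to a pair of distinct points. The disjointness hypotheses ($x'\notin\overline{I_1}$ in the second type, $\overline{I_1^1}\cap\overline{I_1^2}=\emptyset$ in the third type) are secured by shrinking the quasi-rainbows: since the limit points are distinct, their quasi-rainbows eventually have disjoint closures, and one may discard the initial terms.

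Having produced an approximation sequence $\{g_{\gamma(n)}\}$ to some pair $\{p,q\}$, I then argue that such a sequence does have the convergence property, completing the contradiction. By definition of approximation sequence, each $g_{\gamma(n+1)}\circ g_{\gamma(n)}^{-1}$ is hyperbolic with fixed-point pair converging to $\{p,q\}$; the telescoping products $g_{\gamma(n)}\circ g_{\gamma(1)}^{-1}$ are then long compositions of hyperbolics with attracting/repelling dynamics concentrating near $p$ and $q$, forcing uniform convergence on compacta of $S^1-\{q\}$ (say) to the constant $p$. By Remark~\ref{rmk: the convergence property for inverse} the roles of the two points can be exchanged as needed. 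This contradicts Theorem~\ref{Bowditch} and the assumption that $\{g_n\}$ was not properly discontinuous on triples.

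The main obstacle I anticipate is the bookkeeping in the case analysis: there are $3^3$ a priori combinations of outcomes across the three coordinates, and I must verify that in every branch the distinctness of the limit triples guarantees a valid pair to which one of the three type-lemmas applies, while also checking that the disjointness-of-closures hypotheses can always be met by passing to tails of the quasi-rainbows. A subtler technical point is correctly matching the \emph{orientation} of the outcomes (coordinates landing in a ``forward'' pre-approximation at $y_\infty^i$ versus an ``inverse'' pre-approximation at $x_\infty^i$), since Lemma~\ref{lem: the first type} through Lemma~\ref{lem: the third type} each assume a specific configuration; one must ensure that whichever mixture arises can be funneled into exactly one of these lemmas, possibly after replacing $\{g_{\gamma(n)}\}$ by its inverse sequence using Remark~\ref{rmk: the convergence property for inverse}.
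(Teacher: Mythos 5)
Your setup (Bowditch's criterion, Lemma \ref{not properly discontinuous}, coordinate-wise application of Lemma \ref{lem: taking a pre-approximation sequence}, and the three type-lemmas) matches the paper's strategy. The genuine gap is in your final step: you claim that once $\{g_{\gamma(n)}\}$ is an approximation sequence to $\{p,q\}$, the ``telescoping products'' $g_{\gamma(n)}\circ g_{\gamma(1)}^{-1}$ are compositions of hyperbolics concentrating near $p$ and $q$ and therefore force the convergence property. This implication is unproven and does not hold from the definition alone: being an approximation sequence only controls the consecutive quotients $g_{\gamma(n+1)}\circ g_{\gamma(n)}^{-1}$ (hyperbolic, with fixed pairs converging to $\{p,q\}$), and a long composition of hyperbolic maps with nearby axes need not have source--sink dynamics — if the translation lengths decay, the product can converge to a nonconstant homeomorphism. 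Nothing in your argument rules this out, and the paper never takes this route. Instead, the paper closes the argument by two different mechanisms depending on the branch: (i) when the outcomes of the three coordinates are ``aligned'' (e.g.\ all forward, or two fixed points plus one more), the consecutive quotient $g_{\gamma(n+1)}\circ g_{\gamma(n)}^{-1}$ acquires at least three fixed points, hence is the identity by M\"obius-likeness, contradicting distinctness of the $g_n$; and (ii) in the genuinely mixed branches (some coordinates forward at $y_\infty^i$, some inverse at $x_\infty^j$), it invokes Lemma \ref{lem: having the convergence property}, which requires \emph{additional} data beyond being an approximation sequence — namely an auxiliary sequence $\{a_n\}$ of fixed points of the hyperbolic elements $h_{\zeta(n+1)}^{-1}\circ h_{\zeta(n)}$ with $a_n\to x$ and $g_n(a_n)\to y'$, extracted via the conjugation identity $h_{\zeta(n)}(\Fix_{h_{\zeta(n+1)}^{-1}\circ h_{\zeta(n)}})=\Fix_{h_{\zeta(n+1)}\circ h_{\zeta(n)}^{-1}}$ and the quasi-rainbow trapping argument.

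Concretely, your proposal never uses the third coordinate in the endgame and never constructs the sequence $\{a_n\}$, so the hypotheses of Lemma \ref{lem: having the convergence property} are not secured, and the shortcut you substitute for it (north--south dynamics of telescoped hyperbolics) is false as stated. To repair the argument you would need to (a) separate the branches in which the contradiction comes from an element with three fixed points, and (b) in the mixed branches, carry out the fixed-point transport argument that supplies the data Lemma \ref{lem: having the convergence property} actually needs.
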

 \begin{proof}
Let $G$ be a pants-like $\COL_3$ group and $\mathcal{C}=\{\LL_i\}_{i\in \ZZ_3}$ be a pants-like collection for $G.$ We want to show that every sequence of distinct elements of $G$ has the convergence property. Now we assume that there is a sequence $\{g_n\}_{n=1}^\infty$ of distinct elements of $G$ that does not have the convergence property. Then by Proposition \ref{Bowditch}, the sequence is not properly discontinuous on triples, that is there are two compact subsets $K$ and $L$ of $\Theta (S^1)$ such that $\{m\in \NN : g_m(K)\cap L \neq \emptyset \}$ is infinite. Hence there is a sequence $\{x_n\}_{n=1}^\infty$ in $K$ converging to a point $x_\infty$ in $K$ and a subindex $\alpha$ of $\id_\NN$ such that the sequence $\{g_{\alpha(n)}(x_n)\}_{n=1}^\infty$ converges to a point $y_\infty$ in $L$ by Lemma \ref{not properly discontinuous}. Then we write $x_n=(x_n^1, x_n^2, x_n^3)$ for all $n\in \NN \cup \{\infty\}$ and $y_\infty =(y_\infty^1, y_\infty ^2, y_\infty^3)$ so that for each $i\in \ZZ_3,$ the sequence $\{x_n^i\}_{n=1}^\infty$ in $S^1$ converges to the point $x_\infty^i$ of  $S^1$ and the sequence $\{g_{\alpha(n)}(x_n^i)\}_{n=1}^\infty$ converges to the point $y_\infty^i$ of $S^1.$ Choose a number $\epsilon$ in $\RR$ so that $0<\epsilon<d_{\CC}(x_\infty^i, x_\infty^{i+1})/3$ and $0<\epsilon<d_{\CC}(y_\infty^i, y_\infty^{i+1})/3$ for all $i\in \ZZ_3.$ For each $p\in x_\infty \cup y_\infty,$ we write $N_\epsilon(p)=\{z\in S^1 : d_\CC(p,z)<\epsilon \}.$ 

First if there is a subindex $\beta$ of $\alpha$ such that  for some $i_0\in \ZZ_3,$ $g_{\beta(n)}(x_\infty^{i_0})= y_\infty^{i_0}$ and $g_{\beta(n)}(x_\infty^{{i_0}+1})= y_\infty^{{i_0}+1}$ for all $n\in \NN.$ Then by Lemma \ref{lem: the first type}, the sequence $\{g_{\beta(n)}\}_{n=1}^\infty$ is an approximation sequence to $\{y_\infty^{i_0}, y_\infty^{i_0+1}\}$ and also $\{g_{\beta(n)}^{-1}\}_{n=1}^\infty$  is an approximation sequence to $\{x_\infty^{i_0}, x_\infty^{i_0+1}\}$. Applying Lemma \ref{lem: taking a pre-approximation sequence} to the index $i_0+2,$ we can take a subindex $\gamma$ of $\beta$ so that $\{g_{\gamma(n)}\}_{n=1}^\infty$ is one of the three cases in Lemma \ref{lem: taking a pre-approximation sequence}. In the case where $g_{\gamma(n)}(x_\infty^{i_0+2})=y_\infty^{i_0+2}$ for all $n\in \NN,$  we get that $y_\infty \subset \Fix_{g_{\gamma(n+1)}\circ g_{\gamma(n)}^{-1}}$ for all $n\in \NN.$ This implies that $g_{\gamma(n+1)}\circ g_{\gamma(n)}^{-1}=\id_{S^1}$ for all $n\in \NN$ since every pants-like $\COL_3$ group is Möbius-like. This is in contradiction to that the sequence $\{g_n\}_{n=1}^\infty$ is a sequence of distinct elements of $G.$ The first case is not possible. Hence there is a quasi-rainbow $\{I_n\}_{n=1}^\infty$ in some lamination system in $\mathcal{C}$ such that either $\{(g_{\gamma(n)}, I_n)\}_{n=1}^\infty$ is a pre-approximation sequence at $y_\infty^{{i_0}+2}$ or $\{(g_{\gamma(n)}^{-1}, I_n)\}_{n=1}^\infty$ is a pre-approximation sequence at $x_\infty^{{i_0}+2}.$ When  $\{(g_{\gamma(n)}, I_n)\}_{n=1}^\infty$ is a pre-approximation sequence at $y_\infty^{{i_0}+2},$ there is a number $N$ in $\NN$ such that $\overline{I_N} \subset N_\epsilon(y_\infty^{{i_0}+2})$ since $\{I_n\}_{n=1}^\infty$ is a quasi-rainbow at $y_\infty^{{i_0}+2}.$ Then we can take a subindex $\delta$ of $\id_\NN$ so that  $\overline{I_{\delta(1)}}\subset  N_\epsilon(y_\infty^{{i_0}+2}).$ Note that the sequence $\{(g_{\gamma\circ \delta(n)}, I_{\delta(n)})\}_{n=1}^\infty$ is still a pre-approximation sequence at $y_\infty^{{i_0}+2}.$ Since $y_\infty^{i_0}\notin   N_\epsilon(y_\infty^{{i_0}+2})$ and $y_\infty^{i_0}\notin \overline{I_{\delta(1)}},$ the sequence $\{g_{\gamma\circ \delta(n)}\}_{n=1}^\infty$ is an approximation sequence to $\{y_\infty^{i_0}, y_\infty^{{i_0}+2}\}$ by Lemma \ref{lem: the second type}. Then there is a number $n_0$ in $\NN$ such that the hyperbolic element $g_{\gamma\circ \delta(n_0+1)}\circ g_{\gamma\circ \delta(n_0)}^{-1}$ has exactly one fixed point in each of $N_\epsilon(y_\infty^{i_0})$ and $N_\epsilon(y_\infty^{{i_0}+2}).$ Since  $y_\infty^{{i_0}+1}\in \Fix_{g_{\gamma\circ \delta(n_0+1)}\circ g_{\gamma\circ \delta(n_0)}^{-1}}, $ the element $g_{\gamma\circ \delta(n_0+1)}\circ g_{\gamma\circ \delta(n_0)}^{-1}$ has at least three fixed point by the choice of $\epsilon.$ Hence $g_{\gamma\circ \delta(n_0+1)}\circ g_{\gamma\circ \delta(n_0)}^{-1}$ must be $\id_{S^1}$ since $G$ is Möbius-like. This is a contradiction since $\{g_n\}_{n=1}^\infty$ is a sequence of distinct elements of $G.$ Likewise the case where $\{(g_{\gamma(n)}^{-1}, I_n)\}_{n=1}^\infty$ is a pre-approximation sequence at $x_\infty^{{i_0}+2}$ is also impossible. Therefore there is no such a subindex $\beta.$

Then we consider the case where there is a subindex $\beta$ of $\alpha$ such that for some $i_0\in \ZZ_3,$ $g_{\beta(n)}(x_\infty^{i_0})= y_\infty^{i_0}$ for all $n\in \NN.$ Applying Lemma \ref{lem: taking a pre-approximation sequence} to the index $i_0+1,$ we can take a subindex $\gamma$ of $\id_\NN$ so that $\{g_{\beta \circ \gamma(n)}\}_{n=1}^\infty$ is one of the second and third cases in Lemma \ref{lem: taking a pre-approximation sequence} since the first case is ruled out by the previous case. Namely there is a quasi-rainbow $\{J_n^{i_0+1}\}_{n=1}^\infty$ in some lamination system of $\mathcal{C}$ such that either $\{(g_{\beta \circ \gamma(n)}, J_n^{i_0+1})\}_{n=1}^\infty$ is a pre-approximation sequence at $y_\infty^{i_0+1}$ or  $\{(g_{\beta \circ \gamma(n)}^{-1}, J_n^{i_0+1})\}_{n=1}^\infty$ is a pre-approximation sequence at $x_\infty^{i_0+1}.$  Applying Lemma \ref{lem: taking a pre-approximation sequence} again to the index $i_0+2,$ we can take a subindex $\delta$ of $\id_\NN$ so that $\{g_{\beta \circ \gamma\circ \delta(n)}\}_{n=1}^\infty$ is one of the second and third cases in Lemma \ref{lem: taking a pre-approximation sequence}, that is  there is a quasi-rainbow $\{I_n^{i_0+2}\}_{n=1}^\infty$ in some lamination system of $\mathcal{C}$ such that either $\{(g_{\beta \circ \gamma\circ \delta(n)}, I_n^{i_0+2})\}_{n=1}^\infty$ is a pre-approximation sequence at $y_\infty^{i_0+2}$ or  $\{(g_{\beta \circ \gamma\circ \delta(n)}^{-1}, I_n^{i_0+2})\}_{n=1}^\infty$ is a pre-approximation sequence at $x_\infty^{i_0+2}.$ Now for brevity we write $h_n=g_{\beta \circ \gamma\circ \delta(n)}$ and  $I_n^{i_0+1}=J_{\delta(n)}^{i_0+1}$ for all $n\in \NN.$

First we consider the case where $\{(h_n, I_n^{i_0+1})\}_{n=1}^\infty$ and $\{(h_n, I_n^{i_0+2})\}_{n=1}^\infty$ are pre-approximation sequences at  $y_\infty^{i_0+1}$ and  $y_\infty^{i_0+2},$ respectively. Since $\{I_n^{i_0+1}\}_{n=1}^\infty$ and $\{I_n^{i_0+2}\}_{n=1}^\infty$ are quasi-rainbows, there is a number $N$ in $\NN$ such that $\overline{I_N^{i_0+1}}\subset N_\epsilon (y_\infty^{i_0+1})$ and $\overline{I_N^{i_0+2}}\subset N_\epsilon (y_\infty^{i_0+2}).$ Hence we can take a subindex $\zeta$ of $\id_\NN$ so that  $\overline{I_{\zeta(1)}^{i_0+1}}\subset N_\epsilon (y_\infty^{i_0+1})$ and $\overline{I_{\zeta(1)}^{i_0+2}}\subset N_\epsilon (y_\infty^{i_0+2}).$ Since $\overline{I_{\zeta(1)}^{i_0+1}}\cap \overline{I_{\zeta(1)}^{i_0+2}}=\emptyset,$ the sequence $\{h_{\zeta(n)}\}_{n=1}^\infty$ is an approximation sequence to $\{y_\infty^{i_0+1}, y_\infty^{i_0+2}\}$ by Lemma \ref{lem: the third type}. Hence we can choose a number $M$ in $\NN$ such that the element $h_{\zeta(M+1)}\circ h_{\zeta(M)}^{-1}$ has exactly one fixed point in each of $N_\epsilon (y_\infty^{i_0+1})$ and $N_\epsilon (y_\infty^{i_0+2}).$ On the other hand, the element $y_\infty^{i_0} \in \Fix_{h_{\zeta(M+1)}\circ h_{\zeta(M)}^{-1}}.$ Then the element  $h_{\zeta(M+1)}\circ h_{\zeta(M)}^{-1}$ has at least three fixed points by the choice of $\epsilon$ and this  is a contradiction since $h_{\zeta(M+1)}\circ h_{\zeta(M)}^{-1}$  is hyperbolic. Likewise  the case where $\{(h_n^{-1}, I_n^{i_0+1})\}_{n=1}^\infty$ and $\{(h_n^{-1}, I_n^{i_0+2})\}_{n=1}^\infty$ are pre-approximation sequences at  $x_\infty^{i_0+1}$ and  $x_\infty^{i_0+2}$ respectively is not possible.

Next we consider the case where  $\{(h_n^{-1}, I_n^{i_0+1})\}_{n=1}^\infty$ and $\{(h_n, I_n^{i_0+2})\}_{n=1}^\infty$ are pre-approximation sequences at  $x_\infty^{i_0+1}$ and  $y_\infty^{i_0+2},$ respectively. As $\{I_n^{i_0+1}\}_{n=1}^\infty$ and $\{I_n^{i_0+2}\}_{n=1}^\infty$ are quasi-rainbows at $x_\infty^{i_0+1}$ and $y_\infty^{i_0+2},$ respectively, there is a subindex $\zeta$ of $\id_\NN$ such that $\overline{I_{\zeta(1)}^{i_0+1}}\subset N_\epsilon(x_\infty^{i_0+1})$ and $\overline{I_{\zeta(1)}^{i_0+2}}\subset N_\epsilon(y_\infty^{i_0+2}).$ Then since $x_\infty^{i_0}\notin \overline{I_{\zeta(1)}^{i_0+1}}$ and $y_\infty^{i_0}\notin \overline{I_{\zeta(1)}^{i_0+2}},$ by Lemma \ref{lem: the second type}, the sequence $\{h_{\zeta(n)}^{-1}\}_{n=1}^\infty$ is an approximation sequence to $\{x_\infty^{i_0}, x_\infty^{i_0+1}\}$ and  the sequence  $\{h_{\zeta(n)}\}_{n=1}^\infty$ is an approximation sequence to $\{y_\infty^{i_0}, y_\infty^{i_0+2}\}.$ Now we write $\Fix_{h_{\zeta(n+1)}^{-1}\circ h_{\zeta(n)}}=\{x_\infty^{i_0}, a_n\}$ and $\Fix_{h_{\zeta(n+1)}\circ h_{\zeta(n)}^{-1}}=\{y_\infty^{i_0}, b_n\}.$ Then the two sequences $\{a_n\}_{n=1}^\infty$ and $\{b_n\}_{n=1}^\infty$ converge to $x_\infty^{i_0+1}$ and $y_\infty^{i_0+2}.$ Note that for each $n\in \NN,$
$$h_{\zeta(n)}(\Fix_{h_{\zeta(n+1)}^{-1}\circ h_{\zeta(n)}})=h_{\zeta(n)}(\Fix_{h_{\zeta(n)}^{-1}\circ h_{\zeta(n+1)}})
=\Fix_{h_{\zeta(n)}\circ(h_{\zeta(n)}^{-1}\circ h_{\zeta(n+1)})\circ h_{\zeta(n)}^{-1}}=\Fix_{ h_{\zeta(n+1)}\circ h_{\zeta(n)}^{-1}}.$$
Hence $h_{\zeta(n)}(a_n)=b_n$ for all $n\in \NN.$ Now we have that $\{x_{ \beta' \circ \gamma\circ \delta \circ \zeta(n)}^{i_0+1}\}_{n=1}^\infty$ and $\{x_{\beta' \circ  \gamma\circ \delta \circ \zeta(n)}^{i_0+2}\}_{n=1}^\infty$ converge to $x_\infty ^{i_0+1}$ and $x_\infty^{i_0+2},$ respectively and that $\{h_{\zeta(n)}(x_{\beta' \circ \gamma\circ \delta \circ \zeta(n)}^{i_0+1})\}_{n=1}^\infty$ and $\{h_{\zeta(n)}(x_{\beta' \circ  \gamma\circ \delta \circ \zeta(n)}^{i_0+2})\}_{n=1}^\infty$ converge to $y_\infty ^{i_0+1}$ and $y_\infty^{i_0+2},$ respectively, where $\beta'$ is the subindex of $\id_\NN$ satisfying that $\beta=\alpha\circ \beta'$ . Moreover for each $n\in \NN,$ $a_n$ is a fixed point of the hyperbolic element $h_{\zeta(n+1)}^{-1}\circ h_{\zeta(n)}$, and  $\{a_n\}_{n=1}^\infty$ and $\{h_{\zeta(n)}(a_n)\}_{n=1}^\infty$ converge to $x_\infty^{i_0+1}$ and $y_\infty^{i_0+2},$ respectively. Also the sequence $\{h_{\zeta(n)}\}_{n=1}^\infty$ is an approximation sequence.  Therefore by Lemma \ref{lem: having the convergence property}, the sequence  $\{h_{\zeta(n)}\}_{n=1}^\infty$ has the convergence property and so $\{g_n\}_{n=1}^\infty$ has the convergence property.  However this is in contradiction to the assumption that $\{g_n\}_{n=1}^\infty$ does not have the convergence property. Likewise we can see that the case where $\{(h_n, I_n^{i_0+1})\}_{n=1}^\infty$ and $\{(h_n^{-1}, I_n^{i_0+2})\}_{n=1}^\infty$ are pre-approximation sequences at  $y_\infty^{i_0+1}$ and  $x_\infty^{i_0+2},$ respectively, is not possible. Therefore, there is no such subindex $\beta.$ 

Then we consider the case where there is a subindex $\beta$ of $\alpha$ such that for each $i\in \ZZ_3,$ we can take a quasi-rainbow $\{I_n^{i}\}_{n=1}^\infty$ in some lamination system of $\mathcal{C}$ at $y_\infty^i$ so that $\{(g_{\beta(n)}, I_n^i)\}_{n=1}^\infty$ is a pre-approximation sequence at $y_\infty^i.$ Then there is a subindex $\gamma$ of $\beta$ such that for each $i\in \ZZ_3,$ $\overline{I_{\gamma(1)}^i} \subset N_\epsilon(y_\infty^i).$ By the choice of $\epsilon,$ for each $i\in \ZZ_3,$ $\overline{I_{\gamma(1)}^{i}}\cap \overline{I_{\gamma(1)}^{i+1}}=\emptyset.$ Hence by Lemma \ref{lem: the third type},  for each $i\in \ZZ_3,$ the sequence $\{g_{\gamma(n)}\}_{n=1}^\infty$ is an approximation sequence to $\{y_\infty^{i}, y_\infty^{i+1}\}.$ Then there is a number  $n_0\in \NN$ such that for each $i\in \ZZ_3,$  $g_{\gamma(n_0+1)}\circ g_{\gamma(n_0)}^{-1}$ has a fixed point in $N_\epsilon(y_\infty^i).$ This implies that the element  $g_{\gamma(n_0+1)}\circ g_{\gamma(n_0)}^{-1}$ has at least three fixed point. This is a contradiction since $g_{\gamma(n_0+1)}\circ g_{\gamma(n_0)}^{-1}$ is hyperbolic. Therefore, there is no such $\beta.$ Likewise, there is no subindex $\beta$ of $\alpha$ such that for each $i\in \ZZ_3,$ we can take a quasi-rainbow $\{I_n^{i}\}_{n=1}^\infty$ in some lamination system of $\mathcal{C}$ at $x_\infty^i$ so that $\{(g_{\beta(n)}^{-1}, I_n^i)\}_{n=1}^\infty$ is a pre-approximation sequence at $x_\infty^i.$ 

Now we apply Lemma \ref{lem: taking a pre-approximation sequence} to the index $1.$ Then by the previous discussion, the first case of Lemma  \ref{lem: taking a pre-approximation sequence} can not occur so there is a subindex $\beta$ of $\id_\NN$ and a quasi-rainbow $\{I_n^1\}_{n=1}^\infty$ in some lamination system of $\mathcal{C}$ such that either $\{(g_{\alpha\circ \beta(n)}, I_n^1)\}_{n=1}^\infty$ is a pre-approximation sequence at $y_\infty^1$ or  $\{(g_{\alpha\circ \beta(n)}^{-1}, I_n^1)\}_{n=1}^\infty$ is a pre-approximation sequence at $x_\infty^1.$ Then we apply Lemma \ref{lem: taking a pre-approximation sequence} to the index $2.$ Like the previous case,  there is a subindex $\gamma$ of $\id_\NN$ and a quasi-rainbow $\{I_n^2\}_{n=1}^\infty$ in some lamination system of $\mathcal{C}$ such that either $\{(g_{\alpha\circ \beta\circ\gamma(n)}, I_n^2)\}_{n=1}^\infty$ is a pre-approximation sequence at $y_\infty^2$ or $\{(g_{\alpha\circ \beta\circ\gamma(n)}^{-1}, I_n^2)\}_{n=1}^\infty$ is a pre-approximation sequence at $x_\infty^2.$ Again we apply Lemma \ref{lem: taking a pre-approximation sequence} to the index $3.$ Then there is a subindex $\delta$ of $\id_\NN$  and a quasi-rainbow $\{I_n^3\}_{n=1}^\infty$ such that either  $\{(g_{\alpha\circ \beta\circ\gamma\circ \delta(n)}, I_n^3)\}_{n=1}^\infty$ is a pre-approximation sequence at $y_\infty^3$ or $\{(g_{\alpha\circ \beta\circ\gamma\circ \delta(n)}^{-1}, I_n^3)\}_{n=1}^\infty$ is a pre-approximation sequence at $x_\infty^3.$ For brevity, we write $h_n=g_{\alpha\circ \beta\circ \gamma \circ \delta(n)},$ $J_n^1=I_{\gamma\circ \delta(n)}^1,$ $J_n^2=I_{\delta(n)}^2,$ and $J_n^3=I_n^3$ for all $n\in \NN.$ Also we write $u_n^i=x_{ \beta\circ \gamma \circ \delta(n)}^i$ and $v_n^i=y_{ \beta\circ \gamma \circ \delta(n)}^i$ for all $n\in \NN$ and for all $i\in \ZZ_3.$ 

Then by the previous discussion, there are only two cases which we need to deal with.
The first case is that there is an index $i_0 \in \ZZ_3$ such that  the two sequences $\{(h_n, J_n^{i_0})\}_{n=1}^\infty$ and $\{(h_n, J_n^{i_0+1})\}_{n=1}^\infty$ are pre-approximation sequences at $y_\infty^{i_0}$ and $y_\infty^{i_0+1},$ respectively, and the sequence  $\{(h_n^{-1}, J_n^{i_0+2})\}_{n=1}^\infty$ is a pre-approximation sequence at $x_\infty^{i_0+2}.$ 
 The second case is that there is an index $i_0 \in \ZZ_3$ such that  the two sequences $\{(h_n^{-1}, J_n^{i_0})\}_{n=1}^\infty$ and $\{(h_n^{-1}, J_n^{i_0+1})\}_{n=1}^\infty$ are pre-approximation sequences at $x_\infty^{i_0}$ and $x_\infty^{i_0+1},$ respectively, and the sequence  $\{(h_n, J_n^{i_0+2})\}_{n=1}^\infty$ is a pre-approximation sequence at $y_\infty^{i_0+2}.$ 

First we consider the first case. There is a subindex $\zeta$ of $\id_\NN$ such that $\overline{J_{\zeta(1)}^{i_0}} \subset N_\epsilon(y_\infty^{i_0})$ and $\overline{J_{\zeta(1)}^{i_0+1}} \subset N_\epsilon(y_\infty^{i_0+1}).$  By Lemma \ref{lem: the third type}, $\{h_{\zeta(n)}\}_{n=1}^\infty$ is an approximation sequence to $\{y_\infty^{i_0}, y_\infty^{i_0+1}\}.$ Note that for each $n\in \NN,$ $h_{\zeta(n+1)}\circ h_{\zeta(n)}^{-1}$ is hyperbolic so $h_{\zeta(n+1)}^{-1}\circ h_{\zeta(n)}$ is also hyperbolic since $$h_{\zeta(n+1)}^{-1}\circ h_{\zeta(n)}=h_{\zeta(n)}^{-1}\circ(h_{\zeta(n)}\circ h_{\zeta(n+1)}^{-1})\circ h_{\zeta(n)}=h_{\zeta(n)}^{-1}\circ(h_{\zeta(n+1)}\circ h_{\zeta(n)}^{-1})^{-1}\circ h_{\zeta(n)}.$$ Moreover, since $\{(h_{\zeta(n)}^{-1}, J_{\zeta(n)}^{i_0+2})\}_{n=1}^\infty$ is a pre-approximation sequence at $x_\infty^{i_0+2},$ there is a point $\tilde{x}$ in $S^1$ such that $h_{\zeta(n)}^{-1}(\tilde{x})\in J_{\zeta(n)}^{i_0+2}$ for all $n\in\NN$ and so $$h_{\zeta(n+1)}^{-1}(\tilde{x}) =(h_{\zeta(n+1)}^{-1}\circ h_{\zeta(n)})( h_{\zeta(n)}^{-1}(\tilde{x}))\in (h_{\zeta(n+1)}^{-1}\circ h_{\zeta(n)})(J_{\zeta(n)}^{i_0+2}) \cap J_{\zeta(n+1)}^{i_0+2}\subset (h_{\zeta(n+1)}^{-1}\circ h_{\zeta(n)})(J_{\zeta(n)}^{i_0+2}) \cap J_{\zeta(n)}^{i_0+2}.$$ Hence we can claim that for each $n\in \NN,$ $\overline{J_{\zeta(n)}^{i_0+2}}$ has a fixed point of the hyperbolic element $h_{\zeta(n+1)}^{-1}\circ h_{\zeta(n)}.$  For each $n\in \NN,$ since $(h_{\zeta(n+1)}^{-1}\circ h_{\zeta(n)})(J_{\zeta(n)}^{i_0+2}) \cap J_{\zeta(n)}^{i_0+2}\neq \emptyset,$ there are three possible cases: $(h_{\zeta(n+1)}^{-1}\circ h_{\zeta(n)})(J_{\zeta(n)}^{i_0+2}) \subset J_{\zeta(n)}^{i_0+2};$ $(h_{\zeta(n+1)}^{-1}\circ h_{\zeta(n)})(J_{\zeta(n)}^{i_0+2})^* \subset  J_{\zeta(n)}^{i_0+2};$ $J_{\zeta(n)}^{i_0+2}\subset (h_{\zeta(n+1)}^{-1}\circ h_{\zeta(n)})(J_{\zeta(n)}^{i_0+2}).$ In the first and third cases, the claim follows at once so we only need to consider the second case. Fix $n_0\in \NN.$ Assume that  $(h_{\zeta(n_0+1)}^{-1}\circ h_{\zeta(n_0)})(J_{\zeta(n_0)}^{i_0+2})^* \subset  J_{\zeta(n_0)}^{i_0+2}.$ If there is a point $a$ in $\Fix_{h_{\zeta(n_0+1)}^{-1}\circ h_{\zeta(n_0)}}-\overline{J_{\zeta(n_0)}^{i_0+2}},$ then   
$$a\in (J_{\zeta(n_0)}^{i_0+2})^* \subset (h_{\zeta(n_0+1)}^{-1}\circ h_{\zeta(n_0)})(J_{\zeta(n_0)}^{i_0+2}).$$ This implies that $ a=(h_{\zeta(n_0+1)}^{-1}\circ h_{\zeta(n_0)})^{-1}(a)\in J_{\zeta(n_0)}^{i_0+2}$ as  $\Fix_{h_{\zeta(n_0+1)}^{-1}\circ h_{\zeta(n_0)}} =\Fix_{(h_{\zeta(n_0+1)}^{-1}\circ h_{\zeta(n_0)})^{-1}},$ so this is a contraction. Therefore $\Fix_{h_{\zeta(n_0+1)}^{-1}\circ h_{\zeta(n_0)}}\subset \overline{J_{\zeta(n_0)}^{i_0+2}}$ and the claim is proved. Therefore we can take a sequence $\{a_n\}_{n=1}^\infty$ so that for each $n\in \NN,$ $a_n\in \overline{J_{\zeta(n)}^{i_0+2}} \cap \Fix_{h_{\zeta(n+1)}^{-1}\circ h_{\zeta(n)}}.$ Then as $\{J_{\zeta(n)}^{i_0+2}\}_{n=1}^\infty$ is a quasi-rainbow at $x_\infty^{i_0+2},$ the sequence $\{a_n\}_{n=1}^\infty$ converges to $x_\infty^{i_0+2}.$   Note again that for each $n\in \NN,$
$$h_{\zeta(n)}(\Fix_{h_{\zeta(n+1)}^{-1}\circ h_{\zeta(n)}})=h_{\zeta(n)}(\Fix_{h_{\zeta(n)}^{-1}\circ h_{\zeta(n+1)}})
=\Fix_{h_{\zeta(n)}\circ(h_{\zeta(n)}^{-1}\circ h_{\zeta(n+1)})\circ h_{\zeta(n)}^{-1}}=\Fix_{ h_{\zeta(n+1)}\circ h_{\zeta(n)}^{-1}}.$$
Hence the sequence $\{h_{\zeta(n)}(a_n)\}_{n=1}^\infty$ has at most two subsequential limits belonging to $\{y_\infty^{i_0}, y_\infty^{i_0+1}\}$ as the sequence $\{\Fix_{h_{\zeta(n+1)}\circ h_{\zeta(n)}^{-1}}\}_{n=1}^\infty$ converges to $\{y_\infty^{i_0}, y_\infty^{i_0+1}\}.$ Therefore,  there is a subindex $\eta$ of $\id_\NN$ such that the sequence $\{h_{\zeta\circ \eta(n)}(a_{\eta(n)})\}_{n=1}^\infty$ converges to $y_\infty^{j_0}$ for some $j_0\in \{i_0, i_0+1\}.$ Now we have that the two sequences $\{u_{\zeta\circ \eta(n)}^{i_0+2}\}_{n=1}^\infty$ and $\{u_{\zeta\circ \eta(n)}^{j_0}\}_{n=1}^\infty$ converge to $x_\infty^{i_0+2}$ and $x_\infty^{j_0},$ respectively, and  two sequences $\{h_{\zeta\circ \eta(n)}(u_{\zeta\circ \eta(n)}^{i_0+2})\}_{n=1}^\infty$ and $\{h_{\zeta\circ \eta(n)}(u_{\zeta\circ \eta(n)}^{j_0})\}_{n=1}^\infty$ converge to $y_\infty^{i_0+2}$ and $y_\infty^{j_0},$ respectively. Moreover, for each $n\in \NN,$ $a_{\eta(n)}$ is a fixed point of the hyperbolic element $h_{\zeta(\eta(n)+1)}^{-1}\circ h_{\zeta(\eta(n))},$ and  the two sequences $\{a_{\eta(n)}\}_{n=1}^\infty$ and $\{h_{\zeta\circ \eta(n)}(a_{\eta(n)})\}_{n=1}^\infty$ converge to $x_\infty^{i_0+2}$ and $y_\infty^{j_0},$ respectively. Also the sequence $\{h_{\zeta\circ \eta(n)}\}_{n=1}^\infty$ is an approximation sequence as  the sequence $\{h_{\zeta(n)}\}_{n=1}^\infty$ is an approximation sequence. Therefore by Lemma \ref{lem: having the convergence property}, the sequence $\{h_{\zeta\circ \eta(n)}\}_{n=1}^\infty$ has the convergence property so the sequence $\{g_n\}_{n=1}^\infty$ has the convergence property. This is in contradiction to the assumption that  the sequence $\{g_n\}_{n=1}^\infty$ does not have the convergence property. Hence, the first case is not possible. Likewise, in the second case, we can show that   the sequence $\{g_n^{-1}\}_{n=1}^\infty$ has the convergence property and this implies that the sequence $\{g_n\}_{n=1}^\infty$ has the convergence property. This is also a contradiction. Therefore, we can conclude that   there is no sequence $\{g_n\}_{n=1}^\infty$ of distinct elements of $G$ that does not have the convergence property. Thus, $G$ is a convergence group.

 \end{proof}

\section{The elementary geometry and topology of $2$-dimensional hyperbolic orbifolds}\label{sec:elementary facts of orbifolds}
\subsection{Simple closed curves and simple closed geodesics in $2$-dimensional hyperbolic orbifolds.}
Let $S$ be a complete $2$-dimensional hyperbolic orbifold and $\alpha$ be a closed curve from $I=[0,1]$ to $S.$ Fix $G(S).$ Then
a continuous map $\overline{\alpha}$ from $\RR$ to $\HH$ is called a \emph{full lift} of $\alpha$ if $\pi_S(\overline{\alpha})$ is periodic with period $1$ and  the following diagram commutes. 
 \begin{center}
\begin{tikzcd}
\RR \arrow[r, "\overline{\alpha}"] & \HH \arrow[d, "\pi_S"] \\
{[0,1]} \arrow[r, "\alpha"] \arrow[u, "i"] & S
 \end{tikzcd}
\end{center}

For our purposes, we say that the curve $\alpha$ is called \emph{regular} if it does not pass through a cone point, that is, $\alpha(I) \cap \Sigma(S)=\emptyset.$ Namely, a regular curve is a curve in $S-\Sigma(S)$. Assume that $\alpha$ is regular and simple. Note that as $\pi_S$ is a branched covering, the restriction $\pi_S|(\HH-\pi_S^{-1}(\Sigma(S)))$ is a covering map onto $S^o.$ Hence once a lift of $\alpha(0)$ is chosen, a lift $\tilde{\alpha}$ of the curve $\alpha$ is uniquely determined and there is a unique element $g \in G(S)$ such that $\tilde{\alpha}(1)=g(\tilde{\alpha}(0)).$ Fix a lift of $\alpha(0).$ Then we can define a map $\beta$ from $\RR$ to $\HH$ as 
$$\beta(t)=\tilde{\alpha}(t)$$
for all $t\in [0,1]$
and 
$$\beta(t+1)=g(\beta(t))$$
for all $t\in \RR$ where $g$ is the element such that $\tilde{\alpha}(1)=g(\tilde{\alpha}(0)).$
By construction, $\beta$ is obviously a full lift of $\alpha.$ Moreover, since the restriction $\pi_S|(\HH-\pi_S^{-1}(\Sigma(S)))$ is a covering map onto $S^o,$ any full lift $\overline{\alpha}$ of $\alpha$ is $h\beta$ for some $h\in G(S).$
Now we summarize these properties in the following propositions. 

\begin{prop}
Let $S$ be a complete $2$-dimensional hyperbolic orbifold and $\alpha$ be a regular simple closed curve in $S.$ Fix $G(S).$ Suppose that $\tilde{p}$ is a lift of $\alpha(0).$ Then there is a unique full lift $\overline{\alpha}$ of $\alpha$ such that $\overline{\alpha}(0)=\tilde{p}.$
\end{prop}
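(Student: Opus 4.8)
The plan is to reduce the statement to the unique path-lifting property of an honest (unbranched) covering map. Since $\alpha$ is regular, its image lies in the regular locus $S^o = \operatorname{int}(C(S))/G(S)-\Sigma(S)$, and the restriction $\pi_S|(\HH-\pi_S^{-1}(\Sigma(S)))$ onto $S^o$ is a covering map; moreover it is a regular covering whose deck transformation group is $G(S)$, acting freely on $\HH-\pi_S^{-1}(\Sigma(S))$, because the stabilizer of any point over a regular point is trivial. I would record at the outset that the fibre of $\pi_S$ over a regular point is a single free $G(S)$-orbit, since this freeness is precisely what pins down the relevant group elements uniquely.

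For existence I would follow the construction already indicated before the statement. First I would lift $\alpha$ through the covering $\pi_S|(\HH-\pi_S^{-1}(\Sigma(S)))$ starting at $\tilde{p}$, obtaining a unique continuous $\tilde{\alpha}\colon[0,1]\to \HH-\pi_S^{-1}(\Sigma(S))$ with $\tilde{\alpha}(0)=\tilde{p}$ and $\pi_S\circ\tilde{\alpha}=\alpha$. As $\alpha(0)=\alpha(1)$, both $\tilde{p}$ and $\tilde{\alpha}(1)$ lie in the fibre over $\alpha(0)$, so by freeness there is a unique $g\in G(S)$ with $\tilde{\alpha}(1)=g(\tilde{p})$. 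Then I would define $\beta\colon\RR\to\HH$ by $\beta(t)=g^{n}\big(\tilde{\alpha}(t-n)\big)$ for $t\in[n,n+1]$; the relation $\tilde{\alpha}(1)=g(\tilde{\alpha}(0))$ makes this well defined at the integers, and continuity of each $g^{n}\circ\tilde{\alpha}$ together with agreement at the overlaps gives continuity of $\beta$. Finally I would check that $\beta$ is a full lift with $\beta(0)=\tilde{p}$: the relation $\beta(t+1)=g(\beta(t))$ gives $\pi_S(\beta(t+1))=\pi_S(\beta(t))$, so $\pi_S\circ\beta$ has period $1$, while $\pi_S\circ\beta=\alpha$ on $[0,1]$, so the diagram commutes.

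For uniqueness, let $\overline{\alpha}$ be any full lift with $\overline{\alpha}(0)=\tilde{p}$. Since $\pi_S\circ\overline{\alpha}$ has period $1$ and agrees with $\alpha$ on $[0,1]$, it takes values in $\alpha([0,1])\subseteq S^o$; hence $\overline{\alpha}$ maps into $\HH-\pi_S^{-1}(\Sigma(S))$ and is a lift of the path $c:=\pi_S\circ\overline{\alpha}$. Restricting to $[0,1]$, the map $\overline{\alpha}|_{[0,1]}$ is a lift of $\alpha$ starting at $\tilde{p}$, so $\overline{\alpha}|_{[0,1]}=\tilde{\alpha}$ by unique path lifting. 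The key remaining point is to upgrade the mere periodicity of $c$ to the equivariance enjoyed by $\beta$: the two maps $t\mapsto\overline{\alpha}(t+1)$ and $t\mapsto\overline{\alpha}(t)$ are both lifts of $c$ to the connected cover, so they differ by a single deck transformation $g'\in G(S)$, that is, $\overline{\alpha}(t+1)=g'(\overline{\alpha}(t))$ for all $t$. Evaluating at $t=0$ gives $\tilde{\alpha}(1)=\overline{\alpha}(1)=g'(\tilde{p})$, so freeness forces $g'=g$. Thus $\overline{\alpha}$ obeys the same recursion as $\beta$ and agrees with it on $[0,1]$, whence $\overline{\alpha}=\beta$ on all of $\RR$ by induction over the intervals $[n,n+1]$.

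The only genuinely non-formal step is the last one: deducing $\overline{\alpha}(t+1)=g'(\overline{\alpha}(t))$ with a \emph{single} $g'$ from the definition of a full lift, which demands only that $\pi_S\circ\overline{\alpha}$ be periodic. I expect this to be the main (albeit mild) obstacle, and the resolution is exactly that $\RR$ is connected and $G(S)$ acts freely on $\HH-\pi_S^{-1}(\Sigma(S))$, so two lifts of one path that agree up to a deck transformation at a single parameter value agree up to that same deck transformation everywhere.
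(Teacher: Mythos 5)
Your argument is correct and follows essentially the same route as the paper: restrict to the honest covering $\pi_S|(\HH-\pi_S^{-1}(\Sigma(S)))$ over the regular locus $S^o$, use unique path lifting to get $\tilde{\alpha}$ and the unique $g\in G(S)$ with $\tilde{\alpha}(1)=g(\tilde{p})$ (freeness of the $G(S)$-action on fibres over regular points), and extend by the recursion $\beta(t+1)=g(\beta(t))$. Your extra care in the uniqueness step — upgrading mere periodicity of $\pi_S\circ\overline{\alpha}$ to equivariance under a single deck transformation — is a worthwhile refinement of a point the paper passes over silently, but it does not change the substance of the argument.
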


\begin{prop}
Let $S$ be a complete $2$-dimensional hyperbolic orbifold and $\alpha$ be a regular simple closed curve in $S.$ Fix $G(S).$ Suppose that $\overline{\alpha}$ is a full lift of $\alpha.$ Then there is a unique element $g$ in $G(S)$ such that $\overline{\alpha}(t+1)=g(\overline{\alpha}(t))$ for all $t\in \RR.$ We call such a $g$ the \emph{holonomy} of $\overline{\alpha}$ and denote $g$ by $\hol(\overline{\alpha}).$
\end{prop}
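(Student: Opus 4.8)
The plan is to realize the desired relation $\overline{\alpha}(t+1)=g(\overline{\alpha}(t))$ as the comparison of two lifts of a single periodic map along the honest covering that $\pi_S$ restricts to away from the cone points. First I would confirm that $\overline{\alpha}$ never meets the ramification locus, so that this covering-space machinery applies. Since $S$ is complete we have $C(S)=\HH$ and $S^o=\HH/G(S)\setminus\Sigma(S)$, and since $\alpha$ is regular, $\alpha(I)\subseteq S^o$. Writing $\hat{\alpha}=\pi_S\circ\overline{\alpha}\colon\RR\to S$, the commuting square gives $\hat{\alpha}(t)=\alpha(t)$ for $t\in[0,1]$, while the periodicity of $\pi_S\circ\overline{\alpha}$ gives $\hat{\alpha}(t+1)=\hat{\alpha}(t)$ for all $t\in\RR$; hence $\hat{\alpha}(\RR)=\alpha(I)\subseteq S^o$, so $\overline{\alpha}(\RR)\subseteq\pi_S^{-1}(S^o)=\HH\setminus\pi_S^{-1}(\Sigma(S))$. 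As noted above, $q:=\pi_S|_{\HH\setminus\pi_S^{-1}(\Sigma(S))}$ is an unbranched covering onto $S^o$; moreover points outside $\pi_S^{-1}(\Sigma(S))$ are non-ramification points and so have trivial $G(S)$-stabilizer, whence $G(S)$ acts freely and properly discontinuously there. Thus $q$ is a regular covering whose deck transformation group is exactly $G(S)$.

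For existence I would compare the two continuous maps $F_1,F_2\colon\RR\to\HH\setminus\pi_S^{-1}(\Sigma(S))$ defined by $F_1(t)=\overline{\alpha}(t)$ and $F_2(t)=\overline{\alpha}(t+1)$. By periodicity $q\circ F_1=q\circ F_2=\hat{\alpha}$, so $F_1$ and $F_2$ are two lifts of the same map through $q$. For each $t$, the points $F_1(t),F_2(t)$ lie in a common $q$-fiber, hence in a single $G(S)$-orbit, so there is a unique $g_t\in G(S)$ with $F_2(t)=g_t(F_1(t))$ (unique by freeness). The heart of the argument is that $t\mapsto g_t$ is locally constant: around any $t_0$ choose an evenly covered neighborhood $U$ of $\hat{\alpha}(t_0)$ with sheet $W\ni F_1(t_0)$, so that $g_{t_0}W$ is the sheet containing $F_2(t_0)$; by continuity $F_1$ and $F_2$ carry a neighborhood of $t_0$ into $W$ and $g_{t_0}W$ respectively, and since $q(F_2(t))=\hat{\alpha}(t)=q(F_1(t))=q(g_{t_0}F_1(t))$ with both $F_2(t)$ and $g_{t_0}F_1(t)$ lying in the sheet $g_{t_0}W$ on which $q$ is injective, we conclude $F_2(t)=g_{t_0}(F_1(t))$, i.e. $g_t=g_{t_0}$ near $t_0$. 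As $\RR$ is connected, $g_t$ is a single constant $g\in G(S)$, giving $\overline{\alpha}(t+1)=g(\overline{\alpha}(t))$ for all $t$. (This simultaneously recovers the earlier assertion that every full lift is $h\beta$.)

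Uniqueness is then immediate: if $g'\in G(S)$ also satisfies $\overline{\alpha}(t+1)=g'(\overline{\alpha}(t))$ for all $t$, then $g^{-1}g'$ fixes $\overline{\alpha}(0)$, a regular point of $\HH$ lying outside $\pi_S^{-1}(\Sigma(S))$ and therefore with trivial $G(S)$-stabilizer; hence $g^{-1}g'=\mathrm{id}$ and $g'=g$. I expect the only genuine difficulty to be the locally-constant step in the existence part, which is the standard deck-transformation comparison for regular coverings; everything else is the bookkeeping that keeps $\overline{\alpha}$ inside the regular locus so that $q$ really is a covering by a free $G(S)$-action.
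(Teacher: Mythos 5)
Your proof is correct and rests on exactly the same mechanism as the paper's discussion preceding the proposition: the restriction of $\pi_S$ to $\HH-\pi_S^{-1}(\Sigma(S))$ is an honest covering onto $S^o$ with $G(S)$ acting freely on the fibers, so the deck-transformation comparison of the two lifts $t\mapsto\overline{\alpha}(t)$ and $t\mapsto\overline{\alpha}(t+1)$ yields a locally constant, hence constant, element $g$, and freeness at the regular point $\overline{\alpha}(0)$ gives uniqueness. The paper phrases this slightly differently (it builds the particular full lift $\beta$ with $\beta(t+1)=g(\beta(t))$ and observes that every full lift equals $h\beta$), but the substance is identical.
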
  

\begin{prop}
Let $S$ be a complete $2$-dimensional hyperbolic orbifold and $\alpha$ be a regular simple closed curve in $S.$ Fix $G(S).$ Suppose that $\beta_1$ and $\beta_2$ are full lifts of $\alpha.$ Then there is a unique element $g$ such that $\beta_2=g\beta_1.$ Therefore, $\hol(\beta_2)=g\hol(\beta_1)g^{-1}.$
\end{prop}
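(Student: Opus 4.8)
The plan is to reduce the statement to the standard uniqueness theory for lifts under a covering map, using the fact recorded just before this proposition that the restriction $p := \pi_S|(\HH - \pi_S^{-1}(\Sigma(S)))$ is a covering map onto $S^o$ (recall $S$ is complete, so $C(S)=\HH$ and $S^o = \HH/G(S) - \Sigma(S)$).

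First I would observe that since $\alpha$ is regular, $\alpha(I) \subseteq S^o$, so $\pi_S \circ \beta_1$ and $\pi_S \circ \beta_2$ both take values in $S^o$; hence $\beta_1$ and $\beta_2$ both map $\RR$ into $\HH - \pi_S^{-1}(\Sigma(S))$, where $p$ is an honest covering. Moreover, each $\pi_S \circ \beta_i$ is periodic with period $1$ and agrees with $\alpha$ on $[0,1]$; reducing any $t \in \RR$ modulo $1$ shows that both $\pi_S \circ \beta_1$ and $\pi_S \circ \beta_2$ equal the single map $t \mapsto \alpha(t \bmod 1)$. Thus $\beta_1$ and $\beta_2$ are two lifts, through $p$, of one and the same continuous map $\gamma \colon \RR \to S^o$, and the domain $\RR$ is connected.

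Next I would produce the deck transformation. Since $S^o = (\HH - \pi_S^{-1}(\Sigma(S)))/G(S)$, the fibers of $p$ are exactly the $G(S)$-orbits, so $G(S)$ acts transitively on $p^{-1}(\gamma(0))$; as $\beta_1(0)$ and $\beta_2(0)$ both lie in this fiber, there is $g \in G(S)$ with $g(\beta_1(0)) = \beta_2(0)$. Because $\pi_S \circ g = \pi_S$, the map $g \circ \beta_1$ is again a lift of $\gamma$, and it agrees with $\beta_2$ at $0$; by uniqueness of lifts over the connected domain $\RR$, $g \circ \beta_1 = \beta_2$. For uniqueness of $g$, I would use that the points of $\HH - \pi_S^{-1}(\Sigma(S))$ are precisely the non-ramification points, so $G(S)$ acts freely there; if $g'\beta_1 = \beta_2$ as well, then $g^{-1}g'$ fixes $\beta_1(0)$ and hence equals $\id$, giving $g' = g$.

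Finally, the holonomy relation follows by a direct computation combined with the uniqueness of holonomy established in the preceding proposition: for all $t \in \RR$,
\[
\beta_2(t+1) = g\,\beta_1(t+1) = g\,\hol(\beta_1)(\beta_1(t)) = \bigl(g\,\hol(\beta_1)\,g^{-1}\bigr)(\beta_2(t)),
\]
so $g\,\hol(\beta_1)\,g^{-1}$ satisfies the defining property of $\hol(\beta_2)$, whence $\hol(\beta_2) = g\,\hol(\beta_1)\,g^{-1}$. The whole argument is routine covering-space theory, and the only point demanding care is the first step: verifying that both full lifts descend to the \emph{same} map $\gamma$ on all of $\RR$ and that the regularity of $\alpha$ confines everything to the complement of the cone-point preimages, where $p$ is a genuine (freely acted-upon) covering. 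Once that is secured, the existence, uniqueness, and holonomy conjugation are immediate. An alternative, even shorter route would invoke the remark preceding the proposition that every full lift is of the form $h\beta$ for some $h \in G(S)$, writing $\beta_1 = h_1\beta$ and $\beta_2 = h_2\beta$ and taking $g = h_2 h_1^{-1}$; I prefer the covering-space argument above since it yields the uniqueness of $g$ transparently.
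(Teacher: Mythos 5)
Your argument is correct and is essentially the paper's own: the paper states this proposition without a separate proof, as a summary of the immediately preceding discussion that $\pi_S$ restricted to the complement of the ramification preimages is a genuine covering onto $S^o$ and that every full lift has the form $h\beta$ for $h\in G(S)$. Your write-up just makes that covering-space argument explicit (including the freeness of the action off the ramification locus, which gives uniqueness of $g$), so there is nothing to correct.
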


Now we can classify regular simple closed curves depending on their holonomies. 

\begin{defn} Let $S$ be a complete $2$-dimensional hyperbolic orbifold and $\alpha$ be a regular simple closed curve in $S.$ Fix $G(S).$ 
\begin{enumerate}
\item If the holonomy of a full lift of $\alpha$ is trivial, $\alpha$ is said to be \emph{trivial}.
\item  If the holonomy of a full lift of $\alpha$ is not trivial and is elliptic,  $\alpha$ is said to be \emph{elliptic}.
\item If the holonomy of a full lift of $\alpha$ is not trivial and is parabolic, $\alpha$ is said to be \emph{parabolic}.
\item If the holonomy of a full lift of $\alpha$ is not trivial and is hyperbolic, $\alpha$ is said to be \emph{hyperbolic}.
\end{enumerate}
\end{defn}

Note that above definitions does not depends on the choice of a full lift since holonomies of full lifts of $\alpha$ are in same conjugacy class. 

\begin{lem}\label{lem : non essential curves}
Let $S$ be a complete $2$-dimensional hyperbolic orbifold and $\alpha$ be a regular simple closed curve from $I=[0,1]$ to  $S.$ Fix $G(S).$ Then the following holds.
\begin{enumerate}
\item If $\alpha$ is trivial, then $\alpha$ is freely homotopic to a point in $S^o.$
\item If $\alpha$ is elliptic, then $\alpha$ bounds a  cone point.
\item If $\alpha$ is parabolic, then $\alpha$ bounds a  puncture.
\end{enumerate}

\end{lem}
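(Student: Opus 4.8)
The plan is to reduce each case to the geometry of a single lift of $\alpha$ in $\HH$ together with the classification of its holonomy. First I would fix the common setup: by completeness $C(S)=\HH$, and I choose a full lift $\overline\alpha\colon\RR\to\HH$ with $\overline\alpha(t+1)=g(\overline\alpha(t))$, where $g=\hol(\overline\alpha)$. The crucial preliminary step is to show $\overline\alpha$ is \emph{embedded}. If $\overline\alpha(s)=\overline\alpha(t)$ with $s<t$, then projecting by $\pi_S$ and using that $\alpha$ is simple forces $t-s=n\in\NN$, so $g^{n}$ fixes $\overline\alpha(s)\in\HH$. When $g$ is hyperbolic or parabolic it has no fixed point in $\HH$, giving injectivity of $\overline\alpha$ on all of $\RR$; when $g$ is elliptic of order $m$, any fixed point of a nontrivial power $g^{n}$ (with $n\not\equiv 0\bmod m$) is a ramification point, contradicting regularity of $\alpha$, so the only coincidences come from the intrinsic periodicity $g^{\,m}=\id$. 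Hence $\overline\alpha$ descends to an embedded circle $C\subset\HH$ of period $1$ if $g=\id$, to an embedded circle $C$ of period $m$ if $g$ is elliptic of order $m$, and to an embedded line $L\subset\HH$ if $g$ is parabolic.

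Two general facts I would isolate and reuse throughout. First, distinct lifts of the simple closed curve $\alpha$ are pairwise disjoint, since the restriction $p$ of $\pi_S$ to $X:=\HH\setminus\pi_S^{-1}(\Sigma(S))$ is a genuine (unbranched, regular) covering of $S^{o}$ with deck group $G(S)$, so the preimage of the embedded circle $\alpha(S^{1})$ is an embedded $1$-submanifold. Second, a \emph{finite-order nesting principle}: if $h\in G(S)$ has finite order and carries the Jordan domain bounded by $C$ (resp.\ $L$) strictly into itself, then applying $h$ repeatedly produces a strictly decreasing chain contradicting $h^{\mathrm{ord}(h)}=\id$. For case (1), $g=\id$, so $C$ is a loop in $X$ bounding a compact Jordan disk $\overline D\subset\HH$. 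If $\overline D$ met $\pi_S^{-1}(\Sigma(S))$ in a ramification point $w$ with stabilizer $\langle h\rangle$, then $hC$ is disjoint from $C$ by the first fact (it cannot equal $C$, since the degree-one covering $C\to\alpha$ forces any deck element preserving $C$ to fix it pointwise, impossible for a nontrivial isometry), whence $\overline D$ and $h\overline D$ share the interior point $w$ with disjoint boundaries and are therefore nested, contradicting the nesting principle. Thus $\overline D\subset X$, so $\alpha$ is null-homotopic, i.e.\ freely homotopic to a point, in $S^{o}$.

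For case (2), the elliptic $g$ of order $m$ preserves $C$ and, being orientation preserving, the Jordan disk $\overline D$ it bounds; by Brouwer its unique fixed point $z_{0}\in\HH$ lies in $\intr(D)$ and is a ramification point, so $\pi_S(z_{0})\in\Sigma(S)$. Rerunning the disjointness-plus-nesting argument shows $z_{0}$ is the \emph{only} ramification point in $\overline D$ and that $\Stab(C)=\langle g\rangle=G(S)_{z_{0}}$ (so $\alpha$ wraps once), after which identifying $\pi_S(\overline D)$ with the cone quotient $\overline D/\langle g\rangle$ exhibits $\alpha$ as the boundary of a disk in $S$ containing exactly the cone point $\pi_S(z_{0})$. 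For case (3), the parabolic $g$ fixes a unique $\xi\in\partial\HH$; since $g^{\,n}\to\xi$ locally uniformly, both ends of the embedded line $L$ limit to $\xi$, so $L\cup\{\xi\}$ is a Jordan curve in $\overline{\HH}$ cutting out a cusped region $\overline D$ with $\overline D\cap\partial\HH=\{\xi\}$ and $g(\overline D)=\overline D$. The same nesting argument, now using that an elliptic element cannot fix the boundary point $\xi$, shows $\intr(D)$ contains no ramification point, and the quotient $(\overline D\cap\HH)/\langle g\rangle$ is a once-punctured-disk cusp neighborhood with boundary $\alpha$, so $\alpha$ bounds the puncture at $\xi$.

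The main obstacle I anticipate is case (3): controlling the \emph{non-compact} domain $\overline D$. Embeddedness of $L$ and the absence of interior cone points go through purely topologically via the two general facts, but concluding that $\pi_S$ realizes $\overline D\cap\HH$ as an \emph{embedded} once-punctured disk (i.e.\ that distinct cosets in $G(S)/\langle g\rangle$ have disjoint interiors, and that $\xi$ is a genuine puncture rather than a thick end) will require the standard geometric input that a parabolic fixed point of a Fuchsian group admits a precisely invariant horoball. This is exactly where the discreteness and completeness of $G(S)$ enter essentially, in contrast to cases (1) and (2), which are almost entirely covering-space topology plus the finite-order nesting trick. A secondary point to treat carefully in (2) and (3) is the computation $\Stab(C)=\langle g\rangle$ (resp.\ that no infinite-order $k$ carries $\overline D$ strictly into itself), which I would again settle by combining a Brouwer fixed-point argument with the nesting principle.
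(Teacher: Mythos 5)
Your proposal is correct and follows the paper's overall skeleton: pass to the full lift, form the Jordan domain it bounds (a compact disk for trivial and elliptic holonomy, a cusped region meeting $\partial_\infty \HH$ only at the parabolic fixed point for parabolic holonomy), show that domain contains no ramification point other than the forced one, and project. Where you genuinely diverge is in the mechanism for excluding a stray ramification point $w$ with elliptic stabilizer generated by $h$. The paper argues metrically: it takes a shortest geodesic segment from $w$ to the boundary curve and, using that $h$ is an isometry, shows the translated domain can be neither strictly inside nor strictly outside the original, forcing $h(\partial B)=\partial B$, and then extracts a contradiction from simplicity and regularity of $\alpha$. You close off the two alternatives in the opposite order and purely topologically: the deck-transformation/pointwise-fixing argument rules out $hC=C$, so the two boundary curves are disjoint; sharing the interior point $w$ then forces strict nesting of the Jordan domains, which is incompatible with $h$ having finite order. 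Both routes are valid; yours is metric-free in cases (1) and (2) and handles the parabolic case by the same nesting principle, while the paper's shortest-segment trick avoids invoking the nesting dichotomy for disjoint Jordan curves. You are also more explicit than the paper about the last step of case (3): concluding that $(\overline{D}\cap\HH)/\langle g\rangle$ really embeds in $S$ as a once-punctured disk requires knowing the stabilizer of the cusped region is exactly $\langle g\rangle$ (a precisely invariant horoball statement), a point the paper's proof passes over silently, so flagging it as the place where discreteness enters is a fair and honest assessment rather than a gap.
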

\begin{proof}
First, we assume that $\alpha$ is trivial. Choose a full lift $\overline{\alpha}$ and denote $\hol(\overline{\alpha})$ by $g.$ As $\alpha$ is trivial, $g$ is trivial. Then by definition, $\overline{\alpha}|[0,1]$ is a simple closed curve in $\HH.$ Hence there is a closed disk $B$ whose boundary is $\overline{\alpha}([0,1]).$ Now we want to show that there is no ramification point in $B$.  Obviously, the boundary $\partial B$ has no ramification point as $\alpha$ is regular. Assume that there is a ramification point $x$ in the interior $\intr B$ and $h$ is a generator of the stabilizer $G_x.$ Since $\alpha$ is a regular simple closed curve, the pre-image $\pi_S^{-1}(\alpha(I))$ is a $1$-dimensional manifold. Hence $\pi_S^{-1}(\alpha(I))=\cup\{f(\partial B) : f\in G(S)\}$ and $\pi_S^{-1}(\alpha(I))$ is a disjoint union of simple closed curves. Therefore, if $h(\partial B)\neq \partial  B,$ then there are two possible cases: $h(B) \subsetneq B$ or $B \subsetneq h(B).$ Assume that  $h(B)\neq B$ and  choose a shortest geodesic segment $\ell$ joining $x$ and $\partial B.$ Now we say that  the other end point of $\ell$ is  $y.$ As $h$ is an isometry,  $h(\ell)$ is also a shortest geodesic segment joining $x$ and $h(\partial B).$  If $h(y)$ is in the interior $\intr B$ of $B,$ then $h(B) \subset \intr B$ but this implies that the length of shortest geodesic segments joining $x$ and $h(\partial B)$ is less than the length of $\ell.$ It is a contradiction. Similarly,  $h(y)$ cannot be in the exterior $\extr B$. Therefore,   $h(B)=B$ and so $h(\partial B)=\partial B.$ However, this is also a contradiction since $\alpha$ is simple and  $\overline{\alpha}|[0,1]$ is a simple closed curve in $\HH.$ Thus, there is no ramification point in $B$ so $\pi_S(B)$ is a disk with $\partial \pi_S(B)=\alpha(I).$ 
 
Next, we assume that $\alpha$ is elliptic. Choose a full lift $\overline{\alpha}$ and denote $\hol(\overline{\alpha})$ by $g.$ As $\alpha$ is elliptic, $g$ is an elliptic element of order $n$ for some $n\in\NN$ with $2\leq n.$ Then the full lift is periodic with period $n$ and so $\overline{\alpha}|[0,n]$ is a simple closed curve in $\HH.$ Moreover, $\overline{\alpha}|[0,n]$ bounds a disk with a ramification point $x$ which is the fixed point of $g.$  Let $B$ be the closed disk with $\partial B = \overline{\alpha}([0,n]).$ As $\alpha$ is regular, $\partial B$ has no ramification point. Now we claim that there is no ramification point except $x$ in $\intr B.$ Assume that there is a ramification point $y$ in $\intr B -\{x\}$ and $h$ is a generator of the stabilizer $G_y.$ Since $\alpha$ is a regular simple closed curve, the pre-image $\pi_S^{-1}(\alpha(I))$ is a $1$-dimensional manifold. Hence $\pi_S^{-1}(\alpha(I))=\cup\{f(\partial B) : f\in G(S)\}$ and $\pi_S^{-1}(\alpha(I))$ is a disjoint union of simple closed curves. Assume that $h(\partial B)\neq \partial B$ and choose a shortest geodesic segment $\ell$ joining $y$ and $\partial B.$ Then $h(\ell)$ is also a shortest geodesic segment joining $y$ and $h(\partial B)$ as $h$ is an isometry. Now let $z$ be the other end point of $\ell.$ If $h(z)\in \intr B,$ then $h(B) \subset \intr B$ and this implies that the length of shortest geodesic segments joining $y$ and $h(\partial B)$ is less than the length of $\ell.$ This is a contradiction. Similarly, the case where $h(z)\in \extr B$ is not possible, so $h(\partial B)=\partial B.$ Note that $A=\{g^i\overline{\alpha}(0) : i \in \{0,1, \cdots, n-1\}\}$ equals $\pi_S^{-1}(\alpha(0))\cap \partial B.$ Hence $h$ preserves $A$ so $h\overline{\alpha}(0)=g^m\overline{\alpha}(0)$ for some $m\in  \{1, \cdots, n-1\}.$ Then $g^{-m}h$ fixes  $\overline{\alpha}(0).$ If $ g^{-m}h$ is not trivial and is elliptic, this is in contradiction with that $\alpha$ is regular. Therefore,  $g^{-m}h$ is trivial but this is also a contradiction since $x\neq y.$ Thus, there is no such a $y$ so we can conclude that $\pi_S(B)$ is a closed disk with one cone point and $\partial \pi_S(B) = \alpha(I). $

Finally, we consider the case where $\alpha$ is parabolic. Choose a full lift $\overline{\alpha}$ and denote $\hol{\overline{\alpha}}$ by $g.$ As $\alpha$ is parabolic, $g$ is parabolic. Then by definition, $\overline{\alpha}$ is an embedding and there is a point $x_\infty$ in $\partial_\infty \HH$ which is the fixed point of $g$ such that $\displaystyle \lim_{t\to \infty} \overline{\alpha}(t)=\lim_{t\to -\infty} \overline{\alpha}(t)=x_\infty.$ Hence in $\hat{\CC},$ there is a closed disk $B$ such that $\intr B \subset \HH$ and $\partial B= \overline{\alpha}(\RR)\cup\{x_\infty\}.$ Now we want to show that $B$ has no ramification point. Obviously, there is no ramification point in $\partial B$ as $\alpha$ is regular. Assume that there is a ramification point $y$ in $\intr B$ which is the fixed point of a non-trivial elliptic element $h\in G(S).$ Then $h(B)\cap B\neq \emptyset$ and since $h(x_\infty)\neq x_\infty,$  $h(\partial B) \neq \partial B$  and $h(\partial B)\cap \partial B\neq \emptyset$ (See Figure \ref{fig:parabolic}). This is a contradiction since $\alpha$ is simple. Therefore, there is no ramification point in $B.$ Thus, $\pi_S(B)$ is a disk with one puncture and $\partial \pi_S(B)=\alpha(I).$ 
\end{proof}

\begin{rmk}
The converse of each statement in the proposition is also true. 
\end{rmk}

\begin{figure}
\begin{center}
\includegraphics[width=0.5\textwidth]{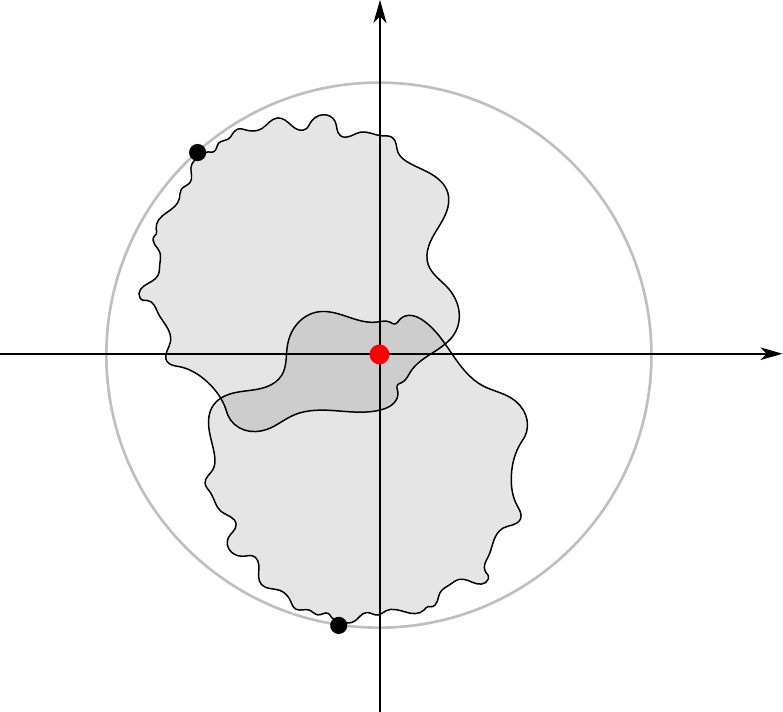}
\end{center}
\caption{This is a schematic picture on the Poincaré disk when $B$ contains a ramification point. The black marked points correspond to $x_\infty$ and $h(x_\infty)$, and the shaded regions are $B$ and $h(B).$ The red point at the origin is the ramification point and $h$ is the rational rotation fixing the red point.}
\label{fig:parabolic}
\end{figure}

A regular simple closed curve is said to be \emph{essential} if it does not bound a disk, a disk with one cone point, or a disk with one puncture. 

\begin{prop}
Let $S$ be a complete $2$-dimensional hyperbolic orbifold and $\alpha$ be an essential regular simple closed curve in $S.$ Fix $G(S).$ Then $\alpha$ is hyperbolic and every full lift of $\alpha$ is a quasi-geodesic in $\HH.$
\end{prop}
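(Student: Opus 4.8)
The plan is to prove the two assertions in turn: that $\alpha$ is hyperbolic, and then that any full lift is a quasi-geodesic. Fix a full lift $\overline{\alpha}$ and set $g=\hol(\overline{\alpha})$. As an element of $G(S)<\PR$, $g$ is trivial, elliptic, parabolic, or hyperbolic, and by definition $\alpha$ is of the corresponding type. If $g$ were trivial, elliptic, or parabolic, then Lemma \ref{lem : non essential curves}, parts (1), (2), (3) respectively, would force $\alpha$ to bound a disk, a disk with one cone point, or a disk with one puncture, each contradicting essentiality. Hence $g$ is hyperbolic, so $\alpha$ is hyperbolic. Since any two full lifts of $\alpha$ differ by an element of $G(S)$ (by the preceding proposition on full lifts), and an isometry of $\HH$ carries quasi-geodesics to quasi-geodesics while preserving the conjugacy type of the holonomy, it suffices to verify the quasi-geodesic claim for the single lift $\overline{\alpha}$.

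For the quasi-geodesic part, let $A=\Ax(g)\subseteq\HH$ and let $\ell>0$ be the translation length of $g$, so that $\rho(x,gx)=\ell$ for $x\in A$ and, since $\ell$ is the minimal displacement of $g$, $\rho(y,g^{k}y)\ge |k|\ell$ for every $y\in\HH$ and $k\in\ZZ$. The fundamental arc $\gamma_0=\overline{\alpha}([0,1])$ is compact, hence has finite diameter $D$ and lies within finite distance $R=\max_{y\in\gamma_0}\rho(y,A)$ of $A$. The relation $\overline{\alpha}(t+1)=g(\overline{\alpha}(t))$ gives $\overline{\alpha}([n,n+1])=g^{n}(\gamma_0)$ for all $n\in\ZZ$; in particular every point of $\overline{\alpha}(\RR)$ lies within $R$ of $A$ (so the two ends of the lift converge to the fixed points of $g$), and each such arc has diameter $D$.

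Now set $p=\overline{\alpha}(0)$. Given $s\le t$, pick integers $m\le n$ with $s\in[m,m+1]$ and $t\in[n,n+1]$; then $\rho(\overline{\alpha}(s),g^{m}p)\le D$ and $\rho(\overline{\alpha}(t),g^{n}p)\le D$. Using $\rho(g^{m}p,g^{n}p)=\rho(p,g^{\,n-m}p)\ge(n-m)\ell$ together with the projection estimate $\rho(g^{m}p,g^{n}p)\le(n-m)\ell+2R$, the triangle inequality yields
$$(n-m)\ell-2D\le\rho(\overline{\alpha}(s),\overline{\alpha}(t))\le(n-m)\ell+2R+2D.$$
Since $(t-s)-1\le n-m\le(t-s)+1$, this produces constants $\lambda\ge 1$ and $\epsilon\ge 0$ depending only on $\ell,D,R$ such that $\lambda^{-1}|t-s|-\epsilon\le\rho(\overline{\alpha}(s),\overline{\alpha}(t))\le\lambda|t-s|+\epsilon$, which is exactly the condition that $\overline{\alpha}$ be a quasi-geodesic.

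The hyperbolicity is immediate from Lemma \ref{lem : non essential curves}, so the only genuine content is the coarse estimate above. The point requiring care is the lower bound: one must invoke that the translation length realizes the minimal displacement, $\rho(p,g^{k}p)\ge|k|\ell$, and that the compact fundamental arc $\gamma_0$ contributes only the bounded additive error $2D$. The remainder is Milnor--\v{S}varc-type bookkeeping, translating the $g$-orbit estimates to estimates for the continuous parametrization $\overline{\alpha}$.
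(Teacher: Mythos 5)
Your proof is correct. The paper states this proposition without proof, so there is no argument to compare against; what you supply is the natural one and fills that gap. The first half (essential $\Rightarrow$ not trivial, elliptic, or parabolic by the contrapositive of Lemma \ref{lem : non essential curves}, hence hyperbolic) is exactly what the paper's definition of ``essential'' is set up to deliver. The second half is the standard fellow-traveling estimate: the two points that genuinely need to be checked are the lower bound $\rho(y,g^{k}y)\ge|k|\ell$ for all $y\in\HH$ (minimal displacement of a hyperbolic isometry is attained on the axis, and $g^k$ has translation length $|k|\ell$) and the upper bound $\rho(g^{m}p,g^{n}p)\le(n-m)\ell+2R$ via projection to $\Ax(g)$, and you invoke both correctly; the passage from the integer-indexed orbit to the continuous parametrization costs only the bounded errors $2D$ and $\pm 1$ in $n-m$, as you say. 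The reduction to a single full lift via $\beta_2=g\beta_1$ with $g$ an isometry is also fine. No gaps.
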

When $\alpha$ is a hyperbolic simple closed curve, each full lift is a quasi-geodesic so it has a unique geodesic connecting two end points of the full lift. So $\pi_S^{-1}(\alpha)$ is a disjoint union of quasi-geodesics and is $G(S)$-invariant. Now let $Q$ be the set of images of full lifts of $\alpha$ and  $Q'$  be the set of geodesic representatives of elements of $Q,$ namely $Q'$ is the set of hyperbolic axises of holonomies of all full lifts of $\alpha.$ Then $Q'$ is also a disjoint union of bi-infinite geodesics and is $G(S)$-invariant. By  construction, for any geodesic $\ell$ in $Q'$, $Q'=\displaystyle \bigcup G(S)\cdot \ell.$  Now we say that  $\pi_S(Q')$ is the \emph{geodesic realization} of $\alpha.$ Note that geodesic realizations are simple geodesics. 

The following lemma was proved in \cite{Suzzi_Valli_2015} in a  slightly different form. 
\begin{lem}[\cite{Suzzi_Valli_2015}] \label{lem : finding the geodesic realization}
Let $S$ be a complete $2$-dimensional hyperbolic orbifold and $\alpha$ be an essential regular simple closed curve in $S.$ Fix $G(S).$ The following holds. 
\begin{enumerate}
\item If $\alpha$ does not bound a disk with two cone points of order two, then the geodesic realization of $\alpha$ is in the free homotopy class of $\alpha$ in $S^o.$
\item If $\alpha$ bounds a disk with two cone points of order two, then a geodesic segment connecting the cone points is the geodesic realization of $\alpha.$
\end{enumerate}
\end{lem}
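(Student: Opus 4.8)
The plan is to work entirely on the level of the axis of the holonomy and its stabilizer in $G(S)$. Since $\alpha$ is essential and regular, the preceding proposition shows $\alpha$ is hyperbolic; fix a full lift $\overline{\alpha}$ with $g=\hol(\overline{\alpha})$ hyperbolic, and let $\ell=\Ax(g)$ be its axis, which shares its two endpoints in $\partial_\infty\HH$ with the quasi-geodesic $\overline{\alpha}(\RR)$. First I would record the analytic input: by stability of quasi-geodesics in the hyperbolic plane (the Morse lemma), $\overline{\alpha}(\RR)$ and $\ell$ lie at bounded Hausdorff distance and cobound an embedded strip $R\subset\HH$. Interpolating geodesically between $\overline{\alpha}$ and an arclength parametrization of $\ell$ gives a homotopy $H\colon\RR\times[0,1]\to\HH$ which can be taken $\langle g\rangle$-equivariant, i.e. $H(t+1,s)=g\cdot H(t,s)$; this descends to a free homotopy in $S$ from $\alpha$ to $\pi_S(\ell)$.

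The core of the argument is a dichotomy for the setwise stabilizer $\Stab_{G(S)}(\ell)$. The full group of isometries of $\HH$ preserving $\ell$ is isomorphic to $\mathrm{Isom}(\RR)=\RR\rtimes\ZZ/2$, where the translation part slides along $\ell$ and the order-two part is the half-turn (rotation by $\pi$) about a point of $\ell$. A discrete subgroup containing the nontrivial translation $g$ is therefore either infinite cyclic or infinite dihedral, and this is exactly the dichotomy in the statement. In the infinite cyclic case $\Stab_{G(S)}(\ell)=\langle g_0\rangle$ with $g_0$ primitive hyperbolic, so $\pi_S(\ell)=\ell/\langle g_0\rangle$ is a simple closed geodesic. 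In the infinite dihedral case $\Stab_{G(S)}(\ell)$ is generated by $g_0$ together with a half-turn and contains two half-turns $e_1,e_2$ about points $p_1,p_2\in\ell$ with $g_0=e_1e_2$; then $\pi_S(\ell)=\ell/\Stab_{G(S)}(\ell)$ is the geodesic segment joining the images $\bar p_1,\bar p_2$, and since $e_1,e_2$ have order two their fixed points project to cone points of order two.

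It remains to match the algebraic dichotomy with the topological bounding condition and to guarantee that the homotopy of the first paragraph stays in $S^o$. For the cyclic case I would show the open strip $R$ contains no ramification point: if $x\in R$ were fixed by an elliptic $e$ of order $n\ge2$, then $e(\overline{\alpha}(\RR))$ is a quasi-geodesic disjoint from $\overline{\alpha}(\RR)$ (simplicity of $\alpha$ makes the full lifts pairwise disjoint, as in the definition of $Q$); tracking the rotation of $e$ about the interior point $x$ against the disjointness of the family $G(S)\cdot\overline{\alpha}(\RR)$ forces either a crossing, which is impossible, or that $e$ preserves $\ell$, which is the dihedral case. Hence in the cyclic case no cone point is crossed and the homotopy lands in $S^o$, giving $(1)$. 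In the dihedral case the half-turns $e_1,e_2$ exhibit $\alpha$ as bounding a disk containing exactly the two order-two cone points $\bar p_1,\bar p_2$, with $g$ a power of $g_0=e_1e_2$, and the geodesic realization degenerates to the segment $[\bar p_1,\bar p_2]$, giving $(2)$.

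The main obstacle I anticipate is the cone-point avoidance step in the cyclic case: converting ``no ramification point lies in the strip $R$'' into a rigorous statement requires a careful configuration argument for the rotation $e$ acting on the pairwise-disjoint family of full lifts, in the same spirit as the shortest-geodesic-segment arguments used in Lemma \ref{lem : non essential curves}. Equally delicate is pinning down, in the dihedral case, that the disk bounded by $\alpha$ contains precisely two cone points and both of order two; this is where the essentiality hypothesis (ruling out disks, disks with one cone point, and once-punctured disks) must be combined with the order-two constraint coming from discreteness of $\Stab_{G(S)}(\ell)$.
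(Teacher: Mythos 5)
Your overall architecture coincides with the paper's: pass to the axis $\ell$ of the holonomy, observe that $\Stab_{G(S)}(\ell)$ is infinite cyclic or infinite dihedral (equivalently, $\ell$ does or does not carry a ramification point, necessarily of order two), read off case (2) from the dihedral structure, and reduce case (1) to showing that the free homotopy between $\alpha$ and $\pi_S(\ell)$ can be pushed off the cone points. The one place you genuinely diverge is that last reduction. The paper fixes an arbitrary ramification point $r$, projects both $\overline{\alpha}(\RR)$ and $\ell$ to the one-cone-point quotient $\overline{\HH}/G_r$, checks that both images are simple arcs missing the cone point, and cites \cite{Suzzi_Valli_2015} for the conclusion; you instead assert that the region $R$ between $\overline{\alpha}(\RR)$ and $\ell$ contains no ramification point at all. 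Your assertion is true and would give a more local and self-contained argument, but as written it is exactly the step you do not prove: the appeal to tracking the rotation of $e$ against disjointness of the translates is a gesture, and you flag it yourself as the main obstacle. Disjointness of $e(\overline{\alpha}(\RR))$ from $\overline{\alpha}(\RR)$ alone does not forbid the fixed point from sitting in the strip, so this is a genuine gap in the proposal as it stands.

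To close it, argue as follows: if $e$ generates $G_x$ for some $x\in R$ and does not preserve $\ell$, then the translates $e^k(\overline{\alpha}(\RR))$ are pairwise disjoint (simplicity of $\alpha$) and likewise the $e^k(\ell)$, and the fixed point $x$ must lie in the unique $e$-invariant central complementary component of each orbit, i.e.\ on the side of $\overline{\alpha}(\RR)$ containing all of its other translates and on the side of $\ell$ containing all of its other translates. Since $\overline{\alpha}(\RR)$ and $\ell$ share their endpoints at infinity, these two sides subtend the same arc of $\partial_\infty\HH$, so their intersection is disjoint from $R$, a contradiction. Two further points need attention: the geodesic interpolation need not stay inside $R$, so once $R$ is known to be free of ramification points you should replace it by an equivariant homotopy supported in $\overline{R}$ (which retracts onto $\ell$); and in the dihedral case both you and the paper are terse about why the disk bounded by $\alpha$ contains no cone points other than the two order-two points on $\ell$ --- this requires the same kind of configuration argument as in the proof of Lemma \ref{lem : non essential curves}.
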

\begin{proof}
First assume that  $\alpha$ is a hyperbolic simple closed curve. Choose a full lift $\overline{\alpha}$ of $\alpha$ and denote $\hol(\overline{\alpha})$ by $g.$ Since $\overline{\alpha}$ is a quasi-geodesic and $J=\overline{\alpha}(\RR)$ is preserved by $g,$ the hyperbolic axis $\ell$ of $g$ is the geodesic representative of $\overline{\alpha}.$
Now we say that $x$ and $y$ in $\partial_\infty \HH$ are two end points of $\ell$ and write $\overline{\ell}$ and $\overline{J}$ for $\ell\cup\{x, y\}$ and $J\cup \{x,y\},$ respectively. Now we think of $\overline{\ell}$ and $\overline{J}$ as images of two proper embedding $\delta_\ell$ and $\delta_J$, respectively,  from $[0,1]$ to $\overline{\HH}$ such that $\delta_\ell(0)=\delta_J(0)=x$ and $\delta_\ell (1)=\delta_J(1)=y.$ 

Choose a ramification point $r$ in $\HH-(\ell \cup J)$ and a generator $h$ of the stabilizer $G_r.$ Note that since $\alpha$ is regular, there is no ramification point in $J$. Now we write $\pi_{G_r}$ for the quotient map from $\HH$ to $\HH/G_r.$ If $h$ preserves $\ell$, then the order of $h$ must be two and so $r$ must be in $\ell$. This is a contradiction. Hence,  $h$ does not preserve $\ell.$ Therefore,  $\pi_{G_r}(\delta_\ell)$ and $\pi_{G_r}(\delta_J)$ are paths  in $\overline{\HH}/G_r$ which do not pass through the cone point in $\HH/G_r$.  Moreover, as $\alpha$ is simple,  $\pi_{G_r}(\delta_\ell)$ and $\pi_{G_r}(\delta_J)$ are simple. Now we consider the case where $\ell$ has no ramification point.  Then the stabilizer $\Stab_{G(S)}(\ell)$ of $\ell$ is generated by the hyperbolic element $g$ so  $\pi_S(\ell)$ is a regular simple closed geodesic.
On the other hand,  the choice of $r$ is arbitrary and so for any ramification point $r$, we get that $\pi_{G_r}(\delta_\ell)$ and $\pi_{G_r}(\delta_J)$ are simple. This implies that the regular simple closed geodesic $\pi_S(\ell)$ and $\alpha(I)$ are in same free homotopy class  in $S^o$ (see Figure $3$ in \cite{Suzzi_Valli_2015}). Therefore, we can take a free homotopy between  $\pi_S(\ell)$ and $\alpha(I)$ in $S^o$. Thus, the first statement is proven. 

Then we consider the case where there is a ramification point $r'$ in $\ell.$ Then the ramification index of $r'$ must be  two and the stabilizer $G_{r'}$ preserves $\ell.$ Now we focus on the stabilizer group $\operatorname{Stab}_{G(S)}(\ell)$ of $\ell.$ Since we may think of $\operatorname{Stab}_{G(S)}(\ell)$ as a discrete subgroup of the isometry group of the real line,  $\operatorname{Stab}_{G(S)}(\ell)$ is isomorphic to $\ZZ_2,$ $\ZZ,$ or the infinite dihedral group $\operatorname{Dih_\infty}.$ Obviously, $G_{r'}\subset \operatorname{Stab}_{G(S)}(\ell)$ and $g \in \operatorname{Stab}_{G(S)}(\ell).$ Let $s$ be a geodesic segment joining $r'$ and $g^{-1}(r')$ and $f$ be the non-trivial element of $G_{r'}.$ Then the end points of $g(s)$ are $r'$ and $g(r').$ Therefore, $(f\circ g)(g^{-1}(r'))=f(r')=r',$ $(f\circ g)(r')=g^{-1}(r'),$ and $(f\circ g)(s)=s$ since $f$ is an isometry in $G_{r'}$. Hence, $fg$ fixes the midpoint $r''$ between $r'$ and $g^{-1}(r')$ and preserves $\ell.$ 
 Then we can see that $f$ and $fg$ are elliptic elements of order two and generate $\operatorname{Stab}_{G(S)}(\ell).$ Therefore, $\operatorname{Stab}_{G(S)}(\ell)$  is isomorphic to $\operatorname{Dih_\infty}$, namely  $\operatorname{Stab}_{G(S)}(\ell)=\langle f, fg| f^2=(fg)^2=\id \rangle.$

Now we write $\pi_{\operatorname{Stab}_{G(S)}(\ell)}$ for the projection from $\HH$ to $\HH/\operatorname{Stab}_{G(S)}(\ell).$ Then $$\Sigma(\HH/\operatorname{Stab}_{G(S)}(\ell))=\{\pi_{\operatorname{Stab}_{G(S)}(\ell)}(r'), \pi_{\operatorname{Stab}_{G(S)}(\ell)}(r'')\}$$ and $\pi_{\operatorname{Stab}_{G(S)}(\ell)}(\ell)$ is the geodesic segment joining two cone points. Also, $\pi_{\operatorname{Stab}_{G(S)}(\ell)}(\overline{\alpha})|[0,1]$ is a simple closed curve bounding a disk with the cone points. This implies the second statement. 
\end{proof}

The proof of the following proposition is similar with the case in complete hyperbolic surfaces. 
\begin{prop}
Let $S$ be a complete $2$-dimensional hyperbolic orbifold, and $\alpha$ and $\beta$ be essential regular simple closed curves in $S.$ Fix $G(S).$ If $\alpha$ and $\beta$ are disjoint and are not freely homotopic in $S^o$, then the geodesic realizations are disjoint.  
\end{prop}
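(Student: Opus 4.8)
The plan is to lift everything to $\HH$ and show that an intersection of the geodesic realizations would force $\alpha$ and $\beta$ to be freely homotopic in $S^o$. Write $Q'_\alpha$ and $Q'_\beta$ for the $G(S)$-invariant families of axes whose projections are the geodesic realizations of $\alpha$ and $\beta$. First I would record the reduction that $\pi_S(Q'_\alpha)\cap\pi_S(Q'_\beta)\neq\emptyset$ if and only if $Q'_\alpha\cap Q'_\beta\neq\emptyset$ in $\HH$: a common image point lifts to some $p\in Q'_\alpha$, and since $Q'_\beta$ is $G(S)$-invariant, the relation $\pi_S(p)\in\pi_S(Q'_\beta)$ places $p$ in $Q'_\beta$ as well. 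Thus it suffices to show that no axis $\Ax(g)$, for $g$ a holonomy of a full lift of $\alpha$, meets any axis $\Ax(h)$, for $h$ a holonomy of a full lift of $\beta$, inside $\HH$.

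Next I would exploit disjointness. Let $J_\alpha$ and $J_\beta$ be the full lifts realizing $g$ and $h$. Since $\alpha(I)\cap\beta(I)=\emptyset$ we have $\pi_S^{-1}(\alpha(I))\cap\pi_S^{-1}(\beta(I))=\emptyset$, so $J_\alpha\cap J_\beta=\emptyset$. By the earlier proposition each of $J_\alpha,J_\beta$ is an embedded quasi-geodesic, hence a properly embedded line in $\HH$, and its ideal endpoints are the fixed points of the hyperbolic holonomy preserving it, namely the endpoints of the corresponding axis. Two disjoint properly embedded lines in the disk have unlinked endpoints, so the endpoint pairs of $\Ax(g)$ and $\Ax(h)$ are unlinked on $\partial_\infty\HH\cong S^1$; for geodesics this forces $\Ax(g)$ and $\Ax(h)$ to be disjoint, asymptotic, or equal, and in particular they can share an interior point of $\HH$ only when $\Ax(g)=\Ax(h)$.

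It remains to dispose of the coincidence $\Ax(g)=\Ax(h)=:\ell$, which is where the substance lies. Here $Q'_\alpha=G(S)\cdot\ell=Q'_\beta$ because each $Q'$ is the $G(S)$-orbit of any one of its geodesics, so the two geodesic realizations are equal as subsets of $S$. Whether this common realization is a closed geodesic or a geodesic segment between two order-two cone points depends only on whether $\ell$ carries a ramification point, hence is the same for $\alpha$ and for $\beta$; so both curves fall into the same case of Lemma \ref{lem : finding the geodesic realization}. In the first case each of $\alpha,\beta$ is freely homotopic in $S^o$ to the common closed geodesic, hence to one another. In the second case both $\alpha$ and $\beta$ bound disks containing exactly the two cone points that are the endpoints of the segment, so each is freely homotopic in $S^o$ to the boundary of a regular neighborhood of the segment, again to one another. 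Either way $\alpha\simeq\beta$ in $S^o$, contradicting the hypothesis.

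The reduction to $\HH$ and the topological unlinking of disjoint lines are routine; the delicate point, and the one I expect to require the most care, is the coincidence step, where one must pass from the purely boundary-level statement $\Ax(g)=\Ax(h)$ to the homotopy conclusion through Lemma \ref{lem : finding the geodesic realization}, tracking the order-two cone points that may lie on $\ell$ so that equal axes genuinely yield freely homotopic curves.
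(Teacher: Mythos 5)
The paper does not actually supply a proof of this proposition (it only remarks that the argument is similar to the surface case), so there is no written proof to compare against line by line; your argument is the natural one, and its overall structure --- reduce to the axes in $\HH$ via $G(S)$-invariance, use disjointness of the embedded quasi-geodesic full lifts to get unlinked endpoint pairs and hence disjoint, asymptotic, or equal axes, and then rule out equal axes by producing a free homotopy between $\alpha$ and $\beta$ via Lemma \ref{lem : finding the geodesic realization} --- is sound, and you correctly isolate the orbifold-specific difficulty, namely the case where the common axis $\ell$ carries order-two ramification points.

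The one inference that does not hold as stated is in that last case: from ``$\alpha$ and $\beta$ each bound a disk containing exactly the two cone points $c_1,c_2$ at the ends of the segment'' you conclude that each is freely homotopic in $S^o$ to the boundary of a regular neighborhood of the segment. Bounding a disk about the same two marked points does not by itself pin down a free homotopy class in $S^o$: already in a surface with four marked points, two curves each cutting off the same pair can differ by twisting and fail to be freely homotopic in the complement of the marked points. Deducing your claim from Lemma \ref{lem : finding the geodesic realization} would require rerunning the homotopy-avoiding-cone-points argument from the proof of its part (1) in the dihedral setting, which the lemma's statement does not hand you. Fortunately the hypothesis you have not yet used at that moment --- that $\alpha$ and $\beta$ are disjoint --- closes the gap immediately: the disks $D_\alpha$ and $D_\beta$ both contain $c_1$ and have disjoint regular boundaries, so one is contained in the interior of the other, and the region between them is a compact annulus containing no cone points and no punctures (both cone points lie in the inner disk, since the boundary curves are regular); hence $\alpha$ and $\beta$ are isotopic, in particular freely homotopic, in $S^o$, contradicting the hypothesis. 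With that substitution the proof is complete.
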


\subsection{Elementary $2$-dimensional hyperbolic orbifolds}

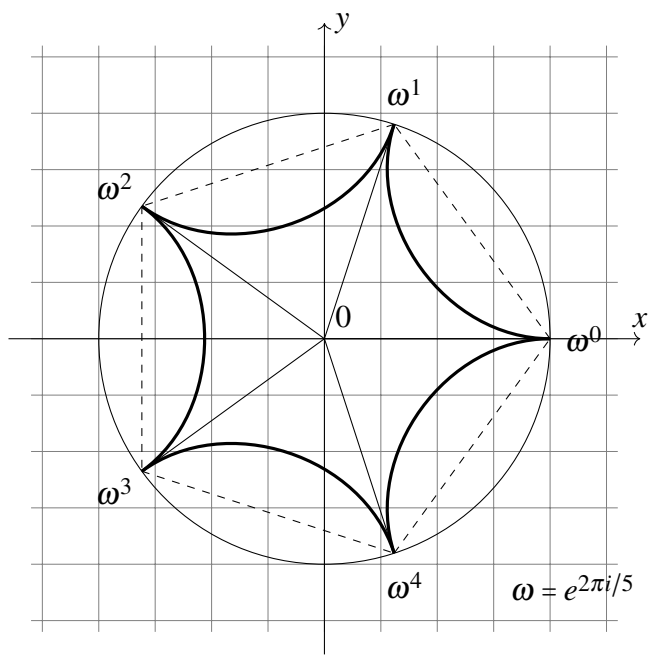
\begin{figure}
\begin{center}
\begin{tikzpicture}[scale=3]
\draw[step=.25, gray, very thin](-1.3,-1.3) grid (1.3,1.3);

\draw[thin,->](-1.4,0)--(1.4,0) node[above]{$x$};
\draw[thin,->](0,-1.4)--(0,0) node[above right]{$0$}--(0,1.4) node[right]{$y$};
\draw (0,0) circle [radius=1];
\node at (1.1, -1.1){$\omega=e^{2\pi i/5}$};
\def\deg{72}      
\foreach \t/\x in {0/0*\deg,1/1*\deg, 2/2*\deg,3/3*\deg,4/4*\deg}
{\draw[thin] (0:0)--(\x:1);
\node[anchor=center] at (\x:1.15) {$\omega^{\t}$};}
\foreach \from/\to in {0/1,1/2,2/3,3/4,4/0}
{\draw [very thick] (\from*\deg:1) to [out=180+\from*\deg, in=180+\to*\deg](\to*\deg:1);
\draw [thin, dashed] (\from*\deg:1)--(\to*\deg:1);}
\end{tikzpicture}
\end{center}
\caption{
Thick arcs are hyperbolic geodesics in the Poincaré disk $\DD$ joining consecutively the points $\{1,\omega, \omega^2, \omega^3, \omega^4\}$ in the infinite circle $\partial \DD$. Hence, the geodesics bound an ideal $5$-gon. Moreover, it is  preserved by an elliptic isometry defined as $z\mapsto \omega z.$ Therefore, the ideal $5$-gon is regular and $0$ is the center.
}
\label{fig:5gon}
\end{figure}

\subsubsection{Ideal monogons}
Let $n$ be a natural number which is greater than $2.$ Recall that an \emph{ideal $n$-gon} is a hyperbolic surface $S$ with geodesic boundary if the interior of $S$ is homeomorphic to the disk, the boundary of $S$ is a disjoint union of $n$ bi-infinite geodesics, and the area of $S$ is finite (see Figure \ref{fig:5gon}). Each boundary component of $S$ is call a \emph{side} and each end point of a side is called a \emph{vertex}. We think of an ideal $n$-gon $S$ as a closed convex subset of $\HH.$ The ideal $n$-gon $S$ is called \emph{regular} if there is an elliptic element $r_S$ in $\PR$ which is a rotation through angle $2\pi/n$ about some point $c_S$ in $S$ and preserves $S$. We call $c_S$ the \emph{center} of $S.$  Then the quotient space $S/\langle r_S \rangle$ of $S$ by the group generated by $r_S$ is called an \emph{ideal monogon} with a cone point of order $n$ (see Figure \ref{fig:monogon}). The quotient space $S/\langle r_S \rangle$ is a $2$-dimensional hyerpbolic orbifold with one geodesic boundary and the cone point is the image of the center $c_S.$  Hence, the order of the cone point is $n$ and there is a unique shortest geodesic segment $\ell_S$ joining the cone point with the geodesic boundary. Note that the segment $\ell_S$ intersects perpendicularly with the geodesic boundary.

\begin{figure}
\centering
\begin{subfigure}[b]{0.3\textwidth}
\includegraphics[width=\textwidth]{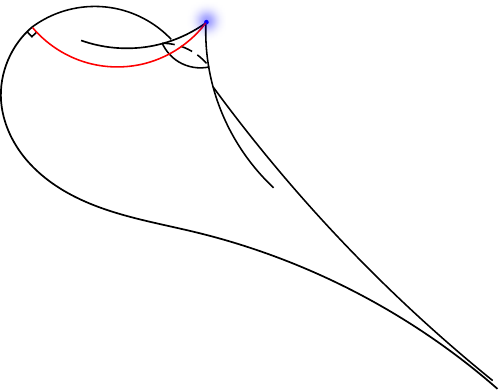}
\caption{An ideal monogon}
  \label{fig:monogon}
\end{subfigure}
\hfill
\begin{subfigure}[b]{0.3\textwidth}
\includegraphics[width=\textwidth]{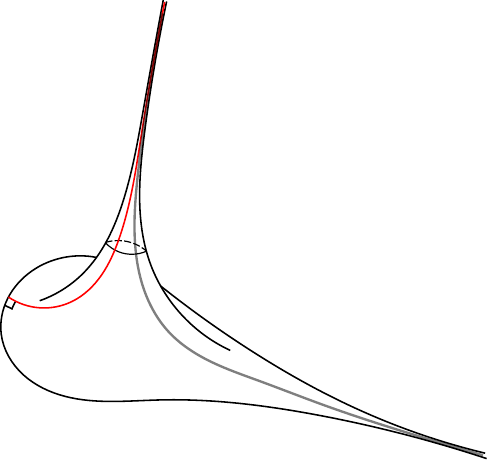}
   \caption{A monogon with one puncture}
   \label{fig:monogoncusp}
\end{subfigure}
\hfill
 \begin{subfigure}[b]{0.3\textwidth}
  \includegraphics[width=\textwidth]{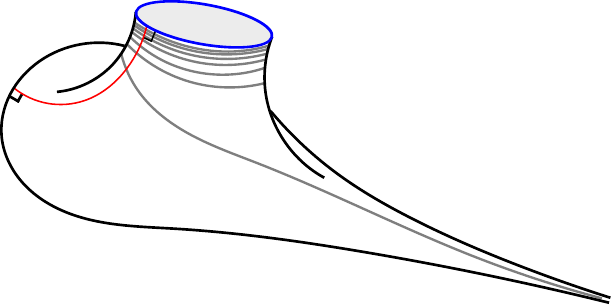}
    \caption{A monogon with one hole}
    \label{fig:monogonhole}
 \end{subfigure}
\hfill
  \caption{In Figure \ref{fig:monogon}, the blue point is the cone point. In Figure \ref{fig:monogoncusp} and Figure \ref{fig:monogonhole}, the gray lines are the images of $s_1$ and $s_2$ under the identification by $f_{12}$. Hence, the gray lines are geodesics in each $2$-dimensional hyperbolic orbifold with geodesic boundary. In particular, the blue circle in Figure \ref{fig:monogonhole} is the simple closed geodesic added to be complete. And the red arcs are the geodesics $\ell_S$.}
\label{fig:annuli}
\end{figure}

\subsubsection{Geometric annuli}
Let $P_3$ be an ideal triangle in $\HH$. Note that every ideal triangle is regular. We say that $s_1$, $s_2$, and $s_3$ are the sides of $P_3.$ For each $i\in \{1,2,3\},$ there is the unique perpendicular from the center $c_{P_3}$ to the geodesic $s_i$ and so we say that $p_i$ is the foot of the perpendicular. Now we glue $s_1$ and $s_2$ by an element $f_{12}$ in $\PR$ such that $f_{12}(s_1)=s_2.$ Then $f_{12}$ fixes the end point $q$ in $\partial_\infty \HH$ at which $s_1$ and $s_2$ intersect. 

If $f_{12}$ is a parabolic element and $f_{12}(p_1)=p_2$, the resulting surface $S$ is Cauchy complete under the metric induced by the hyperbolic metric of $P_3,$ and has one bi-infinite geodesic boundary. Hence, we call this resulting orbifold a \emph{monogon with one puncture} (see Figure \ref{fig:monogoncusp}).  In this case, there is a unique geodesic $\ell_S$ joining the puncture with $s_3$ which intersect perpendicularly with $s_3.$ Moreover, $\ell_S \cap s_3=\{p_3\}.$

If $f_{12}$ is hyperbolic and so $f_{12}(p_1)\neq p_2$, then the resulting space is incomplete. By adding a simple closed geodesic, we can make the resulting space be complete, and in this case, the resulting space $S$ is a hyperbolic surface which has one bi-infinite geodesic boundary and one closed geodesic boundary. Hence, we call this space a \emph{monogon with one hole}(see Figure \ref{fig:monogonhole}) The length of the closed geodesic boundary is determined by the translation length of  $f_{12}.$ Hence, for any given positive number $l$,  there is a monogon with one hole whose closed geodesic boundary is of length $l.$ Note that  there is a unique shortest geodesic segment $\ell_S$ joining two geodesic boundaries which intersect perpendicularly with both geodesic boundaries. Moreover, it turns out that $\ell_S\cap s_3=\{p_3\}.$  

From the construction of a monogon $S$ with one hole or with one puncture, there is a bi-infinite geodesic decomposing $S$ into an ideal triangle which is the gray geodesics in Figure \ref{fig:annuli}. Now we call a $2$-dimensional hyperbolic orbifold $S$ with geodesic boundary a \emph{geometric annulus} if $S$ is isometric to an ideal monogon with one cone point,  a monogon with one puncture, or a monogon with one hole (see Figure \ref{fig:annuli}). Note that a geometric interior of a geometric annulus is homeomorphic to the annulus.  Also, we call the intersection point of $\ell_S$ and the bi-infinite geodesic boudary the \emph{shearing point} of $S.$

\subsubsection{Geometric pair of pants} 

\begin{figure}
\centering
\begin{subfigure}[b]{0.2\textwidth}
  \includegraphics[width=\textwidth]{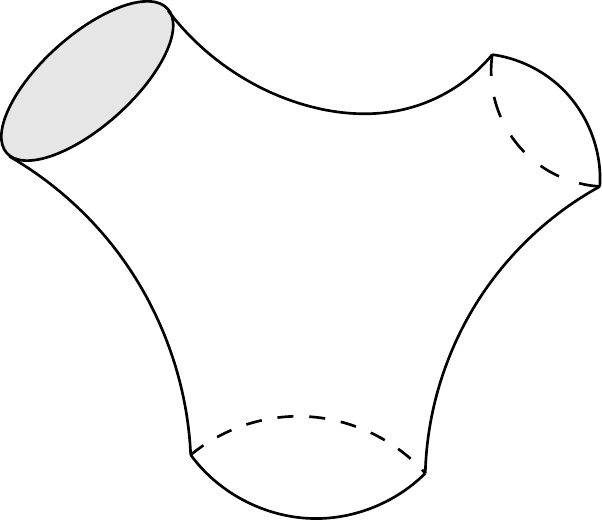}
  \caption{Three geodesic boundaries}
  \label{fig:pants1}
\end{subfigure}
\hfill
\begin{subfigure}[b]{0.20\textwidth}
  \includegraphics[width=\textwidth]{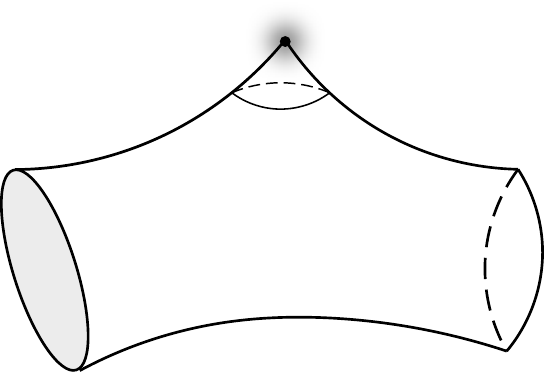}
  \caption{Two geodesic boundaries}
  \label{fig:pants2}
\end{subfigure}
\hfill\begin{subfigure}[b]{0.2\textwidth}
  \includegraphics[width=\textwidth]{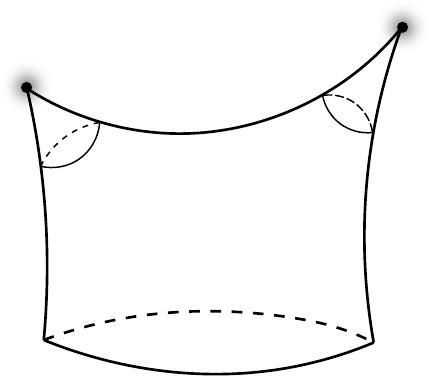}
  \caption{One geodesic boundary}
  \label{fig:pants3}
\end{subfigure}
\hfill\begin{subfigure}[b]{0.2\textwidth}
  \includegraphics[width=\textwidth]{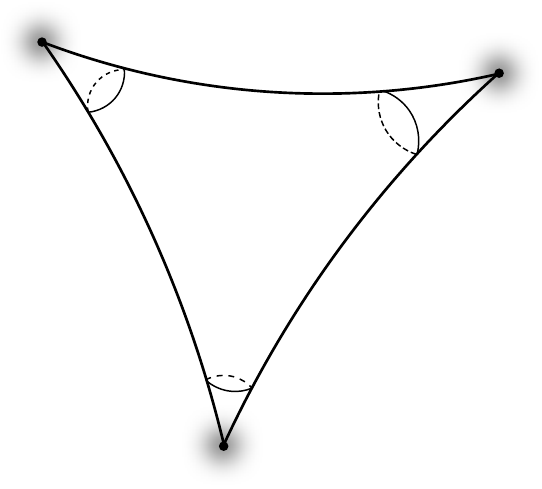}
  \caption{No geodesic boundary}
  \label{fig:pants4}
\end{subfigure}
\caption{There are four types of geometric pairs of pants according to the number of closed geodesic boundary components. Each marked point is a cone point or a puncture. However, in Figure \ref{fig:pants3}, two marked points can not be cone points of order two simultaneously.}
\label{fig:pants}
\end{figure}

\begin{defn}
A \emph{geometric pair of pants} is a $2$-dimensional hyperbolic orbifold with geodesic boundary satisfying the following:
\begin{enumerate}
\item The geometric interior is homeomorphic to the thrice-punctured sphere; 
\item The geometric boundary is compact and has exactly three connected components; 
\item The hyperbolic area is finite.
\end{enumerate}
\end{defn}
\begin{figure}
\centering
\begin{subfigure}[b]{0.2\textwidth}
  \includegraphics[width=1.2\textwidth]{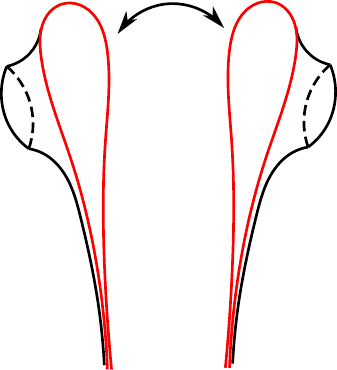}
  \caption{Two monogons with one hole}
  \label{fig:twomonogonhole}
\end{subfigure}
\hfill
\begin{subfigure}[b]{0.2\textwidth}
  \includegraphics[width=1.1\textwidth]{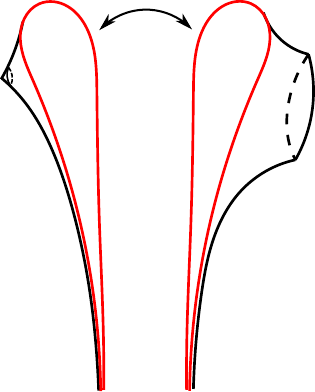}
  \caption{One monogon with one hole}
  \label{fig:onemonogonhole}
\end{subfigure}
\hfill
\begin{subfigure}[b]{0.2\textwidth}
  \includegraphics[width=\textwidth]{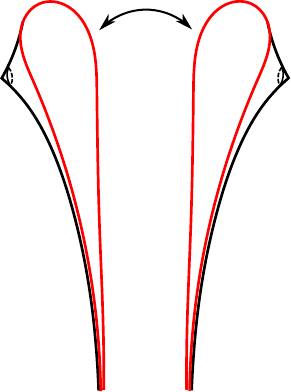}
  \caption{No monogon with one hole}
  \label{fig:nomonogonhole}
\end{subfigure}

\caption{Each pointy point represents a puncture or a cone of order $n>2.$ Then there are three cases according to the number of used monogons with one hole in the glueing process. The red lines are the bi-infinite geodesics $b_1$ and $b_2$ along which we glue two monogons.}
\label{fig:gluing}
\end{figure}

\begin{figure}
\centering
\begin{subfigure}[b]{0.2\textwidth}
  \includegraphics[width=\textwidth]{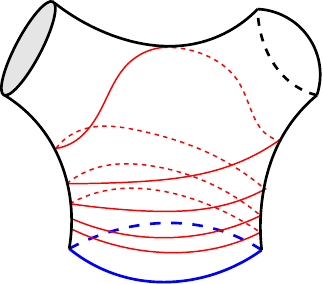}
  \caption{Three geodesic boundaries}
  \label{fig:threegeodesiccase}
\end{subfigure}
\hfill
\begin{subfigure}[b]{0.2\textwidth}
  \includegraphics[width=\textwidth]{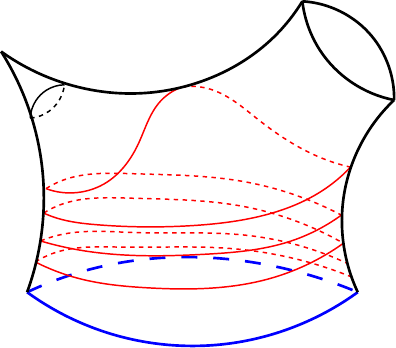}
  \caption{Two geodesic boundaries}
  \label{fig:twogeodesiccase}
\end{subfigure}
\hfill
\begin{subfigure}[b]{0.2\textwidth}
  \includegraphics[width=0.9\textwidth]{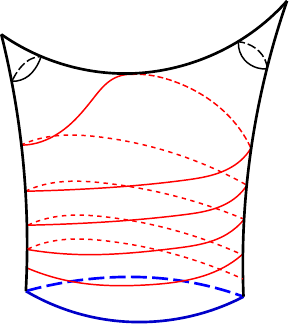}
  \caption{One geodesic boundaries}
  \label{fig:onegeodesiccase}
\end{subfigure}

\caption{Each pointy point represents a puncture or a cone of order $n>2.$ These pairs of pants are corresponding to the first three types in Figure \ref{fig:pants}. In Figure \ref{fig:gluing}, after gluing along red lines, the resulting metric spaces are not usually complete. Therefore, we need to add a simple closed geodesic to be complete. The blue curves represent the additional simple closed geodesics. Then, after gluing and completion, we get the geometric pairs of pants. Precisely, Figure \ref{fig:threegeodesiccase}, Figure \ref{fig:twogeodesiccase} or Figure \ref{fig:onegeodesiccase}  are obtained from Figure \ref{fig:twomonogonhole}, Figure \ref{fig:onemonogonhole} or Figure \ref{fig:nomonogonhole}, respectively. }

\label{fig:completionofnewpairs}
\end{figure}
All geometric pairs of pants fall into four types according to the number of closed geodesic boundary components (See \ref{fig:pants}). For brevity, until the end of this section,  we think of a puncture as a cone point of order $\infty.$ Then in Figure \ref{fig:pants},  each marked point is a cone point of order $n$ for some $n \in \NN \cup \{\infty \}$ with $n\neq 1.$ More precisely,  in Figure \ref{fig:pants3}, two marked points can not be cone points of order two simultaneously. In Figure \ref{fig:pants4}, if three marked points are of order $n_1$, $n_2$ and $n_3$, respectively, then $n_1$, $n_2$ and $n_3$ satisfy
 $$2-\left\{(1-\frac{1}{n_1})+(1-\frac{1}{n_2})+(1-\frac{1}{n_3})\right \} <0 $$ 
 ,or 
 $$\frac{1}{n_1}+\frac{1}{n_2}+\frac{1}{n_3}<1$$
 where $1/\infty=0.$
 In other words, the Euler characteristic must be negative.

Let $A_1$ and $A_2$ be geometric annuli and say that the bi-infinite geodesic boundaries of $A_1$ and $A_2$ are $b_1$ and $b_2,$ respectively.  Each boundary $b_i$ has the shearing point $sh_i.$ Now we glue $A_1$ and $A_2$ along $b_1$ and $b_2$ by an isometry $f_{12}$ of real line such that $f_{12}(b_1)=b_2$ (See Figure \ref{fig:gluing}). Then we get a space $S$ which may be incomplete. If $S$ is complete, then $S$ is a geometric pair of pants. If $S$ is not complete, by adding a simple closed geodesic as a boundary, we can make $S$ be complete. Then $S$ is also a geometric pair of pants (see Figure \ref{fig:completionofnewpairs} ).
The length of the additional closed geodesic is determined by the distance between $f_{12}(sh_1)$ and $sh_2.$ Note that for any given positive real number $l,$ we can find a map $f_{12}$ which makes the additional closed geodesic be of length $l.$ Using this construction, we may get any given geometric pair of pants without cone point of order two that is one of the first three types in Figure \ref{fig:pants}. Also, observe that in the geometric pairs of pants, the red geodesics in Figure \ref{fig:completionofnewpairs} and the gray geodesics in Figure \ref{fig:monogon} with the boundary geodesics decompose the geodesic pairs of pants into ideal monogons and ideal triangles. 
\begin{figure}
\centering
\begin{subfigure}[b]{0.2\textwidth}
  \includegraphics[width=0.5\textwidth]{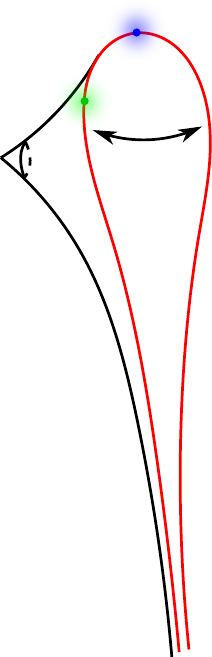}
  \caption{Not a monogon with one hole case}
  \label{fig:onemonogon}
\end{subfigure}
\hspace{10em}
\begin{subfigure}[b]{0.2\textwidth}
  \includegraphics[width=0.8\textwidth]{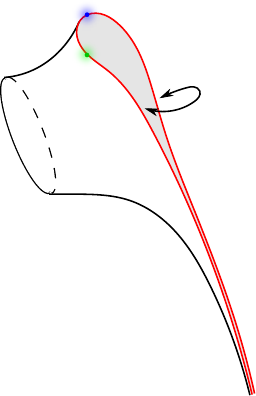}
  \caption{A monogon with one hole case}
  \label{fig:onemonogonwithhole}
\end{subfigure}
\caption{ The pointy point in Figure \ref{fig:onemonogon} represents a puncture or a cone point of order $n>2.$ The red lines are the bi-infinite geodesics $b_1$. The blue marked points are the fixed points $c_1$ of the isometries $f_{11}.$ The green marked points are the shearing points $sh_1$ of the geometric annuli. }
\label{fig:gluingone}
\end{figure}

\begin{figure}
\centering
\begin{subfigure}[b]{0.2\textwidth}
  \includegraphics[width=\textwidth]{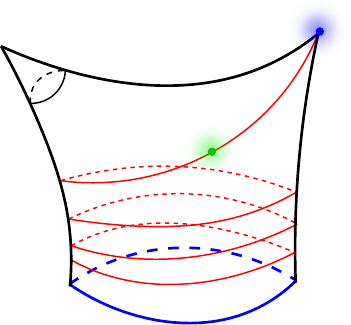}
  \caption{One geodesic boundary}
  \label{fig:twomonogons}
\end{subfigure}
\hspace{10em}
\begin{subfigure}[b]{0.2\textwidth}
  \includegraphics[width=\textwidth]{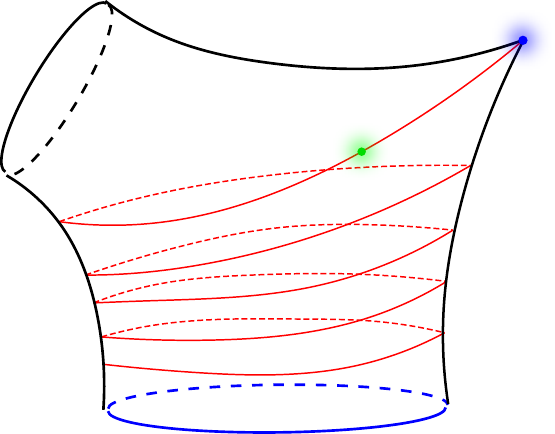}
  \caption{Two geodesic boundaries}
  \label{fig:twomonogons}
\end{subfigure}
\caption{The blue marked points are the images of $b_1$ and the green ones are the images of $sh_1$. Moreover, the blue points are cone points of order two. The blue curves are the additional simple closed geodesics.}
\end{figure}

Similarly, we consider the case where a geometric pair of pants has a cone point of order $2.$
Let $f_{11}$ be an isometry of $b_1$ such that $f_{11}$ is a reflection on $b_1$ and so there is a fixed point $c_1$ (see Figure \ref{fig:gluingone}). Now a quotient space $S$ is obtained from $A_1$ by identifying $x\in b_1$ with $f_{11}(x)$. Then if $c_1=sh_1$, then $S$ is complete and is a geometric pair of pants  with one cone point of order two. If $c_1\neq sh_1,$ then $S$ is not complete and so by adding  a closed geodesic as a boundary, we make $S$ be complete (see \ref{fig:completionofnewpairs}). Hence, $S$ is a geometric pair of pants with one cone point of order two. The length of the additional closed geodesic boundary component is determined by the distance between $c_1$ and $sh_1.$ Like a previous case, using this construction, we can get any given geometric pair of pants with a cone point of order two that is second or third type in Figure \ref{fig:pants}. Also, observe that  in the geometric pairs of pants, the red geodesics in Figure \ref{fig:completionofnewpairs} and the gray geodesics in Figure \ref{fig:monogon} with the boundary geodesics decompose the geodesic pair of pants into ideal monogons and ideal triangles.

From observations, we may conclude that any geometric pair of pants, which does not have three cone points, can be decomposed into two or one geometric annuli. Moreover, the geometric pairs of pant are decomposed into ideal monogons and ideal triangles.  Now we summarize the above in the following lemma.
\begin{lem} \label{lem : a lamination in a geometric pair of pants}
Let $S$ be a geometric pair of pants. Suppose that $S$ does not have three cone points of finite order.(i.e. $S$ is not the type of Figure \ref{fig:pants4})  There is a geodesic lamination such that the metric completion of each connected component of the complement of the lamination is isometric to an ideal triangle or an ideal monogon with one cone point. 
\end{lem}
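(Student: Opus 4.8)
The plan is to extract the lamination directly from the explicit assembly of $S$ given in the preceding construction and then to check that it meets the definition of a geodesic lamination. Since $S$ is not of the type of Figure~\ref{fig:pants4}, that construction realizes $S$ either by gluing two geometric annuli $A_1,A_2$ along their bi-infinite geodesic boundaries $b_1,b_2$, or, in the case of a single cone point of order two, by folding one geometric annulus $A_1$ along $b_1$ by a reflection $f_{11}$; in either case the glued space is completed, when necessary, by adjoining a simple closed geodesic. First I would fix this description and list the distinguished geodesics it carries: the image(s) of the glued boundaries $b_i$ (the ``red'' geodesics), together with, inside each geometric annulus that is a monogon with one puncture or with one hole, the interior bi-infinite geodesic obtained as the common image of $s_1$ and $s_2$ (the ``gray'' geodesic) which, by the construction of geometric annuli (Figure~\ref{fig:annuli}), cuts that annulus into a single ideal triangle. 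A geometric annulus that is an ideal monogon with a cone point of order $n\ge 3$ carries no gray geodesic and is left uncut.

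I would then set $\Lambda$ to be the closure in $S$ of the union of all these red and gray geodesics, and verify that $\pi_S^{-1}(\Lambda)$ is a closed subset of $C(S)$ consisting of pairwise disjoint bi-infinite geodesics. Each red or gray geodesic is a simple geodesic whose preimage is a single $G(S)$-orbit of a bi-infinite geodesic of $C(S)$, and the finitely many orbits are pairwise disjoint since the geodesics occur as edges of the tiling of $C(S)$ by the lifts of the ideal triangles and ideal monogons. The delicate point, which I expect to be the main obstacle, is closedness. Whenever a gluing isometry is hyperbolic, the associated seam is asymptotic to that isometry's axis and therefore spirals into a closed geodesic boundary component of $S$; hence the bare union of red and gray geodesics fails to be closed, and its closure acquires exactly the closed geodesic boundary geodesics. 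I would accordingly include these boundary geodesics as (boundary) leaves of $\Lambda$ and check, using that two geodesics of $\HH$ sharing an ideal endpoint are disjoint, that the enlarged family is still a pairwise disjoint, $G(S)$-invariant, and now closed union of bi-infinite geodesics, so that $\Lambda$ is a genuine geodesic lamination.

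Finally I would identify the complementary regions and their completions. Cutting $S$ along the red geodesics recovers the geometric annuli $A_1$ (and $A_2$); cutting each of these along its gray geodesic produces, by the annulus construction, either an ideal monogon with one cone point (when no gray cut is present) or an ideal triangle, where in the monogon-with-hole case the closed geodesic boundary into which the seam spiralled reopens as an ideal vertex of the triangle. In the folded order-two case the cone point lies on a red leaf and is unfolded into an ordinary boundary point of the adjacent piece, so that piece is again an honest ideal triangle and no cone point of order two ever lies in the interior of a complementary region. Since each such region is by construction the geometric interior of one of the model pieces, its metric completion is the corresponding closed ideal triangle or ideal monogon with one cone point. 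Carrying this analysis through the three admissible types of Figure~\ref{fig:pants} (together with the cone-point-of-order-two subcase) then yields the lemma.
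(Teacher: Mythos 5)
Your proposal is correct and follows essentially the same route as the paper: the paper establishes this lemma not by a separate argument but by the observation, at the end of its construction of geometric pairs of pants from one or two geometric annuli, that the ``red'' gluing geodesics and the ``gray'' seam geodesics of the annuli, together with the boundary geodesics, cut the pair of pants into ideal triangles and ideal monogons with one cone point. Your write-up is in fact more careful than the paper's on the one genuinely delicate point --- that the seams spiral onto the closed geodesic boundary components when the gluing holonomy is hyperbolic, so these boundary geodesics must be adjoined to make the union closed --- which the paper only signals with the phrase ``with the boundary geodesics.''
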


\section{Geodesic laminations in complete $2$-dimensional hyperbolic orbifolds}\label{sec:main2}

In this section, we prove the structure theorem of complete $2$-dimensional hyperbolic orbifolds following the paper \cite{BASMAJIAN_2017}. Then we show that any Fuchsian group $G$ of the first kind such that $\HH/G$ is not a geometric pair of pants is pants-like $\COL_3.$

\subsection{A structure theorem for complete $2$-dimensional hyperbolic orbifolds}\label{subsec:the structure theorem}

\subsubsection{Half-planes and funnels in a complete $2$-dimensional hyperbolic orbifold}
Let $S$ be a hyperbolic surface with geodesic boundary. We define $H=\{z\in \HH : 0\leq \operatorname{Re}(z)\}.$ Then if $S$ is isometric to $H$ with the metric induced from $\rho,$ then $S$ is called a \emph{half-plane}. If $S$ is isometric to $H/G$ where $G$ is generated by $z\mapsto e^{h}z $ for some $h\in \RR_{>0},$ then $S$ is called a \emph{funnel} with boundary length $h.$ 

Let $G$ be a Fuchsian group and $S$ be the complete $2$-dimensional hyperbolic orbifold $\HH/G.$ For a closed subset $A$ of $\partial \HH$, there is a unique smallest convex subset $\overline{CH(A)}$ in $\overline{\HH}$ such that $\overline{CH(A)}\cap \partial \HH =A$. Then $CH(A)=\overline{CH(A)}\cap \HH$ is also a convex subset. Hence, we call $CH(A)$ the \emph{convex hull} of $A.$ When $G$ is not elementary, the \emph{convex core} $CC(S)$ of $S$ is the quotient of the convex hull of $L(G)$ by $G$, $CH(L(G))/G.$ In general, the convex core of a complete $2$-dimensional hyperbolic orbifold  is a $2$-dimensional hyperbolic orbifold with geodesic boundary. In particular, when $G$ is a Fuchsian group of the first kind, $S$ itself is the convex core of $S.$ 

Now we discuss the case where $G$ is non-elementary and  of the second kind. Since $L(G)$ is a cantor set in $\partial \HH,$ the complement $\Omega(G) \cap \partial \HH$ is a countable disjoint union of nondegenerate open intervals. Let $I$ be a connected component of $\Omega(G) \cap \partial \HH.$ Then the stabilizer $\stab_G(I)$ of $I$ under the $G$-action is a trivial group or an infinite cyclic group. Now we consider $CH(\overline{I})$ where $\overline{I}$ is the closure of $I$ in $\partial{\HH}.$ Then $CH(\overline{I})$ is a half-plane under the Poincaré metric and the stabilizer $\stab_G(CH(\overline{I}))$ of $CH(\overline{I})$  is exactly $\stab_G(I).$ Then we can see that $CH(\overline{I})/\stab_G(CH(\overline{I}))$ is isometrically embedded in $S.$ Moreover, 
$CH(\overline{I})/\stab_G(CH(\overline{I}))$ is a half-plane if $\stab_G(CH(\overline{I}))$ is trivial, and a funnel if $\stab_G(CH(\overline{I}))$ is an infinite cyclic group. Note that when $\stab_G(CH(\overline{I}))$ is an infinite cyclic group, $\stab_G(CH(\overline{I}))$ is generated by a hyperbolic element $g\in \PR$ and the hyperbolic axis of $g$ is the geodesic boundary of $CH(\overline{I}).$ Therefore, when $\stab_G(CH(\overline{I}))$ is an infinite cyclic group, the boundary length of the funnel $CH(\overline{I})/\stab_G(CH(\overline{I}))$ is equal to  the translation length of $g.$ Also note that $\pi_1(CC(S))=\pi_1(S)$ where $S=\HH/G.$

\subsubsection{Pants decompositions}
Let $S$ be a $2$-dimensional hyperbolic orbifold with geodesic boundary. Then a collection $\mathcal{C}$ of non-trivial simple closed curves in $S^o$ which are homotopically distinct and disjoint is called  a \emph{pants decomposition} of $S$ if each connected component of $S^o-\bigcup \mathcal{C}$ is homeomorphic to the sphere without three points and $\mathcal{C}$ is locally finite. Let $\mathcal{C}_1$ and $\mathcal{C}_2$ be two pants decompositions of $S.$ We say that $\mathcal{C}_1$ and $\mathcal{C}_2$ are \emph{distinct} if for each $(c_1,c_2)\in \mathcal{C}_1 \times \mathcal{C}_2,$ $c_1$ and $c_2$ are not freely homotopic in $S^o.$   

A geodesic lamination $\Lambda$ of $S$ is called a \emph{geometric pants decomposition} if the Cauchy completion of each connected component of $S-\Lambda$ is a geometric pair of pants. Let $\Lambda_1$ and $\Lambda_2$ be two geodesic laminations in $S$. We say  that $\Lambda_1$ and $\Lambda_2$ are \emph{transverse} if there is no leaf shared by two geodesic laminations $\Lambda_1$ and $\Lambda_2.$ 

Let $G$ be  a Fuchsian group that is not a finite group and $S$ be the complete $2$-dimensional hyperbolic orbifold $\HH/G$. 
If $G$ is torsion free, then $S$ is a complete hyperbolic surface. If not, there is a ramification point $p$ in $\HH$ and  any point in the orbit $G\cdot p$ is also a ramification point. Then, since $G$ is not finite and the set of ramification points is discrete in $\HH,$ there are countably many ramification points. Hence, the complement $\HH-\pi_G^{-1}(\Sigma(S))$ of ramification points in $\HH$ is a Riemann surface whose fundamental group is a free group of infinite rank. Note that $(\pi_G)_*(\pi_1(\HH-\pi_G^{-1}(\Sigma(S)))$ is a normal subgroup of $\pi_1(S^o).$ Then the universal covering of $\HH-\pi_G^{-1}(\Sigma(S))$ is $\HH$ with a holomorphic covering map $\tilde{\pi}_G$ such that the deck transformation group of $\tilde{\pi}_G$ is isomorphic to $(\pi_G)_*(\pi_1(\HH-\pi_G^{-1}(\Sigma(S))).$ We write $K_G$ for the deck transformation group of $\tilde{\pi}_G.$  Then we define $$H_G=\{ h \in \operatorname{Aut}(\HH) : \tilde{\pi}_G\circ h= g\circ \tilde{\pi}_G \ \text{for some} \ g\in G \}.$$ Then $H_G$ is isomorphic to $\pi_1(S^o)$ and is a Fuchsian group. 
Hence, $ \pi_G \circ \tilde{\pi}_G$ is a universal covering map from $\HH$ to $S^o$ whose deck transformation group is $H_G.$ 

Now we define a homomorphism $\tilde{\pi}_G^*$ from $H_G$ to $G$ so that for each $h\in H_G,$ $\tilde{\pi}_G \circ h=\tilde{\pi}_G^*(h)\circ \tilde{\pi}_G.$  Then we have an exact sequence

 \begin{center}
\begin{tikzcd}
\{1\} \arrow[r] & K_G \arrow[r, "i"] & H_G\arrow[r, "\tilde{\pi}_G^*"]& G \arrow[r] &\{1\}
 \end{tikzcd}
\end{center}
where $i$ is the inclusion map. Hence, $H_G/K_G$ is isomorphic to $G.$ 
For more detail, consult the Kra's book \cite{kra1972automorphic}.

Now we prove the following lemma which is analogous to Proposition 3.1 in  \cite{BASMAJIAN_2017}.

\begin{thm}[The structure theorem for complete hyperbolic 2-dimensional orbifolds]\label{thm : straightening a pants decomposition}
Let $G$ be a non-elementary Fuchsian group and $S$ be the complete $2$-dimensional hyperbolic orbifold $\HH/G.$ Suppose that there is a pants decomposition $\mathcal{C}$ of $S.$ Let $\Lambda$ be the union of all geodesic realizations in $S$ of elements of $\mathcal{C}.$ Then $\Lambda \cup (\partial CH(L(G))/G)$ is a geometric pants decomposition of $CC(S).$
\end{thm}
Note that by definition, each element of $\mathcal{C}$ is essential in $S$.
\begin{proof}
Note that any pants decomposition induces an exhaustion by finite type subsurfaces. Here being finite type means that the fundamental group is finitely generated. Hence there is a sequence $\{S_n\}_{n=1}^\infty$ of subsurfaces of $S^o$ such that for each $n\in \NN,$ $S_n \subset S_{n+1}$ and the manifold boundary $\partial S_n$ consists of curves of $\mathcal{C}$ and $\displaystyle \bigcup_{n=1}^\infty int(S_n) = S^o.$ We may assume that for each $n\in \NN,$ every boundary curve of $S_n$ does not bound a disk with two cone points of order two in $S.$ Therefore, there is a sequence $\{H_n\}_{n=1}^\infty$ of subgroups of $H_G$ such that for all $n\in \NN,$ $H_n < H_{n+1}$ and $H_n$ is a corresponding subgroup of $H_G$ to $\pi_1(S_n).$ Note that $H_G=\displaystyle \bigcup_{n=1}^\infty H_n$ as $S^o=\displaystyle \bigcup_{n=1}^\infty S_n.$ Moreover, for each $n\in \NN,$ there is a unique connected component $\tilde{S}_n$ of $(\pi_G \circ \tilde{\pi}_G)^{-1}(S_n)$ such that the stabilizer $\stab_G(\tilde{S}_n)$ is $H_n$ and so $\tilde{S}_n \subseteq \tilde{S}_{n+1}.$ Note that for each $n\in \NN,$ the manifold boundary $\partial \tilde{S}_n $ consists of full lifts of the boundary curves of $S_n.$

 Now we consider the sequence $\{G_n\}_{n=1}^\infty$ of subgroups of $G$ where $G_n=\tilde{\pi}_G^*(H_n).$ Obviously, $G=\displaystyle \bigcup_{n=1}^\infty G_n.$ Also, for each $n\in \NN,$ the stabilizer of $\tilde{\pi}_G(\tilde{S}_n)$ is $G_n$ and the manifold boundary of  $\tilde{\pi}_G(\tilde{S}_n)$ consists of full lifts of the boundary curves of $S_n.$ Fix $n\in \NN.$ By assumption, each boundary component of $\tilde{\pi}_G(\tilde{S}_n)$ is a quasi-geodesic under the Poincaré metric in $\HH$ and $\tilde{\pi}_G(\tilde{S}_n)$ is a connected component of $\pi_G^{-1}(S_n).$ Let $B_n$ be the set of all hyperbolic axises of holonomies of boundary components of $\tilde{\pi}_G(\tilde{S}_n).$ Then there is a closed convex subset $C_n$ of $\HH$ such that $\partial C_n= B_n.$ Then by the choice of $C_n,$ $G_n$ preserves $C_n$ and the stabilizer of $C_n$ is $G_n$ under the $G$-action. Therefore, $\pi_G(C_n)$ is isometric to the $2$-dimensional hyperbolic orbifold $C_n/G_n$ with geodesic boundary. Moreover, $\pi_G(C_n)$ is the $2$-dimensional hyperbolic orbifold which is bounded by the geodesic realizations of the boundary curves of $S_n.$

Since $C_n$ is a closed convex subset of $\HH$ preserved by $G_n,$ $CH(L(G_n)) \subseteq C_n.$ Since each boundary component of $C_n$ is a hyperbolic axis for some hyperbolic element of $G_n,$ the end points of each boundary component of $C_n$ are contained in $L(G_n).$ Hence, $\partial C_n \subset \partial CH(L(G_n)).$ Since $G_n$ is finitely generated, for each boundary component $\ell$ in  $\partial CH(L(G_n))-\partial C_n,$ $\pi_G(\ell)$ bounds a funnel in $\pi_G(C_n).$ (See Section 10.4. in \cite{beardon1983geometry})

Now we claim that $CH(L(G))=\overline{\displaystyle \bigcup_{n=1}^\infty CH(L(G_n))}.$ As for each $n\in \NN,$ $L(G_n)\subseteq L(G_{n+1}),$ for each $n\in \NN,$ $CH(L(G_n)) \subseteq CH(L(G_{n+1}))$ and so  $\displaystyle \bigcup_{n=1}^\infty CH(L(G_n))$ is convex. Then $G$ preserves the closed convex subset $\overline{\displaystyle \bigcup_{n=1}^\infty CH(L(G_n))}$ of $\HH.$ Hence, $CH(L(G)) \subseteq \overline{\displaystyle \bigcup_{n=1}^\infty CH(L(G_n))}.$ On the other hand, for any $n\in \NN,$ $L(G_n) \subseteq L(G)$ and $CH(L(G_n)) \subseteq CH(L(G)).$ Therefore, $$\displaystyle \bigcup_{n=1}^\infty CH(L(G_n))\subseteq CH(L(G)) \subseteq \overline{\displaystyle \bigcup_{n=1}^\infty CH(L(G_n))}.$$ The claim is proved since $CH(L(G))$ is closed in $\HH$.

Let $\ell$ be a bi-infinite geodesic in $\partial CH(L(G)).$ If  $\ell$ is a manifold boundary component of $\displaystyle \bigcup_{n=1}^\infty CH(L(G_n))$, there is a natural number $n_0$ such that $\ell \subset \partial CH(L(G_{n_0})).$ Then $\pi_G(\ell)$ bounds a funnel in $S.$ If $\ell$ is in  $\displaystyle CH(L(G)) - \bigcup_{n=1}^\infty CH(L(G_n)),$ then there is a sequence $\{\ell_n\}_{n=1}^\infty$ of geodesics such that for each $n\in \NN,$ $\ell_n \subset \partial CH(L(G_n))$ and $\{\ell_n\}_{n=1}^\infty$ converges to $\ell.$ Then $\{\pi_G(\ell_n)\}_{n=1}^\infty$ converges to $\pi_G(\ell).$ If $\pi_G(\ell)$ is a regular simple closed geodesic, then $\pi_G(\ell)\subset int(S_{n_1})$ for some $n_1\in \NN$ since $\displaystyle \bigcup_{n=1}^\infty int(S_n) = S^o.$ Hence, $\pi_G(\ell) \subset \pi_G(C_{n_1})$ and $\pi_G(\ell)\subset \pi_G(CH(L(G_{n_1})))$, so $\ell \subset CH(L(G_{n_1})).$ This is a contradiction. Therefore, $\pi_G(\ell)$ is a bi-infinite geodesic and bounds a half-plane in $S.$ 

Now we have that $\{\pi_G(CH(L(G_n)))\}_{n=1}^\infty$ is an increasing sequence of $2$-dimensional hyperbolic orbifolds with geodesic boundary whose orbifold fundamental group are finitely generated and such that $\displaystyle \bigcup_{n=1}^\infty \pi_G(CH(L(G_n)))$ is $CC(S)$ without bi-infinite geodesic boundaries. Finally, by the definition of a pants decomposition, each element in $\mathcal{C}$ has a geodesic realization and the geodesic realization is contained in $\pi_G(CH(L(G_{n_2})))$ for some $n_2 \in \NN.$ Therefore, for $n\in \NN,$ $\Lambda \cap \pi_G(CH(L(G_{n})))$ is a geometric pants decomposition of $ \pi_G(CH(L(G_{n}))).$ Thus, we are done. 
\end{proof}

\begin{rmk}
If $G$ is of the first kind, then $\Lambda$ itself is a geometric pants decomposition of $S.$ 
\end{rmk}

\subsection{Fuchsian groups of the first kind are pants-like $\COL_3$}\label{subsec:main2b}
\begin{lem}\label{lem : three geometric pants decompositions}
Let $G$ be a Fuchsian group of the first kind and $S$ be the complete $2$-dimensional hyperbolic orbifold $\HH/G.$ Suppose that $S$ is not a geometric pants. Then there are three pairwise transverse geometric pants decompositions. 
\end{lem}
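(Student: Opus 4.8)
## Proof Proposal

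The plan is to exhibit three pairwise transverse pants decompositions of $S$ as topological (homotopy-theoretic) objects, and then invoke Lemma \ref{lem : straightening a pants decomposition} to straighten each of them into a geometric pants decomposition of the convex core. Since $G$ is of the first kind, $CC(S) = S$, and the boundary term $\partial CH(L(G))/G$ is empty because $L(G) = \hat{\RR}$; thus the geodesic realization $\Lambda_i$ of each topological pants decomposition $\mathcal{C}_i$ will itself be a geometric pants decomposition of $S$ by the remark following the lemma.

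First I would reduce the problem to the underlying surface $S^o$, which is a complete hyperbolic surface of infinite or finite type (after puncturing at the cone points, treating cone points and punctures uniformly as marked points with negative Euler characteristic contributions). The key input is that $S$ is \emph{not} a geometric pair of pants, so $S^o$ admits at least one essential simple closed curve that is not boundary-parallel, and in fact its pants graph / curve complex is rich enough to contain three pants decompositions that are pairwise ``filling'' relative to one another. Concretely, I would first produce a single pants decomposition $\mathcal{C}_1$ using the fact that any complete hyperbolic surface (possibly of infinite type) with $\chi < 0$ and not itself a pair of pants admits a locally finite pants decomposition; this is standard and the existence of the exhaustion $\{S_n\}$ used in Lemma \ref{lem : straightening a pants decomposition} already presupposes such a decomposition exists.

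The main work, and the step I expect to be the chief obstacle, is producing three decompositions that are \emph{pairwise transverse} in the sense defined just before the lemma, namely that no two of them share a freely homotopic curve (equivalently, after straightening, share a leaf). The natural approach is to fix $\mathcal{C}_1$, then apply a pseudo-Anosov or partial-pseudo-Anosov mapping class $\phi$ supported on a suitable finite-type subsurface to obtain $\mathcal{C}_2 = \phi(\mathcal{C}_1)$ and $\mathcal{C}_3 = \phi^2(\mathcal{C}_1)$, choosing $\phi$ so that high iterates move every curve of $\mathcal{C}_1$ off itself and off the images. For infinite-type surfaces one must be careful that such a mapping class exists and acts with the required mixing on each piece; the cleanest route is to build the three decompositions locally, pair of pants by pair of pants, ensuring on each piece of a fixed decomposition that the three curve systems restricted there are pairwise non-homotopic, and then check local finiteness of each global system. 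Transversality must be verified homotopically in $S^o$ so that the straightening in Lemma \ref{lem : straightening a pants decomposition} preserves distinctness: if two geodesic realizations shared a leaf, the corresponding curves would be freely homotopic in $S^o$, contradicting the construction.

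Finally I would assemble the three geometric pants decompositions $\Lambda_1, \Lambda_2, \Lambda_3$ of $S$ obtained by straightening $\mathcal{C}_1, \mathcal{C}_2, \mathcal{C}_3$, and verify pairwise transversality at the geodesic level: since $\mathcal{C}_i$ and $\mathcal{C}_j$ share no freely homotopic curve for $i \neq j$, part (1) of Lemma \ref{lem : finding the geodesic realization} guarantees each geodesic realization stays in its free homotopy class, so $\Lambda_i$ and $\Lambda_j$ can share no leaf, which is exactly the transversality condition. The delicate point to address is the possible presence of order-two cone points, where part (2) of Lemma \ref{lem : finding the geodesic realization} forces a realization to be a geodesic segment between cone points rather than a closed geodesic; I would handle this by choosing the subsurfaces $S_n$ (as in the lemma's proof) so that no boundary curve bounds a disk with two order-two cone points, and by selecting the three decompositions to avoid curves of this degenerate type, so that all realizations are genuine simple closed geodesics lying in distinct free homotopy classes.
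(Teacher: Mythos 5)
Your overall skeleton is the same as the paper's: produce a topological pants decomposition $\mathcal{C}_1$ of $S^o$, produce two more that are pairwise distinct (no shared free homotopy class), and straighten all three with Lemma \ref{lem : straightening a pants decomposition}; since $G$ is of the first kind the boundary term $\partial CH(L(G))/G$ is empty, so each $\Lambda_i$ is a geometric pants decomposition of $S$ itself, and distinctness of homotopy classes gives transversality of the realizations. The paper disposes of the middle step by citing Lemma 4.2 and Proposition 4.3 of \cite{BaikFuchsian}, which are purely topological constructions of two further decompositions distinct from a given one.

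The gap is precisely in your execution of that middle step, which is the entire content of the lemma beyond the straightening. Taking $\mathcal{C}_2=\phi(\mathcal{C}_1)$ and $\mathcal{C}_3=\phi^2(\mathcal{C}_1)$ for a (partial) pseudo-Anosov $\phi$ supported on a finite-type subsurface fails whenever $S^o$ has infinite type: every curve of $\mathcal{C}_1$ disjoint from the support of $\phi$ is fixed up to isotopy, so $\mathcal{C}_1$ and $\phi(\mathcal{C}_1)$ share all but finitely many of their infinitely many curves, and pseudo-Anosov classes with full support are not available on infinite-type surfaces. Your fallback --- building the three systems locally, piece by piece, and checking local finiteness --- is the right idea and is essentially what the cited results do, but you do not carry it out. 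Separately, your plan to ``select the three decompositions to avoid curves bounding a disk with two order-two cone points'' is not achievable in general (for a sphere with five cone points of order two, every pants decomposition contains such a curve); fortunately it is also unnecessary, since two such curves are freely homotopic in $S^o$ exactly when they enclose the same pair of cone points, so pairwise distinctness of the $\mathcal{C}_i$ already guarantees that the geodesic realizations --- whether closed geodesics or segments between order-two cone points --- share no leaf. The paper imposes the order-two restriction only on the boundary curves of the exhausting subsurfaces $S_n$ inside the proof of Lemma \ref{lem : straightening a pants decomposition}, not on the decompositions themselves.
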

\begin{proof}
If $S^o$ is a Riemann surface of finite type, then there is a nonempty pants decomposition since $S$ is not a geometric pants. 
Otherwise,  by Richards classification result \cite{Ian}, $S^o$ has a topological exhaustion by finite type surfaces. Hence there is a pants decomposition $\mathcal{C}_1.$ Note that  Lemma 4.2 and Proposition 4.3 in \cite{BaikFuchsian} does not depend on the metric of surfaces. Hence by applying the argument of Lemma 4.2 and Proposition 4.3 in \cite{BaikFuchsian} to $S^o$ and $\mathcal{C}_1,$ we can get pants decompositions $\mathcal{C}_2$ and $\mathcal{C}_3$ such that $\mathcal{C}_1,$ $\mathcal{C}_2,$ and $\mathcal{C}_3$ are  pairwise distinct. By Theorem \ref{thm : straightening a pants decomposition}, there are geometric pants decompositions $\Lambda_1,$ $\Lambda_2$ and $\Lambda_3$ which are pairwise transverse.  
\end{proof}

\begin{thm}\label{thm: of the first kind}
Let $G$ be a Fuchsian group of the first kind. Suppose that $\HH/G$ is not a geometric pair of pants. Then $G$ is a pants-like $\COL_3$ group.  
\end{thm}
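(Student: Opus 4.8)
The plan is to exhibit three $G$-invariant very full lamination systems $\LL_1,\LL_2,\LL_3$ that together form a pants-like collection, so that $G$ is pants-like $\COL_3$ by definition. Since $G$ is of the first kind and $S=\HH/G$ is not a geometric pair of pants, Lemma~\ref{lem : three geometric pants decompositions} supplies three pairwise transverse geometric pants decompositions $\Lambda_1,\Lambda_2,\Lambda_3$, each being the geodesic realization of a pants decomposition $\mathcal{C}_i$ of $S$ itself (cf. the remark following Lemma~\ref{lem : straightening a pants decomposition}). First I would refine each $\Lambda_i$: inside every complementary pair of pants $P$ I would insert the geodesic lamination of Lemma~\ref{lem : a lamination in a geometric pair of pants}, producing a geodesic lamination $\hat{\Lambda}_i\supseteq\Lambda_i$ all of whose complementary regions are ideal triangles or ideal monogons with one cone point. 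Lifting to the universal orbifold cover, $\tilde{\Lambda}_i:=\pi_S^{-1}(\hat{\Lambda}_i)$ is a geodesic lamination of $\HH$; an ideal triangle lifts to an ideal triangle and an ideal monogon with a cone point of order $n$ lifts to a regular ideal $n$-gon, so every complementary region of $\tilde{\Lambda}_i$ is a finite-sided ideal polygon. Reading off the two-point sets of endpoints of the leaves of $\tilde{\Lambda}_i$ and invoking the equivalence between laminations of $S^1$ and lamination systems, I obtain a lamination system $\LL_i$; it is $G$-invariant because $\hat{\Lambda}_i$ is a genuine lamination on $S$, and it is very full because each of its gaps is one of the finite ideal polygons just described.

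Before proceeding I would check that Lemma~\ref{lem : a lamination in a geometric pair of pants} applies to every piece $P$. Its only excluded case is a sphere with three cone points of finite order (Figure~\ref{fig:pants4}); but such a $P$ has no boundary curve in $\mathcal{C}_i$, hence is an entire connected component of $S^o$, which would force $S$ itself to be a geometric pair of pants, contrary to hypothesis. Thus every piece has at least one closed-geodesic boundary and the refinement is always available. I also record that since $G$ is of the first kind we have $CC(S)=S$, so $\hat{\Lambda}_i$ has no bi-infinite boundary leaves, and its only closed leaves are the geodesic realizations of the curves of $\mathcal{C}_i$.

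The heart of the argument is the cusp condition: for $i\neq j$, every point of $E(\LL_i)\cap E(\LL_j)$ is a cusp of $G$. I would first establish the structural description
\[ E(\LL_i)\subseteq\{\text{parabolic fixed points of }G\}\cup\{\text{fixed points of the hyperbolic holonomies of the curves of }\mathcal{C}_i\}. \]
Indeed, each leaf of $\hat{\Lambda}_i$ is either a closed geodesic (a curve of $\mathcal{C}_i$, whose lifts end at hyperbolic fixed points) or a bi-infinite geodesic each end of which runs into a cusp or spirals onto one of these closed geodesics; the $n$ ideal vertices of a lifted monogon are $\langle r\rangle$-translates, by the elliptic $r\in G$ fixing the cone point, of such an endpoint, hence again of the listed type. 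Now suppose $p\in E(\LL_i)\cap E(\LL_j)$ is not a cusp. Then $p$ is simultaneously a fixed point of a hyperbolic holonomy $g$ of a curve of $\mathcal{C}_i$ and of a hyperbolic holonomy $h$ of a curve of $\mathcal{C}_j$. As $G$ is discrete, two hyperbolic elements sharing one fixed point must share both and have a common axis, so $g$ and $h$ have equal axes; the corresponding curves of $\mathcal{C}_i$ and $\mathcal{C}_j$ then have the same geodesic realization and are freely homotopic, contradicting the transversality of $\Lambda_i$ and $\Lambda_j$ from Lemma~\ref{lem : three geometric pants decompositions}. Hence $p$ is a cusp.

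Finally I would verify transversality $\LL_i\cap\LL_j=\emptyset$ and conclude. A shared leaf would have both endpoints in $E(\LL_i)\cap E(\LL_j)$, hence, by the previous step, both endpoints would be cusps, making the leaf the unique bi-infinite geodesic joining two distinct cusps. But no leaf of $\hat{\Lambda}_i$ joins two cusps: in the geometric annulus decomposition of Section~\ref{sec:elementary facts of orbifolds} each puncture lies in its own annulus bounded by the gluing geodesic, which is a leaf of $\hat{\Lambda}_i$, and leaves never cross one another nor the curves of $\mathcal{C}_i$, so two distinct cusps are always separated by a leaf and cannot be the two ends of a single one. Therefore $\LL_i\cap\LL_j=\emptyset$, and $\{\LL_1,\LL_2,\LL_3\}$ is a pants-like collection of three $G$-invariant very full lamination systems, so $G$ is a pants-like $\COL_3$ group. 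I expect the main obstacle to be the structural description of $E(\LL_i)$ in the third paragraph — identifying precisely which points of $S^1$ occur as endpoints of the lifted refined laminations, in particular treating the monogon vertices and the spiralling ends carefully — since both the cusp condition and transversality rest entirely on that description.
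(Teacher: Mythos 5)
Your proposal follows the same route as the paper's proof: Lemma \ref{lem : three geometric pants decompositions} supplies three pairwise transverse geometric pants decompositions, Lemma \ref{lem : a lamination in a geometric pair of pants} refines each so that every complementary region is an ideal triangle or a monogon with one cone point, and the preimages in $\HH$ induce three $G$-invariant very full lamination systems. The paper simply asserts that these comprise a pants-like collection, whereas you verify it explicitly via the structural description of $E(\LL_i)$ and the discreteness of $G$; that verification is sound and correctly fills in the one step the paper leaves implicit.
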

\begin{proof}
By Lemma \ref{lem : three geometric pants decompositions}, there are three geometric pants decompositions $\Lambda_1,$ $\Lambda_2,$ and $\Lambda_3$ of $S$ which are pairwise transverse. By Lemma  \ref{lem : a lamination in a geometric pair of pants}, we can add geodesics in each geometric pair of pants and so we can get three geodesic laminations $\Lambda_1',$ $\Lambda_2',$ and $\Lambda_3'$ such that the completion of any connected component of the complement of each geodesic lamination is an ideal triangle or a monogon with one cone point. Therefore, $\pi_G^{-1}(\Lambda_1'),$$\pi_G^{-1}(\Lambda_2'),$ and $\pi_G^{-1}(\Lambda_3')$ are geodesic laminations whose complements are ideal polygons. Therefore, they induce three laminations of $S^1$ and so they also induce three very full laminations systems. Moreover, we can see that the very full lamination systems comprise a pant-like collection for $G.$  Thus, $G$ is a pants-like $\COL_3$ group. 
\end{proof}

\subsection{The proof :  Fuchsian groups of the second kind are pants-like $\COL_3$.}
\label{subsec:main2c}
Now, we consider the case of the second kind and finish the proof of Theorem \ref{B}. 
Let $G$ be a non-elementary Fuchsian group of the second kind. The idea of the proof of the case of the second kind is first to take the doubling the $\HH/G.$ Then, we can see that the doubling is a hyperbolic surface whose fundamental group is a Fuchsian group of the first kind. Now, we can use Theorem \ref{thm : straightening a pants decomposition}, so we get three pairwise transverse geometric pants decompositions. Finally, restricting the laminations to the original surface gives the original surface  three geodesic laminations which are not very full in general. Hence, we need to add more geodesics to make the laminations be very full. 

First, we construct geodesic laminations for elementary pieces. 
\begin{lem}\label{lem: the half farey triangulation}
Let $H$ be the half plane and $\overline{H}$ be the closure of $H$ in $\overline{\HH}=\HH\cup \hat{\RR}.$ Suppose that $Q$ is a countable dense subset of the set $\overline{H}-H$ of non-negative real numbers, containing $\{0, \infty \}.$ There is a geodesic lamination $\Lambda$ in $H$ such that the Cauchy completion of  each connected component  of $H-\Lambda$ is the ideal 3-gon and such that in the corresponding lamination system $\LL$,  every leaf is isolated and $E(\LL)=Q$. 
\end{lem}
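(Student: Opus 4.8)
The plan is to build $\Lambda$ explicitly as an increasing union of triangulated ideal polygons that exhausts $H$. First I would enumerate $Q$ as $\{q_0,q_1,q_2,\dots\}$ with $q_0=0$ and $q_1=\infty$, so that $q_2,q_3,\dots$ are positive reals dense in $(0,\infty)$. For each $n\ge 1$ let $P_n=CH(\{q_0,\dots,q_n\})$ be the convex hull in $\HH$; since all the $q_i$ lie on $\partial_\infty H=[0,\infty]$, this is an ideal $(n{+}1)$-gon with vertex set exactly $\{q_0,\dots,q_n\}$. I would set $\mathcal T_1=\{[0,\infty]\}$ (the single boundary geodesic $[0,\infty]=\partial H$) and pass from $\mathcal T_n$ to $\mathcal T_{n+1}$ as follows: the new vertex $q_{n+1}$ falls strictly between two consecutive (in the linear order of $[0,\infty]$) vertices $q_i,q_j$ of $P_n$, whose joining geodesic $[q_i,q_j]$ is a boundary edge of $P_n$; put $\mathcal T_{n+1}=\mathcal T_n\cup\{[q_i,q_{n+1}],[q_{n+1},q_j]\}$, which adjoins the single ideal triangle $\{q_i,q_{n+1},q_j\}$ lying in $H\setminus P_n$. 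Finally let $\Lambda$ be the union of all geodesics occurring in some $\mathcal T_n$ and let $\LL$ be its lamination system.

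The decisive feature is \emph{monotonicity}: because $q_{n+1}$ is always adjoined on the convex-hull boundary, the new triangle occupies a region of $H$ disjoint from $P_n$, so no previously created triangle is ever subdivided, and $\mathcal T_n\subseteq\mathcal T_{n+1}$. Hence the complementary regions of $\Lambda$ in $H$ are precisely the ideal triangles created at the successive stages, each an ideal $3$-gon. To see that these triangles actually cover $H$ (equivalently that $\Lambda$ is closed) I would check $\bigcup_n P_n=CH(Q)=H$: for $z$ with $\operatorname{Re}(z)>0$, density of $Q$ provides $q\in Q$ with $\operatorname{Re}(z)<q<(\operatorname{Re}(z)^2+\operatorname{Im}(z)^2)/\operatorname{Re}(z)$, and a direct check places $z$ in the interior of the triangle $\{0,\infty,q\}\subseteq P_m$ for $m$ large enough that $0,\infty,q$ have all been enumerated, while points with $\operatorname{Re}(z)=0$ lie on the leaf $[0,\infty]$. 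Thus $H\setminus\Lambda$ is a disjoint union of open ideal triangles, so $\Lambda$ is a closed disjoint union of simple geodesics whose complementary pieces are ideal $3$-gons, as required.

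Next I would read off the endpoint set. Every leaf has both endpoints in $Q$ by construction, so $E(\LL)\subseteq Q$; conversely $q_0,q_1$ are endpoints of $[0,\infty]$ and each $q_{n+1}$ becomes an endpoint of the two leaves $[q_i,q_{n+1}],[q_{n+1},q_j]$ adjoined at its stage, so $Q\subseteq E(\LL)$ and hence $E(\LL)=Q$. For isolation, consider a leaf $\ell(I)$ with endpoints $a,b$; the complementary triangle on the $I$-side has a third vertex $r$ lying in the open arc $I$. An $I$-side sequence $\ell(I_n)\to\ell(I)$ would consist of leaves $[a_n,b_n]$ with $a_n,b_n\in\overline I$ tending to $a$ and $b$; for large $n$ one has $a_n\in(a,r)$ and $b_n\in(r,b)$, so $[a_n,b_n]$ separates $r$ from $\{a,b\}$ and therefore links the triangle side $[a,r]$, contradicting unlinkedness of distinct leaves. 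Hence both $I$ and $I^*$ are isolated (the side of the boundary leaf $[0,\infty]$ exterior to $H$ carries no leaves at all), so every leaf is isolated.

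The routine parts here are the inductive bookkeeping and the unlinkedness of the successively adjoined geodesics, both immediate from the convex-hull description. I expect the main obstacle to be the two places where density of $Q$ is genuinely used: the convex-hull computation $\bigcup_n P_n=H$, which is what guarantees that the complement is exhausted by honest ideal triangles (rather than leaving an uncovered boundary stratum) and hence that $\Lambda$ is closed; and the linking argument, which is what upgrades \textbf{``each complementary region is a triangle''} to \textbf{``every leaf is isolated.''}
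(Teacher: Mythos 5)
Your construction is correct and is essentially the paper's own: adding, for each newly enumerated point of $Q$, the two geodesics joining it to its neighbours among the points already placed is exactly the recursive half-plane subdivision the paper calls a half Farey triangulation (your growing convex hulls $P_n$ are the complements of the paper's residual half-planes). You in fact verify more than the paper records --- the exhaustion $\bigcup_n P_n = H$, the identity $E(\LL)=Q$, and the isolation of leaves are all left implicit there --- and the only point to tidy is the shared-endpoint case in your linking argument: if $a_n=a$, link the leaf $[a,b_n]$ against the other triangle side $[r,b]$ instead of $[a,r]$.
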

\begin{proof}
First, we say that $Q$ is  a sequence $\{q_n\}_{n=1}^\infty$ such that $q_1=0$ and $q_2=\infty.$
Then there is a geodesic joining $q_1$ and $q_2.$ See Figure \ref{fig:the disk with a red line}. The red line represents the geodesic and the geodesic is the boundary of the half plane $H.$ Note that $H=CH([0,\infty]).$
  
Now, we consider $q_3.$ We can draw two geodesics. One is the geodesic joining two points $q_1$ and $q_3$ and the other is the geodesic joining $q_2$ and $q_3.$ The blue lines in Figure \ref{fig:the disk with two blue lines} represent the geodesics. Then, $q_4$ is contained in $(q_1, q_3)$ or $(q_3, q_2)$ and both $CH([q_1, q_3])$ and $CH([q_3, q_2])$ are half planes. So we can do similarly in $CH([q_1, q_3])$ or $CH([q_3, q_2]).$ Obviously, we can repeat this process. Finally, we can get Figure \ref{fig:a half Farey triangulation}. These geodesics give a geodesic lamination in $H$ that we want. 
\end{proof}

\begin{figure}
\centering
\begin{subfigure}[b]{0.2\textwidth}
  \includegraphics[width=\textwidth]{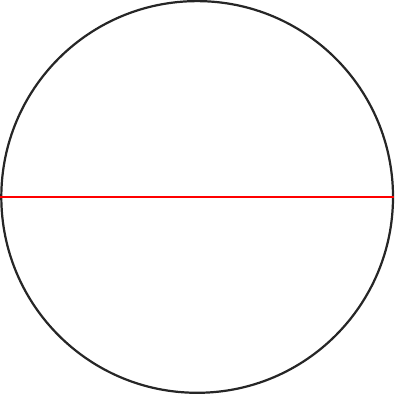}
  \caption{The unit disk with the geodesic connecting two points, $0$ and $\infty.$}
 \label{fig:the disk with a red line}
\end{subfigure}
\hspace{4em}
\begin{subfigure}[b]{0.2\textwidth}
  \includegraphics[width=\textwidth]{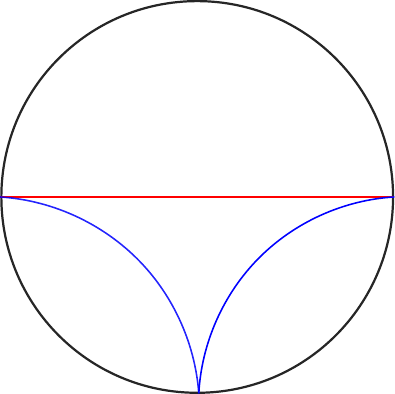}
  \caption{The first step}
 \label{fig:the disk with two blue lines}
\end{subfigure}
\hspace{4em}
\begin{subfigure}[b]{0.2\textwidth}
  \includegraphics[width=\textwidth]{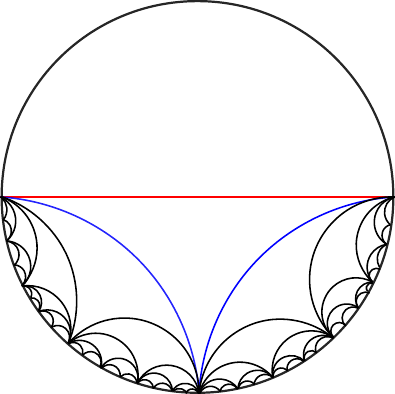}
  \caption{A half Farey triangulation}
  \label{fig:a half Farey triangulation}
\end{subfigure}
\caption{These pictures show the steps to draw a half Farey triangulation.}
\end{figure}

\begin{lem}\label{lem: a triangulation of a square}
Let $I$ and $J$ be two nondegenerate open intervals in $\partial \HH$.  Suppose that $\overline{I}$ and $\overline{J}$ are disjoint. We say that $S$ is the hyperbolic surface $CH(\overline{I}\cup \overline{J}).$ (See Figure \ref{fig:the convex hull of two intervals})  Assume that there is a countable set $Q$ such that $Q$ is dense in $S$ and $\partial I \cup \partial J \subset Q$. Then there is a geodesic lamination $\Lambda$ in $S$ such that the Cauchy completion of  each connected component  of $S-\Lambda$ is the ideal 3-gon and the geodesic boundaries of $S$ are contained in $\Lambda$ and such that in the corresponding lamination system $\LL$,  every leaf is isolated and $E(\LL)=Q$. 
\end{lem}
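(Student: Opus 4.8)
The plan is to reduce everything to the half-plane case already treated in Lemma~\ref{lem: the half farey triangulation}. Write $\partial I=\{a,b\}$ and $\partial J=\{c,d\}$, labelled so that $(a,b,c,d)$ is positively oriented on $\partial\HH$; then the two geodesic boundary components of $S=CH(\overline I\cup\overline J)$ are the geodesics $g_1$ and $g_2$ spanned by $\{b,c\}$ and by $\{d,a\}$ respectively. First I would introduce three auxiliary geodesics inside $S$: the chord $k_I$ spanned by $\{a,b\}$, the chord $k_J$ spanned by $\{c,d\}$, and the diagonal $m$ spanned by $\{b,d\}$. Using only the circular order $(a,b,c,d)$ one checks that $g_1,g_2,k_I,k_J,m$ are pairwise unlinked and, being spanned by pairs of points of $\overline I\cup\overline J$, all lie in the convex set $S$.

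These five geodesics cut $S$ into exactly four pieces: two ideal triangles with vertex sets $\{a,b,d\}$ and $\{b,c,d\}$, which together form the ideal quadrilateral $CH(\{a,b,c,d\})$ split along $m$ and whose four sides are $k_I,g_1,k_J,g_2$; and two half-planes $CH(\overline I)$ and $CH(\overline J)$ lying beyond the chords $k_I$ and $k_J$, with ideal boundary arcs $\overline I$ and $\overline J$. Since $\overline I$ and $\overline J$ are disjoint closed arcs and $Q$ is dense in $\overline I\cup\overline J$ with $\{a,b,c,d\}\subset Q$, the sets $Q_I:=Q\cap\overline I$ and $Q_J:=Q\cap\overline J$ are countable dense subsets of $\overline I$ and $\overline J$ containing their endpoints. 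I would then apply Lemma~\ref{lem: the half farey triangulation} to each half-plane: because laminations, the circular order, and the notion of isolation are $\PR$-equivariant, conjugating $CH(\overline I)$ by an element of $\PR$ carrying it to the standard half-plane and $\{a,b\}$ to $\{0,\infty\}$ lets the lemma produce a geodesic lamination $\Lambda_I$ of $CH(\overline I)$ with ideal-triangle complementary pieces, containing its boundary geodesic $k_I$, with all leaves isolated and endpoint set $Q_I$; and similarly $\Lambda_J$.

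Setting $\Lambda=\{g_1,g_2,m\}\cup\Lambda_I\cup\Lambda_J$, I would verify the three required properties. That every complementary component is an ideal triangle and that $g_1,g_2$ lie in $\Lambda$ are immediate from the decomposition, and $E(\LL)=\{a,b,c,d\}\cup Q_I\cup Q_J=Q$. The remaining point, which I expect to need genuine care, is isolation of every leaf in the assembled system. Leaves interior to a half-plane are isolated there by Lemma~\ref{lem: the half farey triangulation} and keep ideal triangles on both flanks in $\Lambda$; the three spine geodesics $g_1,g_2,m$ bound ideal triangles on both sides, hence admit no side-sequence; and each chord $k_I,k_J$ has an ideal triangle on its quadrilateral side and is isolated on its half-plane side by the lemma.

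The main obstacle is thus not a single hard estimate but the gluing/isolation bookkeeping: one must confirm that welding two laminations along a shared isolated boundary leaf (the chords $k_I,k_J$ and the diagonal $m$) cannot manufacture a new side-sequence straddling the seam. This should follow because any leaf approaching a seam is separated from it by one of the two adjacent ideal triangles, so no converging sequence of leaves can accumulate onto the seam leaf. Carrying out this seam analysis cleanly, together with the (routine but needed) justification that Lemma~\ref{lem: the half farey triangulation} transports verbatim to an arbitrary half-plane by a $\PR$-conjugation, is where I expect the bulk of the careful work to lie.
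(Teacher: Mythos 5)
Your proposal is correct and takes essentially the same route as the paper: decompose $S$ into the ideal quadrilateral $CH(\partial I \cup \partial J)$ cut by a single diagonal into two ideal triangles, together with the two half-planes $CH(\overline{I})$ and $CH(\overline{J})$ handled by Lemma \ref{lem: the half farey triangulation}, and take the union of the resulting systems. The paper's own proof is in fact terser than yours (it does not spell out the seam/isolation bookkeeping you flag, which your write-up handles correctly), so nothing further is needed.
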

\begin{proof}
$S$ is a complete hyperbolic surface with two geodesic boundaries which are the blue lines in Figure \ref{fig:the convex hull of two intervals}. Now, we say that $S'$ is the convex hull of $\partial I \cup \partial J$ which is an ideal $4$-gon.(See Figure \ref{fig:the convex core of the convex hull}) Then $S$ is the union of $S'$, $CH(\overline{I})$ and $CH(\overline{J}).$

Now, we say that $Q_I =Q \cap \overline{I}$ and $Q_J=Q\cap \overline{J}.$ Then $Q_I$ and $Q_J$ are countable dense subsets of $\overline{I}$ and $\overline{J}$, respectively. Since $CH(\overline{I})$ and $CH(\overline{J})$ are half planes, by Lemma \ref{lem: the half farey triangulation}, there are two lamination systems $\LL_I$ and $\LL_J$ in  $CH(\overline{I})$ and $CH(\overline{J})$, respectively, such that $E(\LL_I)=Q_I$ and $E(\LL_J)=Q_J$.  

Finally, we consider $S'.$ In $S',$ we just add one diagonal geodesic like in Figure \ref{fig:the convex core with a diagonal}. The boundary geodesics of $S'$ with the diagonal geodesic give a lamination system $\LL_{S'}$in $S'$ such that  every gap of $\LL_{S'}$ is an ideal triangle, every leaf is isolated, and $E(\LL_{S'})=\partial I \cup \partial J.$ Therefore, $\LL_{S'}\cup \LL_I \cup \LL_J$ is a lamination system which we want. 
 
\end{proof}

\begin{figure}
\centering
\begin{subfigure}[b]{0.2\textwidth}
  \includegraphics[width=\textwidth]{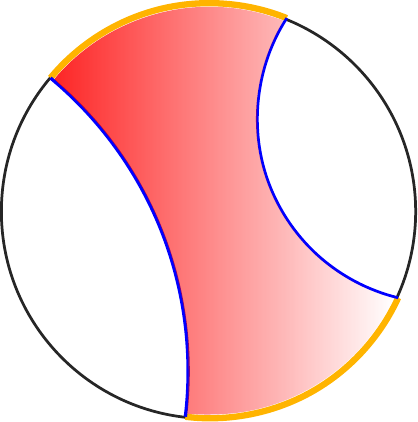}
  \caption{The red region is the convex hull of orange arcs which represent $\overline{I}$ and $\overline{J}$}
 \label{fig:the convex hull of two intervals}
\end{subfigure}
\hspace{4em}
\begin{subfigure}[b]{0.2\textwidth}
  \includegraphics[width=\textwidth]{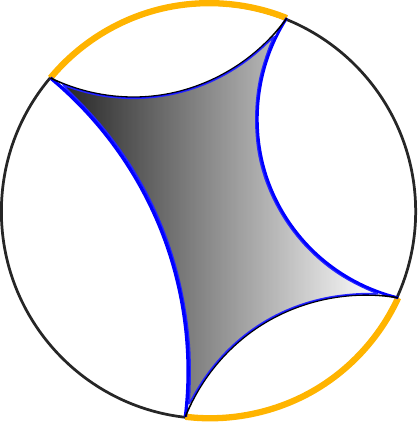}
  \caption{The shaded region is the convex hull of $\partial I$ and $\partial J$. This is a kind of convex core of $S$.}
 \label{fig:the convex core of the convex hull}
\end{subfigure}
\hspace{4em}
\begin{subfigure}[b]{0.2\textwidth}
  \includegraphics[width=\textwidth]{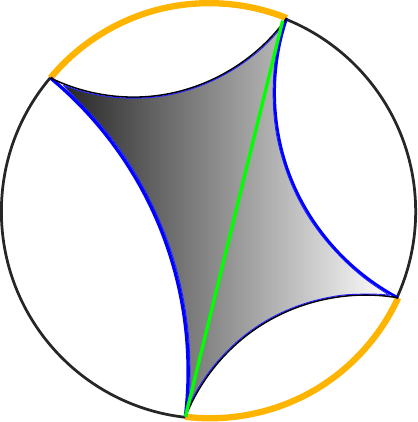}
  \caption{The green line is one of the diagonal geodesics of the shaded region. }
\label{fig:the convex core with a diagonal}
\end{subfigure}
\caption{These pictures show the steps to draw a  triangulation.}
\end{figure}

Let $G$ be an Fuchsian group of the second kind. Then $\partial \HH-L(G)$ is non-empty. Now, we consider the surface $(\overline{\HH}-L(G))/G$ with boundary.  Then $(\overline{\HH}-L(G))/G-\HH/G$ is exactly $(\partial \HH-L(G))/G$ since $G$ acts properly discontinuously on $\partial \HH-L(G).$ Therefore, we call each connected component of  $(\partial \HH-L(G))/G$ an \textsf{ideal
 boundary} of $\HH/G$ and each point of ideal boundaries an \textsf{ideal point} of $\HH/G.$ We denote the quotient map from $\overline{\HH}-L(G)$ to $(\overline{\HH}-L(G))/G$ by $\overline{\pi}_G.$ Note that $\overline{\pi}_G|_{\HH}=\pi_G.$
 
\begin{defn}
Let $G$ be a Fuchsian group. Suppose that there is a cone point $c_1$ in $\HH/G.$ 
A subset  $\ell$ of $\HH/G$ is called  a \textsf{regular simple geodesic ray} emanating from $c_1$ if there is a geodesic ray $\alpha_\ell$ from $[0,\infty)$ to $\HH$ such that $\pi_G\circ \alpha_\ell$ is injective, $(\pi_G\circ \alpha_\ell)([0,\infty))=\ell,$ $\pi_G\circ \alpha_\ell(0)=c_1,$ and for each $s\in (0,\infty)$, $\pi_G \circ \alpha_\ell(s)$ is a regular point in $\HH/G.$ In particular, when $\displaystyle \lim_{s\to \infty} \alpha_\ell(s)$ is a point $p_\infty$ in $\partial \HH - L(G),$  the point $\overline{\pi}_G(p_\infty)$ in an ideal boundary is called the \textsf{ideal end point} of the geodesic ray $\ell.$ Also, when $\displaystyle \lim_{s\to \infty} \alpha_\ell(s)$ is a cusp point of $G,$ we say that $\ell$ is \textsf{emanating} to a puncture which corresponds to the cusp point.
\end{defn} 

Let $G$ be a Fuchsian group of the second kind. Suppose that there is a regular simple geodesic ray $\ell$ emanating from a cone point $c$ having $p$ as the ideal end point. Then there is an injective continuous map $\beta_\ell$ from $[0,1]$ to $\overline{\HH}/G$ such that $\beta_\ell(0)=c,$ $\beta_\ell(1)=p,$ and $\beta_\ell([0,1))=\ell.$

\begin{prop}\label{prop: representative of geodesic rays}
Let $G$ be a Fuchsian group of the second kind and $p$ be an ideal point of  $\HH/G.$  Suppose that there is a cone point $q$ in $\HH/G.$ If there is a continuous map $\alpha$ from $[0,1]$ to $\overline{\HH}/G$ such that $\alpha(0)=q$, $\alpha(1)=p$ and  $\alpha([0,1))\cap \Sigma(\HH/G)=\{q\},$ and $\alpha|(0,1)$ is an embedding in the geometric interior $(\HH/G)^o,$ then there is an isotopy $H$ on $\overline{\HH}/G$ satisfying the following. 
\begin{enumerate}
\item $H(s,0)=\alpha(s),$
\item $H(0,t)=q,$ $H(1, t)=p,$
\item $H((0,1)\times [0,1])\cap \Sigma(\HH/G)=\emptyset$ and 
\item $H([0,1),1)$ is a regular simple geodesic ray emanating from $q$ with the ideal end point $p.$
\end{enumerate}
\end{prop}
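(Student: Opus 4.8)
The plan is to lift $\alpha$ to the universal orbifold cover, replace the lift by the geodesic ray joining the same two endpoints, and then descend. Fix $G=G(S)$. Since $\pi_G$ restricts to a covering map of $\HH-\pi_G^{-1}(\Sigma(\HH/G))$ onto the geometric interior $(\HH/G)^o$, I would first lift $\alpha|_{(0,1)}$ through this covering and extend to $s=0$; because $\alpha([0,1))\cap\Sigma(\HH/G)=\{q\}$, the limit as $s\to 0$ is a ramification point over $q$, which I take to be $\tilde q$, and I let $\langle h\rangle=G_{\tilde q}$ be its finite cyclic stabilizer, generated by an elliptic element of order $m\ge 2$. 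As $\alpha|_{(0,1)}$ is an embedding into $(\HH/G)^o$, the lift $\tilde\alpha|_{(0,1)}$ is an embedding into $\HH-\pi_G^{-1}(\Sigma(\HH/G))$; and since $p$ is an ideal point, $\tilde p:=\tilde\alpha(1)$ lies in $\partial\HH-L(G)$ with $\overline{\pi}_G(\tilde p)=p$.

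Next I would define the candidate ray. Let $\tilde\ell$ be the unique geodesic ray in $\overline{\HH}$ from the interior point $\tilde q$ to the boundary point $\tilde p$, and set $\ell=\pi_G(\tilde\ell\cap\HH)$. Parametrizing $\tilde\ell$ by arclength as $\alpha_\ell\colon[0,\infty)\to\HH$, one has $\alpha_\ell(0)=\tilde q$, $\pi_G\circ\alpha_\ell(0)=q$, and $\lim_{s\to\infty}\alpha_\ell(s)=\tilde p\in\partial\HH-L(G)$; so the only thing needed for $\ell$ to be a regular simple geodesic ray emanating from $q$ with ideal end point $p$ is that $\pi_G\circ\alpha_\ell$ be injective and that $\tilde\ell$ meet $\pi_G^{-1}(\Sigma(\HH/G))$ only at $\tilde q$.

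This simplicity statement is the main obstacle; it is the orbifold analogue of the fact that the geodesic representative of a simple arc is simple, and I would prove it by a bigon/innermost-disk argument parallel to Lemma \ref{lem : finding the geodesic realization} and the proposition on disjoint geodesic realizations. If $\pi_G\circ\alpha_\ell$ failed to be injective, or $\tilde\ell$ passed through a ramification point $r\ne\tilde q$, then a suitable distinct lift of $\alpha$ (namely $g\tilde\alpha$ for an appropriate $g\in G$, or $h_r\tilde\alpha$ for a generator $h_r$ of $G_r$) would have a geodesic representative meeting $\tilde\ell$ transversally, whereas $\tilde\alpha$ and that lift are disjoint in their interiors because $\alpha$ is simple in $(\HH/G)^o$. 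Since two distinct complete geodesics in $\HH$ cross at most once and bound no bigon, transporting the disjointness of the arcs across the geodesic homotopies yields the required contradiction. The genuinely delicate point, exactly as in Lemma \ref{lem : finding the geodesic realization}(2), is an order-two ramification point lying on $\tilde\ell$: the reflection fixing such a point would preserve $\tilde\ell$, and I would exclude this using that $\alpha((0,1))$ avoids $\Sigma(\HH/G)$ and that $\tilde p\in\Omega(G)$, so that the second endpoint of the complete geodesic through $\tilde\ell$ cannot be the $h_r$-image compatible with a simple ray.

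Finally I would produce the isotopy. Since $\overline{\HH}$ is simply connected, $\tilde\alpha$ and $\tilde\ell$ are homotopic rel $\{\tilde q,\tilde p\}$, so $\alpha$ and $\ell$ are homotopic rel $\{q,p\}$ through $(\HH/G)^o\cup\{q,p\}$. Both $\alpha|_{(0,1)}$ and the interior of $\ell$ are then proper simple arcs in the open surface $(\HH/G)^o$ running out to the end at the cone point $q$ and to the ideal end at $p$, and they are properly homotopic; by the standard theorem that homotopic proper simple arcs on a surface are properly isotopic, there is an ambient isotopy of $(\HH/G)^o$ carrying $\alpha|_{(0,1)}$ onto the interior of $\ell$, which extends to $\overline{\HH}/G$ fixing $q$ and $p$. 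Tracking the image of $\alpha$ gives $H$: properties (1) and (2) hold by construction, (3) holds because every time-slice stays in $(\HH/G)^o$ and so misses $\Sigma(\HH/G)$, and (4) holds because $H([0,1),1)=\ell$ is the regular simple geodesic ray built above. Alternatively, once simplicity and the disjointness of $\tilde\alpha,\tilde\ell$ from the $G$-translates are established, $H$ can be written explicitly as the descent of the geodesic homotopy sweeping the disk the two arcs cobound, which I expect matches the style of the surrounding exposition.
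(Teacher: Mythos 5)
The paper states Proposition \ref{prop: representative of geodesic rays} without any proof at all (it is asserted and then immediately used in the elementary and second-kind cases), so there is no argument of the authors to compare yours against; what follows is an assessment of your proposal on its own terms. Your skeleton --- lift $\alpha$ through the covering $\HH-\pi_G^{-1}(\Sigma(\HH/G))\to(\HH/G)^o$, identify the endpoints $\tilde q\in\pi_G^{-1}(q)$ and $\tilde p\in\partial\HH-L(G)$, replace the lift by the geodesic ray $[\tilde q,\tilde p)$, descend, and then isotope --- is certainly the intended construction, and you correctly isolate the two points where all the content lies: simplicity of the projected ray together with avoidance of $\pi_G^{-1}(\Sigma(\HH/G))-\{\tilde q\}$, and the special role of order-two cone points.

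Both of these points, however, are left with genuine gaps. First, the crossing/bigon argument you invoke does not apply as stated: $\tilde\ell=[\tilde q,\tilde p)$ and its translates $g\tilde\ell$ are rays with one endpoint in the \emph{interior} of the disk, and for such arcs a transversal crossing of the geodesic representatives is not a homotopy-invariant obstruction --- given two pairs $(\tilde q,\tilde p)$ and $(g\tilde q,g\tilde p)$ with all four points distinct, one can always join them by disjoint topological arcs even when the two geodesic rays cross, since an arc from an interior point to a boundary point does not separate the closed disk. Linking of endpoints on $\partial\HH$ detects forced intersection only for bi-infinite arcs, so ``transporting disjointness across the geodesic homotopies'' does not yield a contradiction from a single pair of translates. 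One has to bring in more of the orbit, e.g.\ replace $\tilde\alpha$ by the bi-infinite arcs $\tilde\alpha\cup h^k\tilde\alpha$ through the hub $\tilde q$ (with $h$ generating $G_{\tilde q}$), whose endpoints all lie on $\partial\HH$, and then argue about the cyclic order of the resulting endpoint sets; your writeup does not do this. Second, the order-two case is the real difficulty and your proposed resolution is not an argument. If $\tilde\ell$ passes through an order-two ramification point $r\neq\tilde q$, then $h_r$ preserves the complete geodesic $L\supset\tilde\ell$ (it is a rotation by $\pi$, not a reflection) and swaps its ideal endpoints, and $\pi_G\circ\alpha_\ell$ literally retraces the segment from $q$ to $\pi_G(r)$, so injectivity fails; the task is therefore to show that a \emph{simple} $\alpha$ can never lie in such a homotopy class. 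When $q$ itself has order two this is immediate, since $h_{\tilde q}h_r$ is then hyperbolic with axis $L$ and forces $\tilde p\in L(G)$; but when $q$ has order $m>2$ no such element is available, a geodesic from $\tilde q$ through an order-two point $r$ with an endpoint in $\Omega(G)$ is perfectly realizable (e.g.\ in a $\ZZ_m * \ZZ_2$ Schottky-type group), and excluding it for simple $\alpha$ again requires the full-orbit cyclic-order argument rather than the sentence you offer about the ``$h_r$-image compatible with a simple ray.'' The final step (homotopic proper simple arcs are properly isotopic, extended over $q$ and $p$) is acceptable at the level of rigor of the surrounding text, but the two gaps above are exactly the content of the proposition and must be filled for the proof to stand.
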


By Proposition \ref{prop: representative of geodesic rays}, any simple embedded arc in $\overline{\HH}/G$ connecting a cone point and an ideal point has a geodesic realization. 
Now, we are ready to prove the elementary cases.

\begin{lem}\label{thm:elementary}
Let $G$ be an elementary Fuchsian group. Then $G$ is a pants-like $\COL_3$ group. 
\end{lem}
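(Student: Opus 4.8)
The plan is to reduce to normal forms by conjugation and then treat the three structural types of elementary Fuchsian groups separately, in each case producing three pairwise transverse very full $G$-invariant lamination systems whose endpoint sets meet only at cusp points. Recall that a finite subgroup of $\PR$ is cyclic and that every nontrivial elementary Fuchsian group either fixes an interior point of $\HH$ (finite cyclic, $L(G)=\emptyset$, no cusp points), fixes exactly one point of $\partial\HH$ (infinite cyclic generated by a parabolic, $L(G)$ a single cusp point), or preserves a bi-infinite geodesic (infinite cyclic generated by a hyperbolic, or infinite dihedral, with $L(G)$ the two endpoints of that geodesic and no cusp points). After conjugating $G$ into a convenient position, the main tool in each case is to build a single $G$-invariant geodesic lamination of $\HH$ all of whose complementary regions are ideal triangles (together with, in the elliptic case, one central ideal polygon), using the half Farey triangulation of Lemma \ref{lem: the half farey triangulation} and the triangulation of a convex hull of two intervals of Lemma \ref{lem: a triangulation of a square} applied equivariantly over a fundamental domain. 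This yields a very full $G$-invariant lamination system whose endpoint set can be prescribed to be any chosen countable dense $G$-invariant subset.

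For the finite (and trivial) case I would place the fixed point at the center and let $G$ be generated by the rotation $z\mapsto\omega z$ with $\omega=e^{2\pi i/n}$. Form the regular ideal $n$-gon on a $G$-orbit $\{\omega^k p_0\}$ (a diameter when $n=2$, nothing when $n=1$) and fill each of the $n$ exterior half-planes $CH(\overline{A_k})$, $A_k=(\omega^k p_0,\omega^{k+1}p_0)$, by the half Farey triangulation of Lemma \ref{lem: the half farey triangulation}, transported equivariantly, to get a very full $G$-invariant lamination $\LL_0$. Since there are no cusp points I then set $\LL_1,\LL_2,\LL_3$ to be the images of $\LL_0$ under three generic rotations about the center; these commute with $G$, so the images are again $G$-invariant and very full, and since $E(\LL_0)$ is countable the angles can be chosen so that the three endpoint sets are pairwise disjoint, forcing strong transversality. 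The parabolic case is the cleanest: conjugating so that $G=\langle z\mapsto z+1\rangle$ with cusp at $\infty$, I take $\LL_1$ to be the Farey tessellation and $\LL_2,\LL_3$ its images under $z\mapsto z+\alpha$ and $z\mapsto z+\beta$ with $\alpha,\beta,\beta-\alpha\notin\QQ$. Each is a genuine $\langle z\mapsto z+1\rangle$-invariant very full lamination with endpoint set $\QQ\cup\{\infty\}$, $(\QQ+\alpha)\cup\{\infty\}$, $(\QQ+\beta)\cup\{\infty\}$ respectively, so the pairwise endpoint intersections are exactly $\{\infty\}$, the unique cusp point, as the pants-like condition demands.

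The delicate case, and the main obstacle, is when $G$ preserves a geodesic: the two endpoints $0,\infty$ of the axis are limit points but not cusp points, so the pants-like condition requires the three endpoint sets to be pairwise \emph{disjoint}, and in particular no two of the laminations may share $0$ or $\infty$. The naive $G$-invariant laminations (for instance the half Farey triangulations of the two funnels $CH(\overline{I_+})$ and $CH(\overline{I_-})$ glued along the axis) all carry $0,\infty$ as endpoints, so I must instead build laminations that avoid them. For this I use \emph{crossing} leaves that separate $0$ from $\infty$: with $h:z\mapsto\lambda z$, the $h$-orbit $[a\lambda^k,-a\lambda^k]$ of a geodesic $[a,-a]$ consists of pairwise unlinked leaves with no endpoint at $0$ or $\infty$, and they cut $\HH$ into ideal quadrilaterals $R_k=CH\big(\overline{(a\lambda^k,a\lambda^{k+1})}\cup\overline{(-a\lambda^{k+1},-a\lambda^k)}\big)$. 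Triangulating each $R_k$ equivariantly by Lemma \ref{lem: a triangulation of a square} produces a very full $h$-invariant lamination whose endpoint set is a prescribed dense $h$-invariant subset of $\partial\HH\setminus\{0,\infty\}$, so that $0$ and $\infty$ merely acquire rainbows by Theorem \ref{er}. Choosing the anchor $a$ compatibly with the order-two element (so that it permutes the crossing leaves) upgrades this to a $G$-invariant construction in the dihedral case, and selecting three pairwise disjoint dense $G$-invariant subsets of $\partial\HH\setminus\{0,\infty\}$ as the endpoint sets yields $\LL_1,\LL_2,\LL_3$ that are pairwise strongly transverse. In every case I would finish by checking directly that disjointness, or cusp-only intersection, of the endpoint sets precludes any shared leaf, so that the three systems form a pants-like collection and hence $G$ is pants-like $\COL_3$.
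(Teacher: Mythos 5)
Your proposal is correct and follows essentially the same route as the paper: normal forms for the four types of elementary groups, half Farey triangulations of half-planes (Lemma \ref{lem: the half farey triangulation}) and triangulations of $CH(\overline{I}\cup\overline{J})$ (Lemma \ref{lem: a triangulation of a square}) assembled equivariantly over a fundamental domain, with three pairwise disjoint (or cusp-only-intersecting) dense endpoint sets obtained by irrational rotations/translations. The only cosmetic difference is in the infinite dihedral case, where the paper produces the crossing leaves by realizing geodesic rays from the two cone points in the quotient orbifold while you build the symmetric crossing leaves directly upstairs; both reduce to the same quadrilateral decomposition and the same lemmas.
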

\begin{proof}

There are following four cases. 
\begin{enumerate}
\item $G$ is the trivial group. 
\item $G$ is a finite cyclic group which is generated by some elliptic element. 
\item $G$ is an infinite cyclic group which is generated by some non-elliptic element. 
\item $G$ is an infinite dihedral group which is generated by two elliptic elements of order two. 
\end{enumerate}

First, assume that $G$ is the trivial group. Assume that $P$ is a countable dense subset of $\partial \HH.$ Choose two distinct points $p_1$ and $p_2$ in $P.$ Set $I=[p_1, p_2]_{S^1}$ and $J=[p_2, p_1]_{S^1},$ and set $P_I=I\cap P$ and $P_J=J\cap P.$ Then $P_I$ and $P_J$ are countable dense subsets of $I$ and $J$, respectively, and $CH(I)$ and $CH(J)$, containing $p_1$ and $p_2$, are half planes. Therefore, by Lemma \ref{lem: the half farey triangulation}, there are two lamination systems $\LL_I$ and $\LL_J$ such that $E(\LL_I)=P_I$ and $E(\LL_J)=
P_J.$ Now, we write $\LL_P$ for $\LL_I \cup \LL_J.$ Then $\LL_P$  is a very full lamination system such that $E(\LL_I \cup \LL_J)= P.$ Observe that we can choose three countable dense subsets $P_1$, $P_2$ and $P_3$ in $\partial \HH$ that are pairwise disjoint. Therefore, $\{\LL_{P_1}, \LL_{P_2}, \LL_{P_3} \}$ is a pants-like collection. Therefore, we can conclude that $G$ is a pants-like $\COL_3$ group with  $\{\LL_{P_1}, \LL_{P_2}, \LL_{P_3} \}.$

Next, we consider the case where $G$ is a non-trivial finite cyclic group which is generated by some elliptic element. Let $n$ be the order of $G.$ We may assume that in $\DD$, the elliptic element $\phi$ defined by $z\mapsto e^{\frac{2\pi i}{n}}z $ generates $G.$ Let $Q^0$ be the set $\{ e^{2\pi i q}:q\in \QQ \}.$ Then, $Q^0$ is a countable dense subset of $\partial \DD.$ Then $\GG = \{(1, \phi(1))_{S^1}, \cdots, (\phi^{n-1}(1), \phi^{n}(1))_{S^1} \}$ is an ideal polygon with $v(\GG)\subset Q^0.$ For each $k\in \{0, \cdots, n-1\},$ $Q^0_k=[\phi^{k}(1), \phi^{k+1}(1)]_{S^1}\cap Q^0$ is a countable dense subset of $[\phi^{k}(1), \phi^{k+1}(1)]_{S^1}$ and $CH([\phi^{k}(1), \phi^{k+1}(1)]_{S^1})$ is a half plane. Therefore,  by Lemma \ref{lem: the half farey triangulation},  for each $k\in \{0, \cdots, n-1\},$  there is a lamination system $\LL^0_k$ such that $E(\LL^0_k)=Q^0_k.$ Then $\LL^0=\displaystyle \bigcup_{k=0}^{n-1} \LL^0_k$ is a very full lamination system such that $E(\LL^0)=Q^0$ and $\GG$ is a gap of $\LL^0.$ Moreover, by construction, $G$ preserves $\LL^0.$

Now, we set $Q^{\sqrt 2}=\{ e^{2\pi i (q+\sqrt 2)}:q\in \QQ \}$ and $Q^{\sqrt 3}=\{ e^{2\pi i (q+\sqrt 3)}:q\in \QQ \}.$ We can define $\LL^{\sqrt 2}$ by the set $\{(e^{2\pi i \sqrt 2}a, e^{2\pi i \sqrt 2}b )_{S^1} : (a, b)_{S^1}\in \LL^0\}$ and $\LL^{\sqrt 3}$ by the set $\{(e^{2\pi i \sqrt 3}a, e^{2\pi i \sqrt 3}b )_{S^1} : (a, b)_{S^1}\in \LL^0\}.$ Then $E(\LL^{\sqrt2})=Q^{\sqrt2}$ and $E(\LL^{\sqrt3})=Q^{\sqrt3}.$ Therefore, $\{\LL^0, \LL^{\sqrt2}, \LL^{\sqrt 3}\}$ is a pants-like $\COL_3$ collection. Moreover, these lamination systems are preserved by $G$ by construction. Therefore, $G$ is a pants-like $\COL_3$ group. 

Then we consider the case where $G$ is an infinite cyclic group which is generated by some non-elliptic element. First, we assume that in $\HH,$ $G$ is generated by the parabolic element $\phi$ defined by $z\mapsto z+1.$ We set $Q^0=\QQ \cup \infty.$ Since $Q^0\cap [0,1]$ is a countable dense subset of $[0,1],$ and $CH([0,1])$ is a half plane, by Lemma \ref{lem: the half farey triangulation}, there is a lamination system $\LL^0_0$ such that $E(\LL^0_0)=p^{-1}(Q^0\cap [0,1]).$ Now, we define
 $$\LL^0=\displaystyle \left[\bigcup_{k\in \ZZ}(p^{-1}\phi^{k}p)(\LL_0^0)\right] \cup \left[\bigcup_{k\in \ZZ}\{ \phi^k(p^{-1}((0, \infty))),\phi^k(p^{-1}((0,\infty))^*) \}) \right].$$ Obviously, $\LL^0$ is a very full $G$-invariant lamination system such that $E(\LL^0)=p^{-1}(Q^0).$
 
 Now we set $Q^{\sqrt 2}=\sqrt2 + \QQ$ and $Q^{\sqrt 3}=\sqrt3 + \QQ.$ Then, we define 
 $$\LL^{\sqrt2 }=\{(a, b)_{S^1}: p^{-1}((-\sqrt2)+p((a,b)_{S^1}))\in \LL^0 \}$$ and 
  $$\LL^{\sqrt3 }=\{(a, b)_{S^1}: p^{-1}((-\sqrt3)+p((a,b)_{S^1}))\in \LL^0  \}.$$
Then $E(\LL^{\sqrt2})=p^{-1}(Q^{\sqrt2})$ and $E(\LL^{\sqrt3})=p^{-1}(Q^{\sqrt3})$, and both $\LL^{\sqrt2}$ and $\LL^{\sqrt3}$ are very full $G$-invariant lamination systems. Therefore, $\{\LL^0, \LL^{\sqrt2}, \LL^{\sqrt3}\}$ is a pants-like $\COL_3$ collection since for two distinct indices  $i$ and $j$ in $\{0, \sqrt2, \sqrt3\},$  $Q^i \cap Q^j=\{\infty\}$ and $\infty$ is the fixed point of $\phi.$ Therefore, $G$ is a pants-like $\COL_3$ group. 

Then we assume that in $\HH,$ $G$ is generated by a hyperbolic element $\phi$ defined by $z \mapsto e^a z$ for some positive real number $a.$ We set $Q^0=\{e^{aq}: q\in \QQ\} \cup  \{-e^{aq}: q\in \QQ\}.$ Observe that $Q^0$ is preserved by $G$ and $\phi$ maps the geodesic connecting $-1$ and $1$ to the geodesic connecting $-e^a$ and $e^a.$ Moreover, $p^{-1}(Q^0)$ is a countable dense subset of $\partial \DD.$ Lemma \ref{lem: a triangulation of a square} can apply to two closed interval $p^{-1}([-e^a, -1])$ and $p^{-1}([1, e^a])$ since $\{-e^a, -1, 1, e^a\} \subset Q^0$ and both $[-e^a, -1]\cap Q^0$ and $[1, e^a]\cap Q^0$ are countable dense subsets of $[-e^a, -1]$ and $[1, e^a],$ respectively, Therefore, there is a lamination system $\LL_0^0$ such that  $E(\LL_0^0)=p^{-1}(Q^0\cap ([-e^a, -1]\cup[1, e^a])).$ Then, $$\LL^0=\displaystyle \bigcup_{k\in \ZZ} (p^{-1}\phi^kp)(\LL_0^0)$$ is a $G$-invariant very full lamination system such that $E(\LL^0)=p^{-1}(Q^0).$  

Now, we set $Q^{\sqrt2}=\{e^{a(\sqrt2+q)}: q\in \QQ\} \cup  \{-e^{a(\sqrt2+q)}: q\in \QQ\},$ and  $Q^{\sqrt3}=\{e^{a(\sqrt3+q)}: q\in \QQ\} \cup  \{-e^{a(\sqrt3+q)}: q\in \QQ\}.$ In the similar way, we can find two very full $G$-invariant lamination systems $\LL^{\sqrt2}$ and $\LL^{\sqrt3}$ such that 
$\LL^{\sqrt2}=p^{-1}(Q^{\sqrt2})$ and $\LL^{\sqrt3}=p^{-1}(Q^{\sqrt3}).$ Then, by construction, $\{\LL^0, \LL^{\sqrt2}, \LL^{\sqrt 3}\}$ is a pants-like $\COL_3$ collection. Therefore, $G$ is a pants-like $\COL_3$ group.

Finally, we consider the case where $G$ is an infinite dihedral group generated by two elliptic elements of order two. In $\HH,$ for each non-negative real number $a,$ there is a unique order two elliptic isometry $\phi_{a/2}$ that fixes $ie^{a/2}.$ We may assume that $G$ is generated by $\phi_0$ and $\phi_{a/2}$ for some positive real number $a.$ Note that $\phi_{a/2} \circ \phi_0$ is the hyperbolic element defined by $z\mapsto e^az.$

Then we can see that there are exactly two cone points $c_1=\pi_G(i)$ and $c_2=\pi_G(ie^{a/2})$ of order two and that there is the only one ideal boundary $B$ in $\HH/G$ which is homeomorphic to the circle. We can find three countable dense subsets $P_1$, $P_2$ and $P_3$ of $B$ that are pairwise disjoint. First, we consider $P_1.$ Choose two distinct points $p_1^1$ and $p_1^2$ in $P_1.$ Then, by Proposition \ref{prop: representative of geodesic rays}, there are two disjoint regular simple geodesic rays $\ell_1^1$ and $\ell_1^2$ such that for each $i\in \{1, 2\},$ $\ell_1^i$ is a regular simple geodesic ray emanating from $c_i$ with the ideal end point $p_1^i.$ Note that by definition, $\ell_1^1$ and $\ell_1^2$ are simple geodesics in $\HH/G$ since $c_1$ and $c_2$ are of order two. Hence,  there are two bi-infinite geodesics $\tilde{\ell}_1^1$ and $\tilde{\ell}_1^2$ such that for each $k\in \{1, 2\}$, $\tilde{\ell}_1^k$ is the connected component of $\pi_G(\ell_1^k)$ passing through $ie^{(k-1)a/2}.$ 
Also, for each $k\in \{1,2\}$, there are two points $p_-^k$ and $p_+^k$ in $\RR-\{0\}$ such that $p_-^k<0$, $p_+^k>0,$ and $CH(\{p_-^k,p_+^k \})=\tilde{\ell}_1^k.$

Now, we write $I_1=[p_-^2, p_-^1]$ and $J_1=[p_+^1, p_+^2].$ Observe that $L(G)=\{0, \infty \}$ and that $\overline{\pi}_G^{-1}(P_1)$ is a countable dense subset of $\RR-\{0\}.$ 
Then $\partial I_1$ and $\partial J_1$ are contained in $\overline{\pi}_G^{-1}(P_1)$, $\overline{\pi}_G^{-1}(P_1)\cap I_1$ is dense in $I_1,$  and $\overline{\pi}_G^{-1}(P_1)\cap J_1$ is dense in $J_1.$ So, we can apply Lemma \ref{lem: a triangulation of a square} to $CH(I_1 \cup J_1).$ Therefore, there is a geodesic lamination $\Lambda_1^0$ on $CH(I_1 \cup J_1).$ 
Then
$$\Lambda_1=\displaystyle \bigcup_{g\in G}g(\Lambda_1^0)$$
is also a geodesic lamination preserved under the $G$-action. Therefore, there is a $G$-invariant very full lamination system $\LL_1$, induced by $\Lambda_1$, such that $p(E(\LL_1))=\overline{\pi}_G^{-1}(P_1).$ Similarly, there are two $G$-invariant very full lamination systems $\LL_2$ and $\LL_3$ such that $p(E(\LL_2))=\overline{\pi}_G^{-1}(P_2)$ and $p(E(\LL_3))=\overline{\pi}_G^{-1}(P_3).$ By construction, $\{\LL_1, \LL_2, \LL_3\}$ is a pants-like $\COL_3$ collection. Thus, $G$ is a pants-like $\COL_3$ group.  
\end{proof} 

From now on, we consider the case where $G$ is a non-elementary Fuchsian group of the second kind. Let $R(G)$ be the set of all ramification points of $G$ in $\Omega(G).$ Note that $R(G)\cup L(G)$ is a closed subset of the Riemann sphere $\hat{\CC}.$ Hence, the open subset $\Omega(G)-R(G)=\hat{\CC}-(R\cap L(G))$ of $\hat{\CC}$ is a Riemann surface. Now, we write $U_G=\Omega(G)-R(G).$ Obviously, $\pi_1(U_G)$ is a free group of infinite rank and by Uniformization Theorem, the universal cover of $U_G$ is $\HH$, so $U_G$ is a complete hyperbolic surface. Observe that there is no isometrically embedded half plane and funnel in $U_G.$  Therefore, $G(U_G)$ is a Fuchsian group of the first kind. Now, we define 
$$F_G=\{h\in \Aut(\HH): \pi_{U_G}\circ h = g \circ \pi_{U_G} \ \text{for some} \ g \in G\}.$$ 
Note that $G(U_G)$ is the deck transformation group of $\pi_{U_G}$ and that $F_G$ is isomorphic to $\pi_1(U_G/G)$ since  $\pi_G^d \circ \pi_{U_G}$ is the universal covering map of $U_G/G.$

Now, we define a homomorphism $\pi_{U_G}^*$ from $F_G$ to $G$ so that for each $h\in F_G,$ $\pi_{U_G}\circ h= \pi_{U_G}^*(h)\circ \pi_{U_G}.$ Then we have an exact sequence
 \begin{center}
\begin{tikzcd}
\{1\} \arrow[r] & G(U_G) \arrow[r, "i"] & F_G\arrow[r, "\pi_{U_G}^*"]& G \arrow[r] &\{1\}
 \end{tikzcd}
\end{center}
where $i$ is the inclusion map. Hence, $F_G/G(U_G)$ is isomorphic to $G$ and since $\hat{\RR}=L(G(U_G))\subset L(F_G),$ $F_G$ is a Fuchsian group of the first kind. Therefore,  there is no isometrically embedded half plane and funnel in $U_G/G=\HH/F_G$ and  there is no cone point in $U_G/G$ by construction. Hence, $U_G/G$ is a complete hyperbolic surface of which the fundamental group is a Fuchsian group of the first kind. 

By Lemma \ref{lem : three geometric pants decompositions}, there are three pairwise transverse geometric pants decompositions $\Lambda_1,$ $\Lambda_2,$ and  $\Lambda_3$ in $U_G/G.$ Observe that $U_G/G$ is the doubling of $(\HH/G)^o$ about the ideal boundaries as the following diagram commutes. 
\begin{center}
\begin{tikzcd}
\HH-R(G)\arrow[r,hook] \arrow[d, "\pi_G"]&U_G \arrow[r,hook] \arrow[d, "\pi_G^d"] & \Omega(G)   \arrow[d, "\pi_G^d"] \\
(\HH/G)^o\arrow[r,hook]&U_G/G	\arrow[r,hook] & \Omega(G)/G   
\end{tikzcd}
\end{center}
where the horizontal arrows are the inclusion maps. Note that the inclusion maps are not isometries and that the map $J$ induced from the complex conjugation is an isometry on $U_G/G$ fixing every point of $(\hat{\RR}-L(G))/G.$ Therefore, we can see that each ideal boundary of $(\HH/G)^o$ is a simple geodesic in $U_G/G.$ Now, we write $B$ for $(\hat{\RR}-L(G))/G.$ Then, $B$ is a geodesic lamination in $U_G/G$.

Let $P$ be the completion of a connected component of the complement of $\Lambda_k$ for some $k\in \{1, 2, 3\}.$ Since there are no cone points in $U_G/G,$ there are three types of geometric pairs of pants which have only simple closed geodesics and punctures as geometric boundaries. See Figure \ref{fig:standard pants}. 

Now, let $b$ be a connected component of $B.$ If $b$ is a simple closed curve which corresponds to the ideal boundary of a funnel in $\HH/G,$ then for each $i\in \{1,2,3\},$ $b\cap \Lambda_i$ is $b$ itself or a finite subset of $b.$ If $b$ is a simple bi-infinite geodesic which corresponds to the ideal boundary of a half-plane, then for each $i\in \{1,2,3\},$ $b \cap \Lambda_i$ is a countable discrete closed subset of $b$  since any bi-infinite geodesic $\tilde{b}$ such that $(\pi_G^d\circ \pi_{U_G})(\tilde{b})=b$ has no cusp point as an end point.  In particular, when $b$ intersects with $P,$  each connected component of $b\cap P$ is a properly embedded arc in $P$ or a geodesic boundary of $P.$ On the other hands, if $c$ is a simple closed curve in $\Lambda_i$ for some $i\in \{1,2,3\},$ then $c \cap ((\HH/G)^o \cup B)$ is $c$ itself or a disjoint union of properly embedded arcs.

Now, we assume that $P\cap B\neq \emptyset.$ Let $P'$ be a connected component of $P\cap ((\HH/G)^o \cup B).$ Then, we can see that  each manifold boundary of $P'$ is a geodesic boundary of $P$ or consists of an alternative sequence of arcs in $B$ and arcs in $\partial P.$ Hence, each boundary of $P'$ is a piecewise geodesic circle. 

\begin{figure}
\centering
\begin{subfigure}[b]{0.25\textwidth}
  \includegraphics[width=\textwidth]{pants1.pdf}
  \caption{Three geodesic boundaries}
  \label{fig:standard pants1}
\end{subfigure}
\hfill
\begin{subfigure}[b]{0.22\textwidth}
  \includegraphics[width=\textwidth]{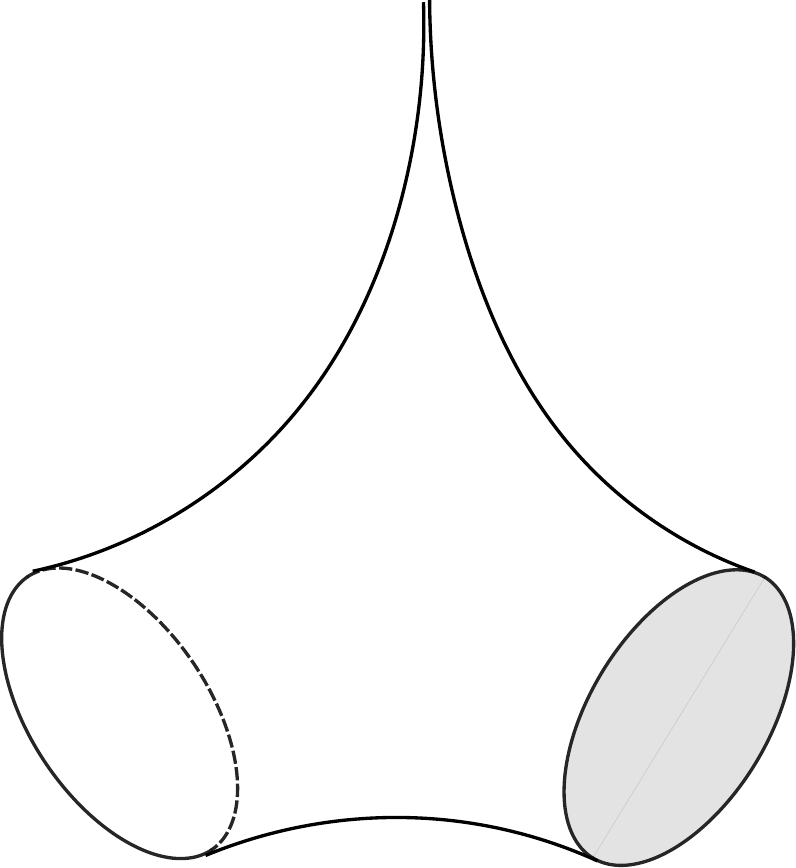}
  \caption{Two geodesic boundaries}
  \label{fig:standard pants2}
\end{subfigure}
\hfill\begin{subfigure}[b]{0.2\textwidth}
  \includegraphics[width=\textwidth]{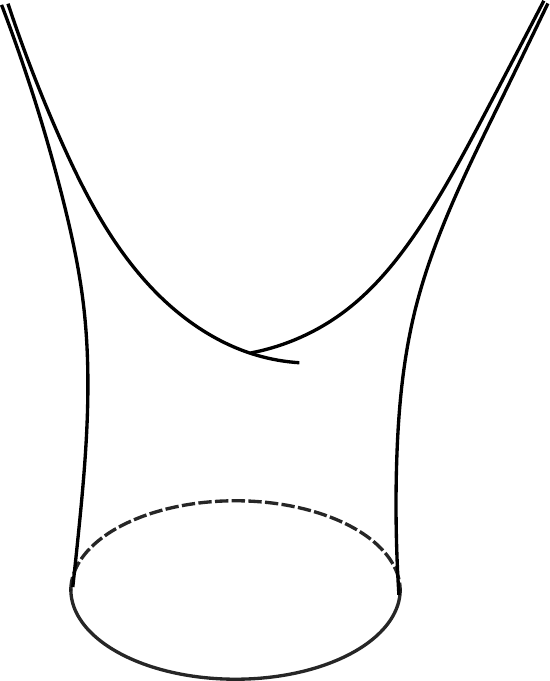}
  \caption{One geodesic boundary}
  \label{fig:standard pants3}
\end{subfigure}

\caption{In $\Lambda_i,$ there are three types of geometric pairs of pants according to the number of closed geodesic boundary components since $U_G/G$ has no cone point and has no half-plane and funnel.}
\label{fig:standard pants}
\end{figure}

From now on, we discuss the shape of $P'.$ First, we consider the case where there is an embedded arc in $P \cap B$ having the end points in one geodesic boundary of $P.$ See Figure \ref{fig:standard pants with return arc}. In each case, the red line represents the embedded arc and $P$ can have a finite number of parallel arcs in $P\cap B$ to the red line which are also geodesic. These parallel arcs divide $P$ into three types of pieces. See Figure \ref{fig:elementary pieces in type one}. The red lines are arcs in $P\cap B$ and the black lines are arcs in $\partial P.$ The small circle in Figure \ref{fig: monogon with puncture} is a puncture and the round circle in Figure \ref{fig: monogon with boundary} is a geodesic boundary of $P.$ We can see that a piece of the first two types can not intersect with $B$ in the complement of the red lines. But, a piece of the third type has three possibilities. The first is that there is no intersection with $B$ except the red lines. The second case is that the piece has an embedded arc in $P\cap B$ connecting two distinct black lines. See Figure \ref{fig: monogon with boundary_case1}. The additional red line represents the arc. Note that since the arc is not separating, there are even number of parallel arcs. Then, the parallel arcs divide the piece into pieces which are of the type of Figure \ref{fig:square} or of the type of Figure \ref{fig: haxagon}. Finally, in Figure \ref{fig: monogon with boundary}, the geodesic boundary is in $B$. See Figure \ref{fig: monogon with boundary_case2}. 
Thus, Figure \ref{fig:square},  Figure \ref{fig: monogon with puncture}, Figure \ref{fig: monogon with boundary}, Figure \ref{fig: monogon with boundary_case2}  and Figure \ref{fig: haxagon} are all possible cases of $P'$ when $P$ has an arc in $P \cap B$ having the end points in one geodesic boundary.

\begin{figure}
\centering
\begin{subfigure}[b]{0.25\textwidth}
  \includegraphics[width=\textwidth]{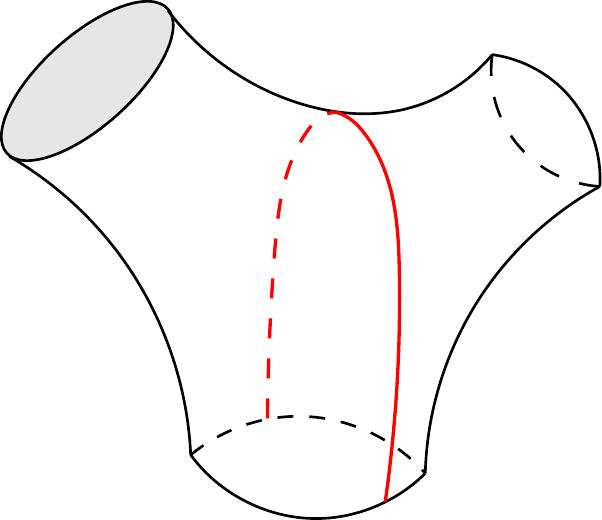}
  \caption{Three geodesic boundaries}
  \label{fig:standard pants1 with return arc}
\end{subfigure}
\hfill
\begin{subfigure}[b]{0.22\textwidth}
  \includegraphics[width=\textwidth]{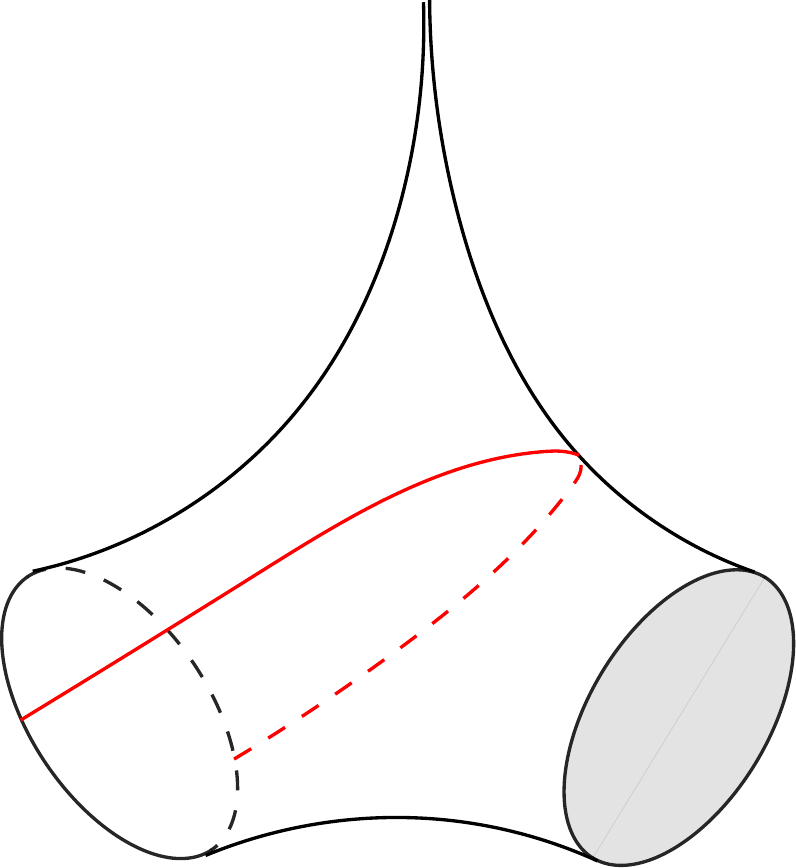}
  \caption{Two geodesic boundaries}
  \label{fig:standard pants2 with return arc}
\end{subfigure}
\hfill\begin{subfigure}[b]{0.2\textwidth}
  \includegraphics[width=\textwidth]{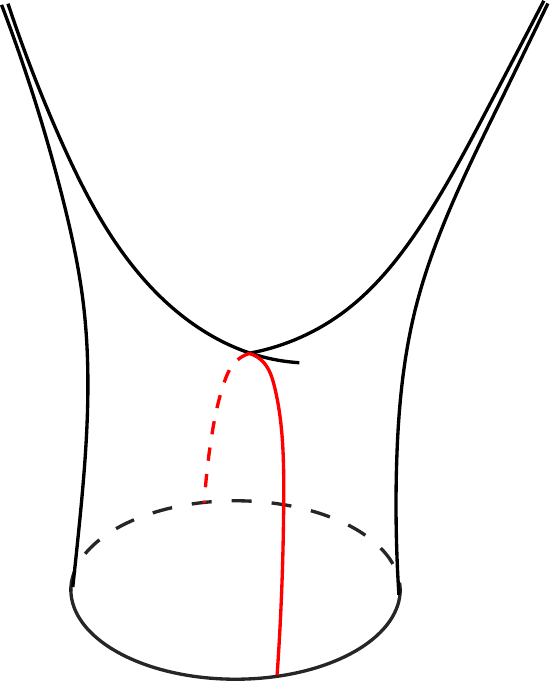}
  \caption{One geodesic boundary}
  \label{fig:standard pants3 with return arc}
\end{subfigure}

\caption{The red lines are curves in $P\cap B$.}
\label{fig:standard pants with return arc}
\end{figure}

\begin{figure}
\centering
\begin{subfigure}[b]{0.2\textwidth}
  \includegraphics[width=\textwidth]{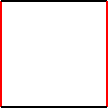}
  \caption{A square}
  \label{fig:square}
\end{subfigure}
\hfill
\begin{subfigure}[b]{0.25\textwidth}
  \includegraphics[width=\textwidth]{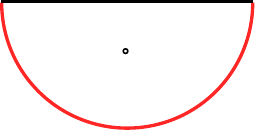}
  \caption{A bi-gon with one puncture}
  \label{fig: monogon with puncture}
\end{subfigure}
\hfill\begin{subfigure}[b]{0.25\textwidth}
  \includegraphics[width=\textwidth]{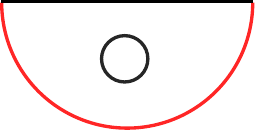}
  \caption{A bi-gon with one closed geodesic boundary}
  \label{fig: monogon with boundary}
\end{subfigure}

\caption{There are three types of pieces from Figure \ref{fig:standard pants with return arc}. The red lines are embedded arcs in $P\cap B$ and the black lines are arcs in $\partial P$. The small circle in Figure \ref{fig: monogon with puncture} represents a puncture. }
\label{fig:elementary pieces in type one}
\end{figure}

\begin{figure}
\centering
\begin{subfigure}[b]{0.25\textwidth}
  \includegraphics[width=\textwidth]{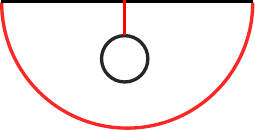}
  \caption{A bi-gon with one closed geodesic boundary and additional arcs}
  \label{fig: monogon with boundary_case1}
\end{subfigure}
\hspace*{2.5cm}
\begin{subfigure}[b]{0.25\textwidth}
  \includegraphics[width=\textwidth]{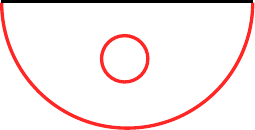}
  \caption{A bi-gon with one closed geodesic boundary which is in $B$}
  \label{fig: monogon with boundary_case2}
\end{subfigure}

\caption{The red lines are curves in $P\cap B$.}
\label{fig:elementary pieces in type one_two subtypes}
\end{figure}

Now, we assume that there is no properly embedded arc in $P\cap B$ having two end points in one geodesic boundary. Also, suppose that there is a properly embedded arc in $P \cap B$ connecting two distinct geodesic boundaries of $P.$ Then, there are four possible cases according to the number of homotopic types of arcs. See Figure \ref{fig:standard pants with non-return arc}. The red lines represent the homotopy classes of the embedded arcs in $P\cap B.$

In Figure \ref{fig:standard pants3 with arc}, the red line is not separating so there are even number of parallel arcs which are homotopic to the red line. Hence, these parallel arcs divide $P$ into pieces which are of the types of Figure \ref{fig:square} or Figure \ref{fig:square with puncture}. Similarly, in Figure \ref{fig:standard pants1 with arc}, the red line is not separating so there are even number of parallel arcs which are homotopic to the red line. These parallel arcs divide $P$ into pieces which are of the types of Figure \ref{fig:square} or Figure \ref{fig:square with boundary}. In particular, in a piece of the type of Figure \ref{fig:square with boundary}, the geodesic boundary may be in $P\cap B.$ Therefore,  Figure \ref{fig:square with red boundary} is also a possible case. Finally, from Figure \ref{fig:standard pants1 with two arcs} and Figure \ref{fig:standard pants1 with three arcs}, we can get pieces which are of the types of Figure \ref{fig:square}, Figure \ref{fig: haxagon}, or Figure \ref{fig: octagon}. Therefore, Figure \ref{fig:square}, Figure \ref{fig: haxagon}, Figure \ref{fig: octagon}, Figure \ref{fig:square with puncture}, Figure \ref{fig:square with boundary}, and Figure \ref{fig:square with red boundary} are all possible cases of $P'$ when $P\cap B$ has an arc connecting two distinct boundaries. 

\begin{figure}
\centering
\begin{subfigure}[b]{0.2\textwidth}
  \includegraphics[width=\textwidth]{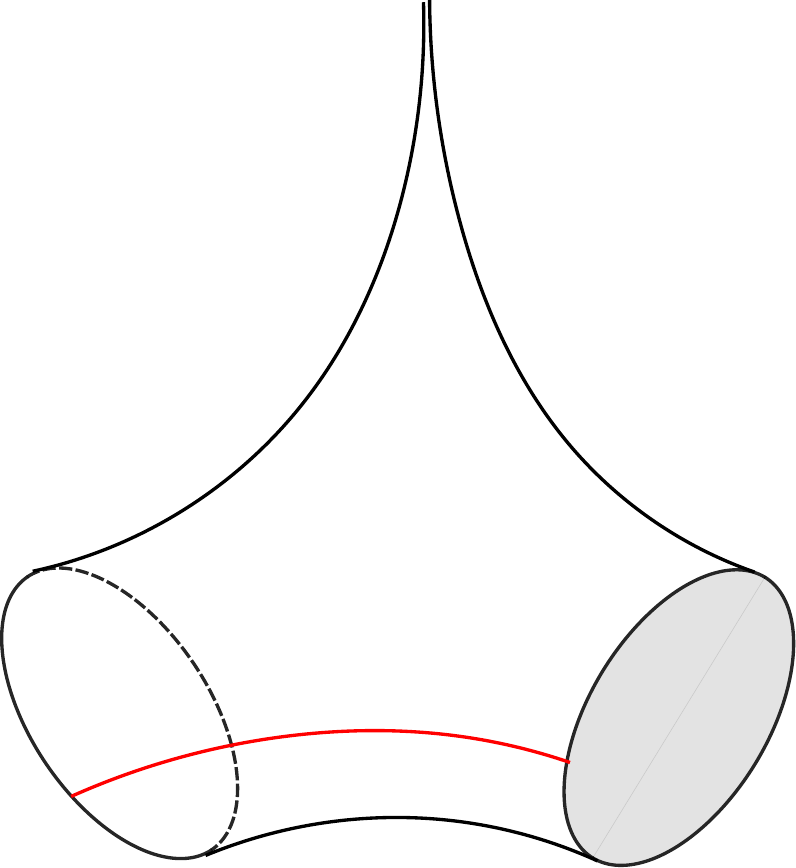}
  \caption{Only one homotopy class of arcs}
  \label{fig:standard pants3 with arc}
\end{subfigure}
\hfill
\begin{subfigure}[b]{0.2\textwidth}
  \includegraphics[width=\textwidth]{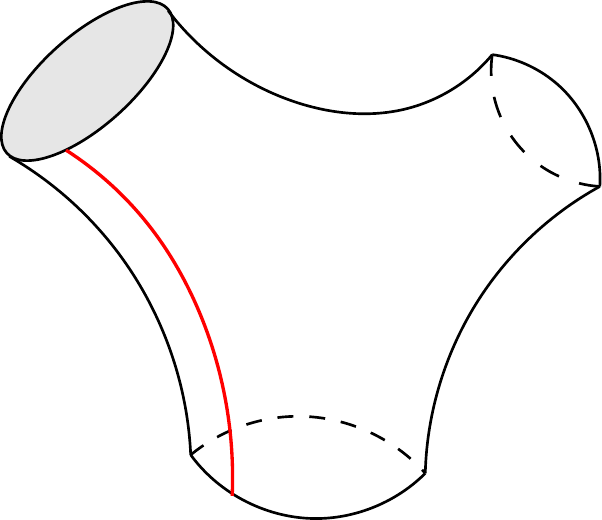}
  \caption{One homotopy class of arcs}
  \label{fig:standard pants1 with arc}
\end{subfigure}
\hfill
\begin{subfigure}[b]{0.2\textwidth}
  \includegraphics[width=\textwidth]{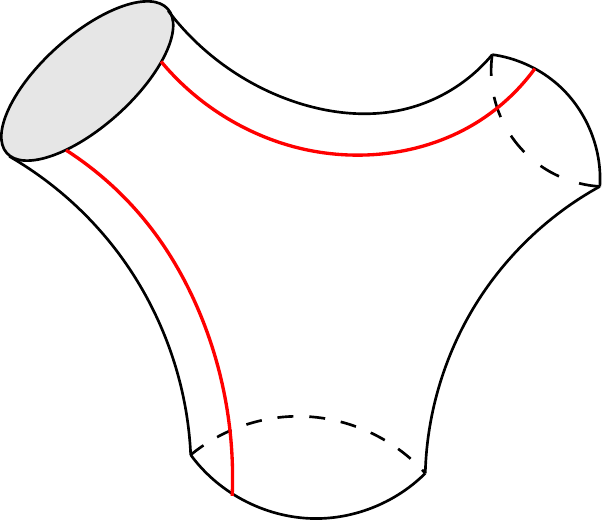}
  \caption{Two homotopy classes of arcs}
  \label{fig:standard pants1 with two arcs}
\end{subfigure}
\hfill
\begin{subfigure}[b]{0.2\textwidth}
  \includegraphics[width=\textwidth]{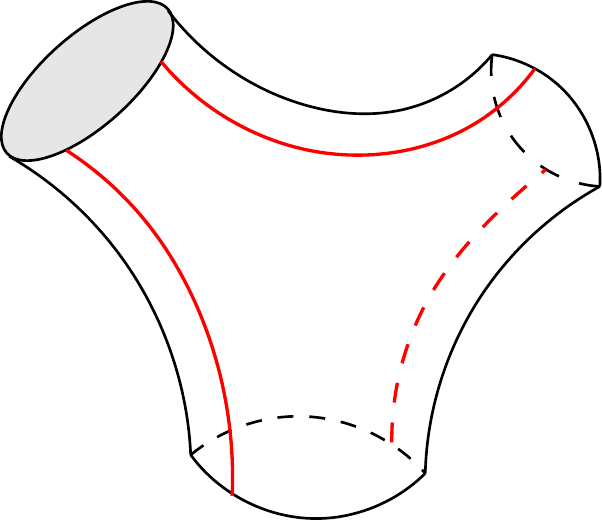}
  \caption{Three homotopy classes of arcs}
  \label{fig:standard pants1 with three arcs}
\end{subfigure}

\caption{The red lines are curves in $P\cap B$.}
\label{fig:standard pants with non-return arc}
\end{figure}

\begin{figure}
\centering
\begin{subfigure}[b]{0.2\textwidth}
  \includegraphics[width=\textwidth]{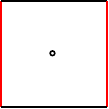}
  \caption{A square with one puncture}
  \label{fig:square with puncture}
\end{subfigure}
\hfill
\begin{subfigure}[b]{0.2\textwidth}
  \includegraphics[width=\textwidth]{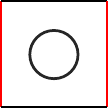}
  \caption{A square with one boundary}
  \label{fig:square with boundary}
\end{subfigure}
\hfill
\begin{subfigure}[b]{0.2\textwidth}
  \includegraphics[width=\textwidth]{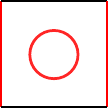}
  \caption{A square with one boundary which is in $P\cap B$}
  \label{fig:square with red boundary}
\end{subfigure}
\caption{The red lines are embedded arcs in $P\cap B$ and the black lines are arcs in $\partial P$. The small circle represents a puncture.}

\end{figure}

\begin{figure}
\centering
\begin{subfigure}[b]{0.28\textwidth}
  \includegraphics[width=\textwidth]{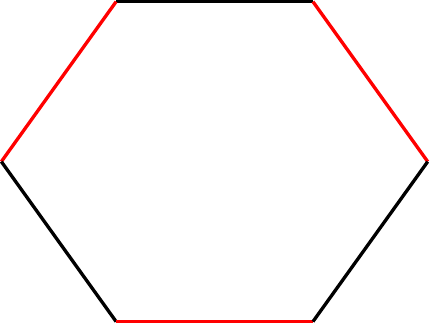}
  \caption{A hexagon}
  \label{fig: haxagon}
\end{subfigure}
\hspace*{2.5cm}
\begin{subfigure}[b]{0.22\textwidth}
  \includegraphics[width=\textwidth]{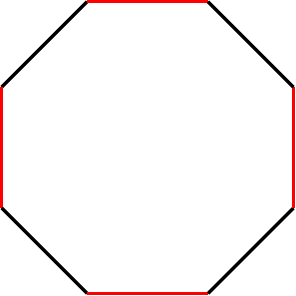}
  \caption{A octagon}
  \label{fig: octagon}
\end{subfigure}

\caption{The red lines are embedded arcs in $P\cap B$ and the black lines are arcs in $\partial P$.}
\label{fig:elementary pieces in type one_two subtypes}
\end{figure}

Then, we consider the case where $P\cap B$ has no arc. Then, $P\cap B$ consists of closed geodesics. Therefore, we can list up the possible types of $P'$ according to the number of boundaries contained in $P\cap B.$ See Figure \ref{fig: pants without arc}. Thus, we can summarize all possible cases of $P'$ in Figure \ref{fig: all possible types}.

\begin{figure}
\centering
\begin{subfigure}[b]{0.2\textwidth}
  \includegraphics[width=\textwidth]{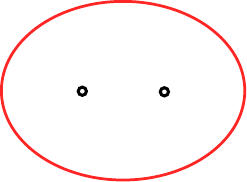}
  \caption{Two punctures and one boundary which is in $B$}
  \label{fig:pants3_one}
\end{subfigure}
\hspace*{2.5cm}
\begin{subfigure}[b]{0.2\textwidth}
  \includegraphics[width=\textwidth]{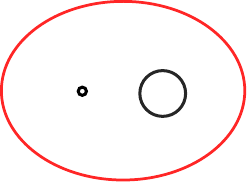}
  \caption{One puncture and two boundaries of which one is in $B$}
  \label{fig:pants2_one}
\end{subfigure}
\hspace*{2.5cm}
\begin{subfigure}[b]{0.2\textwidth}
  \includegraphics[width=\textwidth]{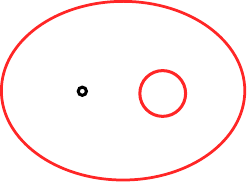}
  \caption{One puncture and two boundaries which are in $B$}
  \label{fig:pants2_two}
\end{subfigure}

\begin{subfigure}[b]{0.2\textwidth}
  \includegraphics[width=\textwidth]{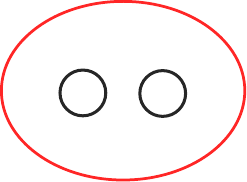}
  \caption{Only one boundary is in $B$ }
  \label{fig:pants1_one}
\end{subfigure}
\hspace*{2.5cm}
\begin{subfigure}[b]{0.2\textwidth}
  \includegraphics[width=\textwidth]{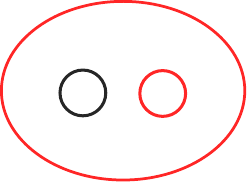}
  \caption{Two boundaries are in $B$}
  \label{fig:pants1_two}
\end{subfigure}
\hspace*{2.5cm}
\begin{subfigure}[b]{0.2\textwidth}
  \includegraphics[width=\textwidth]{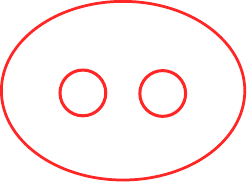}
  \caption{All boundaries are in $B$}
  \label{fig:pants1_three}
\end{subfigure}

\caption{The red circles are closed geodesics in $P\cap B$ and the black circles are closed geodesic in $\partial P -B.$ The small circles represent punctures.}
\label{fig: pants without arc}
\end{figure}

\begin{figure}
\centering
\begin{subfigure}[b]{0.2\textwidth}
  \includegraphics[width=\textwidth]{monogonwithpuncture.pdf}
  \caption{A bi-gon with one puncture}
  \label{fig:1-1}	
\end{subfigure}
\hspace*{2.5cm}
\begin{subfigure}[b]{0.2\textwidth}
  \includegraphics[width=\textwidth]{monogonwithboundary.pdf}
  \caption{A bi-gon with one closed geodesic boundary}
  \label{fig:1-2}	
\end{subfigure}
\hspace*{2.5cm}
\begin{subfigure}[b]{0.2\textwidth}
  \includegraphics[width=\textwidth]{monogonwithboundary_case2.pdf}
  \caption{A bi-gon with one closed geodesic boundary which is in $B$}
  \label{fig:1-3}	
\end{subfigure}

\begin{subfigure}[b]{0.15\textwidth}
  \includegraphics[width=\textwidth]{square.pdf}
  \caption{A square}  
  \label{fig:2-1}	
\end{subfigure}
\hspace*{2.6cm}
\begin{subfigure}[b]{0.22\textwidth}
  \includegraphics[width=\textwidth]{haxagon.pdf}
  \caption{A hexagon}
  \label{fig:2-2}	
\end{subfigure}
\hspace*{2.6cm}
\begin{subfigure}[b]{0.17\textwidth}
  \includegraphics[width=\textwidth]{octagon.pdf}
  \caption{A octagon}
  \label{fig:2-3}	
\end{subfigure}

\begin{subfigure}[b]{0.15\textwidth}
  \includegraphics[width=\textwidth]{square_puncture.pdf}
  \caption{A square with one puncture}
  \label{fig:3-1}	
\end{subfigure}
\hspace*{3.3cm}
\begin{subfigure}[b]{0.15\textwidth}
  \includegraphics[width=\textwidth]{square_boundary.pdf}
  \caption{A square with one boundary}
  \label{fig:3-2}	
\end{subfigure}
\hspace*{3.3cm}
\begin{subfigure}[b]{0.15\textwidth}
  \includegraphics[width=\textwidth]{square_redboundary.pdf}
  \caption{A square with one boundary which is in $P\cap B$}
  \label{fig:3-3}	
\end{subfigure}

\begin{subfigure}[b]{0.2\textwidth}
  \includegraphics[width=\textwidth]{pants3_one.pdf}
  \caption{Two punctures and one boundary which is in $B$}
  \label{fig:4-1}	
\end{subfigure}
\hspace*{2.5cm}
\begin{subfigure}[b]{0.2\textwidth}
  \includegraphics[width=\textwidth]{pants2_one.pdf}
  \caption{One puncture and two boundaries of which one is in $B$}
  \label{fig:4-2}	
\end{subfigure}
\hspace*{2.5cm}
\begin{subfigure}[b]{0.2\textwidth}
  \includegraphics[width=\textwidth]{pants2_two.pdf}
  \caption{One puncture and two boundaries which are in $B$}
  \label{fig:4-3}	
\end{subfigure}

\begin{subfigure}[b]{0.2\textwidth}
  \includegraphics[width=\textwidth]{pants1_one.pdf}
  \caption{Only one boundary is in $B$ }
  \label{fig:5-1}	
\end{subfigure}
\hspace*{2.5cm}
\begin{subfigure}[b]{0.2\textwidth}
  \includegraphics[width=\textwidth]{pants1_two.pdf}
  \caption{Two boundaries are in $B$}
  \label{fig:5-2}	
\end{subfigure}
\hspace*{2.5cm}
\begin{subfigure}[b]{0.2\textwidth}
  \includegraphics[width=\textwidth]{pants1_three.pdf}
  \caption{All boundaries are in $B$}
  \label{fig:5-3}	
\end{subfigure}

\caption{The red circles are closed geodesics in $P\cap B$ and the black lines are in $\partial P.$ The small circles represent punctures.}
\label{fig: all possible types}
\end{figure}

Now, for a simple closed geodesic $\gamma$ in $U_G/G,$ we set the collar width $c(\gamma)$ to be $\ln(\coth (l(\gamma)/4))$ where $l(\gamma)$ is the length of $\gamma$ with respect to the hyperbolic metric of $U_G/G.$ Then, we write $U(\gamma)$ for the $c(\gamma)$- neighborhood of $\gamma$ in $U_G/G$ which is homeomorphic to the annulus. Note that for two disjoint simple closed geodesics $\alpha$ and $\beta$ in $U_G/G,$ $U(\alpha)\cap U(\beta)=\emptyset$ and if $b$ is a connected component of $B$ and $b\cap \alpha=\emptyset,$ then $b \cap U(\alpha)=\emptyset.$

Then, we consider the restrictions $\mathcal{C}_i=\Lambda_i \cap ((\HH/G)^o \cup B)$ of the geodesic laminations.  $\mathcal{C}_i$ are a disjoint union of simple closed curves and properly embedded arcs in $(\HH/G)^o \cup B.$ First, we remove closed curves in $\mathcal{C}_i\cap B$ from $\mathcal{C}_i.$ Then, choose a connected component $\alpha$ of $\mathcal{C}_1.$ Now, we define a regular neighborhood $\overline{V}(\alpha)$ in $(\HH/G)^o \cup B$ as follows.  If $\alpha$ is a simple closed curve, then $\overline{V}(\alpha)=U(\alpha)$ and if $\alpha$ is a properly embedded arc, then there is the simple closed curve $\gamma$ in $\Lambda_1$ containing $\alpha$ and $\overline{V}(\alpha)$ is the connected component of $U(\gamma)\cap ((\HH/G)^o \cup B)$ containing $\alpha.$ Hence, each pair of two distinct connected components of $\mathcal{C}_1$ have disjoint regular neighborhoods. Similarly, we can define regular neighborhoods for connected components of $\mathcal{C}_2$ and  $\mathcal{C}_3.$ 

We can find three disjoint countable dense subsets $Q_1,$ $Q_2,$ and $Q_3$ of $B.$ First, we consider $\mathcal{C}_1.$ Suppose that $\gamma$ is a proper embedding of $[0,1]$ such that $\gamma([0,1])\subset \mathcal{C}_1.$ Then there is an isotopy $H$ through proper embeddings on $(\HH/G)^o\cup B$ satisfying the following:
\begin{enumerate}
\item $H(s,0)=\gamma(s),$
\item $H([0,1]\times [0,1])\subset \overline{V}(\gamma([0,1])),$
\item for all $t\in [0, 1]$, $\{H(0,t), H(1,t)\} \subset B,$ and
\item $\{H(0,1), H(1,1)\} \subset Q_1.$
\end{enumerate}
Then we write $\gamma'(s)=H(s,1).$ Then $\gamma'([0,1])$ is an properly embedded arc in $(\HH/G)^o\cup B$  of which the end points are in $Q_1$ and is contained in $\overline{V}(\gamma([0,1])).$ 
Now in $\mathcal{C}_1$, we replace $\gamma([0,1])$ with $\gamma'([0,1]).$ Since the regular neighborhoods are disjoint, we can properly replace arcs in $\mathcal{C}_1$ so that every arc in $\mathcal{C}_1$ has the end points in $Q_1.$ Similarly, we can assume that for each $i\in \{1,2,3\},$ every arc in $\mathcal{C}_i$ has the end points in $Q_i$ after properly replacing arcs. 

Now, we consider $\mathcal{C}_i$ in $\HH/G.$ Observe that for each arc $\alpha$ in $\mathcal{C}_i,$ there is a unique simple bi-infinite geodesic $\ell_\alpha$ in $\HH/G$ which has same homotopy type with $\alpha$ and connects two end points of $\alpha.$ Note that $\ell_\alpha$ does not pass through a cone point of $\HH/G.$ We say that $\ell_\alpha$ 
is the \textsf{geodesic realization} of $\alpha.$ We define $\Lambda_i^0$ to be the disjoint union of geodesic realizations of components of $\mathcal{C}_i.$ 

Fix $i\in \{1,2,3\}.$ Let $P'$ be the Cauchy completion of a connected component of $\HH/G-\Lambda_i^0.$ By construction, we can see that $P'$ is a geometric pair of pants or each possible shape of $P'$ corresponds to one of Figure \ref{fig: all possible types}. Now, the red arcs are a part of ideal boundaries of $\HH/G$, the black arcs are simple bi-infinite geodesics in $\HH/G$ connecting ideal points in $Q_i$ and the small circles are punctures or cone points. But, the black circles have two possibilities. One is that the geodesic is a regular simple closed curve in $\HH/G.$ The other is that the geodesic comes from a geodesic segment connecting two cone points of order two.

Finally, we construct a geodesic lamination in each case of $P'$ so that for any complement $A$ of the lamination in $P',$ the closure of each connected component of $\pi_G^{-1}(A)$ is an ideal polygon. The case where $P'$ is a geometric pair of pants is done by  Lemma \ref{lem : a lamination in a geometric pair of pants} so we consider the cases in Figure \ref{fig: all possible types}. The case of Figure \ref{fig:2-1} is already done by Lemma \ref{lem: a triangulation of a square}. Here, the countable dense subset $Q$ in Lemma \ref{lem: a triangulation of a square} comes from $Q_i.$ Now, we consider the cases of Figure \ref{fig:2-2}  and Figure \ref{fig:2-3}. See Figure \ref{fig:2_blue}. By adding the blue bi-infinite geodesics, the blue geodesics bound half-planes and the blue and black geodesics bound an ideal hexagon and an ideal octagon in each cases. In each half-plane, we can construct a geodesic lamination by Lemma \ref{lem: the half farey triangulation}. The countable dense subset $Q$ in Lemma \ref{lem: the half farey triangulation} also comes from $Q_i.$ Then we are done.  

Then, we consider the case where $P'$ is of the type of Figure \ref{fig:1-1} and the small circle corresponds to a cone point of order two. Choose a point $p$ of $Q_i$ in the interior of the red line. Then there is a unique regular simple geodesic ray emanating from the cone point with the ideal end point $p.$ See Figure \ref{fig:1-1_order two_blue}. The blue line is the geodesic ray. Then, since the completion of the complement of the geodesic ray is of the type of \ref{fig:2-1}, we can similarly construct a geodesic lamination. 

Now,  we consider the case where $P'$ is of the type of Figure \ref{fig:1-1} and the small circle is a cone point of which the order is greater than 2  or a puncture. Choose a point $p$ of $Q_i$ in the interior of the red line. Contrary to the case of oder two, there is a bi-infinite geodesic of which the end points are $p.$ See Figure \ref{fig:1_blue}. The blue line is the geodesic. This blue line divides the bi-gon into a piece of the type of Figure \ref{fig:2-1} and a piece which is an ideal monogon or a monogon with one puncture. Then, in the piece of the type of Figure \ref{fig:2-1}, we can construct a geodesic lamination as the previous cases. If the other piece is an ideal monogon, we are done. If the piece is a monogon with one puncture, we need to add one more bi-infinite geodesic emanating to the puncture as in Figure \ref{fig:monogoncusp}. 

Then, we consider the case of Figure \ref{fig:1-2}. Choose a point $p$ of $Q_i$ in the interior of the red line. Then, there is a blue bi-infinite geodesic of which the end points are $p.$ See Figure \ref{fig:1-2_blue}. The blue line is the geodesic. This blue line divides the bi-gon into a piece of the type of Figure \ref{fig:2-1} and a piece which is a monogon with one hole. In the piece of the type of Figure \ref{fig:2-1}, we can construct a geodesic lamination as the previous cases. In the monogon with one hole, we need to add one more geodesic as in Figure \ref{fig:monogonhole}. Then we are done. 

In the case of Figure \ref{fig:1-3}, there is a unique simple closed geodesic that bounds the funnel. See Figure \ref{fig:1-3_blue}. The blue circle is the geodesic. This line divides the bi-gon into a piece of the type of Figure \ref{fig:1-2} and the funnel. In the piece of the type of Figure \ref{fig:1-2}, we can construct a geodesic lamination by the previous case. We only need to consider the funnel. Choose a point $p$ of $Q_i$ in the closed ideal boundary. Then there is a unique bi-infinite geodesic of which end points are $p.$ See Figure \ref{fig:funnel_orange}. The orange line is the bi-infinite geodesic. Then, the orange line divides the funnel into a mongon with one hole and a half-plane. Therefore, we can construct a geodesic lamination as previous cases.  

For remaining cases, observe that by adding proper bi-infinite geodesic connecting two ideal points in $Q_i$ and dividing $P'$ along the geodesic, we can reduce the construction problem into the previous cases. See Figure \ref{fig:remaining cases_blue}. The blue line is the bi-infinite geodesic which we want. Observe that each geodesic divides $P'$ into pieces in which we can construct geodesic laminations. Therefore, in each case, we can construct a geodesic lamination in $P'.$

Now, by adding a geodesic lamination in each complementary regions of $\Lambda_i^0,$ we can get a geodesic lamination $\Lambda_i$ such that the corresponding laminations system $\LL_i$ to $\pi_G^{-1}(\Lambda_i)$ is very full. Now observe that by construction, for each pair of distinct indices $i$, $j\in \{1,2,3\},$ the set $E(\LL_i)\cap E(\LL_j)$ consists of cusp points of $G.$
Therefore, the collection $\{\LL_1,\LL_2,\LL_3\}$ of $G$-invariant very full lamination systems is a pants-like $\COL_3$ collection. Thus, $G$ is a pants-like $\COL_3$ group.

\begin{figure}
\centering

\begin{subfigure}[b]{0.24\textwidth}
  \includegraphics[width=\textwidth]{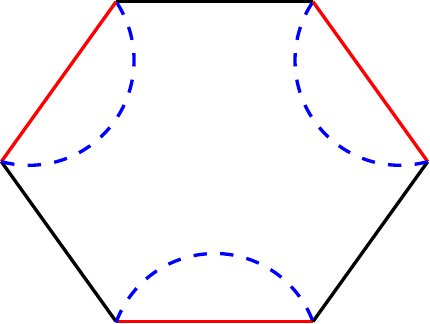}
  \caption{A heatagon }
  \label{fig:2-2_blue}
\end{subfigure}
\hspace*{2.5cm}
\begin{subfigure}[b]{0.2\textwidth}
  \includegraphics[width=\textwidth]{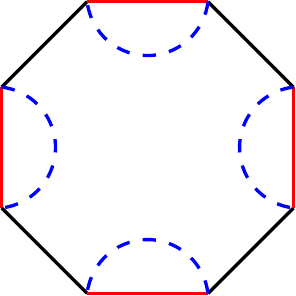}
  \caption{A octagon}
  \label{fig:2-3_blue}
\end{subfigure}
\caption{}
\label{fig:2_blue}
\end{figure}

\begin{figure}
\centering

\begin{subfigure}[b]{0.2\textwidth}
  \includegraphics[width=\textwidth]{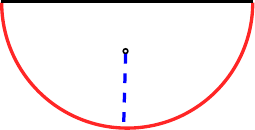}
  \caption{A bi-gon with a cone point of order two}
  \label{fig:1-1_order two_blue}
\end{subfigure}
\hspace*{2.5cm}
\begin{subfigure}[b]{0.2\textwidth}
  \includegraphics[width=\textwidth]{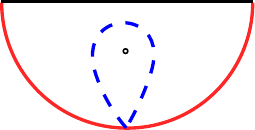}
  \caption{A bi-gon with a puncture or  a cone point which is not of order two}
  \label{fig:1-1_blue}
\end{subfigure}
\hspace*{2.5cm}
\begin{subfigure}[b]{0.2\textwidth}
  \includegraphics[width=\textwidth]{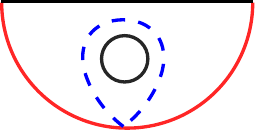}
  \caption{A bi-gone with one geodesic boundary}
  \label{fig:1-2_blue}
\end{subfigure}
\caption{}
\label{fig:1_blue}
\end{figure}

\begin{figure}
\centering

\begin{subfigure}[b]{0.2\textwidth}
  \includegraphics[width=\textwidth]{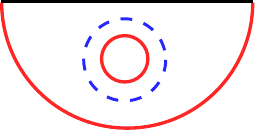}
  \caption{A bi-gon with one closed ideal boundary}
  \label{fig:1-3_blue}
\end{subfigure}
\hspace*{2.5cm}
\begin{subfigure}[b]{0.18\textwidth}
  \includegraphics[width=\textwidth]{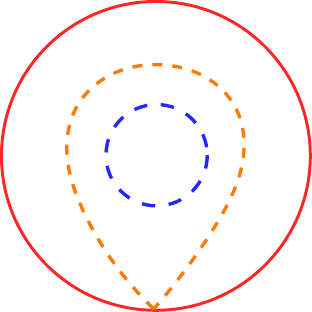}
  \caption{the funnel}
  \label{fig:funnel_orange}
\end{subfigure}

\caption{}

\end{figure}

\begin{figure}
\centering
\begin{subfigure}[b]{0.15\textwidth}
  \includegraphics[width=\textwidth]{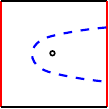}
  \caption{A square with one marked point}

\end{subfigure}
\hspace*{3.3cm}
\begin{subfigure}[b]{0.15\textwidth}
  \includegraphics[width=\textwidth]{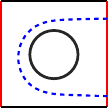}
  \caption{A square with one geodesic boundary}

\end{subfigure}
\hspace*{3.3cm}
\begin{subfigure}[b]{0.15\textwidth}
  \includegraphics[width=\textwidth]{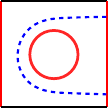}
  \caption{A square with one closed ideal boundary}
	
\end{subfigure}

\begin{subfigure}[b]{0.2\textwidth}
  \includegraphics[width=\textwidth]{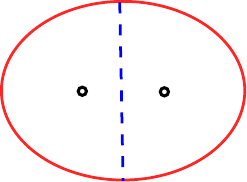}
  \caption{Two marked points and one closed ideal boundary}

\end{subfigure}
\hspace*{2.5cm}
\begin{subfigure}[b]{0.2\textwidth}
  \includegraphics[width=\textwidth]{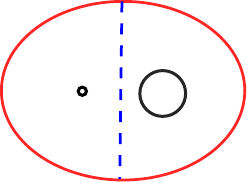}
  \caption{One marked point and one closed ideal boundaries}

\end{subfigure}
\hspace*{2.5cm}
\begin{subfigure}[b]{0.2\textwidth}
  \includegraphics[width=\textwidth]{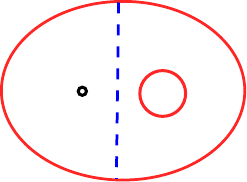}
  \caption{One marked points  and two closed ideal boundaries}

\end{subfigure}

\begin{subfigure}[b]{0.2\textwidth}
  \includegraphics[width=\textwidth]{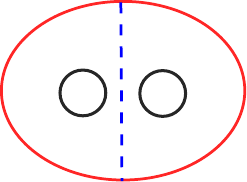}
  \caption{Only one ideal boundary }

\end{subfigure}
\hspace*{2.5cm}
\begin{subfigure}[b]{0.2\textwidth}
  \includegraphics[width=\textwidth]{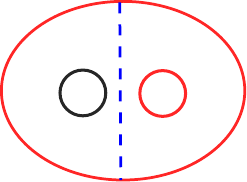}
  \caption{Two ideal boundaries}

\end{subfigure}
\hspace*{2.5cm}
\begin{subfigure}[b]{0.2\textwidth}
  \includegraphics[width=\textwidth]{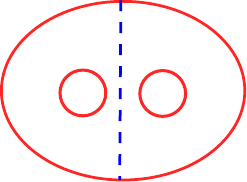}
  \caption{All boundaries are ideal boundaries}

\end{subfigure}

\caption{The blue lines are bi-infinite geodesics connecting two points in $Q_i.$ The marked points which are drawn as the small circles are puncture or cone points.}
\label{fig:remaining cases_blue}
\end{figure}

\begin{thm}\label{thm: non-elementary and of the second kind}
Let $G$ be a non-elementary Fuchsian group of the second kind. Then $G$ is a pants-like $\COL_3$ group.  
\end{thm}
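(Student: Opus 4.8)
The plan is to reduce the second-kind case to the first-kind case by a doubling construction and then transfer the resulting laminations back to $\HH/G$, filling in by hand the complementary regions that fail to be ideal polygons. First I would form the open Riemann surface $U_G=\Omega(G)-R(G)$, where $R(G)$ is the set of ramification points lying in the domain of discontinuity. Since $U_G$ carries no isometrically embedded half-plane or funnel and has no cone points, its uniformizing group $F_G$ (the group fitting into the exact sequence with $G(U_G)$ and $G$) is a Fuchsian group of the first kind, and $U_G/G$ is the complete hyperbolic \emph{surface} obtained by doubling $(\HH/G)^o$ across its ideal boundary $B=(\hat{\RR}-L(G))/G$. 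In this way I land in a situation already understood: applying Lemma \ref{lem : three geometric pants decompositions} (which rests on the structure theorem, Lemma \ref{lem : straightening a pants decomposition}) to $U_G/G$ produces three pairwise transverse geometric pants decompositions $\Lambda_1,\Lambda_2,\Lambda_3$.

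Next I would restrict each $\Lambda_i$ to $(\HH/G)^o\cup B$, obtaining families $\mathcal{C}_i=\Lambda_i\cap((\HH/G)^o\cup B)$ consisting of simple closed curves and properly embedded arcs. After discarding the closed curves contained in $B$ and isotoping the remaining arcs, inside disjoint collar neighborhoods, so that their endpoints land in three fixed, pairwise disjoint countable dense subsets $Q_1,Q_2,Q_3$ of $B$, I would replace each arc by its geodesic realization to obtain partial laminations $\Lambda_i^0$ on $\HH/G$. The role of the collars and the dense sets $Q_i$ is purely bookkeeping: they keep the three families disjoint and force any endpoint shared by leaves of $\LL_i$ and $\LL_j$ to be a cusp point of $G$, which is exactly the defining condition of a pants-like collection.

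The heart of the argument is local. Cutting $\HH/G$ along $\Lambda_i^0$ yields complementary pieces $P'$ that are no longer all geometric pairs of pants, so I would enumerate every possible shape of $P'$ (squares, hexagons, octagons; bigons carrying a puncture, a cone point, or a closed geodesic boundary; and the various pants-type pieces having some boundary circles lying in $B$) and fill each model piece with a geodesic lamination whose lift to $\HH$ has only ideal-polygon complementary regions. The building blocks are the half-Farey triangulation of a half-plane (Lemma \ref{lem: the half farey triangulation}), the triangulation of the convex hull of two intervals (Lemma \ref{lem: a triangulation of a square}), the pants construction (Lemma \ref{lem : a lamination in a geometric pair of pants}), and, for pieces containing a cone point or a puncture, the geodesic-ray realization of Proposition \ref{prop: representative of geodesic rays}; the recurring trick is to cut a complicated piece along one extra bi-infinite geodesic with endpoints in $Q_i$ so that it decomposes into previously handled pieces. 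Taking the union of these local laminations over all pieces and over the $G$-orbit gives three $G$-invariant very full lamination systems $\LL_1,\LL_2,\LL_3$, which by the endpoint control form a pants-like collection.

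I expect the main obstacle to be precisely this exhaustive local analysis: showing that the list of complementary pieces $P'$ is complete and that every model really can be filled with an ideal-polygonal lamination while respecting the endpoint constraints. Two subtleties deserve care. First, cone points of order two: a curve bounding a disk with two such cone points has its geodesic realization collapse to the segment joining them (Lemma \ref{lem : finding the geodesic realization}), so these configurations must be isolated and treated separately when passing from $\mathcal{C}_i$ to geodesic realizations. Second, one must check that the isotopies pushing arc endpoints into $Q_i$ can be carried out simultaneously and $G$-equivariantly, and that the resulting sets $E(\LL_i)$ meet pairwise only in cusp points. Once all the local pictures are dispatched, the global conclusion follows formally, exactly as in the first-kind case (Theorem \ref{thm: of the first kind}) and the elementary case (Lemma \ref{thm:elementary}), so that $\{\LL_1,\LL_2,\LL_3\}$ witnesses $G$ as a pants-like $\COL_3$ group.
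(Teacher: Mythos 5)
Your proposal follows essentially the same route as the paper: doubling $(\HH/G)^o$ across the ideal boundary $B$ to obtain the first-kind surface $U_G/G$, pulling back three transverse geometric pants decompositions, isotoping the restricted arcs so their endpoints land in three disjoint countable dense subsets $Q_i$ of $B$, taking geodesic realizations, and then filling the enumerated complementary pieces with laminations built from the half-Farey triangulation and the square triangulation. The subtleties you flag (cone points of order two, equivariance of the endpoint isotopies, and the completeness of the case list for $P'$) are exactly the points the paper's argument spends its effort on, so your outline matches the intended proof.
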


 \begin{theorem}{\ref{B}}
Let $G$ be a Fuchsian group. Suppose that $\HH/G$ is not a geometric pair of pants. Then $G$ is a pants-like $\COL_3$ group.  
 \end{theorem}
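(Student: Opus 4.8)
The plan is to deduce Theorem \ref{B} by a clean trichotomy on the limit set, assembling the three cases that have already been disposed of in this section. Recall that any Fuchsian group $G$ satisfies exactly one of the following: either $L(G)=\hat{\RR}$, so $G$ is of the first kind; or $L(G)$ is finite (hence of cardinality at most two), so $G$ is elementary and of the second kind; or $L(G)$ is an infinite proper subset of $\hat{\RR}$ (a Cantor set), so $G$ is non-elementary and of the second kind. These three alternatives are mutually exclusive and exhaustive, so it suffices to verify the conclusion in each.

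First I would treat the first-kind case. If $L(G)=\hat{\RR}$, then $G$ is automatically non-elementary, and the hypothesis that $\HH/G$ is not a geometric pair of pants is exactly the input needed for Theorem \ref{thm: of the first kind}, which yields that $G$ is pants-like $\COL_3$. This is the only case in which the pair-of-pants hypothesis genuinely does any work: for a first-kind group, $\HH/G$ can indeed be a geometric pair of pants---the thrice-punctured sphere being the prototypical example---so the exclusion of that orbifold is precisely what rules out the degenerate situation.

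Next I would dispatch the two second-kind cases. If $G$ is elementary, then Lemma \ref{thm:elementary} applies directly; and if $G$ is non-elementary of the second kind, then Theorem \ref{thm: non-elementary and of the second kind} does the same. In both of these cases the hypothesis that $\HH/G$ is not a geometric pair of pants is vacuous: a group of the second kind has $\Omega(G)\cap\partial\HH\neq\emptyset$, so $\HH/G$ contains an isometrically embedded half-plane or funnel and therefore has infinite hyperbolic area, whereas a geometric pair of pants has finite area by definition. Hence no second-kind quotient can be a geometric pair of pants, and the two cited results apply without any restriction.

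Since the three cases cover every Fuchsian group, combining them completes the proof. I do not expect a genuine obstacle here: all of the analytic and combinatorial content has already been established in Theorem \ref{thm: of the first kind}, Lemma \ref{thm:elementary}, and Theorem \ref{thm: non-elementary and of the second kind}, and the only points to check carefully are that the trichotomy is exhaustive and that the pair-of-pants hypothesis is superfluous outside the first-kind case---both of which follow immediately from the area and limit-set considerations above.
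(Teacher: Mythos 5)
Your proposal is correct and matches the paper's own proof, which likewise assembles Theorem \ref{thm: of the first kind}, Lemma \ref{thm:elementary}, and Theorem \ref{thm: non-elementary and of the second kind} via the same trichotomy on the limit set. Your added observation that the pair-of-pants hypothesis is vacuous in the second-kind cases (by the finite-area requirement) is a correct clarification the paper leaves implicit, but the route is the same.
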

 \begin{proof}
 By Theorem \ref{thm: of the first kind}, Theorem \ref{thm:elementary} and Theorem \ref{thm: non-elementary and of the second kind}, we are done.
 \end{proof}

\section{Appendix}
In this appendix, we provide a proof of Lemma  \ref{lem: having the convergence property}. In fact, this lemma is the refinement of a lemma proven in \cite{BaikFuchsian} under the assumption that  $G$ is discrete and torsion-free. Before proving the lemma, we need some lemma and definition.

\begin{lem}\label{lem: existence of angel wingss}
Let $G$ be a laminar group and $\LL$ a $G$-invariant very full lamination system. Assume that a point $p$ in $S^1$ is a cusp point of $G.$ Then there is a sequence $\{U_n\}_{n=1}^\infty$ of nondegenerate open intervals such that for each $n\in \NN,$ $\overline{U_{n+1}}\subset U_n$ and  $U_n=I_n\cup \{p\} \cup J_n$ for some  two disjoint elements $I_n$ and $J_n$ of $\LL$, and $\displaystyle \bigcap_{n=1}^\infty U_n=\{p\}.$ In particular, $\{I_n\}_{n=1}^\infty$ and $\{J_n\}_{n=1}^\infty$ are quasi-rainbows at $p.$
 \end{lem}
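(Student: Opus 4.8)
The plan is to produce both quasi-rainbows simultaneously by taking a single leaf of $\LL$ with $p$ as an endpoint and pushing it toward $p$ with a parabolic element: positive iterates give the shrinking intervals on one side of $p$, negative iterates on the other side. First I would invoke the standing hypotheses. Since $p$ is a cusp point of $G$, there is a parabolic element $g\in G$ with $\Fix_g=\{p\}$; and since $\LL$ is $G$-invariant and very full, the Remark following the definition of a cusp point gives a leaf $\ell=\{I,I^*\}$ of $\LL$ with $p\in v(\ell)$. Write $v(\ell)=\{p,q\}$, and label the two complementary arcs so that, reading $S^1$ positively, $I$ runs from $p$ to $q$ and $I^*$ from $q$ to $p$; both lie in $\LL$.

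Next I would record the dynamics of $g$. Deleting its unique fixed point yields an orientation-preserving, fixed-point-free self-homeomorphism of the open interval $S^1-\{p\}$. After possibly replacing $g$ by $g^{-1}$, every forward orbit then converges monotonically to $p$ from one side and every backward orbit monotonically to $p$ from the other. Concretely, identifying $S^1-\{p\}$ with $(0,1)$ (positive orientation $=$ increasing, with $0\sim 1\sim p$) and writing $q\leftrightarrow x_0$, one has $g(x)>x$, so that $I\leftrightarrow(0,x_0)$, $I^*\leftrightarrow(x_0,1)$, $g^n(x_0)\nearrow 1$, and $g^{-n}(x_0)\searrow 0$.

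Then comes the construction and verification. Set $I_n:=g^n(I^*)$, $J_n:=g^{-n}(I)$, and $U_n:=I_n\cup\{p\}\cup J_n$. By $G$-invariance of $\LL$ we have $I_n,J_n\in\LL$; in the coordinate they are $(g^n(x_0),1)$ and $(0,g^{-n}(x_0))$, hence disjoint for $n\ge 1$, and $U_n$ is exactly the open arc through $p$ with boundary points $g^{\pm n}(x_0)$, so it is a nondegenerate open interval. Monotonicity gives $g(I^*)\subseteq I^*$ and $g^{-1}(I)\subseteq I$, whence $I_{n+1}\subseteq I_n$ and $J_{n+1}\subseteq J_n$; combined with $g^{\pm n}(x_0)\to p$ this yields $\overline{U_{n+1}}\subset U_n$, $\bigcap_{n=1}^\infty \overline{I_n}=\bigcap_{n=1}^\infty \overline{J_n}=\{p\}$, and $\bigcap_{n=1}^\infty U_n=\{p\}$. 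In particular $\{I_n\}_{n=1}^\infty$ and $\{J_n\}_{n=1}^\infty$ are quasi-rainbows at $p$, as required.

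The only step carrying real content is the dynamical claim that the orbit of $q$ converges monotonically to $p$ from both sides. I would justify it directly from the classification of fixed-point-free orientation-preserving homeomorphisms of an interval, \emph{not} by conjugating into $\PR$, since here $G$ is only assumed to be laminar and $\Homeop(S^1)$-valued. Everything else---membership of $I_n,J_n$ in $\LL$, the nesting, and the identification of $U_n$ as a single arc through $p$---is then routine bookkeeping with the circular order $\varphi$.
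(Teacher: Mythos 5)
Your proposal is correct and follows essentially the same route as the paper: take a parabolic $g$ fixing $p$, take a leaf of $\LL$ with endpoint $p$, and iterate $g$ forward and backward to produce the two nested quasi-rainbows whose unions with $\{p\}$ form the intervals $U_n$. The only difference is cosmetic — the paper fixes the labelling by choosing the side of the leaf containing $g(q)$ rather than by an orientation convention, and you spell out the interval dynamics that the paper leaves implicit in "since $g$ is parabolic."
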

 \begin{proof} 
 We can take a parabolic element $g$ of $G$ fixing $p$  as $p$ is a cusp point of $G.$  Also there is a leaf $\ell$ of $\LL$ having $p$ as an end point and we denote the other end point by $q.$ Then since $g$ is a parabolic element, $g(q)\neq q$ and $g(q)\neq p$ so there is an element $I$ in $\ell$ containing the point $g(q).$ Because $g$ is orientation preserving, $g(I)\subsetneq I $ and since $g$ is parabolic, the sequence $\{g^n(I)\}_{n=1}^\infty$ is a quasi-rainbow at $p.$ Likewise $g^{-1}(q)\in I^*$ as $q\in g(I)^*.$ Since $g^{-1}$ is orientation preserving, $g^{-1}(I^*)\subsetneq I^*$ so since $g$ is parabolic, the sequence $\{g^{-n}(I^*)\}_{n=1}
^\infty$ is a quasi-rainbow at $p.$ Now for each $n\in \NN,$ we define $U_n$ as the union of the sets $g^n(I)$, $\{p\}$ and $g^{-n}(I^*).$ Since $I$ and $I^*$ are disjoint and $g^n(I) \subsetneq I$ and $g^{-n}(I^*)\subsetneq I^*$ for all $n\in \NN$, then $\{g^n(I),g^{-n}(I^*)\}$ is a distinct pair for all $n\in \NN$. Hence for each $n\in \NN$, $U_n$ is a nondegenerate open interval because $\partial g^n(I) \cap \partial g^{-n}(I^*)=\{p\},$ and $\overline{U_{n+1}}\subset U_n$ as $g^{n+1}(q)\in g^{n}(I)$ and $g^{-n-1}(q)\in g^{-n}(I^*).$ Finally $\displaystyle \bigcap_{n=1}^\infty U_n =\{p\}$ since the sequences  $\{g^n(I)\}_{n=1}^\infty$ and   $\{g^{-n}(I^*)\}_{n=1}
^\infty$ are quasi-rainbows at $p.$ Thus we are done. 
 \end{proof}
 
 For brevity we make a definition for the sequence $\{U_n\}_{n=1}^\infty$ of the lemma. 
 
 \begin{defn}
 Let $\LL$ be a  lamination system. Let $p$ be a point in $S^1$ and $\{U_n\}_{n=1}^\infty$ be a sequence of nondegenerate open intervals containing $p.$ We call the sequence $\{U_n\}_{n=1}^\infty$  \emph{angel wings} at $p$ if  for each $n\in \NN,$ $\overline{U_{n+1}}\subset U_n$ and  $U_n=I\cup \{p\} \cup J$ for some  two disjoint elements $I$ and $J$ of $\LL$, and $\displaystyle \bigcap_{n=1}^\infty U_n=\{p\}.$
 \end{defn}
 
 So Lemma \ref{lem: existence of angel wingss} says that every cusp point of a group has angel wings if the group has a very full invariant lamination system. Now we prove Lemma  \ref{lem: having the convergence property}.

\begin{lem} \label{lem: having the convergence property}
Let $G$ be a pants-like $\COL_3$ group and $\mathcal{C}=\{\LL_1, \LL_2, \LL_3\}$ be a pants-like collection for $G$.
Suppose that there are two sequences $\{x_n\}_{n=1}^\infty$ and $\{y_n\}_{n=1}^\infty$ in $S^1$ such that $\{x_n\}_{n=1}^\infty$  converges to a point $x$ in $S^1$, $\{y_n\}_{n=1}^\infty$ converges to a point $y$ in $S^1$ and $x\neq y,$ and that $\{g_n\}_{n=1}^\infty$ is an approximation sequence of $G$ such that $\{g_n(x_n)\}_{n=1}^\infty$ converges to a point $x'$ in $S^1$, $\{g_n(y_n)\}_{n=1}^\infty$ converges to a point $y'$ in $S^1$ and $x'\neq y'.$ Further assume that we can take a sequence $\{a_n\}_{n=1}^\infty$ in $S^1$ so that for each $n\in\NN,$ $a_n\in \Fix_{h_n}$ for some hyperbolic element $h_n$ of $G$, $\{a_n\}_{n=1}^\infty$ converges to $x$ and $\{g_n(a_n)\}_{n=1}^\infty$ converges to $y'.$ Then the sequence $\{g_n\}_{n=1}^\infty$ has the convergence property. 
\end{lem}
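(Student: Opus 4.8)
The plan is to prove the convergence property directly by establishing north--south dynamics: after passing to a subsequence, $\{g_n\}$ collapses every compact subset of $S^1\setminus\{x\}$ onto the single point $y'$. First I would record why $x$ and $y'$ must be the repelling and attracting points. The sequences $\{x_n\}$ and $\{a_n\}$ both converge to $x$, yet their images converge to the \emph{distinct} points $x'$ and $y'$; for a sequence enjoying the convergence property this forces the common limit $x$ to be the source, since a point lying away from the source would have both image-limits equal to the sink. Dually, $\{y_n\}$ converges to $y\neq x$ with $g_n(y_n)\to y'$, so $y'$ must be the sink. Thus the only candidate statement is that a subsequence of $\{g_n\}$ converges to the constant $y'$ uniformly on compact subsets of $S^1\setminus\{x\}$, and by Remark~\ref{rmk: the convergence property for inverse} I am free to argue with $\{g_n\}$ or $\{g_n^{-1}\}$ interchangeably.

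To produce the actual uniform convergence I would use the three $G$-invariant very full lamination systems as a trapping device. Around any point $p\in S^1$ there is a canonical family of shrinking neighborhoods bounded by leaves: if $p$ is a cusp point of $G$ then Lemma~\ref{lem: existence of angel wingss} provides angel wings at $p$, and otherwise Theorem~\ref{er} furnishes a rainbow at $p$ in at least two of the $\LL_i$ (by the pants-like condition $p$ can lie in at most one $E(\LL_i)$ unless it is a cusp). In either case one obtains nested intervals $U_m\downarrow\{p\}$ whose boundaries are endpoints of leaves, and I would set up such families at the source $x$ and at the sink $y'$. The crucial input coming from the hypothesis that each $a_n$ is a fixed point of a \emph{hyperbolic} element $h_n$ is that $a_n$ is an endpoint of a leaf of each $\LL_i$: starting from an arbitrary leaf and applying high powers of $h_n$, the closedness of lamination systems (Proposition~\ref{prop:closedness of sequence of leaves}) yields a leaf having $a_n$ as an endpoint. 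Pushing such a leaf forward by $g_n$ gives a leaf of $\LL_i$ with one endpoint $g_n(a_n)\to y'$, which anchors a leaf near the sink for every large $n$.

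The heart of the argument is then a transfer of nesting through these leaves. The hyperbolic elements $g_{k}\circ g_m^{-1}$ supplied by the approximation-sequence hypothesis, together with the leaves anchored at $a_n$ and at the boundary points of the trapping neighborhoods, let one compare the image under $g_n$ of a frame of leaves enclosing a compact $K\subseteq S^1\setminus\{x\}$ with a frame of leaves enclosing $y'$. Using the unlinkedness of leaves and of their images (the defining property of a $G$-invariant lamination system), one shows that $g_n$ must carry the component not containing $x$ into the component containing $y'$, so that $g_n(K)$ is eventually caught inside any prescribed trapping neighborhood $U_m$ of $y'$. Since $U_m\downarrow\{y'\}$, this is precisely the desired uniform convergence, and the convergence property follows.

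The main obstacle I expect is the \emph{orientation/side bookkeeping}: each leaf separates $S^1$ into two arcs, and I must guarantee that $g_n$ sends the arc \emph{away} from the source $x$ into the arc \emph{around} the sink $y'$ rather than onto its complement. Ruling out the wrong correspondence is where all three tracked orbits ($a_n\to x\mapsto y'$, $\ y_n\to y\mapsto y'$, and $x_n\to x\mapsto x'$ with $x'\neq y'$) and the hyperbolicity of the approximation differences are genuinely needed, and it must be combined with the case split according to whether $x$ and $y'$ are cusp points (angel wings) or not (rainbows). The remaining bookkeeping---passing to subsequences so that auxiliary leaf endpoints converge, and upgrading pointwise trapping to uniform trapping on compacta---should be routine once the side correspondence is pinned down.
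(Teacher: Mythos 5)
Your overall architecture --- trapping $x$ and $y'$ in nested interval families (rainbows at non-cusp points, angel wings at cusp points) and then using unlinkedness of leaves under the $G$-action to force $g_n$ to carry the complement of a small interval at $x$ into a small interval at $y'$ --- is the same as the paper's, and the case split you anticipate is the right one. But there is a concrete false step at the point you call the ``crucial input.'' It is not true that a fixed point $a_n$ of a hyperbolic element $h_n$ must be an endpoint of a leaf of each $\LL_i$, and the iteration you propose does not produce one: if $\ell$ is a leaf with both endpoints in the attracting basin of $h_n$, the images $h_n^k(\ell)$ degenerate toward the attracting fixed point and do not converge to a leaf in the sense of the lamination system (the third axiom and Lemma \ref{lods} apply only when the relevant union or intersection is a \emph{nondegenerate} interval). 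Indeed, by Theorem \ref{er} the typical situation is the opposite: an axis endpoint carries a rainbow and hence is \emph{not} in $E(\LL_i)$. The iteration trick in Lemma \ref{lem: disjoint fixed point sets} works only because one endpoint of the starting leaf is already fixed.

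The hypothesis that $a_n\in\Fix_{h_n}$ for hyperbolic $h_n$ is actually used for something much weaker: via Lemma \ref{lem: disjoint fixed point sets}, $a_n\neq x$ and $g_n(a_n)\neq y'$ whenever $x$ or $y'$ is a cusp point, which is exactly what lets $a_n$ land strictly inside one wing of the angel wings at $x$ (and $g_n(a_n)$ at $y'$). What rules out the wrong side-correspondence is not a leaf anchored at $a_n$ but the following: after a pigeonhole step your sketch omits --- a single $\LL\in\mathcal{C}$ must carry the trapping family at $x$ \emph{and} the one at $y'$, so that $g_n(I_N)$ and $J_N$ are leaves of the same invariant system and hence unlinked --- the three tracked points do all the work: $g_n(y_n)\in g_n(I_N)^*\cap J_N$ and $g_n(a_n)\in g_n(I_N)\cap J_N$ eliminate two of the three unlinked configurations, and $g_n(x_n)\notin J_N$ eliminates the third, leaving $g_n(I_N)^*\subset J_N$ and hence $g_n(K)\subset J_N$. (The hyperbolicity of $g_k\circ g_m^{-1}$ from the approximation-sequence hypothesis enters only in the double-cusp case, to show $g_n(x)\neq y'$ for all but one $n$; it is not what pins down the side correspondence.) As written, your proposal both relies on a false auxiliary claim and leaves the central unlinkedness computation as a black box, so it does not yet constitute a proof.
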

\begin{proof}
We will show that we can take a subsequence satisfying the condition of the convergence property with respect to $x$ and $y'.$ First we choose a number $\epsilon$ in $\RR$ so that $0<\epsilon<\frac{1}{3}\min\{d_\CC(x,y), d_\CC(x',y')\}.$ Then for each $p\in \{x,y,x',y'\},$ we denote the $\epsilon$-neighborhood of $p$ by $N_\epsilon(p)=\{z\in S^1 : d_\CC(p,z)<\epsilon \}.$  Note that $N_\epsilon(x)\cap N_\epsilon(y)=\emptyset$ and $N_\epsilon(x')\cap N_\epsilon(y')=\emptyset.$  We can take a subindex $\alpha$ of $\id_\NN$ so that for each $n\in \NN$, $\{ x_{\alpha(n)}, a_{\alpha(n)}\} \subset N_\epsilon(x)$, $y_{\alpha(n)}\in N_\epsilon(y)$, $g_{\alpha(n)}(x_{\alpha(n)})\in N_\epsilon(x')$, and $\{ g_{\alpha(n)}(y_{\alpha(n)}), g_{\alpha(n)}(a_{\alpha(n)})\} \subset N_\epsilon(y').$ Now we divide into four cases depending on whether $x$ and $y'$ are cusp points.  

First we consider the case where both $x$ and $y'$ are not cusp points. Then at least two of $\mathcal{C}$ have a rainbow at $x$ and also at least two of $\mathcal{C}$ have a rainbow at $y'.$ So at least one of $\mathcal{C}$ has rainbows at $x$ and $y'$ simultaneously. Hence we can take two rainbows $\{I_n\}_{n=1}^\infty$ at $x$ and $\{J_n\}_{n=1}^\infty$ at $y'$ in some $\LL$ of $\mathcal{C}$ so that for each $\overline{I_{n+1}}\subset I_n$ and $\overline{J_{n+1}}\subset J_n$ for all $n\in \NN$, and $\overline{I_n}\subset N_\epsilon(x)$ and $\overline{J_n}\subset N_\epsilon(y)$ for all $n\in \NN.$ Then for each $n\in \NN$, there is a number $m(n)$ in $\NN$ such that for all $m\in \NN$ with $m(n)\leq m$, $\{ x_{\alpha(m)}, a_{\alpha(m)}\} \subset I_n$ and $\{ g_{\alpha(m)}(y_{\alpha(m)}), g_{\alpha(m)}(a_{\alpha(m)})\} \subset J_n.$ A subindex $\beta$ of $\alpha$ can be given by $\displaystyle \beta(n)=\alpha \big( \sum_{k=1}^n m(k) \big)$ so that for each $n\in \NN,$ $\{ x_{\beta(n)}, a_{\beta(n)}\} \subset I_n$ and $\{ g_{\beta(n)}(y_{\beta(n)}), g_{\beta(n)}(a_{\beta(n)})\} \subset J_n.$ 

Now we show that the sequence $\{g_{\beta(n)}\}_{n=1}^\infty$ has the convergence property with respect to $x$ and $y'$. Let $K$ be a compact subset of $S^1-\{x\}$ and $U$ be an open neighborhood of $y'$ on $S^1.$ Then since $\{I_n\}_{n=1}^\infty$ and $\{J_n\}_{n=1}^\infty$ are  rainbows, there is a number $N$ in $\NN$ such that $K\subset I_N^* $ and $J_N \subseteq U.$ Note that for all $n\in \NN$ with $N\leq n,$ $\{ x_{\beta(n)}, a_{\beta(n)}\} \subset I_N$ and $\{ g_{\beta(n)}(y_{\beta(n)}), g_{\beta(n)}(a_{\beta(n)})\} \subset J_N$ as $I_n\subseteq I_N$ and $J_n\subseteq J_N.$ Choose a number $n_0\in \NN$ with $N\leq n_0.$ Because $N_\epsilon(x)$ does not intersect with $\overline{N_\epsilon(y)}$ and $y_{\beta(n_0)}\in I_N^*$,  $g_{\beta(n_0)}(y_{\beta(n_0)})\in g_{\beta(n_0)}(I_N)^*\cap J_N$ so by the unlinkedness of $g_{\beta(n_0)}(I_N)^*$ and $J_N$, there are three possible cases: $g_{\beta(n_0)}(I_N)^*\subset J_N$; $g_{\beta(n_0)}(I_N)\subset J_N$; $g_{\beta(n_0)}(I_N)\subset J_N^*.$ If $g_{\beta(n_0)}(I_N)\subset J_N$, then $g_{\beta(n_0)}(x_{\beta(n_0)})\in J_N$ and so $g_{\beta(n_0)}(x_{\beta(n_0)})\in N_\epsilon(y').$ However this is a contradiction since $N_\epsilon(x')$ and $N_\epsilon(y')$ are disjoint. If $g_{\beta(n_0)}(I_N)\subset J_N^*$, then $g_{\beta(n_0)}(I_N) \cap J_N=\emptyset$ but $g_{\beta(n_0)}(a_{\beta(n_0)}) \in g_{\beta(n_0)}(I_N) \cap J_N.$ So it is a contradiction. Therefore $g_{\beta(n_0)}(I_N)^*\subset J_N$ and this implies that $g_{\beta(n_0)}(K)\subset g_{\beta(n_0)}(I_N)^*\subset J_N \subset U.$ Thus we can conclude that $\{g_{\beta(n)}\}_{n=1}^\infty$ has the convergence property and so $\{g_{\alpha(n)}\}_{n=1}^\infty$ has the convergence property. 

Next we consider the case where $x$ is not a cusp point and $y'$ is a cusp point. Since $x$ is not a cusp point,  at most one of $\mathcal{C}$ has $x$ as an end point so there are at lest two of $\mathcal{C}$ have rainbows at $x.$ On the other hand, $y'$ has angel wings in each of $\mathcal{C}.$ So there is a lamination system $\LL$ in $\mathcal{C}$ which has a rainbow at $x$ and angel wings at $y'.$ Hence, in some $\LL$ in $\mathcal{C},$ we can take a rainbow $\{I_n\}_{n=1}^\infty$ at $x$ and angel wings $\{U_n\}_{n=1}^\infty$ at $y'$ so that for each $n\in \NN,$ $\overline{I_{n+1}} \subset I_n$, $\overline{I_n} \subset N_\epsilon(x)$ and $\overline{U_n} \subset N_\epsilon(y').$ Then for each $n\in \NN,$ there is a number $m(n)$ in $\NN$ such that for all $m\in \NN$ with $m(n)\leq m$, $\{ x_{\alpha(m)}, a_{\alpha(m)}\} \subset I_n$ and $\{ g_{\alpha(m)}(y_{\alpha(m)}), g_{\alpha(m)}(a_{\alpha(m)})\} \subset U_n.$ A subindex $\beta$ of $\alpha$ can be given by $\displaystyle \beta(n)=\alpha \big( \sum_{k=1}^n m(k) \big)$ so that for each $n\in \NN,$ $\{ x_{\beta(n)}, a_{\beta(n)}\} \subset I_n$ and $\{ g_{\beta(n)}(y_{\beta(n)}), g_{\beta(n)}(a_{\beta(n)})\} \subset U_n.$ 

Similarly we show that the sequence $\{g_{\beta(n)}\}_{n=1}^\infty$ has the convergence property with respect to $x$ and $y'$. Let $K$ be a compact subset of $S^1-\{x\}$ and $U$ be an open neighborhood of $y'$ on $S^1.$ Then since $\{I_n\}_{n=1}^\infty$ is a rainbow and $\{U_n\}_{n=1}^\infty$ is angel wings, there is a number $N$ in $\NN$ such that $K\subset I_N^* $ and $U_N \subseteq U.$ Note that for all $n\in \NN$ with $N\leq n,$ $\{ x_{\beta(n)}, a_{\beta(n)}\} \subset I_N$ and $\{ g_{\beta(n)}(y_{\beta(n)}), g_{\beta(n)}(a_{\beta(n)})\} \subset U_N$ as $I_n\subseteq I_N$ and $U_n\subseteq U_N.$  Now we write $I_n=(r_n,s_n)_{S^1}$ and $U_n=(u_n, v_n)_{S^1}$ for all $n\in \NN.$ Note that $(u_n,y')_{S^1}$ and $(y',v_n)_{S^1}$ are in $\LL$ for all $n\in \NN$. Now, we choose a number $n_0\in \NN$  with $N<n_0.$ Then $r_{n_0}\in (y_{\beta(n_0)},a_{\beta(n_0)})_{S^1}$ and $s_{n_0}\in (a_{\beta(n_0)},y_{\beta(n_0)})_{S^1}$ as $a_{\beta(n_0)}\in (r_{n_0}, s_{n_0})_{S^1}$ and $y_{\beta(n_0)}\in (s_{n_0},r_{n_0})_{S^1}$ by the choice of $\epsilon.$ So one of $ g_{\beta(n_0)}( [y_{\beta(n_0)},a_{\beta(n_0)}]_{S^1})$ and $g_{\beta(n_0)}([a_{\beta(n_0)},y_{\beta(n_0)}]_{S^1})$ must be contained in $U_{n_0}$ because $\{g_{\beta(n_0)}(y_{\beta(n_0)}),g_{\beta(n_0)}(a_{\beta(n_0)})\} \subset U_{n_0}.$ 

First we consider the case where  $   [g_{\beta(n_0)}(y_{\beta(n_0)}),g_{\beta(n_0)}(a_{\beta(n_0)})]_{S^1}= g_{\beta(n_0)}([y_{\beta(n_0)},a_{\beta(n_0)}]_{S^1}) \subset U_{n_0}.$ If $g_{\beta(n_0)}(r_{n_0})\in (u_{n_0},y')_{S^1},$ then by the unlinkedness of $g_{\beta(n_0)}(I_{n_0})$ and $(u_{n_0},y')_{S^1},$ there are two possible cases: $g_{\beta(n_0)}(I_{n_0})\subset (u_{n_0},y')_{S^1}$; $g_{\beta(n_0)}(I_{n_0})^* \subset (u_{n_0},y')_{S^1}.$  However if  $g_{\beta(n_0)}(I_{n_0})\subset (u_{n_0},y')_{S^1},$ then $g_{\beta(n_0)}(x_{\beta(n_0)}) \in (u_{n_0},y')_{S^1}\subset U_{n_0} \subset N_\epsilon(y')$ so this is a contradiction. Therefore $g_{\beta(n_0)}(I_{n_0})^* \subset (u_{n_0},y')_{S^1}$ and so 
$$g_{\beta(n_0)}(K)\subset g_{\beta(n_0)}(I_{N})^* \subset g_{\beta(n_0)}(I_{n_0})^* \subset (u_{n_0},y')_{S^1}\subset  U_{n_0} \subset U_N \subset U.$$
Then if $g_{\beta(n_0)}(r_{n_0})\in (y',v_{n_0})_{S^1},$ then by the unlinkedness of $g_{\beta(n_0)}(I_{n_0})$ and $(y',v_{n_0})_{S^1},$ there are two possible cases: $g_{\beta(n_0)}(I_{n_0})\subset (y',v_{n_0})_{S^1}$; $g_{\beta(n_0)}(I_{n_0})^* \subset (y',v_{n_0})_{S^1}.$ However if  $g_{\beta(n_0)}(I_{n_0})\subset (y',v_{n_0})_{S^1},$ then $g_{\beta(n_0)}(x_{\beta(n_0)}) \in (y', v_{n_0})_{S^1}\subset U_{n_0} \subset N_\epsilon(y')$ so this is a contradiction. Therefore $g_{\beta(n_0)}(I_{n_0})^* \subset (y',v_{n_0})_{S^1}$ and so 
$$g_{\beta(n_0)}(K)\subset g_{\beta(n_0)}(I_{N})^* \subset g_{\beta(n_0)}(I_{n_0})^* \subset (y', v_{n_0})_{S^1}\subset  U_{n_0} \subset U_N \subset U.$$
Finally we assume that $g_{\beta(n_0)}(r_{n_0})=y'.$ Note that $g_{\beta(n_0)}(r_N)\in  (g_{\beta(n_0)}(y_{\beta(n_0)}),g_{\beta(n_0)}(a_{\beta(n_0)}))_{S^1}$ since  $a_{\beta(n_0)}\in I_N$ and $y_{\beta(n_0)}\in I_N^*$ by the choice of $\epsilon$, and so $g_{\beta(n_0)}(r_N)\in (g_{\beta(n_0)}(y_{\beta(n_0)}), y')_{S^1}\subset (u_{n_0}, y')_{S^1}$ since 
$$[y',g_{\beta(n_0)}( a_{\beta(n_0)}))_{S^1}=g_{\beta(n_0)}([r_{n_0}, a_{\beta(n_0)}))_{S^1}\subset g _{\beta(n_0)}(\overline{I_{n_0}})\subset  g _{\beta(n_0)}(I_N).$$ Hence by the unlinkedness of $g_{\beta(n_0)}(I_N)$ and $(u_{n_0}, y')_{S^1},$ there are two possible cases: $g_{\beta(n_0)}(I_N) \subset (u_{n_0}, y')_{S^1}$; $g_{\beta(n_0)}(I_N)^* \subset (u_{n_0}, y')_{S^1}$. However if $g_{\beta(n_0)}(I_N) \subset (u_{n_0}, y')_{S^1}$, then $g_{\beta(n_0)}(x_{\beta(n_0)}) \in (u_{n_0},y')_{S^1}\subset U_{n_0} \subset N_\epsilon(y')$ so this is a contradiction. Therefore $g_{\beta(n_0)}(I_N)^* \subset (u_{n_0}, y')_{S^1}$ and so 
$$g_{\beta(n_0)}(K)\subset g_{\beta(n_0)}(I_{N})^* \subset (u_{n_0},y')_{S^1}\subset  U_{n_0} \subset U_N \subset U.$$
Thus these implies that $\{g_{\beta(n)}\}_{n=1}^\infty$ has the convergence property and so $\{g_{\alpha(n)}\}_{n=1}^\infty$ has the convergence property. In the similar way we can prove that $\{g_{\beta(n)}\}_{n=1}^\infty$ has the convergence property in the case where $g_{\beta(n_0)}([a_{\beta(n_0)},y_{\beta(n_0)}]_{S^1})\subset U_{n_0}.$ Hence $\{g_{\alpha(n)}\}_{n=1}^\infty$ also has the convergence property.

Now we consider  the case where $x$ is a cusp point and $y'$ is not a cusp point. It follows at once from the previous case that $\{g_{\alpha(n)}^{-1}\}_{n=1}^\infty$ has the convergence property. Indeed it implies that $\{g_{\alpha(n)}\}_{n=1}^\infty$ has the convergence property  by Remark \ref{rmk: the convergence property for inverse}. Thus this case is also proved. 

Finally we consider the most complicated case when both $x$ and $y'$ are cusp points. Note that by Lemma \ref{lem: disjoint fixed point sets}, $a_n\neq x$ and $g_n(a_n)\neq y'$ for all $n\in\NN.$ Also we claim that there is at most one number $n$ in $\NN$ such that $g_n(x)=y'.$ If there are two numbers $n$ and $m$ in $\NN$ with $n \neq m$ such that $g_n(x)=y'$ and $g_m(x)=y',$ then 
$g_m\circ g_n^{-1}(y')=g_m(x)=y'$ and $y'\in \Fix_{g_m\circ g_n^{-1}}.$ However $g_m\circ g_n^{-1}$ is hyperbolic as $\{g_n\}_{n=1}^\infty$ is an approximation sequence and so it is a contradiction by  Lemma \ref{lem: disjoint fixed point sets}. Therefore there is a subindex $\beta$ of $\alpha$ such that $g_{\beta(n)}(x)\neq y'$ for all $n\in\NN.$

 Choose a lamination system $\LL$ in $\mathcal{C}.$ Then we can take two angel wings $\{I_n\}_{n=1}^\infty$ at $x$ and $\{J_n\}_{n=1}^\infty$ at $y'$ in $\LL$ so that for each $n\in\NN,$ $\overline{I_n} \subset N_\epsilon(x)$ and $\overline{J_n} \subset N_\epsilon(y').$ Then for each $n\in \NN,$ there is a number $m(n)$ in $\NN$ such that for all $m\in \NN$ with $m(n)\leq m$, $\{ x_{\beta(m)}, a_{\beta(m)}\} \subset I_n$ and $\{ g_{\beta(m)}(y_{\beta(m)}), g_{\beta(m)}(a_{\beta(m)})\} \subset J_n.$ A subindex $\gamma$ of $\beta$ can be given by $\displaystyle \gamma(n)=\beta \big( \sum_{k=1}^n m(k) \big)$ so that for each $n\in \NN,$ $\{ x_{\gamma(n)}, a_{\gamma(n)}\} \subset I_n$ and $\{ g_{\gamma(n)}(y_{\gamma(n)}), g_{\gamma(n)}(a_{\gamma(n)})\} \subset J_n.$ 

Now we write $I_n=(r_n,s_n)_{S^1}$ and $J_n=(u_n,v_n)_{S^1}$ for all $n\in \NN.$ Note that for each $n\in\NN,$ the nondegenerate open intervals $(r_n,x)_{S^1},(x,s_n)_{S^1}, (u_n,y')_{S^1},$ and $(y',v_n)_{S^1}$ are in $\LL.$ Then we show that the sequence $\{g_{\gamma(n)}\}_{n=1}^\infty$ has the convergence property with respect to $x$ and $y'$. Let $K$ be a compact subset of $S^1-\{x\}$ and $U$ be an open neighborhood of $y'$ on $S^1.$ Then since $\{I_n\}_{n=1}^\infty$ and $\{J_n\}_{n=1}^\infty$ are two angel wings, there is a number $N$ in $\NN$ such that $K\subset I_N^* $ and $J_N \subset U.$ Note that for all $n\in \NN$ with $N\leq n,$ $\{ x_{\gamma(n)}, a_{\gamma(n)}\} \subset I_N$ and $\{ g_{\gamma(n)}(y_{\gamma(n)}), g_{\gamma(n)}(a_{\gamma(n)})\} \subset J_N$ as $I_n\subset I_N$ and $J_n\subset J_N.$ 

We fix a number $n_0\in \NN$  with $N<n_0.$ We claim that $g_{\gamma(n_0)}(K)\subset U.$ Note that one of $g_{\gamma(n_0)}([a_{\gamma(n_0)}, y_{\gamma(n_0)}]_{S^1})$ and $g_{\gamma(n_0)}([y_{\gamma(n_0)}, a_{\gamma(n_0)}]_{S^1})$ must be contained in $J_{n_0}$ as $\{g_{\gamma(n_0)}(a_{\gamma(n_0)}), g_{\gamma(n_0)}(y_{\gamma(n_0)})\}\subset J_{n_0}.$  There are two cases depending on the location of $a_{\gamma(n_0)}:$ $a_{\gamma(n_0)}\in (r_{n_0},x)_{S^1};$ $a_{\gamma(n_0)} \in (x,s_{n_0})_{S^1}.$ 

First we consider the case where  $a_{\gamma(n_0)}\in (r_{n_0},x)_{S^1}.$ Then $y_{\gamma(n_0)}\in (x,r_{n_0})_{S^1}$ by the choice of $\epsilon.$ Hence one of $g_{\gamma(n_0)}(r_{n_0})$ and $g_{\gamma(n_0)}(x)$ belongs to $J_{n_0}$ since $x\in (a_{\gamma(n_0)}, y_{\gamma(n_0)})_{S^1}$ and $r_{n_0}\in (y_{\gamma(n_0)}, a_{\gamma(n_0)})_{S^1}.$ 

In the case where  $g_{\gamma(n_0)}(r_{n_0})\in J_{n_0},$ there are three possible cases: $g_{\gamma(n_0)}(r_{n_0})\in (u_{n_0}, y')_{S^1};$ $g_{\gamma(n_0)}(r_{n_0})=y';$ $g_{\gamma(n_0)}(r_{n_0})\in (y', v_{n_0})_{S^1}.$

 If  $g_{\gamma(n_0)}(r_{n_0})\in (u_{n_0}, y')_{S^1},$ then
$g_{\gamma(n_0)}((r_{n_0}, x)_{S^1})\subset (u_{n_0}, y')_{S^1}$ or $g_{\gamma(n_0)}((r_{n_0}, x)_{S^1})^*\subset (u_{n_0}, y')_{S^1}$ by the unlinkedness of $g_{\gamma(n_0)}((r_{n_0}, x)_{S^1})$ and $(u_{n_0}, y')_{S^1}.$ When $g_{\gamma(n_0)}((r_{n_0}, x)_{S^1})\subset (u_{n_0}, y')_{S^1},$ we get that  $g_{\gamma(n_0)}(x)\in (u_{n_0}, y')_{S^1} $ since $g_{\gamma(n_0)}(x)\neq y'$, and that $g_{\gamma(n_0)}(\ell((x, s_{n_0})_{S^1}))$ lies on $(u_{n_0}, y')_{S^1}.$  However if $g_{\gamma(n_0)}((x, s_{n_0})_{S^1})\subset (u_{n_0}, y')_{S^1},$ then $g_{\gamma(n_0)}(I_{n_0})=g_{\gamma(n_0)}((r_{n_0}, s_{n_0})_{S^1}) \subset (u_{n_0}, y')_{S^1}\subset J_{n_0} $ so this is a contradiction since  $g_{\gamma(n_0)}(x_{\gamma(n_0)})\notin J_{n_0}.$ Therefore $g_{\gamma(n_0)}(( s_{n_0}, x)_{S^1})\subset (u_{n_0}, y')_{S^1}.$ Then 
$$g_{\gamma(n_0)}(K)\subset g_{\gamma(n_0)}(I_N^*) \subset g_{\gamma(n_0)}(I_{n_0}^*)\subset g_{\gamma(n_0)}(( s_{n_0}, x)_{S^1})\subset (u_{n_0}, y')_{S^1} \subset J_{n_0} \subset J_N \subset U.$$  So this case is proved. Then when  $g_{\gamma(n_0)}((r_{n_0}, x)_{S^1})^*\subset (u_{n_0}, y')_{S^1}$, we can see that 
$$g_{\gamma(n_0)}(K)\subset g_{\gamma(n_0)}(I_N^*) \subset g_{\gamma(n_0)}(I_{n_0}^*)\subset g_{\gamma(n_0)}((r_{n_0}, x)_{S^1}^*)\subset (u_{n_0}, y')_{S^1} \subset J_{n_0} \subset J_N \subset U.$$ Therefore the case where $g_{\gamma(n_0)}(r_{n_0})\in (u_{n_0}, y')_{S^1}$ is done. 

Now if $g_{\gamma(n_0)}(r_{n_0})=y',$ then there are three cases by the unlinkeness of $g_{\gamma(n_0)}((r_{n_0}, x)_{S^1})$, $(u_{n_0}, y')_{S^1}$ and $(y', v_{n_0})_{S^1}:$ $g_{\gamma(n_0)}((r_{n_0}, x)_{S^1})\subsetneq (y', v_{n_0})_{S^1};$ $(y',v_{n_0})_{S^1}\subset g_{\gamma(n_0)}((r_{n_0}, x)_{S^1}) \subsetneq (u_{n_0}, y')_{S^1}^*;$ $ (u_{n_0}, y')_{S^1}^* \subset g_{\gamma(n_0)}((r_{n_0}, x)_{S^1}).$

We claim that the first case is not possible. If $g_{\gamma(n_0)}((r_{n_0}, x)_{S^1})\subsetneq (y', v_{n_0})_{S^1},$ then $x\in (y', v_{n_0})_{S^1}$ since $g_{\gamma(n_0)}(r_{n_0})=y'.$ Then $g_{\gamma(n_0)}(\ell((x, s_{n_0})_{S^1}))$ must lie on $ (y', v_{n_0})_{S^1}.$ However if $g_{\gamma(n_0)}((x, s_{n_0})_{S^1}) \subset  (y', v_{n_0})_{S^1},$ then 
$$g_{\gamma(n_0)}(x_{\gamma(n_0)})\in g_{\gamma(n_0)}(I_{n_0})=g_{\gamma(n_0)}((r_{n_0}, s_{n_0})_{S^1}) \subset  (y', v_{n_0})_{S^1} \subset J_{n_0}\subset N_\epsilon(y').$$ This is a contradiction by the choice of $\epsilon.$ Also if  $g_{\gamma(n_0)}((x, s_{n_0})_{S^1})^* \subset  (y', v_{n_0})_{S^1},$ then  $$(y',g_{\gamma(n_0)}(x))_{S^1}=g_{\gamma(n_0)}(( r_{n_0}, x)_{S^1})\subset g_{\gamma(n_0)}((x, s_{n_0})_{S^1}^*)=(g_{\gamma(n_0)}( s_{n_0}), g_{\gamma(n_0)}(x))_{S^1} \subset  (y', v_{n_0})_{S^1},$$
and this implies that $g_{\gamma(n_0)}( s_{n_0})=y'.$ However $g_{\gamma(n_0)}( s_{n_0})=y'=g_{\gamma(n_0)}(r_{n_0})$ so this is a contradiction. Therefore the claim is proved. 

Now we consider the case where $(y',v_{n_0})_{S^1}\subset g_{\gamma(n_0)}((r_{n_0}, x)_{S^1}) \subsetneq (u_{n_0}, y')_{S^1}^*=(y', u_{n_0})_{S^1}.$ This case is also impossible. Observe that $g_{\gamma(n_0)}(x)\in [v_{n_0}, u_{n_0})_{S^1}$ and $ g_{\gamma(n_0)}(y_{\gamma(n_0)})$ must belong to $(u_{n_0}, y')_{S^1}$ in this case. 
Since $y'=g_{\gamma(n_0)}(r_{n_0}) \subset g_{\gamma(n_0)}( (r_N,x)_{S^1}),$ the leaf $\ell((u_{n_0}, y')_{S^1})$ lies on  $g_{\gamma(n_0)}( (r_N,x)_{S^1}).$ If $(u_{n_0}, y')_{S^1}^* \subset g_{\gamma(n_0)}( (r_N,x)_{S^1}),$ then 
$(g_{\gamma(n_0)}(x), g_{\gamma(n_0)}(r_N))_{S^1}=g_{\gamma(n_0)}((r_N, x)_{S^1})^*\subset (u_{n_0}, y')_{S^1}$ and 
$g_{\gamma(n_0)}(x)\in [u_{n_0}, y')_{S^1}.$ This is a contradiction since $[v_{n_0}, u_{n_0})_{S^1}$ and $ [u_{n_0}, y')_{S^1}$ are disjoint. If $(u_{n_0}, y')_{S^1} \subset g_{\gamma(n_0)}( (r_N,x)_{S^1}),$ then $ g_{\gamma(n_0)}(y_{\gamma(n_0)})\in (u_{n_0}, y')_{S^1} \subset g_{\gamma(n_0)}( (r_N,x)_{S^1})$ and this is also a contradiction since $y_{\gamma(n_0)}\notin (r_N,x)_{S^1}.$

Therefore  $ (u_{n_0}, y')_{S^1}^* \subset g_{\gamma(n_0)}((r_{n_0}, x)_{S^1}).$ Then

$$g_{\gamma(n_0)}(K) \subset g_{\gamma(n_0)}(I_{n_0}^*) \subset  g_{\gamma(n_0)}((r_{n_0}, x)_{S^1}^*)
\subset (u_{n_0}, y')_{S^1}\subset J_{n_0}\subset J_N \subset U. $$
Thus the case where  $g_{\gamma(n_0)}(r_{n_0})=y'$ is also proved. 

Then we consider the case where $g_{\gamma(n_0)}(r_{n_0})\in (y', v_{n_0})_{S^1}.$ Then there are two cases by the unlinkedness of $g_{\gamma(n_0)}((r_{n_0}, x)_{S^1})$ and $(y', v_{n_0})_{S^1}:$ $g_{\gamma(n_0)}((r_{n_0}, x)_{S^1})\subset (y', v_{n_0})_{S^1};$ $g_{\gamma(n_0)}((r_{n_0}, x)_{S^1})^*\subset (y', v_{n_0})_{S^1}.$ If  $g_{\gamma(n_0)}((r_{n_0}, x)_{S^1})\subset (y', v_{n_0})_{S^1},$ then $g_{\gamma(n_0)}(x)\in (y', v_{n_0}]_{S^1}\subset (y', v_N)_{S^1}$ and so the leaf $g_{\gamma(n_0)}(\ell((x,s_{n_0})_{S^1}))$ must lie on $(y', v_N)_{S^1}.$ However if $g_{\gamma(n_0)}((x,s_{n_0})_{S^1})\subset (y', v_N)_{S^1},$ then
$g_{\gamma(n_0)}((r_{n_0},s_{n_0})_{S^1})\subset (y', v_N)_{S^1}$ and this is a contradiction since $g_{\gamma(n_0)}(x_{\gamma(n_0)})\notin J_N.$ Therefore  $g_{\gamma(n_0)}((x,s_{n_0})_{S^1})^*\subset (y', v_N)_{S^1}$ and so
$$g_{\gamma(n_0)}(K)\subset g_{\gamma(n_0)}(I_N^*) \subseteq  g_{\gamma(n_0)}((x,s_{n_0})_{S^1}^*)
\subset (y', v_N)_{S^1} \subset J_N \subset U.$$
Hence the case where $g_{\gamma(n_0)}(r_{n_0})\in (y', v_{n_0})_{S^1}$ is also proved. Thus the case where $g_{\gamma(n_0)}(r_{n_0})\in J_{n_0}$ is proved. 

Now we consider the case where $g_{\gamma(n_0)}(x)\in J_{n_0}.$ Then $g_{\gamma(n_0)}(x)\in (u_{n_0},y')_{S^1}$ or $g_{\gamma(n_0)}(x)\in (y', v_{n_0})_{S^1}$ as $g_{\gamma(n_0)}(x)\neq y'.$ For brevity, we write $L=g_{\gamma(n_0)}((r_{n_0}, x)_{S^1})$ and $R=g_{\gamma(n_0)}((x,s_{n_0})_{S^1}).$ If  $g_{\gamma(n_0)}(x)\in (u_{n_0},y')_{S^1},$ then the leaves $\ell(L)$ and $\ell(R)$ must lie on $(u_{n_0},y')_{S^1}.$ However if $L\subset (u_{n_0}, y')_{S^1}$ and $R\subset (u_{n_0}, y')_{S^1},$ then 
$g_{\gamma(n_0)}(I_{n_0})\subset (u_{n_0}, y')_{S^1}$ and this is a contradiction since $g_{\gamma(n_0)}(x_{\gamma(n_0)})\notin J_{n_0}.$ Hence $L^*\subset (u_{n_0},y')_{S^1}$ or $R^* \subset (u_{n_0}, y')_{S^1}.$ In both cases, $g_{\gamma(n_0)}(I_{n_0}^*)\subset (u_{n_0}, y')_{S^1}.$ Therefore 
$$g_{\gamma(n_0)}(K) \subset g_{\gamma(n_0)}(I_N^*) \subset g_{\gamma(n_0)}(I_{n_0}^*)\subset (u_{n_0}, y')_{S^1}\subset J_{n_0}\subset J_N \subset U.$$
Similarly if  $g_{\gamma(n_0)}(x)\in (y', v_{n_0})_{S^1},$ then the leaves $\ell(L)$ and $\ell(R)$ must lie on $(y', v_{n_0})_{S^1}.$ However if $L\subset (y', v_{n_0})_{S^1}$ and $R\subset (y', v_{n_0})_{S^1},$ then 
$g_{\gamma(n_0)}(I_{n_0})\subset (y', v_{n_0})_{S^1}$ and this is a contradiction since $g_{\gamma(n_0)}(x_{\gamma(n_0)})\notin J_{n_0}.$ Hence $L^*\subset(y', v_{n_0})_{S^1}$ or $R^* \subset(y', v_{n_0})_{S^1}.$ In both cases $g_{\gamma(n_0)}(I_{n_0}^*)\subset (y', v_{n_0})_{S^1}.$ Therefore 
$$g_{\gamma(n_0)}(K) \subset g_{\gamma(n_0)}(I_N^*) \subset g_{\gamma(n_0)}(I_{n_0}^*)\subset (y', v_{n_0})_{S^1}\subset J_{n_0}\subset J_N \subset U.$$
Thus  the case where $g_{\gamma(n_0)}(x)\in J_{n_0}$ also follows and so the case where $a_{n_0}\in (r_{n_0}, x)_{S^1}$ is done. Similarly $g_{\gamma(n_0)}(K) \subset U$ when  $a_{n_0}\in (x, s_{n_0})_{S^1}.$ This implies that for any $n\in \NN$ with $N<n$, 
$g_{\gamma(n)}(K)\subset U.$ Therefore the sequence $\{g_{\gamma(n)}\}_{n=1}^\infty$ has the convergence property and so  $\{g_{\alpha(n)}\}_{n=1}^\infty$ has the convergence property. Thus the original sequence $\{g_n\}_{n=1}^\infty$ has the convergence property.

\end{proof}

\newpage

\bibliography{biblio}
\bibliographystyle{alpha}

\end{document}